\theoremstyle{plain}
\newtheorem{mthm}{Theorem}
\newtheorem{thm}{Theorem}[section]
\newtheorem{prop}[thm]{Proposition}
\newtheorem{lem}[thm]{Lemma}
\newtheorem{cor}[thm]{Corollary}
\newtheorem{que}[thm]{Question}
\theoremstyle{definition}
\newtheorem{defn}[thm]{Definition}
\newtheorem{rem}[thm]{Remark}
\begin{document}
\title[superigidity into MCG]{Property $(TT)$ modulo $T$ and homomorphism superrigidity into mapping class groups}
\author{Masato MIMURA}
\address{Graduate School of Mathematical Sciences, 
University of Tokyo, Komaba, Tokyo, 153-8914, Japan}
\email{mimurac@ms.u-tokyo.ac.jp}

\keywords{mapping class groups; automorphism groups of free groups; property (T); bounded cohomology; rigidity}

\thanks{The author is supported by JSPS Research Fellowships (PD) for Young Scientists No.23-247.}

\maketitle

\begin{abstract}
Every homomorphism from finite index subgroups of a universal lattices to mapping class groups of orientable surfaces (possibly with punctures), or to outer automorphism groups of finitely generated nonabelian free groups must have finite image. Here the universal lattice denotes the special linear group $G=\mathrm{SL}_m (\mathbb{Z}[x_1, \ldots , x_k])$ with $m$ at least $3$ and $k$ finite. Moreover, the same results hold ture if universal lattices are replaced with symplectic universal lattices $\mathrm{Sp}_{2m} (\mathbb{Z}[x_1, \ldots , x_k])$ with $m$ at least $2$. These results can be regarded as a non-arithmetization of the theorems of Farb--Kaimanovich--Masur and Bridson--Wade. A certain measure equivalence analogue is also established. To show the statements above, we introduce a notion of property $(\mathrm{TT})/\mathrm{T}$ (``$/\mathrm{T}$" stands for ``modulo trivial part"), which is a weakening of property $(\mathrm{TT})$ of N. Monod.  Furthermore, symplectic universal lattices $\mathrm{Sp}_{2m} (\mathbb{Z}[x_1, \ldots , x_k])$ with $m$ at least $3$ has the fixed point property for $L^p$-spaces for any $p\in (1,\infty)$.

\bigskip

\noindent
2010 \textit{Mathematics Subject Classification}: primary 20F65; secondary 22D12, 20H25

\end{abstract}

\section{Introduction and main results}\label{sec:intro}
In this paper, we use the symbol $k$ for representing any finite nonnegative integer. For $g,l\geq 0$, let $\Sigma=\Sigma_{g,l}$ denote an oriented compact connected surface with $g$ genus and $l$ punctures, and set $\Sigma_g=\Sigma_{g,0}$. For $(\infty>) n\geq 2$, let $F_n$ denote a free group of rank $n$. We use $e$ as the identity in a group. Unless otherwise stated, every group is assumed to be countable and discrete. For a group $K$, $\lambda_K$ is the left regular representation of $K$ in $\ell^2(K)$.

The main topic of this paper is \textit{homomorphism superrigidity into mapping class groups $\mathrm{MCG}(\Sigma)$ of $($orientable$)$ surfaces and into outer automorphism groups $\mathrm{Out}(F_n)$ of $($finitely generated$)$ free groups} of certain groups $\Gamma$. By this terminology, we mean that every group homomorphism from $\Gamma$ into these groups has finite image. A breakthrough was done by B. Farb and H. Masur \cite{FaMas} in 1998 that states any irreducible higher rank lattice has homomorphism superrigidity into mapping class groups. They employs a study of the Poisson boundary of mapping class groups by A. V. Kaimanovich and Masur \cite{FaMas}, and therefore this theorem is called the Farb-Kaimanovich--Masur superrigidity. In 2010, M. R. Bridson and R. D. Wade \cite{BrWa} settled the question for $\mathrm{Out}(F_n)$ target case:

\begin{thm}$($\cite{FaMas}$;$\cite{BrWa}$)$\label{thm:break}
Let $G=\Pi _{i=1}^{m}\mathbf{G}_i(k_i)$, where $k_i$ are local field, $\mathbf{G}_i(k_i)$ are $k_i$-points of Zariski connected simple $k_i$-algebraic group $\mathbf{G}_i$. Assume $\sum_{i}\mathbf{G}_i(k_i) \geq 2$. Then for any irreducible lattice $\Gamma$ in $G$, every homomorphism from $\Gamma$ into $\mathrm{MCG}(\Sigma_{g})$ $(g\geq 0)$; and into $\mathrm{Out}(F_n)$ $(n\geq 2)$ have finite image.
\end{thm}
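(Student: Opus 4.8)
The plan is to treat the two families of targets in parallel. Fix an irreducible lattice $\Gamma$ in $G$ with $\sum_i \operatorname{rank}_{k_i}\mathbf{G}_i \geq 2$ and a homomorphism $\varphi\colon\Gamma\to Q$, where $Q=\mathrm{MCG}(\Sigma_{g})$ or $Q=\mathrm{Out}(F_n)$. Since the conclusion is insensitive to replacing $\Gamma$ by a finite-index subgroup and to enlarging $\varphi(\Gamma)$ to a finite-index overgroup, I would first isolate the two soft inputs that drive the endgame: (i) every finite-index subgroup of $\Gamma$ has finite abelianization, and more generally admits no homomorphism onto an infinite virtually polycyclic group — a consequence of Margulis's normal subgroup theorem together with the higher-rank hypothesis; and (ii) hence $\varphi$ has finite image whenever $Q$ is virtually cyclic or virtually free. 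Input (ii) already disposes of the low-complexity cases: $\Sigma_0$ and $\Sigma_1$ on the mapping class group side, and $\mathrm{Out}(F_2)\cong\mathrm{GL}_2(\mathbb{Z})$ on the other side.

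The core of the argument is an induction on the complexity of $\Sigma$ (genus plus number of punctures), respectively on the rank $n$, driven by the structure theory of subgroups: the Birman--Lubotzky--McCarthy / Ivanov / McCarthy--Papadopoulos trichotomy for $\mathrm{MCG}$, and the Bestvina--Feighn--Handel Tits alternative and its refinements for $\mathrm{Out}(F_n)$. The point is that a subgroup $H\le Q$ is, up to finite index, one of: (a) finite; (b) \emph{reducible}, i.e.\ it fixes an essential multicurve on $\Sigma$, respectively a proper free factor system of $F_n$; or (c) \emph{irreducible and non-elementary}, i.e.\ it contains two independent pseudo-Anosov, respectively fully irreducible, elements and fixes no such structure. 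In case (b), cutting along the multicurve — respectively restricting to the free factor system and passing to $\mathrm{Out}$ of the pieces, while absorbing the resulting $\mathrm{Aut}$/twist ambiguities — yields a homomorphism of a finite-index subgroup of $\Gamma$ into a product of mapping class groups, respectively automorphism groups, of strictly smaller complexity, together with an abelian twist part killed by (i); the inductive hypothesis then applies. So everything reduces to ruling out case (c).

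For case (c) the plan is a boundary-rigidity argument. Pick an admissible random walk $\mu$ on $\Gamma$ with Poisson boundary $(B,\nu)$. On the $\mathrm{MCG}$ side, the Kaimanovich--Masur description of the Poisson boundary identifies the boundary of the pushforward walk on the non-elementary subgroup $\varphi(\Gamma)$ with the space of uniquely ergodic projective measured foliations carrying a non-atomic hitting measure; transporting along $\varphi$ produces a measurable $\Gamma$-equivariant map $\beta\colon B\to\mathrm{PMF}(\Sigma)$ landing in the uniquely ergodic locus with $\beta_\ast\nu$ non-atomic. For $\mathrm{Out}(F_n)$ one runs the analogous construction into the space of projectivized arational trees in $\partial\mathrm{CV}_n$ (equivalently into arational laminations), again with non-atomic pushforward. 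The key geometric input is then that the $Q$-stabilizer of a uniquely ergodic foliation — respectively of an arational tree, or of the unordered attracting/repelling pair of a fully irreducible element — is virtually cyclic, in particular amenable. Since the Poisson boundary of a higher-rank irreducible lattice dominates its Furstenberg boundary $G/P$, the equivariant map into $\mathrm{Prob}(X)$ for the relevant $\Gamma$-space $X$ can be realized on $G/P$, and amenability of the stabilizers together with the higher-rank hypothesis forces it to be essentially constant; this is where $\sum_i \operatorname{rank}_{k_i}\mathbf{G}_i \geq 2$ and irreducibility of $\Gamma$ are used, through Margulis--Zimmer cocycle superrigidity and the Nevo--Zimmer / Stuck--Zimmer analysis of stationary actions, or equivalently through the absence of nonconstant $\Gamma$-maps from $G/P$ into homogeneous spaces of amenable groups. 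But a constant map makes $\beta_\ast\nu$ a Dirac mass, contradicting non-atomicity; hence case (c) is impossible, and the induction gives $\varphi(\Gamma)$ finite.

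I expect the main obstacle to be precisely this last step — upgrading ``there exists an equivariant boundary map into a space with amenable stabilizers'' to ``the map is constant.'' On the $\mathrm{MCG}$ side one must invoke the full boundary theory of higher-rank lattices and be careful that the hypothesis only yields $\sum\operatorname{rank}\geq 2$, not property $(T)$, so no shortcut through fixed-point properties is available. On the $\mathrm{Out}(F_n)$ side there is the added technical burden that a clean Kaimanovich--Masur-type statement may not be at hand, so the non-elementary case itself has to be handled by a finer induction on the Bestvina--Feighn--Handel structure of the image — locating an invariant lamination pair, controlling its stabilizer, and pushing the residual virtually-cyclic/abelian part back into input (i). The bookkeeping of free factor systems and twist subgroups in these reductions is the most delicate part of the proof.
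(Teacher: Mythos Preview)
The paper does not give its own proof of this theorem; it is quoted as background from \cite{FaMas} and \cite{BrWa}, with only a one-line summary of the mechanism (``Thanks to the MFP, the proofs are reduced to showing that every homomorphism above must have infinite kernel''). Your outline is essentially the original Farb--Masur argument for the $\mathrm{MCG}$ target: subgroup trichotomy, induction on complexity for the reducible case, and the Kaimanovich--Masur Poisson-boundary analysis to kill the sufficiently irreducible case. That is a correct strategy and matches the cited sources rather than anything the present paper proves.

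It is worth noting, however, that the paper implicitly offers a genuinely different route to the same conclusion, at least for \emph{totally} higher rank lattices. Burger--Monod \cite{BuMo} gives such lattices property $(\mathrm{TT})$, hence $(\mathrm{TT})/\mathrm{T}$, and then Theorem~\ref{thm:TTMCG} applies. In that proof the non-elementary case (your case (c)) is dispatched not by boundary theory but by the non-vanishing results of Hamenst\"adt (Theorem~\ref{thm:ham}) and Bestvina--Bromberg--Fujiwara (Theorem~\ref{thm:BBF}) for $\widetilde{QH}(K;\lambda_K)$, which directly contradict $(\mathrm{TT})/\mathrm{T}$ for the image. So where you invoke Poisson boundaries and amenable stabilizers, the paper's machinery would instead invoke quasi-cocycles into the regular representation. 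Your approach has the advantage of covering all irreducible lattices with $\sum_i\mathrm{rank}_{k_i}\mathbf{G}_i\geq 2$ (including those with rank-one factors, which need not have $(\mathrm{TT})$), exactly as Theorem~\ref{thm:break} is stated; the bounded-cohomology route, as developed here, only reaches the totally higher rank case. One further remark on your $\mathrm{Out}(F_n)$ case: the actual Bridson--Wade argument \cite{BrWa} does not run a boundary argument at all but instead exploits $\mathbb{Z}$-averseness together with the Bass--Lubotzky theorem on $\overline{\mathrm{IA}}_n$ (Theorem~\ref{thm:BaLu}); you correctly flag that a clean Kaimanovich--Masur analogue may not be available and that a finer structural induction is what is really needed there.
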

For their proofs, the \textit{Margulis Finiteness Property}, hereafter we call the MFP, plays a fundamental role. This property states that \textit{every normal subgroup of an irreducible higher rank lattice is either finite or of finite index.} Thanks to the MFP, the proofs are reduced to showing that every homomorphism above must have infinite kernel. Note that Margulis has also shown that these lattices are arithmetic. 

In this paper, we study some generalization of Theorem~\ref{thm:break} to matrix groups over \textit{general rings}. The special linear group $\mathrm{SL}_m(\mathbb{Z}[x_1,\ldots,x_k])$ with $m\geq 3$ over a finitely generated commutative polinomial ring over integers is called the \textit{universal lattice} by Y. Shalom \cite{Sha1}. Although this group itself is \textit{not} a lattice in semi-simple algebraic group (see Lemma~\ref{lem:Z-avnot}) whenever $k\geq 1$, it surjects onto higher rank lattices of the form of $\mathrm{SL}_m(\mathcal{O})$ (, if corresponding $k$ is large enough,) where $\mathcal{O}$ is an integer ring, such as $\mathrm{SL}_m(\mathbb{Z})$, $\mathrm{SL}_m(\mathbb{Z}[1/2])$, $\mathrm{SL}_m(\mathbb{Z}[\sqrt{2},\sqrt{3}])$, and $\mathrm{SL}_m(\mathbb{F}_q[t])$, where $\mathbb{F}_q$ denotes the finite field of order $q$ for $q$ a positive power of a prime. Universal lattices are expected to have certain properties which are common in the groups above, so that they can be seen \textit{mother groups} for these properties. For instance, Shalom \cite{Sha4} and L. Vaserstein \cite{Vas} have shown that universal lattices have \textit{property }$(\mathrm{T})$ of Kazhdan~\cite{Kaz}, see Sucsection~\ref{subsec:TT} and Remark~\ref{rem:Vas} for details. 

Now it is natural to ask whether universal lattices have homomorphism superrigidity into $\mathrm{MCG}(\Sigma)$ and $\mathrm{Out}(F_n)$. We also deal with the \textit{symplectic} version of this question. More precisely, with following Shalom, we call the symplectic group $\mathrm{Sp}_{2m}(\mathbb{Z}[x_1,\ldots,x_k])$ with $m\geq 2$ the \textit{symplectic universal lattice} and ask the same question to it. Note that Ershov--Jaikin-Zapirain--Kassabov \cite{EJK} have shown symplectic universal lattices have $(\mathrm{T})$, which plays a key role in this paper (see Section~\ref{sec:symp} and Section~\ref{sec:keyobs}). Our main result answers these question in the affirmative (note that Selberg lemma implies $\Gamma$ in the statement below can be taken as a torsion-free group, because universal and symplectic universal lattices can be embedded(\textit{not} discrete embedding) in special linear groups over $\mathbb{C}$):

\begin{mthm}\label{mthm:MCGOut1}
Let $\Gamma$ be a finite index subgroup either of $\mathrm{SL}_{m}(\mathbb{Z}[x_1,\ldots ,x_k])$, $m\geq 3$; or of $\mathrm{Sp}_{2m}(\mathbb{Z}[x_1,\ldots ,x_k])$, $m\geq 2$. Then for any $g\geq0$ and $n\geq 2$, every homomorphism 
$$
\Phi\colon \Gamma \to \mathrm{MCG}(\Sigma_g)
$$
and every homomorphism 
$$
\Psi \colon \Gamma \to \mathrm{Out}(F_n)
$$
have finite image. 
In particular, every homomorphism $\Gamma \to \mathrm{MCG}(\Sigma_{g,l}) \ \ \ (l\geq 1)$ 
and $ \Gamma \to \mathrm{Aut}(F_n)$ 
also have finite image.
\end{mthm}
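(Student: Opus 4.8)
The plan is to dispense with the Margulis Finiteness Property altogether --- it is unavailable here, $\Gamma$ not being a lattice in a semisimple group --- and instead to argue by bounded cohomology, using that $\Gamma$, and hence every finite index subgroup of $\Gamma$, enjoys both Kazhdan's property $(\mathrm{T})$ and property $(\mathrm{TT})/\mathrm{T}$; since both pass to finite index subgroups, I would pass to finite index subgroups of $\Gamma$ freely throughout. I would first reduce the two ``in particular'' assertions to the cases of $\mathrm{MCG}(\Sigma_g)$ and $\mathrm{Out}(F_n)$. The homomorphism $\mathrm{Aut}(F_n)\twoheadrightarrow\mathrm{Out}(F_n)$ has kernel $\mathrm{Inn}(F_n)\cong F_n$, and, capping off the punctures, a finite index subgroup of $\mathrm{MCG}(\Sigma_{g,l})$ surjects onto $\mathrm{MCG}(\Sigma_g)$ (or, when $g=0$, onto a finite group) with kernel an iterated extension of finitely generated free groups and a closed surface group, by the Birman exact sequences. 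In either case the kernel $N$ admits a finite filtration with a-T-menable (free or surface) successive quotients; since quotients of property $(\mathrm{T})$ groups again have property $(\mathrm{T})$, while an a-T-menable torsion-free group with property $(\mathrm{T})$ is trivial, every homomorphism from a finite index subgroup of $\Gamma$ into $N$ is trivial. Hence $\Gamma\to\mathrm{Aut}(F_n)$ and $\Gamma\to\mathrm{MCG}(\Sigma_{g,l})$ have finite image once the two main cases are known.

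For the main cases I would suppose, for contradiction, that $\Phi\colon\Gamma\to\mathrm{MCG}(\Sigma_g)$ has infinite image $H$; the argument for $\Psi\colon\Gamma\to\mathrm{Out}(F_n)$ is word-for-word parallel and indicated at the end. Invoking the classification of subgroups of mapping class groups (Ivanov; McCarthy--Papadopoulos), after passing to a finite index subgroup one of the following holds: (i) $H$ is virtually abelian; (ii) $H$ fixes an essential multicurve $\mathcal{M}$; (iii) $H$ contains two independent pseudo-Anosov elements. Case (i) cannot occur: a finite index subgroup of $H$ is free abelian of rank $r\ge 1$ (as $H$ is infinite), and its preimage in $\Gamma$ would be a finite index subgroup surjecting onto $\mathbb{Z}^r$, contradicting the finiteness of abelianizations of finite index subgroups of (symplectic) universal lattices --- a consequence of property $(\mathrm{T})$. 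In case (ii), cutting along $\mathcal{M}$ yields a homomorphism from $\mathrm{Stab}(\mathcal{M})$ to the product of the mapping class groups of the components of $\Sigma_g\setminus\mathcal{M}$, with kernel the free abelian group generated by the Dehn twists about the curves of $\mathcal{M}$; the twist part is killed as in case (i), each component has strictly smaller complexity $3g-3+l$, and an induction on complexity reduces to surfaces with virtually free (hence a-T-menable) mapping class group, where the image of $\Gamma$ is finite by property $(\mathrm{T})$ again. So the whole point is case (iii).

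In case (iii), $H$ acts on the curve complex $\mathcal{C}(\Sigma_g)$, which is Gromov hyperbolic (Masur--Minsky), the two pseudo-Anosovs acting as independent loxodromic isometries with weak proper discontinuity (Bowditch; Bestvina--Fujiwara), so $H$ is acylindrically hyperbolic. By the quasi-cocycle constructions of Bestvina--Fujiwara, Hamenst\"adt, and Hull--Osin, $H$ then admits an \emph{unbounded} quasi-cocycle $q\colon H\to\ell^2(H)$ for its left regular representation $\lambda_H$. Since $H$ is infinite, the representation $\lambda_H\circ\Phi$ of $\Gamma$ on $\ell^2(H)$ has no nonzero invariant vector, and $q\circ\Phi$ is an unbounded quasi-cocycle for it; property $(\mathrm{TT})/\mathrm{T}$ forces $q\circ\Phi$ to lie within bounded distance of a genuine cocycle $c\colon\Gamma\to\ell^2(H)$, and property $(\mathrm{T})$ forces $c$ to be bounded, so $q\circ\Phi$ is bounded, i.e.\ $q$ is bounded on $\Phi(\Gamma)=H$ --- contradicting the choice of $q$. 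Hence case (iii) is impossible and $H$ is finite. For $\Psi\colon\Gamma\to\mathrm{Out}(F_n)$ one runs the identical argument with $\mathcal{C}(\Sigma_g)$ replaced by the free factor complex $\mathcal{FF}_n$ (Gromov hyperbolic by Bestvina--Feighn), pseudo-Anosovs replaced by fully irreducible outer automorphisms (loxodromic with weak proper discontinuity by Bestvina--Feighn), the reduction along multicurves replaced by the reduction along proper free factor systems and their twist subgroups (Handel--Mosher; Guirardel--Levitt), and the induction on complexity replaced by one on the rank $n$, with base cases $\mathrm{Out}(F_1)=\mathbb{Z}/2$ and $\mathrm{Out}(F_2)\cong\mathrm{GL}_2(\mathbb{Z})$ virtually free.

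The main obstacle I anticipate is entirely in case (iii): one needs to know, first, that every ``large'' subgroup of a mapping class group or of $\mathrm{Out}(F_n)$ carries an unbounded quasi-cocycle valued in a unitary representation \emph{without nonzero invariant vectors}, so that the ``modulo trivial part'' in property $(\mathrm{TT})/\mathrm{T}$ costs nothing; and, second, that $\mathrm{SL}_m(\mathbb{Z}[x_1,\dots,x_k])$ and $\mathrm{Sp}_{2m}(\mathbb{Z}[x_1,\dots,x_k])$, together with their finite index subgroups, genuinely satisfy property $(\mathrm{TT})/\mathrm{T}$ --- and this last point is where the real novelty of the paper lies. A secondary, bookkeeping issue is to arrange the two inductions so that each reduction step strictly decreases complexity (respectively rank) and so that all the abelian ``twist'' subgroups that appear along the way are genuinely annihilated by property $(\mathrm{T})$.
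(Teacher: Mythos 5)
Your strategy coincides with the paper's: deduce superrigidity from property $(\mathrm{TT})/\mathrm{T}$ by combining the McCarthy--Papadopoulos and Handel--Mosher classifications with the quasi-cocycle non-vanishing theorems of Hamenst\"adt and Bestvina--Bromberg--Fujiwara, killing the ``large'' (independent pseudo-Anosov / fully irreducible) case by $(\mathrm{TT})/\mathrm{T}$ and the remaining cases by property $(\mathrm{T})$ and induction on complexity. The one place where you diverge from the paper's route is the reduction of the ``in particular'' assertions: you propose using the Birman forgetful maps and the $\mathrm{Aut}(F_n)\twoheadrightarrow\mathrm{Out}(F_n)$ quotient, observing that the kernels are built from a-T-menable free or surface groups; the paper instead uses the injections $\mathrm{MCG}(\Sigma_{g,l}) \hookrightarrow \mathrm{Out}(F_{2g+l-1})$ and $\mathrm{Aut}(F_n)\hookrightarrow\mathrm{Out}(F_{n+1})$, so that the punctured and $\mathrm{Aut}$ cases are literally special cases of the $\mathrm{Out}$ case. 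Both are correct; the paper's is slightly cleaner since it avoids the low-complexity bookkeeping you flag, and it means the case $\Sigma_g$ (closed) is the only $\mathrm{MCG}$ target one needs to analyze directly.

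The genuine gap, as you yourself identify, is the verification that $\mathrm{SL}_m(\mathbb{Z}[x_1,\dots,x_k])$ ($m\geq 3$) and $\mathrm{Sp}_{2m}(\mathbb{Z}[x_1,\dots,x_k])$ ($m\geq 2$) actually have property $(\mathrm{TT})/\mathrm{T}$: without this input, the entire argument rests on an unproven hypothesis. The paper supplies this via a criterion (Theorem~\ref{thm:short}) in the spirit of Shalom's machinery but considerably lighter: it suffices to exhibit a subgroup $H\leqslant G$ and an $H$-conjugation-invariant generating subset $U\subseteq G$ such that $G\supseteq U$ has relative property $(\mathrm{TT})$, $G$ is boundedly generated by $U$ and $H$, and $G$ has property $(\mathrm{T})$. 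For the universal and symplectic universal lattices this is verified (Theorem~\ref{thm:ex}) with $H=G$ and $U$ the union of all conjugates of the elementary (resp.\ elementary symplectic) matrices, using Ozawa's theorem upgrading relative $(\mathrm{T})$ to relative $(\mathrm{TT})$ for semidirect products with abelian normal factor, the conjugate-width bounds of Park--Woodburn and Kopeiko (the paper's property $(*)$) to control $U$, and the hard input of property $(\mathrm{T})$ for the full groups (Shalom--Vaserstein for $\mathrm{SL}_m$, Ershov--Jaikin-Zapirain--Kassabov for $\mathrm{Sp}_{2m}$). None of this appears in your proposal, and it is precisely where the paper's work lies; an acceptable plan for Theorem~\ref{mthm:MCGOut1} must at least cite or reconstruct this route.
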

Note that the latter cases follow from the fact that there are natural injections 
$$\mathrm{MCG}(\Sigma_{g,l}) \hookrightarrow \mathrm{Out}(F_{2g+l-1}) \textrm{ if $l\geq 1$,\ \ and \ }\mathrm{Aut}(F_n) \hookrightarrow \mathrm{Out}(F_{n+1})$$(the first injection is given via actions on the fundamental group of $\Sigma_{g,l}$).

The homomorphism superrigidity above has strong connection to group actions on low dimensional manifolds, which can be seen as a part of \textit{Zimmer's program}. Indeed, for $\mathrm{MCG}(\Sigma)$ target case, the result states that every $\Gamma$-action on a surface is finite \textit{up to isotopy}. For $\mathrm{Out}(F_n)$ target case, Farb and P. Shalen \cite{FaSh} have employed this superrigidity to deduce certain rigidity of a action of an irreducible higher rank lattice on a $3$-manifolds. Thus Theorem~\ref{mthm:MCGOut1} can be regarded as a mile stone to Zimmer's program for universal and symplectic universal lattices. As they are not realized as (arithmetic) lattices, we consider Theorem~\ref{mthm:MCGOut1} as a certain \textit{non-arithmetization} of Theorem~\ref{thm:break}. 

Let $A$ be a finitely generated and commutative ring (we always assume rings to be associative and unital). Since conclusion of Theorem~\ref{mthm:MCGOut1} remains true if $\Gamma$ is replaced with any group quotient of $\Gamma$, we obtain the same conclusion for $\Gamma$ being a finite index subgroup either of $\mathrm{E}_m(A)$, $m\geq 3$; or of $\mathrm{Ep}_{2m}(A)$, $m\geq 2$. Here $\mathrm{E}_m(A)$ and $\mathrm{Ep}_{2m}(A)$ respectively denote the \textit{elementary group} and the \textit{elementary symplectic group} over $A$. For the definitions of them, see Section~\ref{sec:symp}.

Let us mention some difficulty in establishing Theorem~\ref{mthm:MCGOut1}. As mentioned above, a key to proving Theorem~\ref{thm:break} is the MFP for irreducible higher rank lattices. More essentially, Bridson--Wade \cite{BrWa} have observed that if $\Gamma$ is $\mathbb{Z}\textit{-averse}$, then homomorphism superrigidity follows. Here a group $\Gamma$ is $\mathbb{Z}\textit{-averse}$ if for any finite index subgroup $\Gamma_0$ of $\Gamma$, \textit{no} normal subgroup $N$ of $\Gamma_0$ has a surjective homomorphism onto $\mathbb{Z}$. We, however, observe the following:
\begin{lem}\label{lem:Z-avnot} 
Universal lattices and symplectic universal lattices with $k\geq 1$ do $\mathrm{not}$ satisfy the MFP. Moreover, they are $\mathrm{not}$ $\mathbb{Z}\textit{-averse}$.
\end{lem}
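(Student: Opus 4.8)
The plan is to produce, for $G$ equal to $\mathrm{SL}_{m}(\mathbb{Z}[x_1,\ldots,x_k])$ with $m\ge 3$ or to $\mathrm{Sp}_{2m}(\mathbb{Z}[x_1,\ldots,x_k])$ with $m\ge 2$ (and $k\ge 1$), a \emph{single} normal subgroup $N\trianglelefteq G$ that is at once infinite, of infinite index, and admits an epimorphism onto $\mathbb{Z}$. Infiniteness together with infinite index already contradicts the MFP for $G$, and the epimorphism onto $\mathbb{Z}$ — applied with the finite index subgroup $\Gamma_0=G$ itself — shows that $G$ is not $\mathbb{Z}$-averse.

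First I would introduce the dual numbers ring $R:=\mathbb{Z}[x_1]/(x_1^{2})=\mathbb{Z}[\epsilon]$, together with the ring homomorphism $\phi\colon\mathbb{Z}[x_1,\ldots,x_k]\to R$ given by $\phi(x_1)=\epsilon$ and $\phi(x_i)=0$ for $i\ge 2$, followed by the reduction $R\to\mathbb{Z}$, $\epsilon\mapsto 0$. Functoriality of $\mathrm{SL}_m$ (resp. $\mathrm{Sp}_{2m}$) yields homomorphisms $\pi\colon G\to\mathrm{SL}_m(R)$ and $\rho\colon\mathrm{SL}_m(R)\to\mathrm{SL}_m(\mathbb{Z})$ (resp. the analogous maps with $\mathrm{Sp}_{2m}$). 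The one computation I need is the standard one: because $\epsilon^{2}=0$, every element of $\ker\rho$ has the form $I+\epsilon X$, one has $(I+\epsilon X)(I+\epsilon Y)=I+\epsilon(X+Y)$, and lying in $\mathrm{SL}_m(R)$ (resp. $\mathrm{Sp}_{2m}(R)$) forces $X\in\mathfrak{sl}_m(\mathbb{Z})$ (resp. $X\in\mathfrak{sp}_{2m}(\mathbb{Z})$); hence $\ker\rho$ is a nonzero finitely generated free abelian group, canonically the additive group of the relevant Lie algebra over $\mathbb{Z}$. Set $N:=\ker(\rho\circ\pi)$, a normal subgroup of $G$.

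Then I would verify the three properties. The composite $\rho\circ\pi$ is induced by the ring map $\mathbb{Z}[x_1,\ldots,x_k]\to\mathbb{Z}$ sending every $x_i$ to $0$, which is split by the inclusion of constants; so $\rho\circ\pi$ is surjective and $G/N\cong\mathrm{SL}_m(\mathbb{Z})$ (resp. $\mathrm{Sp}_{2m}(\mathbb{Z})$), an infinite group, whence $N$ has infinite index. Next, since $N=\pi^{-1}(\ker\rho)$ we have $\pi(N)=\ker\rho\cap\pi(G)$, and this subgroup contains the $\pi$-image of a suitable elementary (symplectic) transvection having $x_1$ among its off-diagonal entries, an element which is visibly $\ne e$ in $\ker\rho$. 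Therefore $\pi(N)$ is a nonzero subgroup of a finitely generated free abelian group, so $\pi(N)\cong\mathbb{Z}^{r}$ with $r\ge 1$ and surjects onto $\mathbb{Z}$; composing with $\pi$ gives $N\twoheadrightarrow\mathbb{Z}$. In particular $N$ is infinite, so the MFP fails, and $G$ is not $\mathbb{Z}$-averse. The symplectic case runs identically.

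The only technical point — and the reason I route everything through $\pi(N)\le\ker\rho$ rather than through $\pi$ itself — is that I deliberately avoid asserting that $\pi$ is surjective: the ring map $\phi$ is not split (the domain $\mathbb{Z}[x_1,\ldots,x_k]$ contains no nonzero square-zero element), and surjectivity of $\mathrm{SL}_m(\mathbb{Z}[x_1,\ldots,x_k])\to\mathrm{SL}_m(R)$ would require a stable-range or $SK_1$ input that is unnecessary here, since any nonzero subgroup of $\mathbb{Z}^{m^{2}-1}$ (resp. of $\mathbb{Z}^{2m^{2}+m}$) already surjects onto $\mathbb{Z}$. Finally I would remark that the hypothesis $k\ge 1$ enters only through the availability of the variable $x_1$; for $k=0$ one is dealing with the genuine arithmetic lattices $\mathrm{SL}_m(\mathbb{Z})$ and $\mathrm{Sp}_{2m}(\mathbb{Z})$, which do satisfy the MFP and are $\mathbb{Z}$-averse by Margulis, so no such $N$ exists there.
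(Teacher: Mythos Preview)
Your proof is correct and is essentially the same as the paper's: your $N=\ker(\rho\circ\pi)$ is exactly the paper's congruence kernel for the evaluation $x\mapsto 0$, and your route through the dual numbers $\mathbb{Z}[\epsilon]=\mathbb{Z}[x]/(x^{2})$ is a coordinate-free repackaging of the paper's explicit formula $g\mapsto (g'|_{x=0})_{1,1}$, since reduction modulo $x^{2}$ records precisely the value and first derivative at $0$. The only difference is presentational --- you identify $\ker\rho$ with the integral Lie algebra and invoke the fact that any nonzero subgroup of a finitely generated free abelian group surjects onto $\mathbb{Z}$, whereas the paper simply names one matrix entry; your version has the advantage of explaining \emph{why} the derivative map is a homomorphism (via $(I+\epsilon X)(I+\epsilon Y)=I+\epsilon(X+Y)$) and of treating the symplectic case on equal footing.
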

We prove Lemma~\ref{lem:Z-avnot} (we set $k=1$ for simplicity) by considering the congruence kernel $N$ associated to the map $\mathbb{Z}[x]\twoheadrightarrow \mathbb{Z}; $$x\mapsto 0$. This $N$ is infinite and infinite index normal subgroup, and the map
$$
N \twoheadrightarrow \mathbb{Z};\ g \mapsto (g'\mid_{x=0})_{1,1}
$$
becomes a homomorphism. Here $'$ is a derivative: $\{x^n\}'=nx^{n-1}$ and $(\cdot)_{1,1}$ means the $(1,1)$-th entry.

There is another approach to show homomorphism superrigiity by M. Bestvina and K. Fujiwara \cite{BeFu}. They study second \textit{bounded cohomology} $H^2_{\mathrm{b}}(K;1_K,\mathbb{R})$, see Subsection~\ref{subsec:bdd}, of subgroups $K$ of mapping class groups with trivial coefficient and give an alternative proof of \cite{FaMas}, see also \S 3 of \cite{BrWa} for $\mathrm{Out}(F_n)$ target case. However, to the best of the author's knowledge, no results are known concerning second bounded cohomology of universal or symplectic universal lattices \textit{with trivial coefficient}, compare with Subsection~\ref{subsec:TT} . 

In this paper, we take the third approach, which is a modification of the second one above, to overcome these difficulties to show Theorem~\ref{mthm:MCGOut1}. Precisely, we study second bounded cohomology with unitary coefficients \textit{which does not contain the trivial representation}. Certain cases are studied by U. Hamenst\"{a}dt \cite{Ham} for $K\leqslant \mathrm{MCG}(\Sigma)$; and Bestvina--Bromberg--Fujiwara \cite{BBF} for $K\leqslant \mathrm{Out}(F_n)$, and these results are crucial for the proof of Theorem~\ref{mthm:MCGOut1}. We call a certain property $(\mathrm{TT})/\mathrm{T}$ (that means ``property $\boldsymbol{(\mathrm{TT})}$ modulo $\boldsymbol{\mathrm{T}}$rivial  linear part"), which relates to the cohomology above, see Subsection~\ref{subsec:TT}. 

In short, the proof of Theorem~\ref{mthm:MCGOut1} consists of the following two ingredients:
\begin{thm}\label{thm:TTMCG}
Let $\Gamma$ be a countable discrete subgroup. If $\Gamma$ has property $(\mathrm{TT})/\mathrm{T}$, in the sense in Definition~$\ref{defn:TTmT}$, then every homomorphism from $G$ into $\mathrm{MCG}(\Sigma_{g,l})$ $(g,l\geq 0)$; or into $\mathrm{Out}(F_n)$ $(n\geq 2)$ has finite image.
\end{thm}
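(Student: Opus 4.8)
The plan is to reduce the statement, via the Bestvina--Bromberg--Fujiwara and Hamenst\"{a}dt theorems on second bounded cohomology of subgroups of $\mathrm{Out}(F_n)$ and $\mathrm{MCG}(\Sigma)$, to a statement purely about the group $\Gamma$ having property $(\mathrm{TT})/\mathrm{T}$. Suppose $\Phi\colon\Gamma\to\mathrm{MCG}(\Sigma_{g,l})$ (or $\Psi\colon\Gamma\to\mathrm{Out}(F_n)$) is a homomorphism with image $\Lambda$. The crucial dichotomy for subgroups $\Lambda$ of these groups is: either $\Lambda$ is ``small'' in a suitable sense (virtually abelian, or reducible/fixing a subsurface or a free factor system, or finite), or $\Lambda$ contains enough independent pseudo-Anosov / fully irreducible elements that $H^2_{\mathrm{b}}(\Lambda;\ell^2(\Lambda))$ (equivalently, with a mixing quasi-regular coefficient with no invariant vectors) is infinite-dimensional. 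I would first dispose of the ``large'' alternative: if $H^2_{\mathrm{b}}$ of the image with such a coefficient is nonzero and $\Gamma$ has $(\mathrm{TT})/\mathrm{T}$, then pulling back a nonzero class along $\Phi$ contradicts the vanishing of $H^2_{\mathrm{b}}(\Gamma;\pi)$ for the pulled-back representation $\pi$ (after checking $\pi$ has no trivial part, or that the trivial part is harmless, which is exactly what the ``$/\mathrm{T}$'' is designed to handle). So the image must fall into the ``small'' case.

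Next I would run an induction / structure argument on the ``small'' images. For the mapping class group target: a reducible subgroup $\Lambda$ preserves a canonical multicurve, hence (after passing to the finite-index subgroup fixing each component) maps to the product of mapping class groups of the pieces, each of smaller complexity; a pseudo-Anosov-free irreducible subgroup is virtually cyclic. For the $\mathrm{Out}(F_n)$ target: a subgroup with no fully irreducible element of infinite order virtually fixes a proper free factor system (by Handel--Mosher / Bestvina--Handel type results), and one descends to $\mathrm{Out}$ and $\mathrm{Aut}$ of smaller free groups and mapping class groups via the Bridson--Wade restriction machinery; an irreducible-free subgroup is again virtually cyclic. In all the terminal cases the image is either virtually abelian or has an obvious abelian-type quotient. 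Then property $(\mathrm{T})$ — which is part of $(\mathrm{TT})/\mathrm{T}$, as I would extract from the definition in Subsection~\ref{subsec:TT} — forces any homomorphism of $\Gamma$ onto a virtually abelian (more generally amenable) group to have finite image, so in each terminal case $\Phi(\Gamma)$ is finite. Since the complexity strictly drops at each reduction step, the induction terminates, and one concludes $\Phi(\Gamma)$ (resp.\ $\Psi(\Gamma)$) is finite.

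One technical point to be careful about throughout is that $(\mathrm{TT})/\mathrm{T}$, unlike plain $(\mathrm{TT})$, only controls cohomology after quotienting out the ``trivial linear part'' of the coefficient representation; so when I pull back $\ell^2(\Lambda)$-type coefficients I must confirm either that there are no almost-invariant vectors in the restriction to $\Phi(\Gamma)$ (automatic when the coefficient is mixing and $\Lambda$ is infinite, by Howe--Moore-type mixing of the $\ell^2$-coefficient on the relevant subgroup) or that the finite-dimensional invariant part contributes nothing to $H^2_{\mathrm{b}}$ of $\Gamma$ by property $(\mathrm{T})$. A further routine but necessary step is passing between $\Gamma$ and its finite-index subgroups: $(\mathrm{TT})/\mathrm{T}$ should be inherited by finite-index subgroups (or at least the conclusion of Theorem~\ref{thm:TTMCG} should be, via an induced-representation argument), which is needed because the structure theory of subgroups of $\mathrm{MCG}$ and $\mathrm{Out}(F_n)$ repeatedly forces one to pass to finite-index subgroups of the image and pull them back.

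The main obstacle I expect is the clean organization of the ``small image'' induction so that the bounded-cohomology input from \cite{Ham}, \cite{BBF} is invoked in exactly the right generality — in particular, making sure that at every stage where the image is not yet virtually abelian there is a genuine nonvanishing second bounded cohomology class with an admissible (no trivial part, or $(\mathrm{T})$-negligible trivial part) coefficient, so that $(\mathrm{TT})/\mathrm{T}$ bites. Matching the exact coefficient modules for which \cite{Ham} and \cite{BBF} prove infinite-dimensionality against the coefficient modules for which $(\mathrm{TT})/\mathrm{T}$ guarantees vanishing is the heart of the argument; everything else is the now-standard subgroup trichotomy for $\mathrm{MCG}$ and $\mathrm{Out}(F_n)$ plus Kazhdan's property $(\mathrm{T})$ killing maps onto amenable groups.
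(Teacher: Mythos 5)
Your outline follows essentially the same route as the paper: pass to the image, invoke the subgroup trichotomy (McCarthy--Papadopoulos for $\mathrm{MCG}$, Handel--Mosher for $\mathrm{Out}(F_n)$), kill the ``large'' alternative via Hamenst\"adt / Bestvina--Bromberg--Fujiwara nonvanishing of $\widetilde{QH}$ against $(\mathrm{TT})/\mathrm{T}$, kill the virtually-cyclic alternative with property $(\mathrm{T})$, and handle the reducible alternative by cutting and inducting on complexity. For the $\mathrm{MCG}$ case you have the argument in essentially complete form. Two remarks on the parts where your write-up drifts from what is actually needed.

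First, and this is the genuine gap: your treatment of the terminal case for the $\mathrm{Out}(F_n)$ target is wrong as stated. After passing to a finite-index subgroup that fixes a free factor $L$, you get, by induction, that the images in $\mathrm{Out}(L)$, $\mathrm{Out}(L')$ and then (via abelianization) in $\mathrm{GL}_n(\mathbb{Z})$ are all finite. But the kernel of the map to $\mathrm{GL}_n(\mathbb{Z})$ is a subgroup of the Torelli-type group $\overline{\mathrm{IA}}_n$, and there is no ``obvious abelian-type quotient'' for it; in particular, $\overline{\mathrm{IA}}_n$ is not virtually abelian and the complexity has not dropped, so the induction does not close by itself. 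The step you are missing is the theorem of Bass--Lubotzky (reproved by Bridson--Wade, and stated in the paper as Theorem~\ref{thm:BaLu}): \emph{every nontrivial subgroup of $\overline{\mathrm{IA}}_n$ surjects onto $\mathbb{Z}$}. Combined with property $(\mathrm{T})$ for the kernel, this forces the kernel to be trivial and finally makes the image finite. Without this ingredient your reduction chain does not terminate.

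Second, a smaller point: the ``$/\mathrm{T}$'' caveat in Definition~\ref{defn:TTmT} concerns the containment $\pi\supseteq 1_G$, not the existence of almost-invariant vectors, so your appeal to mixing/Howe--Moore is unnecessary. Since the image $K$ is infinite, $\lambda_K$ has no nonzero invariant vector, hence $\lambda_K\circ\Phi\not\supseteq 1_\Gamma$, and $(\mathrm{TT})/\mathrm{T}$ applies directly. (The paper avoids the pullback altogether by using Lemma~\ref{lem:sta}, heredity of $(\mathrm{TT})/\mathrm{T}$ to quotients, so that Hamenst\"adt / BBF contradict $(\mathrm{TT})/\mathrm{T}$ for $K$ itself; this is equivalent but a touch cleaner.) Finally, you should dispose of the exceptional surfaces $3g+l<5$ and the case $n=2$ at the outset: there the target is virtually free or $\mathrm{GL}_2(\mathbb{Z})$, hence Haagerup, and property $(\mathrm{T})$ finishes immediately.
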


\begin{thm}\label{thm:TTmodT}
The symplectic universal lattices $\mathrm{Sp}_{2m}(\mathbb{Z}[x_1,\ldots ,x_k])$, $m\geq 2$;,as well as the universal lattices $\mathrm{SL}_{m}(\mathbb{Z}[x_1,\ldots ,x_k])$, $m\geq 3$, have property $(\mathrm{TT})/\mathrm{T}$.
\end{thm}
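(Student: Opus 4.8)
The plan is to adapt to the bounded-cohomological, ``modulo trivial part'' setting the arguments by which Shalom \cite{Sha4} (see also Vaserstein \cite{Vas}) proved that the universal lattices have Kazhdan's property $(\mathrm{T})$ and by which Ershov--Jaikin-Zapirain--Kassabov \cite{EJK} proved it for the symplectic universal lattices. Since property $(\mathrm{TT})/\mathrm{T}$ is inherited by group quotients, it suffices to treat $G=\mathrm{SL}_m(R)$ with $m\geq3$ and $G=\mathrm{Sp}_{2m}(R)$ with $m\geq2$, for $R=\mathbb{Z}[x_1,\ldots,x_k]$; and since these groups already have property $(\mathrm{T})$ by the cited works, the task reduces to the degree-two statement: for every unitary representation $\pi$ with no nonzero invariant vectors (which is the only case to treat, after replacing $\mathcal{H}_\pi$ by the orthogonal complement of its subspace of $G$-invariant vectors), every quasi-cocycle $b\colon G\to\mathcal{H}_\pi$ is bounded. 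The argument has a \emph{relative} ingredient and a \emph{globalization} ingredient.

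For the relative ingredient I would prove that certain pairs $(L\ltimes V,V)$ have relative property $(\mathrm{TT})$ — every quasi-cocycle of a unitary representation of $L\ltimes V$ is bounded on $V$ — where $V$ is an abelian ``unipotent radical'' occurring in $G$: for $\mathrm{SL}_m$ take $V=R^2$ with $L=\mathrm{SL}_2(R)$ acting in the standard way, sitting inside an $\mathrm{SL}_3$-block; for $\mathrm{Sp}_{2m}$ take $V=\mathrm{Sym}_m(R)$, the symmetric $m\times m$ matrices, with $L=\mathrm{SL}_m(R)$ acting by congruence, sitting inside the Siegel parabolic. Crucially, each such pair has relative property $(\mathrm{T})$ — from \cite{Sha4} in the first case and from the estimates of \cite{EJK} in the second — and $V$ is amenable. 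The amenability of $V$ lets one adjust $b$ by a bounded function so that its defect cocycle becomes inflated from $L$; then $b|_V$ is a \emph{genuine} $1$-cocycle of $\pi|_V$. Splitting $\mathcal{H}_\pi=\mathcal{H}_\pi^{V}\oplus(\mathcal{H}_\pi^{V})^{\perp}$ orthogonally and $(L\ltimes V)$-equivariantly, the $(\mathcal{H}_\pi^{V})^{\perp}$-component of $b|_V$ is a genuine $1$-cocycle of a unitary representation of $L\ltimes V$ without $V$-invariant vectors, hence bounded on $V$ by relative property $(\mathrm{T})$; while the $\mathcal{H}_\pi^{V}$-component of $b|_V$ is a group homomorphism $\psi\colon V\to\mathcal{H}_\pi^{V}$ which a short computation with the (now inflated) defect shows to be $L$-equivariant, hence zero — the point being the elementary fact that the $L$-module $V$ admits no nonzero equivariant homomorphism into a unitary representation (this uses $\mathbb{Z}\subseteq R$ and unitarity; it needs a slightly cleverer calculation when $V=\mathrm{Sym}_2(R)$). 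Therefore $b|_V$ is bounded.

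For the globalization I would pass from ``$b$ is bounded on each of the conjugate copies of $V$'' to ``$b$ is bounded on $G$''. One cannot invoke bounded generation of $G$ by elementary (or elementary symplectic) matrices, since this property fails over $\mathbb{Z}[x_1,\ldots,x_k]$; instead I would run, in the quasi-cocycle setting, the bounded-generation-free mechanism of \cite{Sha4} (for $\mathrm{SL}_m$) and of \cite{EJK} (for $\mathrm{Sp}_{2m}$), built on the root-graded structure of $G$: applying the relative ingredient to the various conjugates of the pair yields boundedness of $b$ on every root subgroup, and then the Chevalley-type commutator relations among root subgroups — available because the root system $A_{m-1}$, respectively $C_m$, has rank at least $2$ — combined with the spectral gaps provided by the relative $(\mathrm{T})$'s, propagate boundedness to all of $G$. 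At each step of this propagation $b$ is pinned down only up to a homomorphism into a $\pi$-invariant subspace, and equivariance forces that ambiguity into the globally $G$-invariant part of $\pi$; as $\pi$ has no invariant vectors, the ambiguity disappears and $b$ comes out bounded. (This is exactly why one gets property $(\mathrm{TT})/\mathrm{T}$ and not property $(\mathrm{TT})$: the latter, with trivial coefficients, would force $H^2_{\mathrm b}(G;\mathbb{R})=0$, which is not known for universal lattices.)

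I expect the globalization to be the main obstacle. In the proofs of property $(\mathrm{T})$ by Shalom and by Ershov--Jaikin-Zapirain--Kassabov one propagates the invariance of a single vector, a rigid datum; here one must instead propagate the boundedness of an entire function, which requires keeping uniform control of the steadily accumulating defect constants while closing up over all the root subgroups, and simultaneously keeping track of the ``trivial linear parts'' appearing along the way and verifying that they collapse modulo the globally invariant vectors. The tightest case is $\mathrm{Sp}_4(R)$ (root system $C_2$, so $m=2$), whose root combinatorics — together with the more delicate non-equivariant-maps statement for $\mathrm{Sym}_2(R)$ — would need to be worked out explicitly.
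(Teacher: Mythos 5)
Your relative ingredient is sound: it is, in essence, a re-derivation of Ozawa's theorem that relative property $(\mathrm{T})$ implies relative property $(\mathrm{TT})$ for a semidirect product $G_0\ltimes A$ with $A$ abelian (cited as \cite{Oza} in the paper), combined with the relative $(\mathrm{T})$ of Shalom \cite{Sha1} and Neuhauser \cite{Neu}. This matches the paper.

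The globalization is where your proposal has a genuine gap, and it is also where the paper takes a much shorter route than the one you outline. You propose to re-run, at the quasi-cocycle level, the bounded-generation-free ``root-graded'' propagation of Shalom and of Ershov--Jaikin-Zapirain--Kassabov, tracking accumulating defect constants; you yourself flag this as ``the main obstacle,'' and as stated it is not an argument. The paper instead sets $U:=\bigcup_{g\in G}g\,\mathcal{E}\,g^{-1}$, the set of \emph{all} conjugates of elementary (resp.\ elementary symplectic) matrices, so that $U$ is conjugation-invariant and generates $G$. Two facts then suffice. (1) The pair $G\supseteq U$ has relative $(\mathrm{TT})$. This is not automatic from relative $(\mathrm{TT})$ on $\mathcal{E}$ alone; one needs the existence of a global bound $N$ such that every $g\,E_{i,j}(r)\,g^{-1}$ is a product of at most $N$ elementary matrices. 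That is supplied by Park--Woodburn \cite{PaWo} for $\mathrm{SL}_m$ and by Kopeiko \cite{Kop} for $\mathrm{Sp}_{2m}$ (property $(*)$ in the paper); your proposal never invokes any such bounded factorization, and without it boundedness on root subgroups does not extend to all conjugates. (2) Property $(\mathrm{T})$ for $G$, used as a black box. Given these, a half-page computation (the paper's Theorem~\ref{thm:short}) shows, for any quasi-$\pi$-cocycle $b$, the uniform estimate $\|\pi(u)b(h)-b(h)\|\le 4C$ for all $u\in U$, $h\in G$; so if $b(G)$ were unbounded the normalized vectors $b(h)/\|b(h)\|$ would be almost $\pi(U)$-invariant, hence almost $\pi(G)$-invariant, contradicting $(\mathrm{T})$ when $\pi\not\supseteq 1_G$. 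That single step replaces your entire proposed propagation and the bookkeeping of ``trivial linear parts'' along the way: the hypothesis $\pi\not\supseteq 1_G$ is used exactly once, at the end, via $(\mathrm{T})$.

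In short: the relative step is correct; the globalization, as you present it, is a gap. The missing ideas are the Park--Woodburn/Kopeiko bounded factorization of conjugates of elementary matrices, and the simple observation that relative $(\mathrm{TT})$ on a conjugation-invariant generating set together with $(\mathrm{T})$ already yields $(\mathrm{TT})/\mathrm{T}$.
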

The statement above for universal lattices has proven in the previous paper \cite[Remark~6.7]{Mim1} of the author.

We furthermore obtain some generalization of Theorem~\ref{mthm:MCGOut1}. Recall that Theorem~\ref{thm:break} is easily deduced if a lattice $\Gamma$ is noncocompact (because every solvable subgroup of $\mathrm{MCG}(\Sigma)$ or of $\mathrm{Out}(F_n)$ is virtually abelian, and because the MFP holds). For our case, it is much less trivial, but Theorem~\ref{mthm:MCGOut1} for universal lattice case can be shown in a simple argument, see Theorem~\ref{mthm:MCGOut2} below. The following theorem shows that homomorphism superridigity remains valid even in certain \textit{measure equivalent} setting (the author thanks Alex Furman for this formulation), in which the simple argument might not apply any more : 

\begin{mthm}\label{mthm:ME}
Let $A$ be a finitely generated, commutative, associative and unital ring. Let $\Gamma$ be either of $\mathrm{E}_{m}(A)$, $m\geq 3$; or of $\mathrm{Ep}_{2m}(A)$, $m\geq 2$. Suppose a countable group $\Lambda$ satisfies the following two conditions:
   \begin{enumerate}[$(i)$]
     \item $\Lambda$ is measure equivalent to $\Gamma$;
     \item For an ergodic ME-coupling $\Omega$ of $(\Gamma,\Lambda)$, there exists $\Lambda$-fundamental domain $X\simeq \Omega/\Lambda$ in  $\Omega$ such that the associated ME-cocycle $     \alpha \colon \Gamma \times X \to \Lambda$ 
     satisfies the $L^2$-condition. Namely, for any $\gamma\in \Gamma$, 
     $$
     |\alpha (\gamma, \cdot)|_{\Lambda} \in L^2(X),
     $$
   where $| \cdot |_{\Lambda}$ denotes a word metric on $\Lambda$ with respect to a finite generating set for $\Lambda$. 
   \end{enumerate}
Then every homomorphism from $\Lambda$ into $\mathrm{MCG}(\Sigma_{g,l})$ $(g,l\geq 0)$; or into $\mathrm{Out}(F_n)$ $(n\geq 2)$ has finite image. 

In particular, the following holds true: let $\Gamma$ be a finite index subgroup of $\mathrm{E}_{m}(A)$, $m\geq 3$; or of $\mathrm{Ep}_{2m}(A)$, $m\geq 2$. Suppose $G$ be a locally compact and second countable group and $G$ contains a group isomorphic to $\Gamma$ as a lattice. Then for any $\mathrm{cocompact}$ lattice $\Lambda$ in $G$, every homomorphism from $\Lambda$ into $\mathrm{MCG}(\Sigma_{g,l})$ $(g,l\geq 0)$; or into $\mathrm{Out}(F_n)$ $(n\geq 2)$ has finite image.
\end{mthm}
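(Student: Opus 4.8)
The plan is to deduce Theorem~\ref{mthm:ME} from Theorem~\ref{thm:TTMCG} by showing that the group $\Lambda$ in the statement has property $(\mathrm{TT})/\mathrm{T}$; once this is known, every homomorphism from $\Lambda$ into $\mathrm{MCG}(\Sigma_{g,l})$ or into $\mathrm{Out}(F_n)$ has finite image. The starting point is that $\Gamma$ itself has property $(\mathrm{TT})/\mathrm{T}$: when $\Gamma$ is a universal or symplectic universal lattice this is Theorem~\ref{thm:TTmodT}, and since $A$ is finitely generated, $\mathrm{E}_m(A)$ (resp.\ $\mathrm{Ep}_{2m}(A)$) is a group quotient of a universal (resp.\ symplectic universal) lattice, so it inherits property $(\mathrm{TT})/\mathrm{T}$ because that property --- being the assertion that every quasi-cocycle with values in a unitary representation without nonzero invariant vectors is bounded --- descends to quotients. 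For the ``in particular'' part one additionally uses that property $(\mathrm{TT})/\mathrm{T}$ is a commensurability invariant, so a finite index subgroup of $\mathrm{E}_m(A)$ or $\mathrm{Ep}_{2m}(A)$ has it as well.

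The substantive step is the transfer: \emph{if $\Gamma$ has property $(\mathrm{TT})/\mathrm{T}$ and $\Lambda$ is measure equivalent to $\Gamma$ via an ergodic coupling whose ME-cocycle $\alpha\colon\Gamma\times X\to\Lambda$ satisfies the $L^2$-condition, then $\Lambda$ has property $(\mathrm{TT})/\mathrm{T}$.} Here I would use the induction machinery for measure equivalence in the style of Monod--Shalom and Furman. Normalising the measure on $X$ to be a probability measure, and given a unitary representation $(\pi,\mathcal{H})$ of $\Lambda$ with no nonzero invariant vectors together with a quasi-cocycle $q\colon\Lambda\to\mathcal{H}$, I would form the induced $\Gamma$-representation $\tilde\pi$ on $L^2(X,\mathcal{H})$ determined by $\alpha$ and the $\Gamma$-action on $X$, together with the induced cochain $\tilde q$ given fibrewise by $\tilde q(\gamma)(x)=q\bigl(\alpha(\gamma,\gamma^{-1}x)\bigr)$. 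Two points make this work: first, a Hilbert-space-valued quasi-cocycle on a finitely generated group grows at most linearly in the word metric, so $\|\tilde q(\gamma)\|_{L^2(X,\mathcal{H})}^2\le C\int_X|\alpha(\gamma,\cdot)|_\Lambda^2\,d\mu<\infty$ precisely by the $L^2$-hypothesis, while the defect of $\tilde q$ is pointwise controlled by that of $q$ and hence stays bounded; second, ergodicity of $\Gamma\curvearrowright X$ forces $\tilde\pi$ to have no nonzero $\Gamma$-invariant vectors. Property $(\mathrm{TT})/\mathrm{T}$ of $\Gamma$ then yields $\sup_{\gamma\in\Gamma}\|\tilde q(\gamma)\|<\infty$. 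Since $\tilde q$ is now bounded, one may induce it back along the coupling read from the $\Lambda$-side without any further integrability hypothesis, and recognise $q$ inside the doubly induced (still bounded) cochain, obtaining $\sup_{\lambda\in\Lambda}\|q(\lambda)\|<\infty$; this is exactly property $(\mathrm{TT})/\mathrm{T}$ for $\Lambda$.

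The step I expect to be the main obstacle is making this transfer rigorous: one must set up induction and the reverse induction at the level of quasi-cocycles rather than honest cocycles, and --- because property $(\mathrm{TT})/\mathrm{T}$ isolates the trivial linear part rather than being a plain bounded-cohomology vanishing statement --- verify that the decomposition of a unitary module into its invariant and non-invariant parts is respected by the induction functor, so that no spurious quasi-morphism contribution is created, and that the bound extracted on the $\Gamma$-side is of the representation-theoretic type that can be pushed back to $\Lambda$. By comparison the rest is routine. For the ``in particular'' assertion, take $\Omega=G$ with $\Gamma$ and $\Lambda$ acting by left and right translations: this is a measure-equivalence coupling because $\Gamma$ and $\Lambda$ are lattices in the unimodular group $G$, and since $\Lambda$ is \emph{cocompact}, a Borel fundamental domain $X\simeq G/\Lambda$ is relatively compact, so the associated ME-cocycle takes only finitely many values on $\{\gamma\}\times X$ for each $\gamma$ and is in particular $L^2$; passing to an ergodic component if necessary does not affect the $L^2$-condition, and the first part of the theorem applies.
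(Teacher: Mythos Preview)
Your strategy is to show that $\Lambda$ itself has property $(\mathrm{TT})/\mathrm{T}$ and then invoke Theorem~\ref{thm:TTMCG}. This would prove more than the paper does: the paper explicitly remarks (after Lemma~\ref{lem:sta}) that it is \emph{not known} whether $(\mathrm{TT})/\mathrm{T}$ is an ME-invariant, even under an $L^2$ integrability hypothesis. The place your argument breaks is the sentence ``ergodicity of $\Gamma\curvearrowright X$ forces $\tilde\pi$ to have no nonzero $\Gamma$-invariant vectors.'' Ergodicity alone only tells you that $x\mapsto\|f(x)\|$ is constant for an invariant $f$; it does not rule out a nonzero equivariant section. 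The result you need here is Furman's Lemma~\ref{lem:fur}, and its hypothesis is that $\pi$ be \emph{weakly mixing} (no finite-dimensional subrepresentation), not merely that $\pi\not\supseteq 1_\Lambda$. For a representation $\pi$ with $\pi\not\supseteq 1_\Lambda$ but admitting a finite-dimensional subrepresentation, there is no available statement guaranteeing $\tilde\pi\not\supseteq 1_\Gamma$, and hence $(\mathrm{TT})/\mathrm{T}$ for $\Gamma$ cannot be applied.

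The paper sidesteps this by \emph{not} proving $(\mathrm{TT})/\mathrm{T}$ for $\Lambda$. Instead it revisits the proof of Theorem~\ref{thm:TTMCG} and observes that only two ingredients are actually used: (a) $\widetilde{QH}(\Lambda;\sigma)=0$ for every \emph{weakly mixing} unitary $\sigma$ (because the coefficients in Theorems~\ref{thm:ham} and~\ref{thm:BBF} are left regular representations of infinite groups, which are even strongly mixing, and weak mixing is stable under pull-backs and finite-index induction); and (b) every finite-index subgroup of $\Lambda$ has finite abelianization, which follows since property $(\mathrm{T})$ is an ME-invariant. For (a) one takes a weakly mixing $\sigma$, applies Furman's lemma to get $\Omega\mathbf{I}_\Lambda^\Gamma\sigma\not\supseteq 1_\Gamma$, uses $(\mathrm{TT})/\mathrm{T}$ for $\Gamma$ to get $\widetilde{QH}(\Gamma;\Omega\mathbf{I}_\Lambda^\Gamma\sigma)=0$, and then invokes the Monod--Shalom injectivity theorem (Theorem~\ref{thm:MS}) together with Lemma~\ref{lem:2ind} to conclude $\widetilde{QH}(\Lambda;\sigma)=0$. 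Your ``induce back and recognise $q$'' step is exactly where this injectivity theorem is doing the work, and it should be cited rather than described as a direct cochain-level manipulation.
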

We refer to Section~\ref{sec:ME} for definitions and details on ME. Note that in the latter case, an ME-cocycle $\alpha \colon \Gamma \times G/\Lambda \to \Lambda$ satisfies $L^{\infty}$-condition for some choice of a $\Lambda$-fundamental domain, and that it is stronger than the $L^2$-condition in item $(ii)$. Also we mention that $\Lambda$ is always finitely generated because $\Gamma$ here has property $(\mathrm{T})$, see Subsection~\ref{subsec:TT} and Subsection~\ref{sebsec:sympT}, which implies finite generation; and because $(\mathrm{T})$ is an ME-invariant, see \cite{Fur1}.

In addition, we consider a generalization of properties $(\mathrm{T})$ and $(\mathrm{FH})$ of Bader--Furman--Gelander--Monod concerning general Banach space $B$, which they call $(\mathrm{T}_B)$ and $(\mathrm{F}_B)$. We note that for $(\infty >)p\gg 2$, property $(\mathrm{F}_{\mathcal{L}_p})$, which represents the fixed point property with respect to all affine isometric action on $L^p$ spaces, is strictly stronger than $(\mathrm{T})$ (, which is equivalent to $(\mathrm{F}_{\mathcal{L}_2})$ for locally compact second countable groups). See Theorem~\ref{thm:BFGM1} for details. We establish $(\mathrm{F}_{\mathcal{L}_p})$, $p\in (1,\infty)$ for symplectic universal lattices with $m\geq 3$:
\begin{thm}\label{thm:FFLp}
Let $p\in (1,\infty)$. Then symplectic universal lattices $\mathrm{Sp}_{2m}(\mathbb{Z}[x_1,\ldots ,x_k])$ for $m\geq 3$, as well as universal lattices $\mathrm{SL}_{m}(\mathbb{Z}[x_1,\ldots ,x_k])$ for $m\geq 4$, have property $(\mathrm{FF}_{\mathcal{L}_p})/\mathrm{T}$. In particular, these groups have property $(\mathrm{F}_{\mathcal{L}_p})$. Here for each $p$, $\mathcal{L}_p$ denotes the class of $L^p$ spaces.
\end{thm}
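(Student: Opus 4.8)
The plan is to run, for $L^p$-coefficients, the same two-step scheme that yields property $(\mathrm{TT})/\mathrm{T}$: the ``Shalom machine'' for property $(\mathrm{T})$ of universal lattices \cite{Sha4}, in its Banach-coefficient form following Bader--Furman--Gelander--Monod (see Theorem~\ref{thm:BFGM1}), together with the quasi-cocycle bookkeeping used for $(\mathrm{TT})/\mathrm{T}$ in \cite{Mim1}. Write $R=\mathbb{Z}[x_1,\dots,x_k]$. For the universal lattice $\mathrm{SL}_m(R)$, $m\geq 4$, the organizing device is a single \emph{relative} property of the pair $\bigl(\mathrm{SL}_2(R)\ltimes R^2,\ R^2\bigr)$, realized inside $\mathrm{SL}_m(R)$ via a $2\times 2$ diagonal block together with the root subgroups attached to it; for the symplectic universal lattice $\mathrm{Sp}_{2m}(R)$, $m\geq 3$, one uses the analogous local ``rank-one plus abelian'' building blocks, but the needed relative property is now supplied by the $\mathrm{Sp}$-graded mechanism of Ershov--Jaikin-Zapirain--Kassabov \cite{EJK} rather than by a direct Shalom-type product. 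In either case the relative statement to be proved is: for every isometric representation $\pi$ of the relevant semidirect product on an $L^p$ space whose restriction to the ``rank-one'' factor has no nonzero invariant vector, and for every quasi-cocycle $b$ for $\pi$, the restriction of $b$ to the abelian normal subgroup is bounded.

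The heart of the matter --- and the step I expect to be the genuine obstacle --- is precisely this relative $(\mathrm{FF}_{\mathcal{L}_p})$ statement. I would prove it in two stages. \emph{Degree one:} for a genuine cocycle $b$, show that the orbit of $0$ under the abelian normal subgroup is bounded. The arithmetic input is the fact underlying relative property $(\mathrm{T})$ for such pairs over finitely generated commutative rings, namely that the dual action of the ``rank-one'' factor on the character group of the abelian subgroup admits no invariant mean supported away from the trivial character; this has to be transported from the unitary setting into $L^p$ using the $L^p$-space techniques of Bader--Furman--Gelander--Monod and the uniform convexity of $L^p$ for $p\in(1,\infty)$, which upgrades bounded orbits to fixed points. \emph{The $(\mathrm{FF})$-refinement:} once the bounded defect of a quasi-cocycle is absorbed, its restriction to the abelian --- hence amenable --- normal subgroup is within bounded distance of a genuine cocycle, since second bounded cohomology of an amenable group with these coefficients vanishes, and the degree-one case applies. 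Two points need care: keeping the bounds locally uniform in $p\in(1,\infty)$, and carrying out the symplectic counterpart of the arithmetic input through the graded structure exploited in \cite{EJK}.

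With the relative statement available, the globalization runs as in \cite{Mim1} and, ultimately, \cite{Sha4}: every elementary generator $e_{ij}(r)$ --- resp. every elementary symplectic generator --- lies in one of finitely many of the above configurations, so a quasi-cocycle whose linear part has no invariant vectors is bounded on each of the relevant abelian subgroups, and the Steinberg-type commutation relations, such as $[e_{i\ell}(a),e_{\ell j}(b)]=e_{ij}(ab)$, together with the usual bounded-cohomology bookkeeping then force it to be bounded on all of $\mathrm{E}_m(R)$, resp. $\mathrm{Ep}_{2m}(R)$, which coincide with $\mathrm{SL}_m(R)$, resp. $\mathrm{Sp}_{2m}(R)$, for the polynomial rings in question. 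The loss of one unit of rank --- $m\geq 4$ for $\mathrm{SL}_m$ and $m\geq 3$ for $\mathrm{Sp}_{2m}$, against $m\geq 3$ and $m\geq 2$ in Theorem~\ref{thm:TTmodT} --- is forced at this globalization step, where the Hilbert-space devices that allow the smaller rank in Theorem~\ref{thm:TTmodT} are unavailable; this is the same phenomenon that makes the Banach-coefficient fixed point theorems in the literature require one more than bare property $(\mathrm{T})$ does. Finally, $(\mathrm{FF}_{\mathcal{L}_p})/\mathrm{T}$ yields plain $(\mathrm{F}_{\mathcal{L}_p})$: given an affine isometric $G$-action on an $L^p$ space $B$, split off the subspace $B^{\pi(G)}$ of invariant vectors of the linear part $\pi$, which has a canonical complement in $L^p$; on $B^{\pi(G)}$ the action is translation by a homomorphism $G\to B^{\pi(G)}$, which vanishes since $G$ has finite abelianization (a consequence of property $(\mathrm{T})$), while on the complement the linear part has no invariant vector, so the degree-one case of $(\mathrm{FF}_{\mathcal{L}_p})/\mathrm{T}$ gives a bounded orbit and hence, by uniform convexity, a fixed point; combining the two gives a global fixed point.
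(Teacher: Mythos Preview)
Your ``$(\mathrm{FF})$-refinement'' step has a genuine gap. You correctly note that amenability of the abelian normal subgroup $N$ forces $b|_N$ to lie within bounded distance of a genuine $N$-cocycle $c$ (vanishing of $H^2_{\mathrm{b}}(N;\rho|_N)$ for reflexive coefficients). But then you write ``the degree-one case applies,'' and it does not: your degree-one statement concerns $G$-cocycles restricted to $N$, whereas $c$ is merely a cocycle for $\rho|_N$ on $N$, with no a priori tie to the $G$-structure. Relative $(\mathrm{T}_{\mathcal{L}_p})$ for $G\trianglerighteq N$ says nothing about arbitrary $N$-cocycles --- since $N$ is infinite abelian there are plenty of unbounded ones. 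This is exactly why Ozawa's equivalence of relative $(\mathrm{T})$ and relative $(\mathrm{TT})$ (Theorem~\ref{thm:relTTT}) is stated only for Hilbert coefficients; his argument uses spectral decomposition over $\widehat{N}$ and does not transport to $L^p$. If your amenability shortcut worked at the rank-two level, you would in fact get $(\mathrm{FF}_{\mathcal{L}_p})/\mathrm{T}$ for $\mathrm{SL}_{m\geq 3}$ and $\mathrm{Sp}_{2m\geq 4}$, contradicting your own explanation of the rank loss.

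The paper's route is different: it proves relative $(\mathrm{FF}_{\mathcal{L}_p})$ one rank higher. Proposition~\ref{pro:TtoFF} and Proposition~\ref{prop:TtoFFsp} show that relative $(\mathrm{T}_B)$ for the rank-two pairs $\mathrm{E}_2(A)\ltimes A^2\trianglerighteq A^2$ and $\mathrm{E}_2(A)\ltimes S^{2*}(A^2)\trianglerighteq S^{2*}(A^2)$ --- obtained for free from relative $(\mathrm{T})$ via Theorem~\ref{thm:BFGM1}$(ii)$ --- implies relative $(\mathrm{FF}_B)$ for $\mathrm{SL}_3(A)\ltimes A^3\supseteq A^3$ and $\mathrm{SL}_3(A)\ltimes S^{3*}(A^3)\supseteq S^{3*}(A^3)$. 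The extra rank supplies enough commuting root subgroups to bound $\|b_1(n)-\rho(s)b_1(n)\|$ uniformly over a finite generating set, after which the relative Kazhdan constant (Lemma~\ref{lem:kazhconstTB}) and the Chebyshev-center lemma force $b_1|_N$ to be bounded. So the rank restriction enters at this \emph{relative} step, not at globalization. Globalization then runs through Theorem~\ref{thm:shortFB} with $H=G$ and $U$ the conjugation-closure of the elementary (symplectic) matrices: condition $(iv')$ comes from property $(*)$ (Theorem~\ref{thm:bddgen}, Park--Woodburn and Kopeiko), not from Steinberg relations alone, and condition $(v')$ is $(\mathrm{T}_{\mathcal{L}_p})$ for $G$, which is just $(\mathrm{T})$ by BFGM. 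The Ershov--Jaikin-Zapirain--Kassabov theorem is invoked only for this last input in the symplectic case; the relative input is Neuhauser's Theorem~\ref{thm:neu}.
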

The statement above for universal lattices (for $m\geq 4$) is one of the main results in the previous paper \cite[Theorem~1.5]{Mim1} of the author.

Finally, as we mentioned above, Theorem~\ref{mthm:MCGOut1} can be deduced in considerably simpler argument for universal lattice case. This argument uses the \textit{distortion} of a finitely generated group, and we moreover show the following generalization.
\begin{mthm}\label{mthm:MCGOut2}
Let $\Gamma$ be a finite index subgroup of a noncommutative universal lattice $\mathrm{E}_{m}(\mathbb{Z}\langle x_1,\ldots ,x_k\rangle )$, $m\geq 3$. Then every homomorphism from $\Lambda$ into $\mathrm{MCG}(\Sigma_{g,l})$ $(g,l\geq 0)$; or into $\mathrm{Out}(F_n)$ $(n\geq 2)$ has finite image.
\end{mthm}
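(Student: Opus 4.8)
The plan is to rest everything on one soft fact: in $E:=\mathrm{E}_m(R)$ with $R=\mathbb{Z}\langle x_1,\dots,x_k\rangle$ and $m\ge3$, every elementary matrix generates an exponentially distorted cyclic subgroup, whereas $\mathrm{MCG}(\Sigma_{g,l})$ and $\mathrm{Out}(F_n)$ possess no distortion element of infinite order (for mapping class groups this is the theorem of Farb--Lubotzky--Minsky). Since $E$ is finitely generated---by the $E_{ij}(1)$ and $E_{ij}(x_l)$---so is the finite-index subgroup $\Gamma$, and the inclusion $\Gamma\hookrightarrow E$ is a quasi-isometry for word metrics. Fix a homomorphism $\Phi\colon\Gamma\to\mathrm{MCG}(\Sigma_{g,l})$; it is coarsely Lipschitz for word metrics because it sends a finite generating set of $\Gamma$ into $\mathrm{MCG}$. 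Property $(\mathrm{T})$ will play no role, which is what makes the argument ``considerably simpler'' than the one behind Theorem~\ref{mthm:MCGOut1}.

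First I would establish the distortion estimate. For $a\in R$, indices $i\ne j$, and a third index $k$ (available since $m\ge3$), the Steinberg relation $E_{ij}(na)=[E_{ik}(n),E_{kj}(a)]$ yields $|E_{ij}(na)|_E\le 2|E_{ik}(n)|_E+O_a(1)$, while $|E_{ik}(n)|_E=O\big((\log n)^3\big)$: expand $n$ in base $2$ and decompose each $E_{ik}(2^\ell)$ by a balanced commutator $[E_{ip}(2^{\lceil\ell/2\rceil}),E_{pk}(2^{\lfloor\ell/2\rfloor})]$. Hence $|E_{ij}(a)^n|_E=o(n)$, so $\langle E_{ij}(a)\rangle$ is distorted in $E$, and in $\Gamma$ whenever $E_{ij}(a)\in\Gamma$. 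Composing with $\Phi$ and using coarse Lipschitzness, $|\Phi(E_{ij}(a))^n|_{\mathrm{MCG}}=o(n)$ for every such $a$; since $\mathrm{MCG}(\Sigma_{g,l})$ has no distortion element of infinite order, $\Phi(E_{ij}(a))$ is torsion.

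Now $\Gamma\cap U_{ij}=\{E_{ij}(a):a\in J_{ij}\}$, where $J_{ij}:=\{a\in R:E_{ij}(a)\in\Gamma\}$ has finite index in $(R,+)$; its $\Phi$-image is an abelian torsion subgroup of $\mathrm{MCG}$, hence finite because $\mathrm{MCG}$ is virtually torsion-free. Therefore $L_{ij}:=\{a:E_{ij}(a)\in\ker\Phi\}$ has finite index in $R$, and $V:=\bigcap_{i\ne j}L_{ij}$ is a finite-index \emph{subring} of $R$: it is additively closed, and $a,b\in V$ forces $E_{ij}(ab)=[E_{ik}(a),E_{kj}(b)]\in\ker\Phi$. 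To finish I would invoke two standard facts: (a) a finite-index subring of the finitely generated ring $R$ contains a finite-index two-sided ideal $I$; and (b) for $m\ge3$, the subgroup $\langle E_{ij}(a):a\in I,\ i\ne j\rangle$ equals the relative elementary subgroup $\mathrm{E}_m(R,I)$ and has finite index in $E$, since $R/I$ is a finite ring. Because $I\subseteq V$, this finite-index subgroup lies in $\ker\Phi$, so $\Phi(\Gamma)=\Gamma/\ker\Phi$ is finite. The only features of the target used---no distortion element of infinite order, virtual torsion-freeness---hold equally for every $\mathrm{MCG}(\Sigma_{g,l})$ and every $\mathrm{Out}(F_n)$, so the same proof covers all targets in the statement.

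The main obstacle is precisely this last upgrade. Distortion by itself only shows that the image of each individual elementary matrix is torsion; promoting that to finiteness of $\Phi(\Gamma)$ forces one to combine (a) and (b) while $\Gamma$ is merely finite-index---possibly non-normal---in $E$. The distortion estimate, by contrast, is elementary, using nothing beyond $m\ge3$ and the Steinberg commutator relations; the remaining point to pin down is the exact non-distortion statement for $\mathrm{Out}(F_n)$ that one must cite in the outer-automorphism case.
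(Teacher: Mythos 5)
Your argument has the same skeleton as the paper's proof of Theorem~\ref{mthm:MCGOut2}: (i) elementary matrices are distorted in $E_m(R)$, hence in the finite-index subgroup $\Gamma$, via the Steinberg commutator relations; (ii) the Farb--Lubotzky--Minsky and Alibegovi\'c non-distortion theorems, together with virtual torsion-freeness of $\mathrm{MCG}(\Sigma)$ and $\mathrm{Out}(F_n)$, force the $\Phi$-images of these elementaries to be trivial; (iii) the set of levels so obtained is a finite-index subring of $R$ (closed under multiplication by the Steinberg relation), Lewin's theorem supplies a finite-index two-sided ideal $\mathcal{I}$ inside it, and one finishes using finiteness over the finite ring $R/\mathcal{I}$. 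Your bookkeeping is slightly different but equivalent: you first show each $\Phi(E_{ij}(a))$ is torsion and then note that abelian torsion subgroups of the target are finite, whereas the paper passes at the outset to $\Gamma_0=\Phi^{-1}\bigl(\Phi(\Gamma)\cap H_0\bigr)$ for a torsion-free finite-index $H_0$, so that the distorted images are literally $e$.

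The one place to be careful is your step~5(b). The assertion that $\langle E_{ij}(a):a\in\mathcal{I},\ i\ne j\rangle$ \emph{equals} the relative elementary subgroup $E_m(R,\mathcal{I})$ is a Vaserstein-type normality statement that is delicate over the noncommutative ring $R=\mathbb{Z}\langle x_1,\dots,x_k\rangle$, and it is stronger than what you need; what the argument actually requires is merely that this \emph{subgroup} (not its normal closure) has finite index in $E_m(R)$, so that it has finite index in $\Gamma$ and hence $\ker\Phi$ does too. The paper takes a different route to this: it lifts along $\mathrm{St}_m(R)\twoheadrightarrow E_m(R)$ and factors through $\mathrm{St}_m(R/\mathcal{I})$, which is finite by the Kassabov--Sapir lemma. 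Note, however, that when $\Gamma$ is a proper finite-index subgroup the normal closure of the $x_{ij}(a)$ in $\mathrm{St}_m(R)$ need not lie in the preimage of $\Gamma_0$, so the Steinberg factorization is not automatic either; both your write-up and the paper's pass over this finite-index claim rather quickly, and it deserves a precise citation (e.g.\ via a Tits--Vaserstein inclusion $E_m(R,\mathcal{I}^2)\subseteq\langle E_{ij}(a):a\in\mathcal{I}\rangle$ together with finiteness of $\mathrm{St}_m(R/\mathcal{I}^2)$). Apart from this shared soft spot, your proof is essentially the paper's.
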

It follows that the same conclusion holds if $\Gamma$ is a finite index subgroup of $\mathrm{E}_m(R)$, $m\geq 3$, for any finitely generated ring $R$. However, we have no idea what happens if we consider $\Lambda$ in the setting in Theorem~\ref{mthm:ME} for $A=R$ noncommutative. Also, we warn that at the moment the proof of Theorem~\ref{mthm:MCGOut2} does \textit{not} work for symplectic universal lattices. See Remark~\ref{rem:spdame}.

\begin{rem}
Homomorphism superrigidity into mapping class groups can be combined with Roydon's theorem that the full isometry group of the Teichm\"{u}lar space os the mapping class group; and with Kerchoff's solution to the Nielson Realization Conjecture. Thus we have the following corollary: 
\begin{cor}\label{cor:fixed}
Let $\Gamma$ be either as in Theorem~\ref{mthm:MCGOut1}; as in Theorem~$\ref{mthm:MCGOut2}$; or as $\Lambda$ in Theorem~\ref{mthm:ME}. Then any isometric action of $\Gamma$ on a Teichm\"{u}ller space has a global fixed point.
\end{cor}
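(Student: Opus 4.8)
The plan is to read off the fixed point from the homomorphism superrigidity already established, using two classical facts about Teichm\"{u}ller spaces: Royden's theorem (in the form extended to punctured surfaces by Earle--Kra) computing the isometry group of the Teichm\"{u}ller metric, and Kerckhoff's solution to the Nielsen Realization Conjecture. I will use repeatedly that $\Gamma$ has property $(\mathrm{T})$ in every one of the three cases: for finite index subgroups of universal and symplectic universal lattices, and of noncommutative universal lattices, this is due to Shalom--Vaserstein, Ershov--Jaikin-Zapirain--Kassabov and Ershov--Jaikin-Zapirain respectively, and for $\Lambda$ as in Theorem~\ref{mthm:ME} it follows from the ME-invariance of $(\mathrm{T})$ noted just after that theorem. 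By the Delorme--Guichardet theorem $\Gamma$ therefore has property $(\mathrm{FH})$.

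Fix a Teichm\"{u}ller space $\mathcal{T}=\mathcal{T}(\Sigma_{g,l})$ with its Teichm\"{u}ller metric; an isometric $\Gamma$-action on $\mathcal{T}$ is the same datum as a homomorphism $\rho\colon\Gamma\to\mathrm{Isom}(\mathcal{T})$. First I would dispose of the surfaces of low complexity. If $\mathcal{T}$ is a point there is nothing to prove. Otherwise, in the remaining low-complexity cases $\mathcal{T}$ is isometric to the hyperbolic plane $\mathbb{H}^{2}$, so $\mathrm{Isom}(\mathcal{T})\cong\mathrm{PGL}_{2}(\mathbb{R})=\mathrm{O}(2,1)$; this group has the Haagerup property, hence acts properly by affine isometries on some Hilbert space. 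Pulling that action back along $\rho$ gives an affine isometric action of $\Gamma$, which by $(\mathrm{FH})$ has a bounded orbit; properness then forces $\rho(\Gamma)$ to be relatively compact in $\mathrm{O}(2,1)$, and a relatively compact group of isometries of $\mathbb{H}^{2}$ fixes the circumcenter of any of its (necessarily bounded) orbits. So the action has a global fixed point in this case.

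Now assume $\Sigma_{g,l}$ has sufficiently high complexity. Royden's theorem, together with the Earle--Kra extension when $l\geq 1$, identifies $\mathrm{Isom}(\mathcal{T})$ with the extended mapping class group $\mathrm{MCG}^{\pm}(\Sigma_{g,l})$, except that for $\Sigma_{2}$ and $\Sigma_{1,2}$ one must pass to the quotient of $\mathrm{MCG}^{\pm}$ by the central hyperelliptic involution, which acts trivially on $\mathcal{T}$. Set $\Gamma_{0}=\rho^{-1}(\mathrm{MCG}(\Sigma_{g,l}))$, of index at most $2$ in $\Gamma$. In each of the three settings $\Gamma_{0}$ still satisfies the relevant hypotheses: for Theorems~\ref{mthm:MCGOut1} and~\ref{mthm:MCGOut2} a finite index subgroup of a finite index subgroup of a (symplectic, or noncommutative) universal lattice is again a finite index subgroup of it; and for Theorem~\ref{mthm:ME} a finite index subgroup of a cocompact lattice is a cocompact lattice, while in the general ME setting $\Gamma_{0}$ remains measure equivalent to $\Gamma$ with an $L^{2}$ ME-cocycle (compose the given coupling with the finite index inclusion $\Gamma_{0}\hookrightarrow\Lambda$, whose cocycle is bounded, and note that the word metrics of $\Gamma_{0}$ and of $\Lambda$ are bi-Lipschitz equivalent on $\Gamma_{0}$). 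Hence the corresponding homomorphism superrigidity theorem applies to $\Gamma_{0}$, so $\rho(\Gamma_{0})$ is finite, and therefore $\rho(\Gamma)$ is a finite subgroup of $\mathrm{MCG}^{\pm}(\Sigma_{g,l})$ (lifting it through the hyperelliptic quotient in the two exceptional cases). By Kerckhoff's theorem every finite subgroup of the extended mapping class group is realized by isometries of some complete hyperbolic structure on $\Sigma_{g,l}$, equivalently fixes a point of $\mathcal{T}(\Sigma_{g,l})$; this provides the desired global fixed point.

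I expect the only genuine work to be bookkeeping: correctly listing the low-complexity exceptions to Royden's theorem, and checking in the measure-equivalence case that the index-at-most-$2$ subgroup $\Gamma_{0}$ inherits the hypotheses of Theorem~\ref{mthm:ME}. The substance of the corollary is carried entirely by the superrigidity theorems already proven together with the cited theorems of Royden (and Earle--Kra) and of Kerckhoff.
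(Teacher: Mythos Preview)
Your argument is correct and follows exactly the route the paper indicates: the paper's ``proof'' is just the single sentence in the remark preceding the corollary, namely that one combines Royden's theorem (identifying $\mathrm{Isom}(\mathcal{T})$ with the mapping class group) with Kerckhoff's solution of the Nielsen Realization Conjecture and the homomorphism superrigidity already established. Your write-up is considerably more careful than the paper's---you handle the low-complexity exceptions, the index-two passage to the orientation-preserving subgroup, and the verification that finite-index subgroups inherit the hypotheses in each of the three settings---but the underlying strategy is identical.
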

\end{rem}
\bigskip

\textbf{Organization of this paper}: Section~\ref{sec:prem} is for preliminaries of $\mathrm{MCG}(\Sigma)$, $\mathrm{Out}(F_n)$; property $(\mathrm{T})$; and bounded cohomology. The definition of property $(\mathrm{TT})/\mathrm{T}$ is recalled there. Section~\ref{sec:TTmT} is devoted to the proof of Theorem~\ref{thm:TTMCG}. In Section~\ref{sec:symp}, we treat fundamental results on symplectic universal lattices, with explaining some difference from universal lattices. In Section~\ref{sec:keyobs}, we prove a key theorem, Theorem~\ref{thm:short}, to establish Theorem~\ref{thm:TTmodT}. To deduce Theorem~\ref{thm:TTmodT} from Theorem~\ref{thm:short}, a certain property, which is written as $(*)$ there, shall play an imortant role. We thus obtain Theorem~\ref{mthm:MCGOut1} by combining Theorem~\ref{thm:TTMCG} and Theorem~\ref{thm:TTmodT}. In Section~\ref{sec:ME}, we treat measure equivalence and show Theorem~\ref{mthm:ME}.  Section~\ref{sec:tba} is employed to recall definitions of property $(\mathrm{T}_B)$ and $(\mathrm{F}_B)$, and to prove Theorem~\ref{thm:FFLp}. In Section~\ref{sec:short}, we introduce a shortcut argument for Theorem~\ref{mthm:MCGOut1} for universal lattices, which is based on distorted elements in finitely generated groups. There we will prove Theorem~\ref{mthm:MCGOut2}.

\section*{acknowledgments}
The author is indebted to his Ph.D advisor Narutaka Ozawa and posdoc advisor Masahiko Kanai for their suggestions. The symbol ``$(\mathrm{TT})/\mathrm{T}$" is suggested by Nicolas Monod, who he thanks. He is grateful to Cornelia Dru\c{t}u, Talia Fern\'{o}s, Ursula Hamenst\"{a}dt, Alex Lubotzky, and Andr\'{e}s Navas for fruitful conversations and references. He wishes to express his gratitude to Alex Furman; Andrei Jaikin-Zapirain and Martin Kassabov; and to Koji Fujiwara for letting him know their works in progress respectively.  Also thanks to Mladen Bestvina, Martin R. Bridson, Pierre de la Harpe, Benson Farb, David Fisher, Nigel Higson, Yoshikata Kida, Takefumi Kondo, Pierre Pansu, Guoliang Yu, and  Andrzej \.{Z}uk for their attentions and comments.

Most part of this work was done during a long-term stay (February, 2010--January, 2011) at EPFL (\'{E}cole Polytechnique F\'{e}d\'{e}rale de Lausanne), Switzerland of the author. He is truly grateful to Nicolas Monod and his secretary Marcia Gouffon for their warmhearted acceptence and hospitality. He also thanks Alain Valette for many helps and conversations in Switzerland. This stay was supported by  the Excellent Young Researcher Overseas Visiting Program by JSPS.

\section{Preliminaries}\label{sec:prem}
\subsection{Mapping class groups and automorphism groups of free groups}\label{subsec:MCG}
Let $\Sigma=\Sigma_{g,l}$ for $g,l\geq 0$. The \textit{mapping class group} of $\Sigma$ is defined as $\mathrm{MCG}(\Sigma):=\mathrm{Homeo}_{+}(\Sigma)/\mathrm{isotopy}$. Here a mapping class is allowed to permute punctures, and isotopies can rotate a neighborhood of a puncture. We say the surface $\Sigma=\Sigma_{g,l}$ is \textit{non-exceptional} if $3g+l\geq 5$ holds, and say it is \textit{exceptional} otherwise. For $n\geq 2$, $\mathrm{Aut}(F_n)$ denotes the \textit{automorphism group} of $F_n$ and $\mathrm{Out}(F_n)$ denotes the \textit{outer automorphism group} of $F_n$, namely, $\mathrm{Out}(F_n):=\mathrm{Aut}(F_n)/\mathrm{Inn}(F_n)$. 

It is a classical fact that $\mathrm{MCG}(\Sigma)$ is a finitely generated group  , and in fact it is finitely presented (see for instance Chapter 6 of a forthcoming book \cite{FaMar} of Farb--Margalit). Also, Nielsen has given finite presentations for $\mathrm{Out}(F_n)$ and $\mathrm{Aut}(F_n)$. For comprehensive treatment for $\mathrm{MCG}(\Sigma)$; and $\mathrm{Out}(F_n)$, we refer to a survay of N. V. Ivanov \cite{Iva} and \cite{FaMar}; and surveys of K. Vogtmann \cite{Vog}.

Via intersection numbers for $\Sigma_g$; and abelianization for $F_n$ respectively, we have the following surjections for $g\geq 1$ and $n\geq 2$:
$$
\mathrm{MCG}(\Sigma_g) \twoheadrightarrow \mathrm{Sp}_{2g}(\mathbb{Z});\ \ \mathrm{Out}(F_n) \twoheadrightarrow \mathrm{GL}_{n}(\mathbb{Z}).
$$
The \textit{Torelli groups} $\mathcal{T}_g \trianglelefteq \mathrm{MCG}(\Sigma_g)$; and $\overline{\mathrm{IA}}_n \trianglelefteq \mathrm{Out}(F_n)$ are defined as the kernel of the surjections above repsectively (the symbol $\mathrm{IA}$ stands for ``identity on the abelianization"). 

From this point, we collect some facts needed for the proof of Theorem~\ref{thm:TTMCG}. First we need element classification for $\mathrm{MCG}(\Sigma)$ and $\mathrm{Out}(F_n)$. The well-known classification theorem of Nielson--Thurston gives an element classification for $\mathrm{MCG}(\Sigma)$: each $f\in \mathrm{MCG}(\Sigma)$ is  either a torsion; \textit{reducible}; or \textit{pseudo-Anosov}. Here $f$ is said to be \textit{reducible} if it fixes some \textit{curve system}, a collection of isotopy classes of essential (not homotopic to one point nor homotopic to a boundary component) simple closed curves that are pairwise disjoint. A basic example of a reducible element is the Dehn twist along an essential simple closed curve.  A mapping class $f$ is said to \textit{pseudo-Anosov} if $f$ fixes exactly two points in the projective space $\mathcal{PMF}(\Sigma)$ of measured foliation on $\Sigma$ (see for instance \cite{FaMar} for details). In this case the action of $f$ on $\mathcal{PMF}(\Sigma)$ has a north-south dynamics. For two pseudo-Anosov element $f_1,f_2\in \mathrm{MCG}(\Sigma)$, we say they are \textit{independent} if the fixed point sets (in $\mathcal{PMF}(\Sigma)$) are disjoint. 

A similar concept to an pseudo-Anosov element (in $\mathrm{MCG}(\Sigma)$) is defined in $\mathrm{Out}(F_n)$ case, and it is called a \textit{fully irreducible element}: A subgroup $L\leqslant F_n$ is called a \textit{free factor} of $F_n$ if there exists $L'\leqslant F_n$ such that the free product $L\star L'$ is isomorphic to $F_n$. An element $f$ in $\mathrm{Out}(F_n)$ is said to be \textit{fully irreducible} if \textit{no} non-zero power of $f$ fixes the set of conjugacy classes of any free factor of $F_n$. A theorem of Levitt--Lustig \cite{LeLu} states that if $f$ is fully irreducible, then it acts on $\overline{\mathcal{PT}}$ with exactly two fixed points with north-south dynamics. Here $\overline{\mathcal{PT}}$ denotes Culler--Morgan's equivalent compactification $\overline{\mathcal{PT}}$ \cite{CuMo} of Vuller--Vogtmann's outer space \cite{CuVo}, and this can be seen an analogue of $\mathcal{PMF}(\Sigma)$ in $\mathrm{MCG}(\Sigma)$ case. Two fully irreducible elements are said to be \textit{independent} if their fixed point sets (in $\overline{\mathcal{PT}}$) are disjoint.

Secondly, we need subgroup classification for these groups. The following two theorems play roles:
\begin{thm}$($McCarthy--Papadopoulos \cite{McPa}$)$\label{thm:mcpa}
Let $\Sigma=\Sigma_{g,l}$ be a surface. Then each $K \leqslant \mathrm{MCG}(\Sigma)$ satisfies either of the following:
\begin{enumerate}[$(i)$]
  \item the group $K$ is finite$;$
  \item the group $K$ is $\mathrm{reducible}$. That means, there exists an $H$-preserved  curve system $\mathfrak{C}$ on $\Sigma;$
  \item the group $K$ has a pseudo-Anosov element, but there are no two independent pseudo-Anosov elements in $K$. In this case, $K$ is virtually $\mathbb{Z};$
  \item the group $K$ contains two independent pseudo-Anosov elements.
 \end{enumerate}
 \end{thm}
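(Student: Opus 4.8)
The plan is to read this off from the dynamics of the $K$-action on Thurston's boundary sphere $\mathcal{PMF}(\Sigma)$; since this is precisely the theorem of McCarthy--Papadopoulos \cite{McPa}, what follows is the shape of their argument, which is how I would approach it. The guiding analogy is the classification of subgroups of isometry groups of Gromov-hyperbolic spaces: such a subgroup is elliptic/bounded, or fixes a point or an unordered pair at infinity, or contains two independent loxodromic elements, and which alternative holds is detected by the limit set. Here the loxodromic elements are the pseudo-Anosov classes, whose north--south dynamics on $\mathcal{PMF}(\Sigma)$ is recalled above.

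First I would dispose of the case that $K$ contains no pseudo-Anosov element. By the Nielsen--Thurston classification every element of $K$ is then of finite order or reducible, and I would invoke the theory of canonical reduction systems of Birman--Lubotzky--McCarthy and Ivanov \cite{Iva}: each reducible $f$ has a canonical, hence conjugation-equivariant, reduction multicurve $\sigma(f)$, and one attaches to $K$ itself a canonical reduction system. One shows this system is $K$-invariant, and that it is empty only when $K$ is finite; when it is nonempty we are in case $(ii)$, with the invariant curve system $\mathfrak{C}$. The real content here is that an infinite group all of whose elements are ``small'' (torsion or reducible) must preserve a multicurve; this uses unique ergodicity of minimal measured foliations and finiteness of the curve complex modulo $K$.

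Next, suppose $K$ contains a pseudo-Anosov $f$ with fixed pair $\{F_{+},F_{-}\}\subset\mathcal{PMF}(\Sigma)$. If $K$ contains two independent pseudo-Anosov elements we are in case $(iv)$. Otherwise, a ping-pong argument on $\mathcal{PMF}(\Sigma)$ shows that any pseudo-Anosov $h\in K$ must have fixed pair equal to $\{F_{+},F_{-}\}$ --- else suitable powers of $f$ and $h$ would be independent pseudo-Anosov elements of $K$. Conjugation then forces $\{F_{+},F_{-}\}$ to be $K$-invariant: for $g\in K$ the conjugate $gfg^{-1}$ is a pseudo-Anosov in $K$ with fixed pair $g\{F_{+},F_{-}\}$, which by the previous sentence equals $\{F_{+},F_{-}\}$. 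Hence $K$ maps into the stabilizer of this unordered pair, and the index-$\le 2$ subgroup fixing $F_{+}$ and $F_{-}$ individually is virtually infinite cyclic (the stabilizer of the fixed pair of a pseudo-Anosov is virtually $\mathbb{Z}$, being in essence its centralizer). Thus $K$ is virtually $\mathbb{Z}$ and we are in case $(iii)$. The four alternatives are then visibly exhaustive, overlapping only through finite $K$.

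The step I expect to be the main obstacle is the no-pseudo-Anosov case: producing the $K$-invariant curve system when $K$ is infinite. Naively intersecting the reduction systems of individual elements need not give a $K$-invariant multicurve, so one genuinely needs the canonical-reduction-system machinery --- the limit set in $\mathcal{PMF}(\Sigma)$ of such a $K$ can be infinite yet supported entirely on foliations of proper subsurfaces, which is exactly the phenomenon that case $(ii)$ records, so no soft limit-set dichotomy alone will do.
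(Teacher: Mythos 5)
The paper does not prove this statement: it is quoted verbatim as a result of McCarthy--Papadopoulos \cite{McPa} and used as a black box in the proof of Theorem~\ref{thm:TTMCG}, so there is no in-paper argument to compare against. Assessed on its own terms, your sketch correctly reconstructs the shape of the original argument: the trichotomy according to whether $K$ contains a pseudo-Anosov, two independent pseudo-Anosovs, or none; the ping-pong on $\mathcal{PMF}(\Sigma)$ to force a single invariant unordered pair in the one-pseudo-Anosov case and then the virtual cyclicity of its stabilizer; and the canonical reduction system machinery (Birman--Lubotzky--McCarthy, Ivanov) to extract the invariant curve system in the no-pseudo-Anosov case, with emptiness of that system characterizing finiteness. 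You also correctly flag the no-pseudo-Anosov case as the genuinely hard part and explain why a naive limit-set dichotomy does not suffice.

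One small point worth making explicit in the ping-pong step: you need the fact that if two pseudo-Anosovs share even one fixed point in $\mathcal{PMF}(\Sigma)$, then they in fact share both (their invariant foliations coincide up to scaling and they generate a virtually cyclic group); otherwise "not independent" would only mean the fixed sets intersect, and the passage from "any pseudo-Anosov $h \in K$ is not independent of $f$" to "any such $h$ has the same fixed pair as $f$" would have a gap. This is a standard lemma (essentially unique ergodicity of the stable/unstable foliations of a pseudo-Anosov), but it is the hinge on which your conjugation argument for $K$-invariance of $\{F_+,F_-\}$ turns, so it deserves a sentence rather than being absorbed into "suitable powers would be independent."
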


\begin{thm}$($Handel--Mosher \cite{HaMo}$)$\label{thm:hamo}
Let $n\geq 2$. Then each $K \leqslant \mathrm{Out}(F_n)$ satisfies either of the following:
\begin{enumerate}[$(i)$]
  \item the group $K$ is $\mathrm{not}$ fully irreducible, in the sense of Handel--Mosher. This means, for some finite index subgroup $K_0\leqslant K$, there exists a free factor $L \leqslant F_n$ with $L\ne \{e\}$ such that each element of $H_0$ preserves the set of conjugacy classes in $L;$ 
    \item the group $K$ has a fully irreducible element, but there are no two independent fully irreducible elements in $H$. In this case, $K$ is virtually $\mathbb{Z};$
  \item the group $K$ contains two independent fully irreducible elements. \end{enumerate}
 \end{thm}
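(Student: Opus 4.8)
The plan is to read off the trichotomy from the dynamics of the $\mathrm{Out}(F_n)$-action on Culler--Morgan's compactification $\overline{\mathcal{PT}}$ of outer space, in exact parallel with the way the McCarthy--Papadopoulos classification (Theorem~\ref{thm:mcpa}) is governed by the action of $\mathrm{MCG}(\Sigma)$ on $\mathcal{PMF}(\Sigma)$: fully irreducible elements play the role of pseudo-Anosov ones, and $\overline{\mathcal{PT}}$ that of $\mathcal{PMF}(\Sigma)$. Recall from Levitt--Lustig \cite{LeLu} that a fully irreducible $\phi$ acts on $\overline{\mathcal{PT}}$ with north--south dynamics, with fixed pair $\mathrm{Fix}(\phi)=\{\Lambda^+_\phi,\Lambda^-_\phi\}$; moreover powers and conjugates of $\phi$ are again fully irreducible, with $\mathrm{Fix}(\phi^N)=\mathrm{Fix}(\phi)$ and $\mathrm{Fix}(g\phi g^{-1})=g\cdot\mathrm{Fix}(\phi)$.

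The first division is whether $K$ is \emph{virtually reducible}, i.e.\ whether some finite-index $K_0\leqslant K$ preserves the conjugacy class of a proper nontrivial free factor $L$ of $F_n$ (equivalently, a proper nontrivial free factor system, after passing to the subgroup fixing its finitely many components). If it is, one is in case (i). This also covers finite $K$ (take $K_0=\{e\}$, which preserves everything) and virtually cyclic $K$ containing no fully irreducible element: a non-fully-irreducible element has a power preserving a proper free factor, so such a $K$ is virtually reducible.

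The substantial content is the complementary case, $K$ \emph{not} virtually reducible, where one must (a) produce a fully irreducible element of $K$, and then deduce (ii) or (iii). Granting (a), fix a fully irreducible $\phi\in K$. If some fully irreducible $\psi\in K$ has $\mathrm{Fix}(\psi)\cap\mathrm{Fix}(\phi)=\varnothing$, then $\phi,\psi$ are two independent fully irreducibles: case (iii). If instead some fully irreducible $\psi\in K$ shared \emph{exactly one} fixed point $p$ with $\phi$, then $\phi$ and $\psi$ both fix $p$, so $\langle\phi,\psi\rangle\leqslant\mathrm{Stab}(p)$; since $p$ is the attracting lamination of $\phi$ or of $\phi^{-1}$, this stabilizer is virtually cyclic (by the work of Bestvina--Feighn--Handel), forcing $\langle\phi,\psi\rangle$ to be virtually cyclic and hence $\mathrm{Fix}(\psi)=\mathrm{Fix}(\phi)$, a contradiction. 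So \emph{every} fully irreducible element of $K$ has fixed pair exactly $\mathrm{Fix}(\phi)$; applying this to all conjugates $g\phi g^{-1}$ ($g\in K$) gives $g\cdot\mathrm{Fix}(\phi)=\mathrm{Fix}(\phi)$ for every $g$, so $K\leqslant\mathrm{Stab}(\{\Lambda^+_\phi,\Lambda^-_\phi\})$. This pair-stabilizer is virtually cyclic (the same input; alternatively, the expansion factor gives a homomorphism to $\mathbb{R}$ with finite kernel on the subgroup of index $\leqslant 2$ fixing each point), and since it contains the infinite-order $\phi$ it is virtually $\mathbb{Z}$: case (ii).

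The main obstacle is exactly step (a): producing a fully irreducible element inside a non-virtually-reducible $K$. There is no flat-factor or product structure to fall back on as in the lattice setting; the substitute is the relative train track technology of Bestvina--Handel together with the attracting-lamination and weak-attraction theory of Bestvina--Feighn--Handel, which powers a ping-pong on $\overline{\mathcal{PT}}$ (in a post-2011 streamlining, on the Gromov-hyperbolic free factor complex, where loxodromic $=$ fully irreducible) to manufacture the element; this is the hard core of Handel--Mosher's argument, and I would not expect to shortcut it. The two auxiliary facts I would quote rather than reprove are the element classification (fully irreducible $\Leftrightarrow$ north--south dynamics on $\overline{\mathcal{PT}}$) and the virtual cyclicity of stabilizers of attracting laminations of fully irreducible automorphisms.
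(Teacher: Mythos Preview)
The paper does not give its own proof of this statement: Theorem~\ref{thm:hamo} is quoted from Handel--Mosher \cite{HaMo} as a black box and then invoked in the proof of Theorem~\ref{thm:TTMCG}. So there is nothing in the paper to compare your argument against; the relevant comparison is with \cite{HaMo} itself.

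That said, your outline is a faithful summary of how the Handel--Mosher trichotomy is actually obtained. You have correctly identified that the division between (ii)/(iii) is soft dynamics on $\overline{\mathcal{PT}}$ once a fully irreducible element is in hand (Levitt--Lustig north--south dynamics plus the Bestvina--Feighn--Handel virtual cyclicity of lamination stabilizers), and that the genuine content is your step~(a): producing a fully irreducible element in a subgroup that is not virtually reducible. You are right not to try to shortcut this; it is precisely the main theorem of \cite{HaMo}, proved via relative train tracks and the weak attraction theory, and your parenthetical remark about the later hyperbolicity of the free factor complex (Bestvina--Feighn) giving a cleaner route is accurate but anachronistic relative to this paper.

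One small point to tighten: your handling of the ``exactly one shared fixed point'' case is correct but slightly compressed. You use that $\mathrm{Stab}(p)$ is virtually cyclic for $p\in\{\Lambda^\pm_\phi\}$; strictly this is the statement that the stabilizer in $\mathrm{Out}(F_n)$ of the attracting (or repelling) tree of a fully irreducible automorphism is virtually cyclic, which is the Bestvina--Feighn--Handel input you cite. With that granted, your contradiction goes through. Likewise, your remark that finite $K$ and virtually cyclic $K$ without fully irreducible elements fall under (i) is fine, since in the latter case a finite-index cyclic subgroup is generated by a non-fully-irreducible element, some power of which fixes a proper free factor.
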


Finally, we state the following theorem of Bass--Lubotzky \cite{BaLu} (another proof is given by Bridson--Wade \cite{BrWa}) on $\overline{\mathrm{IA}}_n$.
\begin{thm}$($Bass--Lubotzky \cite{BaLu}$)$\label{thm:BaLu}
Any nontrivial subgroup of $\overline{\mathrm{IA}}_n$ maps onto $\mathbb{Z}$.
\end{thm}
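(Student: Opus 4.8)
\emph{Strategy.} The plan is to deduce the statement from a single structural fact — that $\overline{\mathrm{IA}}_n$ is \emph{residually torsion-free nilpotent} — together with the elementary observation that in a finitely generated residually torsion-free nilpotent group $\Delta$ every nontrivial subgroup $H$ surjects onto $\mathbb{Z}$. For the latter: given $e\neq h\in H$, choose a quotient $q\colon\Delta\twoheadrightarrow Q$ with $Q$ torsion-free nilpotent and $q(h)\neq e$; since $\Delta$ is finitely generated, $Q$ is a finitely generated nilpotent group, hence Noetherian, so $q(H)$ is a finitely generated, nontrivial, torsion-free nilpotent group, therefore infinite, therefore (a nilpotent group with finite abelianization being finite) has infinite, hence $\mathbb{Z}$-surjecting, abelianization; composing $H\twoheadrightarrow q(H)\twoheadrightarrow q(H)^{\mathrm{ab}}\twoheadrightarrow\mathbb{Z}$ is all that is needed. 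Note $\overline{\mathrm{IA}}_n$ is finitely generated by Magnus's theorem, and the case $n=2$ is vacuous since $\overline{\mathrm{IA}}_2=\{e\}$, so it remains to establish residual torsion-free nilpotence of $\overline{\mathrm{IA}}_n$ for $n\geq 3$.

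\emph{The Andreadakis--Johnson filtration on $\mathrm{IA}_n$.} Recall Magnus's theorem that $F_n$ is residually torsion-free nilpotent: $\bigcap_k\gamma_kF_n=\{e\}$ and each $F_n/\gamma_kF_n$ is torsion-free. On $\mathrm{Aut}(F_n)$ put $\mathcal{A}_k=\{\phi:\ \phi(x)x^{-1}\in\gamma_{k+1}F_n\text{ for all }x\in F_n\}$, so that $\mathcal{A}_0=\mathrm{Aut}(F_n)$, $\mathcal{A}_1=\mathrm{IA}_n$, and $[\mathcal{A}_i,\mathcal{A}_j]\subseteq\mathcal{A}_{i+j}$. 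The $k$-th Johnson homomorphism embeds $\mathcal{A}_k/\mathcal{A}_{k+1}$ into $\mathrm{Hom}(H,\gamma_{k+1}F_n/\gamma_{k+2}F_n)$, where $H=F_n^{\mathrm{ab}}\cong\mathbb{Z}^n$; the target is a finitely generated free abelian group. Since $\bigcap_k\mathcal{A}_k=\{e\}$ by Magnus, each $\mathrm{IA}_n/\mathcal{A}_k$ carries the central series $\mathcal{A}_1/\mathcal{A}_k\supseteq\cdots\supseteq\mathcal{A}_k/\mathcal{A}_k=\{e\}$ with torsion-free successive quotients, hence is torsion-free nilpotent; thus $\mathrm{IA}_n$ is residually torsion-free nilpotent.

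\emph{Descent to $\mathrm{Out}(F_n)$.} One must now push the filtration through $\mathrm{IA}_n\twoheadrightarrow\overline{\mathrm{IA}}_n=\mathrm{IA}_n/\mathrm{Inn}(F_n)$. The basic computation is $\mathrm{Inn}(F_n)\cap\mathcal{A}_k=\mathrm{inn}(\gamma_kF_n)$: indeed $\mathrm{inn}_g\in\mathcal{A}_k$ iff $[g,x]\in\gamma_{k+1}F_n$ for all $x$, which, using that the free Lie ring of rank $\geq 2$ over $\mathbb{Z}$ is centerless, holds iff $g\in\gamma_kF_n$. Hence the images $\bar{\mathcal{A}}_k=\mathcal{A}_k\,\mathrm{Inn}(F_n)/\mathrm{Inn}(F_n)$ form a central filtration on $\overline{\mathrm{IA}}_n$ with finitely generated nilpotent quotients $\overline{\mathrm{IA}}_n/\bar{\mathcal{A}}_k$, and the successive quotient $\bar{\mathcal{A}}_k/\bar{\mathcal{A}}_{k+1}$ is the cokernel of the inner-derivation map $\gamma_kF_n/\gamma_{k+1}F_n\to\mathrm{Hom}(H,\gamma_{k+1}F_n/\gamma_{k+2}F_n)$, $\bar u\mapsto(\bar x\mapsto\overline{[u,x]})$. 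If one checks (i) $\bigcap_k\bar{\mathcal{A}}_k=\{e\}$ and (ii) these cokernels are torsion-free, then $\overline{\mathrm{IA}}_n$ is residually torsion-free nilpotent and the proof is complete.

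\emph{The main obstacle.} Both (i) and (ii) are where the real work lies, and I expect them to be the hard part. Point (i) is the Hausdorffness of the pro-nilpotent congruence topology on $\overline{\mathrm{IA}}_n$: unwinding it, one must rule out an automorphism of $F_n$ that is inner on every nilpotent quotient $F_n/\gamma_kF_n$ yet not inner, i.e.\ show that $F_n$ is self-normalizing in its pro-nilpotent completion $\widehat{F_n}$ for $n\geq 2$ — true, but requiring genuine argument. Point (ii) is the torsion-freeness of the degree-$k$ part of the outer derivation module of the free Lie ring over $\mathbb{Z}$: over $\mathbb{Q}$ this module splits off and the statement is immediate, but the integral version demands a careful analysis of how $\gamma_kF_n/\gamma_{k+1}F_n$ sits inside the relevant $\mathrm{Hom}$-lattice. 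This integral bookkeeping is precisely what the ``linear-central filtrations'' machinery of Bass--Lubotzky is built to handle, and is the reason the result is nontrivial despite the $\mathrm{IA}_n$-level statement being classical.
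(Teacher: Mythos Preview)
The paper does not supply its own proof of this theorem: it is quoted as a result of Bass--Lubotzky \cite{BaLu} (with an alternative proof attributed to Bridson--Wade \cite{BrWa}) and is used as a black box in the proof of Theorem~\ref{thm:TTMCG}. So there is no ``paper's proof'' to compare your proposal against.

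As to your proposal itself: the reduction is correct and clean --- for a finitely generated residually torsion-free nilpotent group, every nontrivial subgroup surjects onto $\mathbb{Z}$ for exactly the reasons you give --- and the Andreadakis--Johnson filtration is indeed the structure Bass--Lubotzky exploit. But by your own admission your argument stops precisely at the two genuine obstacles, (i) separation ($\bigcap_k\bar{\mathcal{A}}_k=\{e\}$) and (ii) integral torsion-freeness of the outer graded quotients, and does not prove either. What you have written is an accurate roadmap rather than a proof: the content of the Bass--Lubotzky theorem \emph{is} (i) and (ii), and your final paragraph says as much. In particular, (i) is not a formality --- it amounts to showing that an automorphism of $F_n$ which is inner modulo every $\gamma_kF_n$ is already inner, and this is where the ``linear-central filtration'' analysis does real work. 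If you want a self-contained proof, you must either carry out that analysis or cite it; as it stands the proposal is a correct outline with its hard core left unverified.
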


\subsection{Property $\boldsymbol{(\mathrm{T})}$, $\boldsymbol{(\mathrm{TT})}$, $\boldsymbol{(\mathrm{TT})/\mathrm{T}}$}\label{subsec:TT}
In this subsection, we only treat countable groups (in a similar way we can treat locally compact and second countable groups). Let $(\pi,\mathfrak{H})$ be a unitary representation of a group $G$. A map $b\colon G\to \mathfrak{H}$ is called a $\pi$\textit{-}($1$\textit{-})\textit{cocycle} if for any $g,h\in G$, 
$b(gh)=$$b(g)+\pi(g)b(h)$ holds (this condition is equivalent to that $\alpha (g)\cdot \xi$$:=\pi(g)\xi +b(g)$ becomes an affine isometric action of $G$ on $\mathfrak{H}$). A $\pi$-cocycle is called a $\pi$-\textit{coboundary} if there exists $\eta\in \mathfrak{H}$ such that for any $g\in G$, $b(g)=\eta -\pi(g)\eta$ holds (this means the associated affine isometric action has a global fixed point). We define the \textit{first group cohomology} as
$$
H^1(G;\pi):= \{\textrm{$\pi$-cocycles}\}/\{\textrm{$\pi$-coboundaries}\}.
$$

A map $b\colon G\to \mathfrak{H}$ is called a \textit{quasi-}$\pi$\textit{-}($1$\textit{-})\textit{cocycle} if 
$$
\sup_{g,h\in G}\| b(gh)- b(g)-\pi(g)b(h)\| <\infty.
$$
By definition, any linear combination of $\pi$-cocycles and bounded maps $G\to \mathfrak{H}$ is a quasi-$\pi$-cocycle. The following quotient vector space measures how many there are ``non-trivial" quasi-$\pi$-cocycles: 
$$
\widetilde{QH}(G;\pi):= \{\textrm{quasi-$\pi$-cocycles}\}/(\{\textrm{$\pi$-cocycles}\}+\{\textrm{bounded maps}\}). 
$$

Note that thanks to uniform convexity of Hilbert spaces, a cocycle is a coboundary if and only if it is bounded (see also Lemma 2.2.7 in \cite{BHV}).
\begin{defn}\label{def:T}
Let $G$ be a countable group.
\begin{enumerate}[$(1)$]
   \item A unitary $G$-representation $(\pi,\mathfrak{H})$ is said to \textit{weakly contain} $1_G$, written as $\pi \succeq 1_G$ if $\pi$ has \textit{almost invariant vectors}. Here $\pi$ is said to have \textit{almost invariant vectors} if for any $\epsilon >0$ and any finite subset $S\subseteq G$ there exists $\xi\in \mathfrak{H}$ such that 
   $$
   \max_{s\in S} \|\xi -\pi(s)\xi\| <\epsilon \|\xi \|.
   $$
   If $G$ is finitely generated, then one can fix $S$ in above as an  (arbitrarily taken) finite generating subset.
   \item (Kazhdan \cite{Kaz}) $G$ is said to have \textit{property} $(\mathrm{T})$ if whenever a unitary $G$-representation $\pi$ satisfies $\pi \succeq 1_G$, $\pi \supseteq 1_G$ (namely, $\mathfrak{H}^{\pi(G)}\ne 0$) holds. 
   \item Let $H$ be a subgroup of $G$. The pair $G\geqslant N$ is said to have \textit{relative property} $(\mathrm{T})$ if whenever a unitary $G$-representation $\pi$ satisfies $\pi \succeq 1_G$, $\pi\mid_H \supseteq 1_H$ (namely, $\mathfrak{H}^{\pi(H)}\ne 0$) holds.
   \item A group $G$ is said to have \textit{property} $(\mathrm{FH})$ if for any any unitary $G$-representation $\pi$, $H^1(G;\pi)=0$. This means that every cocycle into any unitary $G$-representation is a coboundary. This condition is equivalent to that every cocycle into any unitary $G$-representation is bounded (see the sentence above from this definition).
   \item (Monod \cite{Mon1}) $G$ is said to have \textit{property} $(\mathrm{TT})$ if every quasi-cocycle into any unitary $G$-representation is bounded. This is equivalent to the following two conditions:
   \begin{enumerate}[$(1)$]
     \item for any unitary $G$-representation $\pi$, $H^1(G;\pi)=0$;
     \item for any unitary $G$-representation $\pi$, $\widetilde{QH}(G;\pi)=0$.
   \end{enumerate}
   \item Let $U$ be a subset of $G$. We say the pair $G\supseteq U$ has \textit{relative property} $(\mathrm{TT})$ if every quasi-cocycle into any unitary $G$-representation is bounded on $U$.
   \end{enumerate}
\end{defn}

In the previous paper of the author \cite{Mim1}, a certain weaker version of property $(\mathrm{TT})$ is defined as follows:
\begin{defn}(\cite{Mim1})\label{defn:TTmT}
A countable group $G$ is said to have \textit{property} $(\mathrm{TT})/\mathrm{T}$ ($(\mathrm{TT})$ modulo $\mathrm{T}$) if for any unitary $G$-representation \textit{with} $\pi \not\supseteq 1_G$, every quasi-$\pi$-cocycle is bounded. This is equivalent to the following two conditions:
   \begin{enumerate}[$(a)$]
     \item for any unitary $G$-representation $\pi$ \textit{with} $\pi \not\supseteq 1_G$, $H^1(G;\pi)=0$;
     \item for any unitary $G$-representation $\pi$ \textit{with} $\pi \not\supseteq 1_G$, $\widetilde{QH}(G;\pi)=0$.
   \end{enumerate}
\end{defn}

The celebrated Delorme--Guichardet theorem asserts that property $(\mathrm{T})$ is equivalent to property $(\mathrm{FH})$ (for locally compact and second countable groups), see \S 2.12 in \cite{BHV}. In view of this, Monod called the ``quasification" of property $(\mathrm{FH})$ property $(\mathrm{TT})$. Property $(\mathrm{TT})$ relates bounded cohomology of groups (and this is the motivation of Burger and Monod to introduce this), see the next subsection.

Property $(\mathrm{T})$ represents strong rigidity of a group (for examples, see below). One counterpart of $(\mathrm{T})$ is \textit{Gromov's a-T-menability}, or the \textit{Haagerup property}, which states an existence of a \textit{metrically proper} cocycle into a unitary representation (recall a map $b\colon G\to \mathfrak{H}$ is said to be metrically proper if for any $M>0$, $\|b(g)\|>M$ for all $g\in G$ away from some compact subset). By definition, one has the following:
\begin{lem}\label{lem:TH}
Let $G$ and $H$ be countable groups. Suppose $G$ has $(\mathrm{T})$ and $H$ has the Haagerup property. Then every homomorphism from $G$ into $H$ has finite image.
\end{lem}
Moreover, note that $(\mathrm{T})$ implies \textit{finite generation} of the group.

We note that all amenable groups (including virtually abelian groups) and free groups (and virtually free groups) have the Haagerup property. Here we note that $(\mathrm{T})$($\Leftrightarrow (\mathrm{FH})$) passes to group quotients (which is trivial), and passes to finite index subgroups (for which we use induction). For property $(\mathrm{TT})/\mathrm{T}$, the same stability holds:
\begin{lem}\label{lem:sta}
Property $(\mathrm{TT})/\mathrm{T}$ passes to group quotients and to finite index subgroups.
\end{lem}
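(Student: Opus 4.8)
The plan is to verify the two stability claims separately, both by transporting unitary representations and quasi-cocycles along the obvious maps, while being careful that the hypothesis $\pi\not\supseteq 1_G$ is preserved in the relevant direction.

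\textbf{Quotients.} Suppose $G$ has property $(\mathrm{TT})/\mathrm{T}$ and let $q\colon G\twoheadrightarrow \bar G$ be a surjection. Given a unitary $\bar G$-representation $(\pi,\mathfrak H)$ with $\pi\not\supseteq 1_{\bar G}$ and a quasi-$\pi$-cocycle $\bar b\colon \bar G\to\mathfrak H$, pull back to $\tilde\pi:=\pi\circ q$ and $\tilde b:=\bar b\circ q$. Since $q$ is a group homomorphism, $\tilde b$ is a quasi-$\tilde\pi$-cocycle with the same defect. The key point is that $\mathfrak H^{\tilde\pi(G)}=\mathfrak H^{\pi(q(G))}=\mathfrak H^{\pi(\bar G)}=0$, so $\tilde\pi\not\supseteq 1_G$; hence property $(\mathrm{TT})/\mathrm{T}$ for $G$ forces $\tilde b$, and therefore $\bar b$ (which has the same image), to be bounded. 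This direction is essentially formal.

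\textbf{Finite index subgroups.} This is the part that requires the induction trick, mirroring the standard proof that $(\mathrm{FH})$ passes to finite index subgroups. Let $\Gamma\leqslant G$ with $[G:\Gamma]=d<\infty$, and let $(\rho,\mathfrak K)$ be a unitary $\Gamma$-representation with $\rho\not\supseteq 1_\Gamma$, together with a quasi-$\rho$-cocycle $c\colon\Gamma\to\mathfrak K$. Form the induced representation $\pi:=\mathrm{Ind}_\Gamma^G\rho$ on $\mathfrak H:=\ell^2(\Gamma\backslash G)\otimes\mathfrak K$ (a finite sum of copies of $\mathfrak K$, indexed by coset representatives), and induce the quasi-cocycle to a quasi-$\pi$-cocycle $b\colon G\to\mathfrak H$ in the usual way, using a section $G/\Gamma$; one checks the defect of $b$ stays finite because $d$ is finite and the "correction" terms land in $\Gamma$ where $c$ has bounded defect. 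Next one must check $\pi\not\supseteq 1_G$: if $v\in\mathfrak H$ were $\pi(G)$-invariant then, restricting to $\Gamma$, its components would produce a nonzero $\rho(\Gamma)$-invariant vector in $\mathfrak K$ (here one uses that $\mathrm{Res}_\Gamma^G\mathrm{Ind}_\Gamma^G\rho$ contains $\rho$ as a subrepresentation, via the coset of the identity), contradicting $\rho\not\supseteq 1_\Gamma$. So property $(\mathrm{TT})/\mathrm{T}$ for $G$ gives that $b$ is bounded; restricting $b$ along a suitable coset component recovers $c$ up to a bounded error, so $c$ is bounded. Hence $\Gamma$ has property $(\mathrm{TT})/\mathrm{T}$.

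\textbf{Main obstacle.} The routine calculations (defect estimates) are not the difficulty; the one subtle point is the passage of the hypothesis $\pi\not\supseteq 1$ through induction: one needs that $\mathrm{Ind}_\Gamma^G\rho$ has no nonzero $G$-invariant vector precisely when $\rho$ has no nonzero $\Gamma$-invariant vector. For finite index this follows from Frobenius reciprocity together with the fact that the restriction to $\Gamma$ of $\mathrm{Ind}_\Gamma^G\rho$ contains $\rho$ as a direct summand, so I would spell that identification out carefully (choosing coset representatives $g_1=e,g_2,\dots,g_d$ and looking at the $g_1$-component). With that in hand, both halves of the lemma follow by the transport-of-structure arguments sketched above.
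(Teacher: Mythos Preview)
Your proposal is correct and follows essentially the same approach as the paper, which only sketches the argument: for quotients the pullback is immediate, and for finite-index subgroups one inducts both the representation and the quasi-cocycle, the key point being that $\sigma\not\supseteq 1_{\Gamma}$ implies $\mathrm{Ind}_{\Gamma}^{G}\sigma\not\supseteq 1_G$. Your justification of this last implication via the coset-component description is fine, though the cleanest route is the Frobenius reciprocity you mention at the end, $\mathrm{Hom}_G(1_G,\mathrm{Ind}_\Gamma^G\rho)\cong\mathrm{Hom}_\Gamma(1_\Gamma,\rho)=0$.
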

For the proof of heredity to finite index subgroups, we observe first that if $G\geqslant G_0$ and $G_0$-representation $\sigma \not\supseteq 1_{G_0}$, then the induction $\mathrm{Ind}_{G_0}^{G}\sigma \not\supseteq 1_G$. Secondly that from a finite index subgroup, one can induce quasi-cocycles without any problem. We note that it is \textit{not} known whether $(\mathrm{TT})/\mathrm{T}$ is an ME-invariant, see Section~\ref{sec:ME} for details.

A basic example of (infinite) groups with $(\mathrm{T})$ is a totally higher rank lattice, a lattice in a semi-simple algebraic group over local fields with each factor having local rank $\geq 2$, such as $\mathrm{SL}_{m}(\mathbb{Z})$, $m\geq 3$ and $\mathrm{Sp}_{2m}(\mathbb{Z}[1/2])$, $m\geq 2$ (a well-known example of relative property $(\mathrm{T})$ is $\mathrm{SL}_2(\mathbb{Z})\ltimes \mathbb{Z}^2 $$\geqslant \mathbb{Z}^2$, and $\mathrm{SL}_2(\mathbb{Z})\ltimes \mathbb{Z}^2$ itself does not have $(\mathrm{T})$). In fact, a totally higher rank lattice has property $(\mathrm{TT})$ \cite{BuMo}. It is known that a lattice in $\mathrm{Sp}_{m,1}$, $m\geq 2$ and some hyperbolic group (such as some random groups and $\mathrm{Sp}_{n,1}, n\geq 2$) have property $(\mathrm{T})$. However, any hyperbolic $H$ group \textit{fails} to have $(\mathrm{TT})/\mathrm{T}$ because Mineyev--Monod--Shalom \cite{MMS} have shown that $H$ satisfies $\widetilde{QH}(H;\lambda_H)\ne 0$ (in fact it is infinite dimensional) unless $H$ is virtually $\mathbb{Z}$. Here $\lambda_H$ denotes the left regular representation. As is stated as Theorem~\ref{thm:TTmodT}, universal lattices and symplectic universal lattices have $(\mathrm{TT})/\mathrm{T}$. However to the best knowledge of the author, it is not known whether they have $(\mathrm{TT})$ (this is the motivation to define $(\mathrm{TT})/\mathrm{T}$). The following is a special case of a lemma in \cite{Mim3}, but we state it here for the convenience:
\begin{lem}\label{lem:T}
For countable groups, $(\mathrm{TT})/\mathrm{T}$ implies $(\mathrm{T})$.
\end{lem}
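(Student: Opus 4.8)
The plan is to establish property $(\mathrm{FH})$ for $G$ and then invoke the Delorme--Guichardet theorem recalled above. Since a cocycle into a Hilbert space is a coboundary exactly when it is bounded (the remark preceding Definition~\ref{def:T}), it suffices to show that for \emph{every} unitary $G$-representation $(\pi,\mathfrak{H})$, every $\pi$-cocycle is a coboundary.

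\emph{Step 1: reduction to $\mathrm{Hom}(G,\mathbb{R})=0$.} Write $\mathfrak{H}=\mathfrak{H}_0\oplus\mathfrak{H}_1$ with $\mathfrak{H}_0:=\mathfrak{H}^{\pi(G)}$ and $\mathfrak{H}_1:=\mathfrak{H}_0^{\perp}$. Both summands are $\pi$-invariant (for $\mathfrak{H}_1$ because $\pi(g)$ is unitary and $\mathfrak{H}_0$ is invariant), $\pi$ acts trivially on $\mathfrak{H}_0$, and $\pi_1:=\pi|_{\mathfrak{H}_1}$ has no nonzero invariant vectors since an invariant vector in $\mathfrak{H}_1$ lies in $\mathfrak{H}_0\cap\mathfrak{H}_1=\{0\}$; thus $\pi_1\not\supseteq 1_G$. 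Decompose a given $\pi$-cocycle as $b=b_0\oplus b_1$. Then $b_1$ is a $\pi_1$-cocycle, hence a coboundary by condition $(a)$ of Definition~\ref{defn:TTmT}; and $b_0\colon G\to\mathfrak{H}_0$ is, since $\pi|_{\mathfrak{H}_0}$ is trivial, a group homomorphism into the additive group of $\mathfrak{H}_0$. If $\mathrm{Hom}(G,\mathbb{R})=0$, then pairing $b_0$ against any of its values and taking real and imaginary parts forces $b_0=0$ (if $b_0(g)=v\neq 0$, then $h\mapsto\mathrm{Re}\langle b_0(h),v\rangle$ is a nonzero real character), whence $b=b_1$ is a coboundary. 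So it remains to prove that $(\mathrm{TT})/\mathrm{T}$ implies $\mathrm{Hom}(G,\mathbb{R})=0$.

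\emph{Step 2: killing real characters.} Suppose $\phi\colon G\to\mathbb{R}$ is a homomorphism with $\phi\neq 0$; then $\phi(G)$ is a nonzero subgroup of $\mathbb{R}$, hence unbounded. Let $\pi$ be the unitary $G$-representation on $L^2(\mathbb{R})$ defined by $(\pi(g)\xi)(t)=\xi(t-\phi(g))$, i.e.\ $\lambda_{\mathbb{R}}$ pulled back along $\phi$. Choosing $g_0\in G$ with $\phi(g_0)=a\neq 0$, any $\pi(g_0)$-invariant vector is an $a$-periodic function in $L^2(\mathbb{R})$, hence $0$; therefore $\pi\not\supseteq 1_G$. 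Now set
$$
b(g):=\mathbf{1}_{[0,\phi(g)]}\ \ (\phi(g)\geq 0),\qquad b(g):=-\mathbf{1}_{[\phi(g),0]}\ \ (\phi(g)<0).
$$
Using the elementary identity $\mathbf{1}_{[0,s+t]}=\mathbf{1}_{[0,s]}+\mathbf{1}_{[s,s+t]}$ (valid up to a null set, for all signs of $s,t$) together with $\pi(g)\mathbf{1}_{[0,t]}=\mathbf{1}_{[\phi(g),\phi(g)+t]}$, one checks directly that $b$ is a $\pi$-cocycle. But $\|b(g)\|^2=|\phi(g)|$ is unbounded over $g\in G$, so $b$ is not a coboundary, contradicting condition $(a)$ of Definition~\ref{defn:TTmT} for $\pi$. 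Hence no such $\phi$ exists.

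Combining the two steps, every $\pi$-cocycle is a coboundary for every unitary $G$-representation $\pi$, so $G$ has $(\mathrm{FH})$, and by Delorme--Guichardet $G$ has $(\mathrm{T})$. The only subtle point is Step~2: property $(\mathrm{TT})/\mathrm{T}$ is, by definition, silent about representations that contain the trivial representation, so at first glance it constrains nothing about $\mathrm{Hom}(G,\mathbb{R})$; the resolution is that a nonzero real character can be promoted to an honest unbounded $1$-cocycle valued in the \emph{fixed-vector-free} regular representation of $\mathbb{R}$ (the indicator-function cocycle above), which $(\mathrm{TT})/\mathrm{T}$ does see. Everything else is routine.
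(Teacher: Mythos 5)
Your proof is correct. The overall skeleton agrees with the paper's: both decompose $\mathfrak{H}=\mathfrak{H}^{\pi(G)}\oplus(\mathfrak{H}^{\pi(G)})^{\perp}$, use condition $(a)$ of $(\mathrm{TT})/\mathrm{T}$ to kill the coboundary on the fixed-vector-free part, observe that the remaining piece is an additive homomorphism into the invariant subspace, and then derive a contradiction from the existence of a nontrivial such homomorphism. The difference is in how that contradiction is produced. The paper reduces to showing that the abelianization $H=G/[G,G]$ is \emph{finite}, and if it were infinite it invokes the fact (citing BFGM \S 3.a) that $\lambda_H\succeq 1_H$ together with $\lambda_H\not\supseteq 1_H$ forces $H^1(H;\lambda_H)\neq 0$, then pulls that nonzero class back along $G\twoheadrightarrow H$. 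You instead reduce to the strictly weaker (and exactly sufficient) statement $\mathrm{Hom}(G,\mathbb{R})=0$, and if $\phi\neq 0$ exists you construct by hand the classical translation cocycle $g\mapsto\pm\mathbf{1}_{[0,\phi(g)]}$ into $\lambda_{\mathbb{R}}\circ\phi$, verify it is an unbounded cocycle with $\|b(g)\|^2=|\phi(g)|$, and check $\lambda_{\mathbb{R}}\circ\phi\not\supseteq 1_G$ directly. Your route is more self-contained (no appeal to the BFGM machinery that infinite amenable groups have nonvanishing $H^1$ with $\lambda$-coefficients) and arguably more transparent; the paper's route is shorter once BFGM is taken as a black box, and it incidentally establishes the sharper conclusion that the abelianization is actually finite, not merely that it has no real characters.
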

\begin{proof}
Let $G$ be a group with $(\mathrm{TT})/\mathrm{T}$. Then we know that for any unitary representation $\pi \not\supseteq 1_G$, $H^1(G;\pi)=0$. To verify $(\mathrm{T})$, it suffices to show that the abelianization $H:=G/[G,G]$ is finite. 

Suppose $H$ happens to be infinite. Then since $H$ is infinite abelian, $\lambda_H \not\supseteq 1_H$ but $\lambda_H \succeq 1_H$ (the latter follows from amenability of $H$, see Appendix G in \cite{BHV}). These show that $H^1(H;\lambda_H)\ne 0$, see \S 3.a in \cite{BFGM}. By pulling-back through $G\twoheadrightarrow H$, we have a unitary $G$-representation $\pi \not\supseteq 1_G$ with $H^1(G;\pi)\ne 0$. This is a contradiction.
\end{proof}
To sum up, we have the following implications between $(\mathrm{T})$, $(\mathrm{TT})$, and $(\mathrm{TT})/\mathrm{T}$:
$$
(\mathrm{TT})\Longrightarrow (\mathrm{TT})/\mathrm{T} \ {}^{\Longrightarrow}_{\not\Longleftarrow} \ (\mathrm{T}).
$$

\begin{que}\label{que:TT}
Does property $(\mathrm{TT})/\mathrm{T}$ imply $(\mathrm{TT})$? In particular, do universal lattices and symplectic universal lattices have vanishing $\widetilde{QH}$ with $\mathrm{trivial}$ representation coefficients?
\end{que}
The positive answer will imply vanishing of $\widetilde{QH}(G;1_G,\mathbb{R})$ and vanishing of the \textit{stable commutator length} on $G$ being a universal lattices or symplectic universal lattices, see \cite{Cal} on these topics.  Ver partial result is obtained for universal lattices of degree $\geq 6$, see Remark~\ref{rem:TTq} for details.

\subsection{Bounded cohomology and non-vanishing results}\label{subsec:bdd}
We shortly state the relation between $(\mathrm{TT})$ (and $(\mathrm{TT})/\mathrm{T}$) and \textit{bounded cohomology}. We refer \cite{BuMo}, \cite{Mon1} and \cite{Mon2} for comprehensive treatment. For a countable group $G$ and a Hilbert (unitary) $G$-module $(\pi,\mathfrak{H})$, the \textit{bounded cohomology} $H^{\bullet}_{\mathrm{b}}(G;\pi,\mathfrak{H})=H^{\bullet}(G;\pi)$ is defined to be the cohomology of the (homogeneous) complex 
$$
0 \longrightarrow \ell^{\infty}(G;\mathfrak{H})^G \longrightarrow \ell^{\infty}(G^2;\mathfrak{H})^G \longrightarrow \ell^{\infty}(G^3;\mathfrak{H})^G \longrightarrow \cdots
$$
of bounded invariant functions with the usual coboundary map. This complex is the complex of bounded functions, which is a subcomplex of the standard (homogeneous) bar complex for the ordinary group cohomology. 
Therefore there is a natural homomorphism 
$$
\Psi^{\bullet}\colon H^{\bullet}_{\mathrm{b}}(G;\pi)\to H^{\bullet}(G;\pi),
$$
which is called the \textit{comparison map}. Note that $\Psi^{\bullet}$ is neither injective nor surjective in general. The space $\widetilde{QH}(G;\pi)$ relates to bounded cohomology by the following well-known lemma:
\begin{lem}\label{lem:comp}
In the setting of the paragraph above, the following map
$$
\widetilde{QH}(G;\pi) \to \mathrm{Ker}\Psi^2\{H^{2}_{\mathrm{b}}(G;\pi)\to H^{2}(G;\pi)\};\ [b] \mapsto [\delta b]_{\mathrm{b}}
$$
gives an isomorphism between vector spaces. Here $[\cdot]_{\mathrm{b}}$ means the bounded cohomology class, and $\delta$ is the coboundary map.
\end{lem}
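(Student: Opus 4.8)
The plan is to replace the homogeneous bar complex used in the definition by the equivalent \emph{inhomogeneous} one: for each $n\geq 0$ the space of (not necessarily invariant) maps $G^n\to\mathfrak{H}$, equipped with the usual inhomogeneous differential $\delta$, and with $H^\bullet_{\mathrm{b}}(G;\pi)$ computed from its subcomplex of \emph{bounded} such maps. The passage between the two pictures is the same one used for ordinary group cohomology and visibly preserves boundedness, so $\Psi^\bullet$ is simply induced by the inclusion of bounded into all inhomogeneous cochains. In degree $1$ a cochain is a map $b\colon G\to\mathfrak{H}$ with $(\delta b)(g,h)=\pi(g)b(h)-b(gh)+b(g)$; thus $b$ is a $\pi$-cocycle exactly when $\delta b=0$, and $b$ is a quasi-$\pi$-cocycle exactly when $\delta b$ is a bounded function on $G^2$. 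Since $\delta^2=0$, for a quasi-cocycle $b$ the cochain $\delta b$ is a \emph{bounded} $2$-cocycle, while as a $2$-cocycle in the full complex it is by construction a coboundary; hence $[\delta b]_{\mathrm{b}}$ lies in $\mathrm{Ker}\Psi^2$. For well-definedness on $\widetilde{QH}(G;\pi)$ I would note that if $b=c+\beta$ with $c$ a genuine $\pi$-cocycle and $\beta\colon G\to\mathfrak{H}$ bounded, then $\delta b=\delta\beta$ is the coboundary of a bounded $1$-cochain, so $[\delta b]_{\mathrm{b}}=0$ in $H^2_{\mathrm{b}}(G;\pi)$. Linearity of the map is immediate from linearity of $\delta$ and of the quotients involved.

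For injectivity, suppose $b$ is a quasi-cocycle with $[\delta b]_{\mathrm{b}}=0$ in $H^2_{\mathrm{b}}(G;\pi)$. Then there is a \emph{bounded} $1$-cochain $\beta\colon G\to\mathfrak{H}$ with $\delta\beta=\delta b$, so $\delta(b-\beta)=0$, i.e.\ $b-\beta$ is a genuine $\pi$-cocycle; therefore $b=(b-\beta)+\beta$ represents $0$ in $\widetilde{QH}(G;\pi)$. For surjectivity, let $\omega$ be a bounded $2$-cocycle whose class lies in $\mathrm{Ker}\Psi^2$, so that $\omega$, viewed in the full inhomogeneous complex, is an ordinary coboundary; choose any (a priori unbounded) $1$-cochain $b\colon G\to\mathfrak{H}$ with $\delta b=\omega$. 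Since $\omega$ is bounded, $b$ is a quasi-$\pi$-cocycle, and by definition $[\delta b]_{\mathrm{b}}=[\omega]_{\mathrm{b}}$, so the class of $\omega$ is attained.

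I do not expect a genuine obstacle here: the lemma is essentially formal, being a diagram chase in low degrees of the bar complex. The only points requiring care are the bookkeeping of the dictionary between the homogeneous and inhomogeneous complexes and the matching of sign conventions for $\delta$, together with keeping straight which cochains are required to be bounded at each step; none of this presents real difficulty.
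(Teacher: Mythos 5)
Your proof is correct and is the standard argument; the paper itself does not supply a proof, treating the lemma as well known, so there is no alternative route in the text to compare against. Your identification of quasi-cocycles as inhomogeneous $1$-cochains with bounded differential, the check of well-definedness on the quotient, and the injectivity/surjectivity diagram chase are exactly the expected details.
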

Hence if $G$ has $(\mathrm{TT})$, then for any unitary $G$-representation $\pi$, $H^2_{\mathrm{b}}(G;\pi)$ naturally injects into $H^2(G;\pi)$.

In this paper, we concentrate ourselves to unitary representations $\pi$ with $\pi\not\supseteq 1_G$ in relation to $(\mathrm{TT})/\mathrm{T}$. Some notable non-vanishing results of $\widetilde{QH}$ (and hence of $H^2_{\mathrm{b}}$) for certain subgroups of $\mathrm{MCG}(\Sigma)$ and of $\mathrm{Out}(F_n)$ have been obtained respectively by U. Hamenst\"{a}dt and a forthcoming work of Bestvina--Bromberg--Fujiwara. These are keys to the proof of Theorem~\ref{thm:TTMCG}, and we state them. 

\begin{thm}$($Hamenst\"{a}dt \cite[Corollary B, Proposition $5.1$]{Ham}$)$\label{thm:ham}
Let $\Sigma=\Sigma_{g,l}$ be a non-exceptional $($, namely, $3g+l\geq 5$$)$ surface. Let $K \leqslant \mathrm{MCG}(\Sigma)$. Suppose $K$ contains two independent pseudo-Anosov elements. Then 
$$
\widetilde{QH}(K;\lambda_K,\ell^2(K))\ne 0,
$$
holds. In fact, it is infinite dimensional.
\end{thm}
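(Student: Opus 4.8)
The plan is to run a Bestvina--Fujiwara style construction of non-trivial quasi-cocycles with coefficients in $\ell^2(K)$, built from the action of $\mathrm{MCG}(\Sigma)$ on the curve graph. First I would assemble the standard geometric package for a non-exceptional surface: the curve graph $\mathcal{C}(\Sigma)$ is connected, of infinite diameter, and Gromov-hyperbolic (Masur--Minsky); $\mathrm{MCG}(\Sigma)$ acts on it by isometries; a mapping class acts as a hyperbolic (loxodromic, north--south) isometry precisely when it is pseudo-Anosov; two pseudo-Anosov elements are independent exactly when their pairs of fixed points on the Gromov boundary are disjoint; and every pseudo-Anosov $g$ satisfies the weak proper discontinuity (WPD) condition for this action (Bestvina--Fujiwara \cite{BeFu}): for each $R>0$ and each vertex $x$ on a quasi-axis of $g$, only finitely many $k\in\mathrm{MCG}(\Sigma)$ move both $x$ and some far vertex $g^nx$ by at most $R$. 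Restricting to $K$, the hypothesis gives two independent loxodromic WPD elements inside $K$; by ping-pong on the boundary the non-elementary subgroup they generate contains a sequence $g_1,g_2,\dots\in K$ of pairwise independent pseudo-Anosov elements, which will supply the infinitely many linearly independent classes.

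Next I would carry out the explicit construction. Fix a base vertex $x_0\in\mathcal{C}(\Sigma)$; for each $g_i$ fix a quasi-axis and choose on it a long oriented subsegment $w_i$. Taking the $w_i$ long enough and using independence, I arrange that for $i\neq j$ neither $w_i$ nor its reverse $\bar w_i$ fellow-travels, up to a fixed additive error, any quasi-axis of $g_j^{\pm1}$, and that $\bar w_i$ does not fellow-travel the forward axis of $g_i$. For $k\in K$ pick a geodesic $[x_0,kx_0]$ in $\mathcal{C}(\Sigma)$ and let $c_i(k)\in\ell^2(K)$ be the finitely supported vector whose $h$-th coordinate is the (suitably separated) count of occurrences of the oriented segment $h\cdot w_i$ along $[x_0,kx_0]$ minus that of $h\cdot\bar w_i$. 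Thinness of geodesic triangles shows that the occurrences of translates of $w_i$ along $[x_0,k_1k_2x_0]$ agree, outside a $\delta$-bounded neighbourhood of the comparison tripod, with those along the concatenation of $[x_0,k_1x_0]$ and $k_1[x_0,k_2x_0]$; the WPD condition bounds, uniformly in $k_1,k_2$, the number of group elements $h$ that can be supported in that bounded region; hence $c_i(k_1k_2)-c_i(k_1)-\lambda_K(k_1)c_i(k_2)$ is uniformly bounded in $\ell^2$-norm, i.e.\ $c_i$ is a quasi-$\lambda_K$-cocycle.

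It remains to prove that the classes $[c_1],[c_2],\dots$ are linearly independent in $\widetilde{QH}(K;\lambda_K,\ell^2(K))$. By Lemma~\ref{lem:comp} this amounts to showing that for no nontrivial finite combination $b=\sum_i a_ic_i$ is the $2$-cocycle $\delta b$ the coboundary of a bounded $1$-cochain; equivalently, $b$ is not a genuine $\lambda_K$-cocycle plus a bounded map. Choosing $i_0$ with $a_{i_0}\neq0$ and testing on powers, the geodesic $[x_0,g_{i_0}^nx_0]$ fellow-travels the forward axis of $g_{i_0}$, hence contains, on the order of $n$, pairwise disjoint translates of $w_{i_0}$, only $O(1)$ translates of $\bar w_{i_0}$, and, by the genericity arrangement, only $O(1)$ translates of $w_j$ or $\bar w_j$ for $j\neq i_0$; so $b(g_{i_0}^n)$ equals, up to a bounded error, $a_{i_0}$ times a sum of order-$n$ many vectors with essentially disjoint supports. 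To rule out a genuine cocycle plus bounded correction I would run the Bestvina--Fujiwara comparison: evaluate $b$ on ping-pong words whose associated geodesics track a long stretch of the axis of $g_{i_0}$ and then turn off toward the axis of a second index $j_0$ with $a_{j_0}\neq0$, note that the occurrence patterns of $w_{i_0}$ and of $w_{j_0}$ along such geodesics can be prescribed independently, and observe that a single honest $\lambda_K$-cocycle cannot reproduce both patterns simultaneously up to a bounded error. Hence $[\delta b]_{\mathrm{b}}\neq0$, giving $\widetilde{QH}(K;\lambda_K,\ell^2(K))\neq0$ and, since $i_0$ ranges over an infinite index set, its infinite dimensionality.

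\textbf{The main obstacle} is this last step: establishing non-triviality \emph{modulo genuine cocycles}, not merely unboundedness of the $c_i$. The regular representation of the infinite group $K$ carries honest $1$-cocycles that are themselves unbounded --- indeed some grow at rate $\sqrt n$, the same rate as $\|c_i(g_i^n)\|$ --- so a norm estimate alone is not decisive; one must control the \emph{bounded} cohomology class $[\delta c_i]_{\mathrm{b}}$, and it is precisely here that the WPD finiteness and the sharp disjointness combinatorics of quasi-axis segments, rather than mere Gromov-hyperbolicity of $\mathcal{C}(\Sigma)$, are doing the real work.
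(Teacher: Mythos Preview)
The paper does not give its own proof of this statement; it is quoted as a result of Hamenst\"adt, and the only indication of method is the sentence that follows: ``Hamenst\"adt's proof of Theorem~\ref{thm:ham} uses the hyperbolicity of the curve graph $\mathcal{C}(\Sigma)$ of non-exceptional $\Sigma$ \cite{MaMi} and the fact that the action $\mathrm{MCG}(\Sigma)\curvearrowright\mathcal{C}(\Sigma)$ is acylindrical \cite{Bow}.'' So there is nothing in the paper to compare against beyond that one line.

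Your outline is in the same family --- a Bestvina--Fujiwara counting construction on $\mathcal{C}(\Sigma)$, promoted to $\ell^2(K)$-coefficients --- and is a correct sketch of a valid proof. The one substantive difference from the paper's summary is your choice of finiteness input: you invoke WPD for pseudo-Anosov elements \cite{BeFu}, whereas Hamenst\"adt's argument, as the paper records, uses Bowditch's acylindricity, which is strictly stronger (the bound on elements moving two far-apart points by $\le R$ is uniform over all pairs of points, not only pairs along a fixed quasi-axis). In fact WPD already suffices for the $\ell^2$-defect bound you need: if $h_1\cdot w_i$ and $h_2\cdot w_i$ are both close to the tripod centre, then $h_2^{-1}h_1$ moves each endpoint of $w_i$ a bounded amount, and WPD for $g_i$ bounds the number of such elements independently of where the tripod centre lies. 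So your step ``WPD bounds, uniformly in $k_1,k_2$, the number of $h$ supported in that region'' is legitimate, though you should say explicitly that it is this pairwise-difference trick that converts the axis-based WPD statement into a bound at an arbitrary point.

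Your final paragraph is honest: showing that the classes $[\delta c_i]_{\mathrm b}$ are nonzero --- i.e.\ that the $c_i$ are not genuine cocycles plus bounded maps --- is the real content, and what you have written there is a plan rather than a proof. The growth-rate remark is well taken (unboundedness alone cannot separate quasi-cocycles from cocycles here), and the ping-pong comparison you describe is the right mechanism, but carrying it out rigorously is exactly the work done in \cite{Ham}; you have, reasonably, deferred to it.
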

\begin{thm}$($Bestvina--Bromberg--Fujiwara \cite{BBF}$)$\label{thm:BBF}
Let $n\geq 2$ and $K \leqslant \mathrm{Out}(F_n)$. Suppose $K$ contains two independent fully irreducible elements. Then 
$$
\widetilde{QH}(K;\lambda_K,\ell^2(K))\ne 0,
$$
holds. In fact, it is infinite dimensional.
\end{thm}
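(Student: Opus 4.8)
The plan is to adapt Hamenst\"adt's argument behind Theorem~\ref{thm:ham} to the $\mathrm{Out}(F_n)$ setting, replacing the curve complex by a hyperbolic complex on which $\mathrm{Out}(F_n)$ acts with good dynamical properties. Fix two independent fully irreducible elements $\varphi,\psi\in K$; these exist precisely because we are in case $(iii)$ of Theorem~\ref{thm:hamo}. First I would produce a $\delta$-hyperbolic space $X$ on which $\mathrm{Out}(F_n)$ acts by isometries so that every fully irreducible element acts loxodromically, satisfies the WPD condition, and independent pairs have independent axes; the free factor complex $\mathcal{FF}_n$ of Bestvina--Feighn is the natural choice. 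Restricting this action to $K$, the WPD property is inherited, since for a loxodromic $g\in K$ the sets of elements almost fixing a basepoint and a far-away point on the axis of $g$ only shrink under passage from $\mathrm{Out}(F_n)$ to $K$; hence $K$ acts on $X$ non-elementarily with $\varphi$ and $\psi$ loxodromic and WPD.

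Next, following Bestvina--Bromberg--Fujiwara \cite{BBF}, I would pass from $X$ to a quasi-tree: take the $K$-orbit $\mathbf{Y}$ of the axis of $\varphi$ in $X$, verify the projection axioms (this is exactly where $\delta$-hyperbolicity together with WPD is used), and form the associated quasi-tree of metric spaces $\mathcal{C}=\mathcal{C}(\mathbf{Y})$. Then $K$ acts on $\mathcal{C}$ with $\varphi$ still loxodromic and $\psi$ moving the axis of $\varphi$ unboundedly, and crucially $\mathcal{C}$ has bounded geometry along its geodesics, which is what keeps the $\ell^2$-estimates below finite.

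The heart of the argument is the construction of infinitely many linearly independent quasi-$\lambda_K$-cocycles on $\mathcal{C}$. With real coefficients one uses the Bestvina--Fujiwara counting quasimorphisms \cite{BeFu}: for each sufficiently long word $w$ in the coarse ``alphabet'' cut out along the axis of $\varphi$ (chosen so that $w$ and $w^{-1}$ do not coarsely coincide), the function counting occurrences of $w$ minus occurrences of $w^{-1}$ along quasigeodesics of $\mathcal{C}$ is a homogeneous quasimorphism on $K$, and one obtains infinitely many independent ones. To land in $\ell^2(K)$ instead of $\mathbb{R}$, I would replace the scalar counter by the $\ell^2(K)$-valued cochain recording, with multiplicity, the group elements at which the pattern $w$ occurs, as in \cite{Ham} and \cite{MMS}: bounded geometry of $\mathcal{C}$ makes this cochain square-summable and its coboundary a bounded, $\lambda_K$-equivariant $2$-cocycle whose class dies under the comparison map. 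By Lemma~\ref{lem:comp} this yields a nonzero element of $\widetilde{QH}(K;\lambda_K,\ell^2(K))$, and evaluating on high powers of suitably chosen loxodromics shows that an infinite subfamily of the $w$'s gives linearly independent classes.

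I expect the last step to be the main obstacle: promoting the $\mathbb{R}$-valued quasimorphisms to genuinely $\ell^2(K)$-valued quasi-cocycles while keeping infinitely many independent in $\widetilde{QH}$. The scalar Bestvina--Fujiwara construction only uses hyperbolicity plus one WPD loxodromic, whereas the $\ell^2$-refinement additionally needs the properness and bounded-geometry features of the quasi-tree $\mathcal{C}$ (to bound the $\ell^2$-norms of the counting cochains and the sup-norm of their coboundaries) together with careful bookkeeping to ensure equivariance for the \emph{regular} representation $\lambda_K$ rather than some induced one. Once the mechanism works for a single word $w$, producing infinitely many independent classes is a combinatorial matter, exactly as in \cite{Ham} and \cite{BeFu}.
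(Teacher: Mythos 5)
This theorem is not proved in the paper you are reading: it is quoted from a then-forthcoming paper of Bestvina--Bromberg--Fujiwara, and the only description the author gives of their method is the single sentence following Theorem~\ref{thm:BBF}, namely that BBF ``examine actions on a quasi-tree.'' So there is no in-paper proof against which your proposal can be checked; I can only assess the plausibility of your reconstruction.

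Your outline is consistent with the quasi-tree hint and with how the published BBF machinery is generally applied: find a hyperbolic $\mathrm{Out}(F_n)$-space on which fully irreducibles act as WPD loxodromics, run the projection-complex construction to build a quasi-tree, and then promote Bestvina--Fujiwara-style counting quasimorphisms to $\ell^2$-valued quasi-cocycles. Two cautions though. First, your step ``every fully irreducible acts loxodromically and satisfies WPD on $\mathcal{FF}_n$'' is \emph{not} automatic, and your explanation (that the almost-stabilizing sets only shrink when passing to a subgroup) addresses the wrong direction: it explains why WPD is inherited by $K$, not where WPD for the ambient $\mathrm{Out}(F_n)$-action comes from. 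In contrast to the mapping class group case, the $\mathrm{Out}(F_n)$-action on the free factor complex is not acylindrical, so there is no Bowditch-type shortcut; WPD for fully irreducibles on $\mathcal{FF}_n$ is a genuine theorem (due to Bestvina--Feighn) that has to be invoked rather than deduced from general nonsense. Second, in passing to $\ell^2(K)$-coefficients you lean on the analogy with Hamenst\"adt's argument for the curve complex, but Hamenst\"adt's construction exploits acylindricity on the nose, which is exactly what fails here; BBF need the quasi-tree precisely to replace that ingredient. So the ``$\ell^2$-promotion'' step, which you correctly flag as the main obstacle, is carrying more weight than the phrase ``as in \cite{Ham}'' suggests. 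Modulo those two points, the skeleton is the right one and matches the one-line description the paper gives of \cite{BBF}.
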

Hamenst\"{a}dt's proof of Theorem~\ref{thm:ham} uses the hyperbolicity of the curve graph $\mathcal{C}(\Sigma)$ of non-exceptional $\Sigma$ \cite{MaMi} and the fact that the action $\mathrm{MCG}(\Sigma) \curvearrowright \mathcal{C}(\Sigma)$ is acylindrical \cite{Bow}, for details see \cite{Ham}. The approach of Bestvina--Bromberg--Fujiwara is to examine actions on a quasi-tree (a metric space which is quasi-isometric to a tree, see \cite{Man}).

\section{Property $(\mathrm{TT})/\mathrm{T}$ implies homomorphism superrigidity}\label{sec:TTmT}
Now we are in position to prove Theorem~\ref{thm:TTMCG}. Before proceeding in the proof of it, we expalin idea by showing the following ``baby case" (the author thanks Takefumi Kondo for this suggestion):
\begin{prop}\label{pro:baby}
Suppose a group $\Gamma$ have $(\mathrm{TT})/\mathrm{T}$. Then for a hyperbolic group $H$, every homomorphism $\Gamma \to H$ has finite image.
\end{prop}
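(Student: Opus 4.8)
The plan is to reduce the statement to an application of Lemma~\ref{lem:TH} together with the non-vanishing results of Mineyev--Monod--Shalom. Let $\Phi\colon\Gamma\to H$ be a homomorphism and set $K:=\Phi(\Gamma)\leqslant H$. Since $K$ is a subgroup of a hyperbolic group it is itself hyperbolic, and we want to show $K$ is finite. Suppose, for contradiction, that $K$ is infinite. A hyperbolic group is either virtually cyclic or contains a nonabelian free subgroup; in particular, if $K$ is infinite it is either virtually $\mathbb{Z}$ or non-elementary. I would dispose of the virtually-$\mathbb{Z}$ case first: in that case $K$ is in particular amenable (indeed virtually abelian), hence has the Haagerup property, and since $\Gamma$ has $(\mathrm{TT})/\mathrm{T}$, hence $(\mathrm{T})$ by Lemma~\ref{lem:T}, Lemma~\ref{lem:TH} forces $\Phi(\Gamma)=K$ to be finite, a contradiction.

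So it remains to treat the case where $K$ is a non-elementary (i.e.\ not virtually $\mathbb{Z}$) hyperbolic group. Here I would invoke the theorem of Mineyev--Monod--Shalom: for such $K$ one has $\widetilde{QH}(K;\lambda_K)\ne 0$ (in fact infinite-dimensional), where $\lambda_K$ is the left regular representation on $\ell^2(K)$. The key point is that $\lambda_K\not\supseteq 1_K$: since $K$ is infinite, $\ell^2(K)$ contains no nonzero $K$-invariant vector. Now pull this back along $\Phi$: the composition $\pi:=\lambda_K\circ\Phi$ is a unitary $\Gamma$-representation, and any nonzero $\Gamma$-invariant vector of $\pi$ would be a $\Phi(\Gamma)=K$-invariant vector of $\lambda_K$, so $\pi\not\supseteq 1_\Gamma$. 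Composing a nonbounded quasi-$\lambda_K$-cocycle on $K$ with $\Phi$ produces a quasi-$\pi$-cocycle on $\Gamma$; to get a contradiction with $(\mathrm{TT})/\mathrm{T}$ I must ensure this pulled-back quasi-cocycle is itself unbounded (equivalently, nonzero in $\widetilde{QH}(\Gamma;\pi)$).

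This last issue is the main obstacle, and it is exactly where one must use more than the bare existence of one nontrivial quasi-cocycle on $K$: if $\Phi$ has large kernel, a single quasi-cocycle on $K$ might become bounded after restriction along $\Phi$. The clean way around it is to use the \emph{infinite-dimensionality} in the Mineyev--Monod--Shalom theorem, or more precisely the statement in terms of bounded cohomology: via Lemma~\ref{lem:comp}, $\widetilde{QH}(K;\lambda_K)\ne 0$ corresponds to a nonzero class in $\mathrm{Ker}\,\Psi^2\subseteq H^2_{\mathrm{b}}(K;\lambda_K)$. Since $\Phi$ is surjective onto $K$, the pullback map $H^\bullet_{\mathrm{b}}(K;\lambda_K)\to H^\bullet_{\mathrm{b}}(\Gamma;\pi)$ is injective in degrees $\le 2$ (a standard fact: bounded cohomology is insensitive to the kernel, because $H^\bullet_{\mathrm{b}}$ of the kernel vanishes in low degree when we average, or more directly, for surjections the inflation map on bounded cohomology is isometric/injective in the relevant range), and it carries $\mathrm{Ker}\,\Psi^2_K$ into $\mathrm{Ker}\,\Psi^2_\Gamma$. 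Hence $\widetilde{QH}(\Gamma;\pi)\ne 0$, contradicting $(\mathrm{TT})/\mathrm{T}$ for $\Gamma$. This contradiction shows $K$ cannot be infinite non-elementary either, so $K$ is finite, completing the proof. (In the write-up I would state the needed injectivity of inflation in bounded cohomology for a surjective homomorphism carefully, as this is the one nonformal input beyond the quoted results; alternatively, one can argue directly that a proper unbounded quasi-cocycle associated to an action of $K$ with unbounded orbits pulls back to an unbounded quasi-cocycle whenever $\Phi$ has infinite image, using that $K$ acts on itself with unbounded orbits and that quasimorphism-type cocycles detect this.)
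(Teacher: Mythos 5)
Your overall strategy matches the paper: split into the elementary (virtually $\mathbb{Z}$) case, handled by property $(\mathrm{T})$ and Lemma~\ref{lem:TH}, and the non-elementary case, handled by the Mineyev--Monod--Shalom non-vanishing result for $\widetilde{QH}(K;\lambda_K)$. The paper phrases this by first invoking Lemma~\ref{lem:sta} to conclude that $K=\Phi(\Gamma)$ itself has $(\mathrm{TT})/\mathrm{T}$ (since the property passes to quotients), and then applies MMS directly at the level of $K$, so no pullback of quasi-cocycles is ever needed.

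The issue with your write-up is that the concern you flag as the ``main obstacle'' is not actually an obstacle, and the machinery you bring in to address it (injectivity of inflation in degree-$2$ bounded cohomology for surjections) is a substantial and unnecessary overreach. If $b\colon K\to\mathfrak{H}$ is a quasi-$\lambda_K$-cocycle and $\Phi\colon\Gamma\twoheadrightarrow K$ is \emph{surjective}, then the image of $b\circ\Phi$ is exactly $b(K)$, so $b\circ\Phi$ is bounded if and only if $b$ is bounded --- the kernel of $\Phi$ plays no role. (You seem to be conflating pullback along a surjection with restriction to a subgroup, where boundedness can indeed be gained.) This trivial observation is precisely the content of the ``passes to quotients'' half of Lemma~\ref{lem:sta}, and it immediately gives the contradiction: $K$ has $(\mathrm{TT})/\mathrm{T}$, yet $\lambda_K\not\supseteq 1_K$ and $\widetilde{QH}(K;\lambda_K)\neq 0$. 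Your inflation-injectivity argument would, if carefully justified (it requires nontrivial input from Monod's theory, e.g.\ a five-term exact sequence for $H^\bullet_{\mathrm{b}}$), also work, but it replaces a one-line elementary fact with a delicate homological statement that the proposition does not need. I would rewrite to apply Lemma~\ref{lem:sta} at the outset and then argue entirely inside $K$, as the paper does.
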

The proof of Proposition~\ref{pro:baby} goes as follows: let $K$ be the image of $\Gamma \to H$. Recall by Lemma~\ref{lem:sta} and Lemma~\ref{lem:T}, $K$ has $(\mathrm{TT})/\mathrm{T}$ and in particular $(\mathrm{T})$. We appeal to a subgroup classification for hyperbolic groups: if $K$ is elementary hyperbolic (, namely, virtually $\mathbb{Z}$), then $K$ must be finite thanks to $(\mathrm{T})$  for $K$ and to Lemma~\ref{lem:TH}. If not, then \cite{MMS} states that $\widetilde{QH}(K;\lambda_K)$ is non-zero (in fact infinite dimensional), but this contradicts $(\mathrm{TT})/\mathrm{T}$ for $K$. This ends our proof.

We note that in the following proof, the argument of $\mathrm{MCG}(\Sigma)$ target case is a standard argument in this field, and the argument of $\mathrm{Out}(F_n)$ target case is  based on the argument in the work in \cite{BrWa}.
\begin{proof}(Theorem~\ref{thm:TTMCG})
We deal with the first two statements of the theorem. Recall by Lemma~\ref{lem:sta} that $\Phi(\Gamma)$ and $\Psi(\Gamma)$ has $(\mathrm{TT})/\mathrm{T}$ (and in particular $(\mathrm{T})$ by Lemma~\ref{lem:T}).

\textbf{Case1. with target of mapping class groups}

Let $\Phi\colon \Gamma\to \mathrm{MCG}(\Sigma_{g})$ be a homomorphism. First, we note that for exceptional cases the conclusion holds by Lemma~\ref{lem:TH}. Therefore, hereafter, we assume every surface which appears in this proof is non-exceptional. 

Let $K\leqslant \mathrm{MCG}(S)$ be the image of $\Phi$. 
We employ Theorem~\ref{thm:mcpa}, the subgroup classification result of \cite{McPa}. For convenience we restate here: $K$ is either of the following forms:
\begin{enumerate}[$(i)$]
  \item the group $K$ is finite;
  \item the group $K$ is reducible: there exists a $K$-invarinat curve system $\mathfrak{C}$;
  \item the group $K$ is virtually $\mathbb{Z}$;
  \item the group $K$ contains two independent pseudo-Anosov elements.
\end{enumerate}

Then we appeal to Theorem~\ref{thm:ham} of Hamenst\"{a}dt, and exclude option $(iv)$. This part is the key in this proof. More precisely, $K$ has property $(\mathrm{TT})/\mathrm{T}$. However, if option $(iv)$ occurs, then by Theorem~\ref{thm:ham}, 
$$
\widetilde{QH}(K;\lambda_K)\ne 0.
$$
Since option $(iv)$ (or the condition above already) implies that $K$ is infinite. Therefore $\lambda_K \nsupseteq 1_{K}$. This contradicts $(\mathrm{TT})/\mathrm{T}$ for $K$.

Property $(\mathrm{T})$ excludes option $(iii)$ (Lemma~\ref{lem:TH}). Therefore  for the proof, it suffices to show that  $K$ must be virtually abelian in option $(ii)$. 

Suppose option $(ii)$ occurs. Take a maximal curve system $\mathfrak{C}$ preserved by $K$. Cut $\Sigma$ open along $\mathfrak{C}$ and replace each boundary circle of the resulting bordered surface with a puncture. Then we get a possibly disconnected surface $\Sigma'$. Let $\Sigma'_1 ,\ldots ,\Sigma'_n$ be connected components of $\Sigma'$. Then there is a homomorphism:
\begin{align*}
K \twoheadrightarrow K' \leqslant(\mathrm{MCG}(\Sigma'_1)\times \cdots \times \mathrm{MCG}(\Sigma'_n))\rtimes \mathfrak{S} \longrightarrow \mathfrak{S}.
\end{align*}
Here $\mathfrak{S}$ is a subgroup of $\mathfrak{S}_n$, the  symmetric group of degree $n$, and $\mathfrak{S}$ acts by permutations on mapping class groups of homeomorphic surfaces among $\Sigma'_1,\ldots ,\Sigma'_n$. And the second homomorphism is the projection to $\mathfrak{S}$. It  is known the kernel of the map $K \to K'$ is a free abelian group, generated by multiple Dehn twists associated to the curves of the curve system $\mathfrak{C}$ (for instance, see Chapter 4 of \cite{Iva}). Take the kernel $K'_0 \trianglelefteq K'$ of the map $K' \to \mathfrak{S}$. Then there is a natural map from $K'_0$ to each $\mathrm{MCG}(\Sigma'_i)$, which takes the $i$-th component. 

Note that by definition $K'_0$ is a finite index subgroup of $K'$ and hence has $(\mathrm{TT})/\mathrm{T}$. Therefore again Theorem~\ref{thm:ham} tells us that each image of $K'_0$ inside $\mathrm{MCG}(\Sigma'_i)$ is either finite or reducible. However it cannot be reducible. Indeed, if an image of $K'_0$ fixes some curve system on $\Sigma'_i$, then by translation by $\mathfrak{S}$ we have a curve system on $\Sigma'_1\cup\cdots \cup \Sigma'_n$ which is preserved by $K'$. This contradicts the maximality of $\mathfrak{C}$. Therefore $\Lambda'_0$ must be finite, and thus $\Lambda$ is virtually abelian. Property $(\mathrm{T})$ ends our proof.

\textbf{Case2. with target of outer automorphism groups}
Recall from Subsection~\ref{subsec:MCG} the definition of $\overline{\mathrm{IA}}_n \trianglelefteq \mathrm{Out}(F_n)$. First we note that the conclusion holds for $n=2$ because then $\mathrm{Out}(F_n)\cong \mathrm{GL}_2(\mathbb{Z})$ has the Haagerup property.

Let $K\leqslant \mathrm{Out}(F_n)$ be the image of $\Gamma$ by $\Psi$. Firstly, we appeal to Theorem~\ref{thm:hamo}, the classification of subgroups in $\mathrm{Out}(F_n)$ by Handel--Mosher \cite{HaMo}: a subgroup $K \leqslant \mathrm{Out}(F_n)$ is either of the following forms:
\begin{enumerate}[$(i)$]
  \item the group $K$ is $\mathrm{not}$ fully irreducible: there exists a finite index subgroup which preserves each conjugacy class of some proper free factor of $F_n$;
  \item the group $K$ s virtually $\mathbb{Z}$;
  \item the group $K$ contains two independent fully irreducible elements. \end{enumerate}

Secondly,  we appeal to Theorem~\ref{thm:BBF} of Bestvina--Bromberg--Fujiwara and exclude option $(ii)$. This is done in a similar manner to one in Case $1$. Property $(\mathrm{T})$ excludes  option $(i)$. We shall show that a finite index subgroup $K_0\leqslant K$ as in option $(2)$ must be finite, by induction on $n$. 

If $n=2$, then we have seen it. Suppose the assertion holds true for every natural number $<n$, and we will verify the case of $n$. We take $K_0\leqslant K$ and a  free factor $L$ (with $L'\leqslant F_n$ such that $L\star L' \cong F_n$) as in option $(ii)$. Then for any $[f]\in K_0$, one can choose an element $g_f \in F_n$ such that $f(L)={g_{f}}^{-1} L g_{f}$ holds. Then the map $L\ni h$$\mapsto g_f h {g_{f}}^{-1}\in L$ is an element of $\mathrm{Aut}(L)$, and the image in $\mathrm{Out}(L)$ is uniquely determined by $[f]\in K_0$. This induces a well-defined group homomorphism $K_0\to \mathrm{Out}(L)$. Likewise, the action on $F_n/\langle \langle L \rangle\rangle$ induces a homomorphism $K_0\to \mathrm{Out}(L')$. Because the ranks of $L$ and $L'$ are strictly less than $n$, the assumption of induction applies. Hence both images are finite. By taking abelianization of $L\star L'$, with respect to the union of basis for $L$ and $L'$ the action of $K_0$ is of the following form:
$$
 \left( 
\begin{array}{cc}
 G & 0 \\ 
 * &  G' 
\end{array}
\right),
$$
where both $G$ and $G'$ are finite. Hence the image of the homomorphism $K_0\to \mathrm{GL}_n(\mathbb{Z})$ is virtually abelian. Since $K_0$ is a finite index subgroup of $K$,  $K_0$ has $(\mathrm{TT})/\mathrm{T}$ and in particular has $(\mathrm{T})$. By combining these two, we have the image is in fact finite. Therefore, the kernel $K_0'$ of the map above is a finite index subgroup of $K_0$.

Note that $K_0'$ is in $\overline{\mathrm{IA}}_n$. Finally we appeal to Theorem~\ref{thm:BaLu} of Bass--Lubotzky and Bridson--Wade that every nontrivial subgroup in $\overline{\mathrm{IA}}_n$ surjects onto $\mathbb{Z}$. 
On the other hand, $K_0'$ is a finite index subgroup of $K_0$ and hence has property $(\mathrm{T})$. Therefore $K_0'$ must be trivial by Lemma~\ref{lem:TH}, and thus $K$ is finite. This ends our proof.

\end{proof}

\begin{que}$($This question has been asked by D. Fisher$)$ 
Is property $(\mathrm{T})$ enough to deduce the homomorphism superrigidity into $\mathrm{MCG}(\Sigma)$? 
\end{que}
There are two facts which support this question. First, J. E. Anderson \cite{And} asserts that mapping class groups do not have $(\mathrm{T})$. Secondly, S.-K. Yeung has shown that a lattice in $\mathrm{Sp}_{m,1}$ with $m\geq 2$ (which has $(\mathrm{T})$) has homomorphism superrigidity into mapping class groups. Concerning $\mathrm{Out}(F_n)$, it seems that analogous problems are widely open.

\section{Symplectic universal lattices}\label{sec:symp}
Before proceeding to symplectic case, let us shortly recall the definition of elementary groups and universal lattices. For $m\geq 2$ and a unital (associative) ring $R$ (possibly noncommutative), \textit{elementary matirx} in $\mathrm{M}_m(R)$ is a matrix of the form $E_{i,j}(r):=I_m+r e_{i,j}$ ($1\leq i,j\leq m$, $i\ne j$; $r\in R$), here $e_{i,j}$ is the matrix with $1$ in $(i,j)$-th entry and $0$ in the other entries. The \textit{elementary group}, denoted by $\mathrm{E}_m(R)$, is the multiplicative group in $\mathrm{M}_m(R)$ generated by elementary matrices. Observe the Steinberg commutator relation
$$
[E_{i,j}(r),E_{j,l}(s)]=E_{i,l}(rs)\ \ \ i\ne j\ne l; r,s\in R.
$$
Then it follows that if $m\geq 3$ and $R$ is a finitely generated ring with the generating set $\{r_1,\ldots ,r_k\}$, then $\mathrm{E}_m(R)$ is a finitely generated group with a finite generating set 
$$
\{ E_{i,j}(r_t): 1\leq i,j\leq m,i\ne j; 1\leq t\leq k\}.
$$
The elementary group for $R=\mathbb{Z}[x_1,\ldots ,x_k]$ with $m\geq 3$ is called the \textit{universal lattice}. The Suslin stability theorem \cite{Sus} states that in that case, $\mathrm{E}_m$ coincides with the whole group $\mathrm{SL}_m$.
\subsection{One realization for elementary symplectic groups}
Firstly, for $m\geq 1$, we take the $2m$ by $2m$ alternating matrix as
$$
L_{m}=
 \left( 
\begin{array}{ccc}
L & 0  &  0  \\
 0  &\ddots &0 \\
 0 & 0 &  L
\end{array}
\right)\in \mathrm{M}_{2m}.
$$

Here 
$$
L=
 \left( 
\begin{array}{cc}
0 & 1  \\
 -1 &  0 
\end{array}
\right) \in \mathrm{M}_2.
$$
This choice of alternating matrix is standard, but in connection to property $(\mathrm{T})$, this choice is not good. Hence this alternating matrix $L_m$ later shall be replaced with another one $J_m$ in this thesis. See the next subsection for details.

Let $A$ be a \textit{commutative} ring (commutativity is needed here because we consider the transpose). Then the \textit{symplectic group} over $A$ of degree $2m$ is the following group: 
$$
\mathrm{Sp}_{2m}(A)=\{ g\in \mathrm{M}_2m(A):{}^tgL_mg=L_m\}.
$$
Here ${}^tg$ is the transpose matrix of $g$. We use the following permutation symbol on $\mathbb{Z}$:$
(2i)':=2i-1 ;\ (2i-1)'= 2i \ \ \ (i\in \mathbb{N})$. 
For any $(i,j)$ with $1\leq i\leq 2m$, $1\leq i\leq 2m$ and $i\ne j$; and any $a\in A$, we define the \textit{elementary symplectic matrix} as follows:
$$
SE_{i,j}(a):= 
\left\{
\begin{array}{cc}
 I_2m+a e_{i,j} & \textrm{if $i=j'$;} \\
 I_2m+ae_{i,j}-(-1)^{i+j}ae_{j',i'} & \textrm{if $i\ne j'$}.
\end{array}
\right.
$$ 
Here $e_{i,j}$ is the matrix with $(i,j)$-th entry $1$ and the other entries $0$.
The \textit{elementary symplectic group} over $A$ of degree $2m$ is the group generated by all elementary symplectic matrices, and it is written as $\mathrm{Ep}_{2m}(A)$. A priori, $\mathrm{Ep}_{2m}(A)$ is a (possibly non-proper) subgroup of $\mathrm{Sp}_{2m}(A)$.

Then there are some basic commutator relations between elementary symplectic matrices. They are so many, and here we only state some important relations. For the all precise relations, we refer to Lemma 4.1 of the paper \cite{GMV} of Grunewald--Menniche--Vaserstein. 

\begin{lem}\label{lem:spcom}
Let $a,b\in A$ are any element. Then there are the following formulae:
\begin{enumerate}[$(i)$]
\item \begin{enumerate}[$(1)$]
 \item $[SE_{i,i'}(a),SE_{i',l}(b)]=SE_{i,l}(ab)
         SE_{l',l}((-1)^{i+l}ab^2) $,

    $\textrm{if }$ $i\ne l, i'\ne l$.
 \item $[SE_{i,i'}(a),SE_{k,i'}(b)]=SE_{i,i'}(2ab+(-1)^{i+k}ab^2) $,
         
         if  $i\ne k, i'\ne k$.
 \item $[SE_{i,i'}(a),SE_{k,i}(b)]=SE_{k,i'}(-ab)
         SE_{k,k'}(-(-1)^{i+k}ab^2) $,

         $\textrm{if }$  $i\ne k, i'\ne k$.
  \item $[SE_{i,i'}(a),SE_{i,l}(b)]=SE_{l',l}(-2ab-(-1)^{i+l}ab^2) $,

  $\textrm{if }$ $ i\ne l, i'\ne l$.
      \end{enumerate}
   \item \begin{enumerate}[$(1)$]
   \item          $[SE_{i,j}(a),SE_{j,l}(b)]=SE_{i,l}(ab)$,

         $\textrm{if }$ $i'\ne j, j'\ne l, i\ne l, i'\ne l$.
   \item 
         $[SE_{i,j}(a),SE_{j,l}(b)]=SE_{i,l}(2ab)$,

         $\textrm{if }$ $i'\ne j, j'\ne l, i\ne l, i'= l$.
    \item 
         $[SE_{i,j}(a),SE_{k,j'}(b)]=SE_{k,i'}((-1)^{i+j}ab)$,
         
         $\textrm{if }$  $i'\ne j, j'\ne k,  i'\ne k, i\ne k$.
    \item $[SE_{i,j}(a),SE_{k,j'}(b)]=SE_{i,i'}(2(-1)^{i+j}ab)$,

         $\textrm{if }$ $ i'\ne j, j'\ne k,  i'\ne k, i= k$.
    \end{enumerate}
\end{enumerate}
\end{lem}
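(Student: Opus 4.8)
The plan is to prove the lemma by a direct unipotent matrix computation, the point being that every elementary symplectic matrix has square-zero nilpotent part. First I would write $SE_{i,j}(a) = I_{2m} + M_{i,j}(a)$, where $M_{i,j}(a) = a e_{i,j}$ when $i = j'$ and $M_{i,j}(a) = a e_{i,j} - (-1)^{i+j} a e_{j',i'}$ otherwise. Using $e_{p,q}e_{r,s} = \delta_{q,r}e_{p,s}$ together with the fact that $p \ne p'$ for every index $p$ (so that $e_{i,j}e_{j',i'}$, $e_{j',i'}e_{i,j}$, $e_{j',i'}^2$, etc.\ all vanish), one checks that $M_{i,j}(a)^2 = 0$ in both cases; hence $SE_{i,j}(a)^{-1} = I_{2m} - M_{i,j}(a)$. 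It then follows that for $X = I_{2m}+M$, $Y = I_{2m}+N$ with $M^2 = N^2 = 0$,
\[
[X,Y] = XYX^{-1}Y^{-1} = I_{2m} + MN - NM - MNM + NMN + MNMN,
\]
and this master identity drives every case of the lemma.

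Next I would treat the eight formulae one at a time by substituting $M = M_{i,j}(a)$ and $N = M_{k,l}(b)$ into the master identity and multiplying out, again only using $e_{p,q}e_{r,s} = \delta_{q,r}e_{p,s}$. The index hypotheses in each item are exactly what is needed so that, on the one hand, the target indices are admissible (e.g.\ $i \ne l$ so that $SE_{i,l}$ makes sense, and the conditions involving $i'$ decide which of the two shapes of $SE$ occurs), and, on the other hand, the ``unwanted'' terms among $MN, NM, MNM, NMN, MNMN$ vanish. For the items in part (ii), where neither factor is of long-root type, the terms $NM, MNM, NMN, MNMN$ all drop out and $MN$ collapses to a single $M_{i,l}(\cdot)$, giving $SE_{i,l}(ab)$ in the generic case (ii)(1) and the extra factor $2$ in (ii)(2)--(4) precisely because of the coincidence $i' = l$ (a long-root contribution in type $C_m$). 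For the items in part (i), where one factor is $SE_{i,i'}(a) = I_{2m} + a e_{i,i'}$, the cubic term $MNM$ (or $NMN$) genuinely survives and supplies the quadratic correction such as $SE_{l',l}((-1)^{i+l}ab^2)$; here I would additionally check that the linear and quadratic leftover terms commute, so that the product really is the stated product of two $SE$'s in the given order.

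The hard part will not be any single step but the sign bookkeeping: one must propagate the involution $p \mapsto p'$ and the factors $(-1)^{i+j}$ through each product and use the identities $(p')' = p$ and $(-1)^{i'+j'} = (-1)^{i+j}$ to see that the coefficients match what is claimed. This is purely mechanical but long. A cleaner conceptual route is to observe that $SE_{i,j}(a) = \exp\bigl(a\,\xi_{i,j}\bigr)$ for suitable root vectors $\xi_{i,j}$ of $\mathfrak{sp}_{2m}$ (a short root vector $e_{i,j} - (-1)^{i+j}e_{j',i'}$ or a long root vector $e_{i,i'}$), so that the formulae become instances of the Chevalley commutator relations for the root system $C_m$, with structure constants $0, \pm 1, \pm 2$ determined by which sums of two roots are again roots. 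Since we need these identities over an \emph{arbitrary} commutative ring $A$ with the explicit constants displayed, I would in the end either carry out the matrix verification in full or, as the paper does, present one or two representative cases in detail and refer to Lemma~4.1 of \cite{GMV} for the complete list.
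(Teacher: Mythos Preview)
Your approach is correct and is exactly the standard route: the paper itself gives no proof of this lemma at all, merely citing Lemma~4.1 of \cite{GMV}, so your direct matrix computation via $SE_{i,j}(a)=I+M_{i,j}(a)$ with $M_{i,j}(a)^2=0$ and the master identity $[I+M,I+N]=I+MN-NM-MNM+NMN+MNMN$ is precisely what one does (and what \cite{GMV} does).

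One small inaccuracy in your sketch is worth fixing before you write it out. In part~(ii) you say that ``$NM$, $MNM$, $NMN$, $MNMN$ all drop out and $MN$ collapses to a single $M_{i,l}(\cdot)$''. That is not quite right: for instance in (ii)(1), with $M=a e_{i,j}-(-1)^{i+j}a e_{j',i'}$ and $N=b e_{j,l}-(-1)^{j+l}b e_{l',j'}$, one finds $MN=ab\,e_{i,l}$ and $NM=(-1)^{i+l}ab\,e_{l',i'}$, so $NM$ does \emph{not} vanish; rather it supplies the second summand of $M_{i,l}(ab)=ab\,e_{i,l}-(-1)^{i+l}ab\,e_{l',i'}$, and it is the combination $MN-NM$ that equals $M_{i,l}(ab)$. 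The higher terms $MNM$, $NMN$, $MNMN$ do vanish under the stated index restrictions, so the conclusion is unchanged. The same two-term mechanism occurs in (ii)(3), while in (ii)(2) and (ii)(4) the coincidence $i'=l$ (resp.\ $i=k$) makes the two summands of $MN-NM$ land in the same matrix unit, producing the factor~$2$. Once you correct this, the rest of your plan (including the observation that in part~(i) the cubic term genuinely survives and yields the $ab^2$ correction) goes through.
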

These relation implies that if $m\geq 2$, then whenever $A$ is finitely generated (as a ring), $\mathrm{Ep}_{2m}(A)$ is a finitely generated group. However, we warn that \textit{we need some care on a finite generating set} of $\mathrm{Ep}_{2m}(A)$. If one looks Lemma~\ref{lem:spcom} carefully, then one will notice that some rules are \textit{different} between elementary symplectic matrices of the form $SE_{i,j}(a)$ $(i'\ne j)$;  and those of the form $SE_{i,i'}(a)$. Structure of the latter in terms of commutators is \textit{much more complicated} than that of the former. Indeed, let $m\geq 2$ and suppose $A$ is finitely generated and $\{s_1,\ldots ,s_k\}$ is a finite generating set for $A$ (as a ring). Then the following finite set is a generating set for $\mathrm{Ep}_{2m}(A)$:
\begin{align*}
S&:=\{ SE_{i,j}(\pm s_l) : 1\leq i,j \leq 2m, i\ne j ,i'\ne j; 0 \leq l \leq k\} \\
& \cup  \{ SE_{i,i'}(\pm s_1^{\epsilon_1}s_2^{\epsilon_2}\cdots s_k^{\epsilon_k}) : 1\leq i \leq 2m; \epsilon_1,\ldots , \epsilon_k \in \{0,1\} \}.
\end{align*}
Here we regard $s_0=1$ and $s_l^0=1$. 

Note that if $m\geq m_0$, then there is a natural inclusion $
\mathrm{Ep}_{2m_0}(A) \hookrightarrow \mathrm{Ep}_{2m}(A),
$ which sends to the left upper corner.
\subsection{Another realization}
The realization in the previous subsection is natural, but it is not suited for study of property $(\mathrm{T})$. For this purpose, \textit{in this paper we use the following realization}, which may look awkward on other point of view: set the following alternating matrix (, which is conjugate to $L_m$):
$$
J_m:=\left( 
\begin{array}{cc}
   0 & I_m \\ 
   -I_m & 0 
\end{array}
\right)\in \mathrm{M}_{2m}
$$

\begin{defn}\label{def:symp}
Let $m\geq 1$ and $A$ be a commutative ring. 
\begin{enumerate}[$(i)$]
 \item
 The \textit{symplectic group} over $A$ of degree $2m$, written as $\mathrm{Sp}_{2m}(A)$, is defined as the multiplicative group of symplectic matrices in matrix ring $\mathrm{M}_{2m}(A)$ associated with the alternating matrix $J_m$, namely, 
  $$
\mathrm{Sp}_{2m}(A):= \{g\in \mathrm{M}_{2m}(A): {}^{t}g J_m g= J_m \}.
$$
 \item Matrices of the following form are called \textit{elementary symplectic matrices}:
   \begin{enumerate}[$(1)$]
     \item For $1\leq i,j \leq m$ with $i\ne j$ and $a\in A$, define 
      \begin{align*}
         B_{i,j}(a):&=I_{2m}+a (e_{i,m+j}+e_{j,m+i}), \\ 
         C_{i,j}(a):&=I_{2m}+a(e_{m+j,i}+e_{m+i,j} )(={}^tB_{i,j}(a)), \\
         D_{i,j}(a):&=I_{2m}+ae_{i,j}-ae_{m+j,m+i}.
       \end{align*}
     \item For $1\leq i \leq m$ and $a\in A$, define
       \begin{align*}
         B_{i,i}(a):=I_{2m}+a e_{i,m+i}, C_{i,i}(a):=I_{2m}+ae_{m+i,i}(={}^tB_{i,i}(a)). 
       \end{align*}
       
  Namely, for $i\ne j$,
\begin{align*}
&B_{i,j}(a):=\left( 
\begin{array}{cc}
   I_m & a(e_{i,j}+e_{j,i}) \\ 
   0 & I_m 
\end{array}
\right) , 
C_{i,j}(a):=\left( 
\begin{array}{cc}
   I_m & 0 \\ 
   a(e_{i,j}+e_{j,i}) & I_m 
\end{array}
\right) , \\
& D_{i,j}(a):=\left( 
\begin{array}{cc}
   I_m +ae_{i,j}&0 \\ 
   0 & I_m-ae_{j,i}
 \end{array}
 \right) ; 
\end{align*}
and for $i=j$,
\begin{align*}
B_{i,i}(a):=\left( 
\begin{array}{cc}
   I_m & ae_{i,i} \\ 
   0 & I_m 
\end{array}
\right) , 
C_{i,i}(a):=\left( 
\begin{array}{cc}
   I_m & 0 \\ 
   ae_{i,i} & I_m 
\end{array}
\right) .
\end{align*}
   \end{enumerate}
    \item The \textit{elementary symplectic group} $\mathrm{Ep}_{2m}(A)$ over $A$ of degree $2m$ is the subgroup of $\mathrm{Sp}_{2m}(A)$ generated by all elementary symplectic matrices in the sense above.
\end{enumerate}
\end{defn}

Note that in item $(ii)$ above, the case of $i\ne j$ corresponds to that of $SE_{l,k}(a)$ $(l'\ne k)$; and the case of $i= j$ corresponds to that of $SE_{l,l'}(a)$  in the previous subsection. The indices are however permuted.

The following is mere interpretation of an observation in the previous subsection to this realization:

\begin{lem}\label{lem:spfg}
Let $m\geq 2$ and suppose $A$ is finitely generated and $\{s_1,\ldots ,s_k\}$ is a finite generating set for $A$ $($as a ring$)$. Then the following finite set is a generating set for $\mathrm{Ep}_{2m}(A)$:
\begin{align*}
S&:=\{ B_{i,j}(\pm s_l),C_{i,j}(\pm s_l),D_{i,j}(\pm s_l) : 1\leq i,j \leq m, i\ne j; 0 \leq l \leq k\} \\
& \cup  \{ B_{i,i}(\pm s_1^{\epsilon_1}s_2^{\epsilon_2}\cdots s_k^{\epsilon_k}), C_{i,i}(\pm s_1^{\epsilon_1}s_2^{\epsilon_2}\cdots s_k^{\epsilon_k}) : 1\leq i \leq m; \epsilon_1,\ldots , \epsilon_k \in \{0,1\} \}.
\end{align*}
Here we regard $s_0=1$ and $s_l^0=1$. 
\end{lem}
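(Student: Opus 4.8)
The plan is to reduce this to the corresponding statement in the ``standard'' realization from the previous subsection, where the analogous fact was already recorded, and then simply transport it through the explicit conjugation identifying $L_m$ with $J_m$. The two realizations of $\mathrm{Sp}_{2m}(A)$ are conjugate: there is a permutation matrix $P\in\mathrm{GL}_{2m}(\mathbb{Z})\subseteq\mathrm{GL}_{2m}(A)$ with ${}^tP L_m P = J_m$ (interleaving the coordinate blocks $(1,2,\ldots,2m)$ versus $(1,3,\ldots,2m-1,2,4,\ldots,2m)$), so $g\mapsto P^{-1}gP$ gives an isomorphism $\mathrm{Sp}_{2m}^{L}(A)\xrightarrow{\ \sim\ }\mathrm{Sp}_{2m}^{J}(A)$. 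Under this isomorphism each elementary symplectic matrix $SE_{i,j}(a)$ of the old realization is carried to one of the matrices $B_{i',j'}(\pm a)$, $C_{i',j'}(\pm a)$, or $D_{i',j'}(\pm a)$ of Definition~\ref{def:symp}, with the index correspondence: $SE_{l,k}(a)$ with $l'\ne k$ going to a $B$-, $C$-, or $D$-type generator (according to which block $l$ and $k$ lie in), and $SE_{l,l'}(a)$ going to a $B_{i,i}$- or $C_{i,i}$-type generator. In particular the isomorphism sends the generating set of $\mathrm{Ep}_{2m}^{L}(A)$ displayed at the end of Subsection~4.1 to the set $S$ displayed in the statement, up to harmless sign changes and the renaming of indices; since conjugation by $P$ carries the group generated by the one set onto the group generated by the image set, and the image set equals $S$, we conclude $\langle S\rangle = \mathrm{Ep}_{2m}^{J}(A)$.

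Concretely, the steps I would carry out are: (1) exhibit $P$ explicitly and check ${}^tP L_m P = J_m$, hence that $\mathrm{conj}_P$ is an isomorphism of the two symplectic groups; (2) compute $P^{-1}SE_{i,j}(a)P$ for each of the two cases ($i=j'$ and $i\ne j'$) and match the outcome with the list $B_{\cdot,\cdot}, C_{\cdot,\cdot}, D_{\cdot,\cdot}$, recording the precise index permutation $(i,j)\mapsto(i',j')$ and the sign; (3) observe that the defining product of elementary symplectic matrices for $\mathrm{Ep}_{2m}^{L}(A)$ is thereby carried onto the product over all $B,C,D$ matrices, so $\mathrm{conj}_P(\mathrm{Ep}_{2m}^{L}(A))=\mathrm{Ep}_{2m}^{J}(A)$; (4) invoke the finite-generation statement for $\mathrm{Ep}_{2m}^{L}(A)$ already noted (which itself follows from the Steinberg-type commutator relations of Lemma~\ref{lem:spcom}: for $m\ge 2$ every generator $SE_{i,i'}(s_1^{\epsilon_1}\cdots s_k^{\epsilon_k})$ is expressed, via relations $(i)$ and $(ii)$ of that lemma, as a word in the $SE_{i,j}(\pm s_l)$ with $i'\ne j$, after the $SE_{i,j}(\pm s_l)$'s themselves are generated by the $SE_{i,j}(\pm s_l)$ with $l\le k$ using $(ii)(1)$), and push it through $\mathrm{conj}_P$ to obtain the asserted generating set $S$ for $\mathrm{Ep}_{2m}^{J}(A)$; (5) note that the possibly needed sign variants $\pm$ are already built into $S$, so no extra elements are required.

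The only genuinely substantive point — and the one I expect to be the main bookkeeping obstacle — is step (4): the non-obvious part of finite generation is that the ``diagonal-block'' generators $B_{i,i}(a)$ and $C_{i,i}(a)$ must be allowed to range over all monomials $s_1^{\epsilon_1}\cdots s_k^{\epsilon_k}$ (not just the ring generators $s_l$), because the commutator relations producing $B_{i,i}$- and $C_{i,i}$-type matrices from the off-diagonal ones introduce squares (see the $ab^2$ terms in Lemma~\ref{lem:spcom}$(i)$), so one cannot cleanly generate $B_{i,i}(s_l s_{l'})$ from $B_{i,i}(s_l)$ and $B_{i,i}(s_{l'})$; instead one multiplies out using the $2ab$ and $ab^2$ identities and solves for the mixed monomials inductively on the number of active exponents $\epsilon_t$. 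This is exactly the ``some care on a finite generating set'' warning in the text, and it is handled by a finite, if slightly tedious, descent; everything else (the conjugation, the index matching, the sign tracking) is routine linear algebra over $A$. Once step (4) is in hand for the $L$-realization, the $J$-realization statement is immediate by transport of structure, completing the proof of the lemma.
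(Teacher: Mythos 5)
Your proposal is correct and takes essentially the same approach as the paper: the paper presents Lemma~\ref{lem:spfg} as ``mere interpretation of an observation in the previous subsection to this realization,'' which is precisely the transport through the permutation conjugation identifying $L_m$ with $J_m$ that you carry out in your steps (1)--(3) and (5). Like the paper, you trace the underlying finite-generation claim (your step (4)) back to the commutator relations of Lemma~\ref{lem:spcom}, with the $ab^2$-type identities explaining why the square-free monomials $s_1^{\epsilon_1}\cdots s_k^{\epsilon_k}$ must all appear among the $B_{i,i}$- and $C_{i,i}$-type generators.
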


Here we state the stability theorem of Grunewald--Mennicke--Vaserstein for symplectic case.
\begin{thm}$($\cite{GMV}$)$
Let  $k\in \mathbb{N}$ and $A=\mathbb{Z}[x_1,\ldots,x_k]$. Then for any $m\geq 2$, $\mathrm{Ep}_{2m}(A)=\mathrm{Sp}_{2m}(A)$.
\end{thm}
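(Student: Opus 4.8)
The plan is to follow the template of Suslin's proof \cite{Sus} of the analogous statement $\mathrm{SL}_m(\mathbb{Z}[x_1,\ldots,x_k])=\mathrm{E}_m(\mathbb{Z}[x_1,\ldots,x_k])$, $m\geq 3$, adapting each step to the symplectic setting, and to argue by induction on $k$. The base case $k=0$ is classical: $\mathbb{Z}$ is a Euclidean domain, and over a Euclidean domain every symplectic matrix is a product of elementary symplectic matrices (one clears the first column by a Euclidean algorithm using the $D_{i,j}$'s together with the long-root generators $B_{i,i},C_{i,i}$, then uses the symplectic relation ${}^t g J_m g=J_m$ to standardize the first hyperbolic pair, and descends to $\mathrm{Sp}_{2(m-1)}$). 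For the inductive step, write $A=B[x]$ with $B=\mathbb{Z}[x_1,\ldots,x_{k-1}]$. Given $g\in\mathrm{Sp}_{2m}(A)$, specialize at $x=0$: by the inductive hypothesis $g(0)\in\mathrm{Sp}_{2m}(B)=\mathrm{Ep}_{2m}(B)\subseteq\mathrm{Ep}_{2m}(A)$, so after multiplying by $g(0)^{-1}$ we may assume $g\equiv I_{2m}\bmod(x)$.

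The core is a symplectic local--global principle: for any commutative ring $R$, if $\sigma(x)\in\mathrm{Sp}_{2m}(R[x])$ satisfies $\sigma(0)=I_{2m}$ and $\sigma(x)\in\mathrm{Ep}_{2m}(R_{\mathfrak m}[x])$ for every maximal ideal $\mathfrak m$ of $R$, then $\sigma(x)\in\mathrm{Ep}_{2m}(R[x])$. The proof is a dilation/patching argument in the style of Vaserstein and Quillen: elementariness over a localization $R_f[x]$ can be ``spread out'' to $R[x]$ with controlled denominators, and two such spreadings at comaximal $f$'s glue. Applying this with $R=B$ reduces the problem to showing $g\in\mathrm{Ep}_{2m}(B_{\mathfrak m}[x])$ for each $\mathfrak m$, where now $B_{\mathfrak m}$ is a regular local ring of Krull dimension at most $k$, in particular a ring of stable rank $1$.

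It remains to handle the local case: over a local ring $R$, show $\mathrm{Sp}_{2m}(R[x],(x))\subseteq\mathrm{Ep}_{2m}(R[x])$ for $m\geq 2$. Here the engine is transitivity of $\mathrm{Ep}_{2m}(R[x])$ on unimodular column vectors of length $2m\geq 4$ that are $\equiv e_1\bmod(x)$: given such a vector one first reduces it by a suitable $\mathrm{SL}$-type elementary transformation using that unimodular rows over $R[x]$ with $R$ local are ``stably standard'' (Quillen--Suslin), and then one observes --- via the commutator calculus of Lemma~\ref{lem:spcom} --- that the required $\mathrm{E}_{2m}$-operations can in fact be realized inside $\mathrm{Ep}_{2m}$. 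Granting transitivity, take $g$ as above, move its first column to $e_1$ by an element of $\mathrm{Ep}_{2m}(R[x])$, use ${}^t g J_m g=J_m$ to standardize the complementary entries of the first hyperbolic pair (again by elementary symplectic matrices), and thereby write $g\in\mathrm{Ep}_{2m}(R[x])\cdot\mathrm{Sp}_{2(m-1)}(R[x])$ with the second factor in the corner. Iterating this descent collapses $\mathrm{Sp}_{2m}$ down to $\mathrm{Sp}_2=\mathrm{SL}_2$; the point is that $\mathrm{SL}_2(R[x])$ placed in the corner of $\mathrm{Sp}_{2m}(R[x])$ with $m\geq 2$ lands in $\mathrm{Ep}_{2m}(R[x])$ --- a stabilization phenomenon, the obstruction to $\mathrm{SL}_2$ being elementary dying after one stabilization, cf. \cite{Sus}, \cite{Vas} --- which is precisely why the hypothesis is $m\geq 2$ and no smaller.

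The main obstacle is the transitivity statement in the last paragraph: it is the symplectic counterpart of Suslin's ``$m\geq 3$'' mechanism, and its proof combines (i) the Quillen--Suslin solution for unimodular rows over $R[x]$ with $R$ regular local, with (ii) a careful translation of the resulting $\mathrm{GL}$-elementary operations into the symplectic elementary group, which is where the bookkeeping of Lemma~\ref{lem:spcom} --- in particular the fact that the long-root generators $B_{i,i},C_{i,i}$ arise as commutators of short-root ones only when $m\geq 2$ --- is used in full. Controlling denominators in the dilation lemma, and making the corner-embedding descent bottom out correctly at $\mathrm{Sp}_4$ rather than at $\mathrm{Sp}_2$, are the other technically delicate points; beyond these the argument is a faithful, if laborious, transcription of \cite{GMV}.
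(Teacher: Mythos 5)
The paper does not prove this theorem; it is quoted from \cite{GMV} without argument, so there is no in-paper proof to compare against --- the assessment must be of the sketch on its own terms. At the level of blueprint your proposal does match the Suslin--Vaserstein strategy that \cite{GMV} run symplectically: induction on $k$, specialization at $x=0$, a Quillen--Vaserstein local--global patching, reduction to $B_{\mathfrak{m}}[x]$ with $B_{\mathfrak{m}}$ regular local, transitivity on unimodular columns congruent to $e_1\bmod(x)$, descent on the hyperbolic rank, and a final symplectic stabilization.

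However, the pivotal step is disposed of wrongly. You claim the local transitivity is obtained by first applying $\mathrm{E}_{2m}$-operations coming from the Quillen--Suslin theory and then ``observing via the commutator calculus of Lemma~\ref{lem:spcom} that the required $\mathrm{E}_{2m}$-operations can in fact be realized inside $\mathrm{Ep}_{2m}$.'' This is false as stated: a generic transvection in $\mathrm{E}_{2m}(R[x])$ does not preserve the alternating form and hence does not lie in $\mathrm{Ep}_{2m}(R[x])$, and Lemma~\ref{lem:spcom} only relates symplectic elementary matrices to one another --- it supplies no procedure for rewriting a non-symplectic elementary matrix as a symplectic word. The $\mathrm{Ep}_{2m}$-transitivity on unimodular symplectic columns must be proved intrinsically (via a symplectic analogue of Suslin's completion lemma together with the orbit structure of the quadratic condition imposed by the form), not inferred from the $\mathrm{GL}$-statement by inspecting generators; this is where the actual work in \cite{GMV} lives. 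Two further slips: the subgroup that survives iterated descent is the hyperbolic $\mathrm{Sp}_2$ of Definition~\ref{def:idensym}(iii), not the diagonal $\mathrm{SL}_2$-block of Definition~\ref{def:idensym}(i) --- they are abstractly isomorphic but sit differently in $\mathrm{Sp}_{2m}$, and the stabilization you need is precisely that the former lies in $\mathrm{Ep}_4(R[x])$; and the long-root generators $B_{i,i},C_{i,i}$ are \emph{not} obtained from short roots via Lemma~\ref{lem:spcom} --- the relevant commutators only reproduce them up to a factor of $2$, which is exactly why they appear as independent generators in Lemma~\ref{lem:spfg}.
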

\begin{defn}\label{def:sul}
Let $m\geq 2$. Take any $k\in \mathbb{N}$. 
A \textit{symplectic universal lattice} of degree $2m$ denotes a group $
\mathrm{Sp}_{2m}(\mathbb{Z}[x_1,\ldots ,x_k]) (=\mathrm{Ep}_{2m}(\mathbb{Z}[x_1,\ldots ,x_k]))$.
\end{defn}

Finally, we define the following identification for certain subgroups of $\mathrm{Sp}_{2m}(A)$ (or $\mathrm{Ep}_{2m}(A)$):

\begin{defn}\label{def:idensym}
Let $A$ be a commutative ring.
\begin{enumerate}[$(i)$]
 \item For $m\geq m_0\geq 2$, by $\mathrm{SL}_{m_0}(A)\leqslant \mathrm{Sp}_{2m}(A)$ (or, $\mathrm{SL}_{m_0}(A)\hookrightarrow \mathrm{Sp}_{2m}(A)$) we mean the inclusion is realized in the following way:
 $$
  \left\{ 
 \left( 
\begin{array}{cccc}
W & 0 &0 & 0\\
 0 &  I_{m-m_0} & 0 &0 \\
 0 & 0 & {}^t W^{-1} &0 \\
 0 & 0  & 0 & I_{m-m_0} 
\end{array}
\right)
 : W \in \mathrm{SL}_{m_0} (A)
 \right\}
 \ \leqslant \ \mathrm{Sp}_{2m}(A).
  $$ 
  \item Let $m\geq 2$. 
We denote by $S^{m*}(A^m)$ the additive group of all symmetric matrices in $\mathrm{M}_m(A)$. 
By $\mathrm{E}_{m}(A) \ltimes S^{m*}(A^m)$$\trianglerighteq S^{m*}(A^m)$, we identify these groups respectively with 
\begin{align*} 
& \left\{ 
 (W,v):=\left( 
\begin{array}{c|c}
W & v \\
\hline 
 0 &  {}^{t}(W^{-1}) 
\end{array}
\right)
 : W \in \mathrm{E}_{m} (A), \, v \in S^{m*}(A^m)
 \right\}
  \\ \trianglerighteq &
 \left\{ 
 \left( 
\begin{array}{c|c}
I_m & v \\
\hline 
 0 &  I_m 
\end{array}
\right)
 :  v \in S^{m*}(A^m)
 \right\}.
\end{align*} 
Thus the action of $\mathrm{E}_{m} (A)$ on $S^{m*}(A^m)$ is:
$$
(W,0)(I_m,v)(W^{-1},0)=(I_m,W v {}^tW) \ \ (W\in \mathrm{E}_{m} (A), v\in S^{m*}(A^m)).
$$

Here $\mathrm{E}_m(A)$ denotes the elementary group over $A$.
 \item 
Let $m\geq m_0 \geq 2$. Then by 
 $\mathrm{Sp}_{2m_0}(A)\leqslant \mathrm{Sp}_{2m}(A)$ (or, $\mathrm{Sp}_{2m_0}(A)\hookrightarrow \mathrm{Sp}_{2m}(A)$), we mean the inclusion is realized as 
   $$
  \left\{ 
 \left( 
\begin{array}{cccc}
X & 0 & Y &0 \\
 0 &  I_{m-m_0} & 0 &0 \\
 Z & 0 & W &0 \\
 0 & 0  & 0 & I_{m-m_0} 
\end{array}
\right)
 : \left( 
\begin{array}{cc}
X & Y \\
 Z & W 
\end{array}
\right) \in \mathrm{Sp}_{2m_0} (A)
 \right\}
 \ \leqslant \ \mathrm{Sp}_{2m}(A).
  $$
\end{enumerate}
\end{defn}

The advantage of this realization of $\mathrm{Sp}_{2m}(A)$ (, namely the choice of $J_m$) is that then it is easy to express the pair in item $(ii)$. This pair plays a central role in the study of property $(\mathrm{T})$, see the next subsection.

\subsection{Known results on property $(\mathrm{T})$}\label{sebsec:sympT}
The first step to $(\mathrm{T})$ for symplectic universal lattices has benn done by M. Neuhauser:
\begin{thm}$($Neuhauser, \cite[Theorem $3.3$]{Neu}$)$\label{thm:neu}
Let $A=\mathbb{Z}[x_1,\ldots ,x_k]$ for any integer $k\in \mathbb{Z}$. Then for $m\geq 2$, the pair $\mathrm{E}_{m}(A) \ltimes S^{m*}(A^m)$$\trianglerighteq S^{m*}(A^m)$, which is defined as in item $(ii)$ of Definition~$\ref{def:idensym}$, has relative $(\mathrm{T})$. 
\end{thm}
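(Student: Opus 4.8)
The plan is to prove relative property $(\mathrm{T})$ for the pair $\mathrm{E}_m(A) \ltimes S^{m*}(A^m) \trianglerighteq S^{m*}(A^m)$ by reducing it, via Fourier analysis and a spectral-gap argument, to a single base case which is handled using the structure of the underlying ring $A = \mathbb{Z}[x_1,\ldots,x_k]$. The overall strategy follows the Shalom/Kassabov philosophy for establishing relative $(\mathrm{T})$ of $\mathrm{SL}_2 \ltimes \mathbb{Z}^2$-type pairs over noetherian rings. First I would dualize: a unitary representation $\pi$ of $G := \mathrm{E}_m(A)\ltimes S^{m*}(A^m)$ with almost invariant vectors restricts on the abelian normal subgroup $S^{m*}(A^m)$ to give, by SNAG/the spectral theorem, a projection-valued measure $P$ on the Pontryagin dual $\widehat{S^{m*}(A^m)}$, which can be identified with (a completion of) the space of symmetric bilinear forms on $A^m$; the action of $\mathrm{E}_m(A)$ on $S^{m*}(A^m)$ by $v \mapsto WvW^t$ dualizes to an action on this dual space, and the goal becomes showing that $P(\{0\}) \neq 0$, i.e. that the spectral measure of any net of almost-invariant vectors accumulates at the trivial character.

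The key steps, in order, would be: (1) Reduce from general $m \geq 2$ to $m = 2$. Inside $\mathrm{E}_m(A)$ one finds many copies of $\mathrm{E}_2(A)$ (acting on a $2\times 2$ symmetric block), and one uses a Mautner-type / bootstrapping argument — combined with the commutator relations of Lemma~\ref{lem:spcom} — to propagate invariance of spectral projections from one coordinate block to the whole of $S^{m*}(A^m)$; this is essentially the standard "$\mathrm{SL}_m$ from $\mathrm{SL}_2$" argument and should be routine. (2) For $m=2$, identify the relevant part of the dual with $A^3$ (or $A \oplus A \oplus A$ carrying the quadratic-form coordinates) and analyze the $\mathrm{E}_2(A)$-orbits. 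The crucial algebraic input is that the ring $A = \mathbb{Z}[x_1,\dots,x_k]$ is a finitely generated commutative ring for which the pair $\mathrm{SL}_2(A) \ltimes A^2 \trianglerighteq A^2$ has relative $(\mathrm{T})$ — this is exactly the Shalom/Kassabov/Vaserstein result underlying property $(\mathrm{T})$ of the universal lattices $\mathrm{SL}_m(A)$, $m \geq 3$, which by the discussion around Remark~\ref{rem:Vas} we may cite. (3) Transfer: embed the $A^2$-by-$\mathrm{SL}_2(A)$ relative-$(\mathrm{T})$ pair into the symplectic pair so that almost-invariant vectors for the symplectic pair restrict to almost-invariant vectors for the linear pair, forcing the spectral measure to charge the origin in each linear coordinate; then conclude, using step (1)'s propagation and the $\mathrm{E}_2(A)$-action mixing the symmetric-matrix entries, that the full spectral measure charges $0 \in \widehat{S^{m*}(A^m)}$, which yields a nonzero $G$-invariant vector and hence relative $(\mathrm{T})$.

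I expect the main obstacle to be step (2)–(3), the precise orbit analysis of $\mathrm{E}_2(A)$ acting on the dual of $S^{2*}(A^2)$ and the bookkeeping needed to arrange that a net of almost-invariant vectors for the symplectic pair genuinely restricts to almost-invariant vectors for a linear relative-$(\mathrm{T})$ subpair. The subtlety is that the quadratic-form action $v \mapsto WvW^t$ is not the standard linear action of $\mathrm{SL}_2(A)$ on $A^2$, so one must carefully choose the right "slices" of the symmetric-matrix space and the right unipotent one-parameter subgroups (using the explicit $B_{i,j}, C_{i,j}, D_{i,j}$ generators of Definition~\ref{def:symp}) so that the dual dynamics genuinely sees the $\mathrm{SL}_2 \ltimes A^2$ picture; keeping track of the noninvertibility of $2 \in A$ and of the various $\pm$ and $(-1)^{i+j}$ signs in the commutator relations is where the real work lies. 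Once the correct subpairs are identified, the spectral-gap/mixing argument to spread invariance over all of $S^{m*}(A^m)$ is standard, and property $(\mathrm{T})$ (or rather relative $(\mathrm{T})$) of $\mathrm{SL}_2(A)\ltimes A^2 \trianglerighteq A^2$ can be quoted as a black box; I would also remark that an alternative, possibly cleaner route is to invoke Neuhauser's own method directly, which uses bounded generation of $S^{m*}(A^m)$ by the commutator relations together with Shalom's "bounded generation + relative $(\mathrm{T})$ of a subpair" criterion, thereby avoiding some of the explicit Fourier computation.
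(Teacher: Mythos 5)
The paper does not prove this statement: Theorem~\ref{thm:neu} is a direct quotation of Neuhauser \cite[Theorem 3.3]{Neu}, and no argument is given in the text, so there is nothing internal to compare your sketch against. What follows is an assessment of the sketch on its own terms.

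Your general philosophy --- dualize the abelian normal subgroup via SNAG, show the spectral measure of almost-invariant vectors concentrates at the trivial character, feed in relative $(\mathrm{T})$ of $\mathrm{SL}_2(A)\ltimes A^2 \trianglerighteq A^2$ as the spectral-gap engine --- is the right one. But the step you yourself flag as ``the main obstacle'' is not a matter of bookkeeping or of ``choosing the right slices''; it is the entire content of the theorem, and the sketch does not indicate how to get through it. Concretely, there is \emph{no} $\mathrm{E}_2(A)$-invariant copy of $A^2$ inside $S^{2*}(A^2)$ on which $v\mapsto WvW^t$ restricts to the standard linear action: writing $(a,b,c)$ for $\left(\begin{smallmatrix} a & b\\ b & c\end{smallmatrix}\right)$, the upper unipotent sends $(a,b,c)$ to $(a+2tb+t^2c,\, b+tc,\, c)$ and the lower one sends $(a,0,0)$ to $(a,as,as^2)$, so the orbits are genuinely quadratic, and proper additive subgroups such as $\{(a,b,0)\}$ are stabilized only by a Borel subgroup, which is solvable and so cannot carry relative $(\mathrm{T})$. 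Your step $(3)$ therefore cannot be an embedding of $\mathrm{SL}_2(A)\ltimes A^2$ that places $A^2$ inside the normal subgroup $S^{m*}(A^m)$; the spectral-gap estimate has to be produced directly on the dual side by partitioning $\widehat{S^{2*}(A^2)}\setminus\{0\}$ and extracting, on each piece, an affine picture from a well-chosen unipotent, which is exactly the estimate you leave open. If instead you mean to place $A^2$ inside the Levi $\mathrm{E}_m(A)$ (as happens with the pair $H\trianglerighteq H'$ in the proof of Proposition~\ref{prop:TtoFFsp}), then ``almost-invariant restricts to almost-invariant'' is fine, but propagating that $A^2\leq \mathrm{E}_m(A)$-invariance to all of $S^{m*}(A^m)$ is again the whole theorem --- and for the base case $m=2$ the commutator room used for such bootstrapping ($D_{i,k}$ with $k\notin\{i,j\}$) does not exist. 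Two smaller inaccuracies: the $C_{i,j}$ generators are not elements of $\mathrm{E}_m(A)\ltimes S^{m*}(A^m)$ (only the $B_{i,j}$ and $D_{i,j}$ families are), so they should not appear in the analysis; and Lemma~\ref{lem:spcom} is stated in the $SE_{i,j}$-coordinates of the $L_m$-realization, so its relations must be translated to the $B,D$-coordinates before they can be used here.
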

Despite this theorem, it seemed challenging to extend from this relative property (and resulting relative $(\mathrm{T})$ for the pair $\mathrm{Sp}_{2m}(A) $$\geqslant S^{m*}(A^m)$) to the full property $(\mathrm{T})$. For some difficulty, see Remark~\ref{rem:Vas} in below. Nevertheless, in 2011, Ershov--Jaikin-Zapirain--Kassabov suceeded in establishing $(\mathrm{T})$ with substantially generalizing results.

\begin{thm}$($Ershov--Jaikin-Zapirain--Kassabov \cite{EJK}$)$\label{thm:sympT}
Let $\Phi$ be a reduced irreducible classical root system of rank at least $2$ and $A$ be a finitely generated commutative ring. Then $\mathrm{St}_{\Phi}(A)$, the twisted Steinberg group over $A$, has property $(\mathrm{T})$. 

In particular, every symplectic universal lattice $\mathrm{Sp}_{2m}(\mathbb{Z}[x_1,\ldots ,x_k])$ $(m\geq 2)$ has property $(\mathrm{T})$.
\end{thm}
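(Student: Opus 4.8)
The plan is to obtain property $(\mathrm{T})$ not by bounded generation (which is unavailable over $\mathbb{Z}[x_1,\ldots,x_k]$ once $k\geq 1$, in contrast with the $\mathrm{SL}_m$ proofs of Shalom and Vaserstein) but by feeding a \emph{relative} property $(\mathrm{T})$ input — already supplied by Neuhauser's Theorem~\ref{thm:neu} — into a spectral/combinatorial bootstrap that upgrades relative $(\mathrm{T})$ for a family of root subgroups to $(\mathrm{T})$ for the group they generate, using only the rank $\geq 2$ hypothesis on $\Phi$. \textbf{Step 1 (reduce to the Steinberg/elementary picture).} Since $(\mathrm{T})$ passes to quotients, it suffices to treat $\mathrm{St}_{\Phi}(A)$; for the symplectic assertion one may work directly with $\mathrm{Ep}_{2m}(A)$, because the Grunewald--Mennicke--Vaserstein stability theorem gives $\mathrm{Ep}_{2m}(\mathbb{Z}[x_1,\ldots,x_k])=\mathrm{Sp}_{2m}(\mathbb{Z}[x_1,\ldots,x_k])$ and $\mathrm{Ep}_{2m}(A)$ is a quotient of $\mathrm{St}_{C_m}(A)$. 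Record the root grading: $\mathrm{Ep}_{2m}(A)$ is generated by the abelian short-root subgroups $\{D_{i,j}(A)\}_{i\ne j}$, which span a copy of $\mathrm{E}_m(A)$ (Definition~\ref{def:idensym}(ii)), together with the long-root subgroups $\{B_{i,j}(A),C_{i,j}(A)\}$; the Chevalley-type commutator relations are those of Lemma~\ref{lem:spcom}, and the doubling constants appearing there encode the non-simply-laced, i.e. "twisted", feature that must be carried along in the general statement.

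\textbf{Step 2 (seed relative $(\mathrm{T})$ for every root subgroup).} Invoke Neuhauser's Theorem~\ref{thm:neu}: the pair $\mathrm{E}_m(A)\ltimes S^{m*}(A^m)\trianglerighteq S^{m*}(A^m)$ has relative $(\mathrm{T})$. Inside $\mathrm{Ep}_{2m}(A)$, via Definition~\ref{def:idensym}(ii), this says that for \emph{every} unitary representation $\pi$ with $\pi\nsupseteq 1$ the restriction to the abelian group $S^{m*}(A^m)$ (the span of the $B_{i,j}(A)$) already has a spectral gap. Combining this with the corresponding relative $(\mathrm{T})$ statements for $A_2$-type subconfigurations (classical, as for $\mathrm{SL}_3(\mathbb{Z})\ltimes\mathbb{Z}^3\geqslant\mathbb{Z}^3$ over a general finitely generated ring) yields relative $(\mathrm{T})$ for the pair $(\mathrm{Ep}_{2m}(A),X_{\alpha}(A))$ for \emph{each} root subgroup $X_\alpha$. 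These estimates must be made quantitative — uniform Kazhdan constants for the rank-$2$ seeds, independent of the ring — since that is exactly what the next step consumes; this is the most laborious bookkeeping, and it is where the "awkward" long-root subgroups $B_{i,i}(A),C_{i,i}(A)$ flagged after Lemma~\ref{lem:spcom} require care.

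\textbf{Step 3 (upgrade via the codistance criterion).} Apply the method of Ershov--Jaikin-Zapirain--Kassabov \cite{EJK}: if a group $G$ is generated by subgroups $\{G_J\}$ indexed by a covering of $\Phi$ by its rank-$\leq 2$ subsystems, each $G_J$ acting on any $\pi\nsupseteq 1$ with an almost-invariant-vector-free spectral gap (from Step 2), and if the geometry of $\Phi$ — rank $\geq 2$, so every root sits in a rank-$2$ subsystem and these subsystems overlap — forces the \emph{codistance} of the configuration of fixed subspaces $\{\mathfrak{H}^{G_J}\}$ to be bounded away from $1$ uniformly, then $\mathfrak{H}^G\ne 0$, i.e. $G$ has $(\mathrm{T})$. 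The rank $\geq 2$ hypothesis is used precisely here: in rank $1$ there is essentially a single root subgroup and no overlap, the codistance can equal $1$, and the argument (correctly) fails, e.g. for $\mathrm{SL}_2(\mathbb{Z}[x])$. Specializing $\Phi=C_m$ and $A=\mathbb{Z}[x_1,\ldots,x_k]$ gives $(\mathrm{T})$ for $\mathrm{St}_{C_m}(A)$, hence for its quotient $\mathrm{Sp}_{2m}(\mathbb{Z}[x_1,\ldots,x_k])$ with $m\geq 2$; the latter is finitely generated since $A$ is a finitely generated ring (Lemma~\ref{lem:spfg}).

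\textbf{Main obstacle.} The crux is the quantitative part of Step 2 together with the combinatorics of Step 3: proving that the codistance of the fixed-subspace configuration is uniformly $<1$ needs genuinely more than Neuhauser's \emph{relative} statement — one must extract ring-independent angle/Kazhdan-constant bounds from the explicit rank-$2$ relative pairs (the $\mathrm{Sp}_4$- and $\mathrm{SL}_3$-over-$A$ seeds), analyze how the long-root subgroups sit relative to the short-root copy of $\mathrm{E}_m(A)$ (the complicated commutator structure noted after Lemma~\ref{lem:spcom}, which for general $\Phi$ is the "twisting"), and then verify the codistance estimate uniformly across all reduced irreducible $\Phi$ of rank $\geq 2$. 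That uniform spectral-gap-to-codistance passage is where essentially all the difficulty concentrates.
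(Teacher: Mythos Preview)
The paper does not give its own proof of this theorem: it is stated as a result of Ershov--Jaikin-Zapirain--Kassabov \cite{EJK} and used as a black box (it supplies condition $(v)$ in Theorem~\ref{thm:short}). The only surrounding commentary is the remark that $\mathrm{St}_{C_m}(A)$ surjects onto $\mathrm{Ep}_{2m}(A)$, together with Remark~\ref{rem:Vas}, which explains why Shalom's bounded-generation machinery (Theorem~\ref{thm:shalom}) does \emph{not} apply in the symplectic case and hence why one genuinely needs the techniques of \cite{EJK}.

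Your proposal is therefore not comparable to a proof in the paper, but it is a fair high-level sketch of the strategy of \cite{EJK} itself: seed relative $(\mathrm{T})$ on root subgroups (via rank-$2$ subsystems, with Neuhauser's Theorem~\ref{thm:neu} as one ingredient on the $C_2$ side), make the constants uniform, and then run a spectral/codistance criterion over the root-system combinatorics to upgrade to full $(\mathrm{T})$. You correctly identify where the rank $\geq 2$ hypothesis is used and where the real work lies (the uniform quantitative estimates and the codistance bound). As a sketch this is accurate; just be aware that what you have written is an outline of \cite{EJK}, not something the present paper attempts, and that each of your Steps 2 and 3 hides substantial arguments that you would need to either carry out or cite precisely.
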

Note that there is a surjection from $\mathrm{St}_{\Phi}(A)$ onto $\mathrm{Ep}_{2m}(A)$ for $\Phi$ being the root system of type $C_m$, and that thus the latter statement is a part of the former assertion, which treats notably wide cases. For details of twisted Steinberg group, see \cite{EJK}.

\begin{rem}\label{rem:Vas}
A notion of \textit{bounded generation} is a powerful tool to establish $(\mathrm{T})$ \cite{Sha1}. Recall that for a group $G$, and subsets $(S_j)_{i\in J}$ of $G$ indexed by $J$ with $e\in S_j$, we say $(S_j)_{i\in J}$ \textit{boundedly generates} $G$ if there exists $N\in \mathbb{N}$ such that 
$$
G=\Big(\bigcup_{j\in J}S_j\Big)^N
$$
holds. Namely, if there exists $N\in \mathbb{N}$ such that any element in $G$ can be written as product of $N$ elements of $\bigcup_{j\in J}S_j$. We warn that in some other literature, bounded generation is used for a \textit{confined} situation as follows: $J$ is a finite set, and each $S_j$ is a cyclic subgroup of $G$. 

In \cite{Vas}, L. Vaserstein has proven the following remarkable bounded generations for universal lattices and symplectic universal lattices:
\begin{thm}$($Vaserstein \cite{Vas}$)$\label{thm:vassp}
Let $A_k=\mathbb{Z}[x_1,\ldots l_k]$ for any $k$. Then for any $m\geq 3$, $\mathrm{SL}_{m}(A_k)$ is boundedly generated by the set of elementary matrices and the subgroup $\mathrm{SL}_2(A_k)$, which sits in the left upper corner. 

Furthermore, for any $m\geq 2$, $\mathrm{Sp}_{2m}(A_k)$ is boundedly generated by the set of elementary symplectic matrices and the subgroup $\mathrm{Sp}_2(A_k)$. 

Here $\mathrm{Sp}_{2m}(A_k)$ is realized as in Definition~$\ref{def:symp}$, and $\mathrm{Sp}_2(A_k)$ is realized in $\mathrm{Sp}_{2m}(A_k)$ as in item $(iii)$ of Definition~$\ref{def:idensym}$, namely, 
  $$
  \mathrm{Sp}_2(A_k):=
  \left\{ 
 \left( 
\begin{array}{cc|cc}
a & 0 & b &0 \\
 0 &  I_{m-1} & 0 &0 \\
 \hline
 c & 0 & d &0 \\
 0 & 0  & 0 & I_{m-1} 
\end{array}
\right)
 : \left( 
\begin{array}{cc}
a & b \\
 c & d 
\end{array}
\right) \in \mathrm{Sp}_{2} (A_k)
 \right\}
 \ \leqslant \ \mathrm{Sp}_{2m}(A_k).
$$
\end{thm}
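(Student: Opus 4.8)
I would prove both assertions by a single mechanism: a descending induction on the size of the matrix, in which each step strips off one row and one column at the cost of a number of elementary (respectively elementary symplectic) matrices bounded uniformly in the matrix, leaving a single terminal factor in the corner copy of $\mathrm{SL}_2(A_k)$ (respectively $\mathrm{Sp}_2(A_k)$). For the linear statement, start with $g\in\mathrm{SL}_m(A_k)$, $m\ge3$; its first column $v$ is a unimodular column, and the whole argument hinges on a \emph{quantitative} transitivity fact: there is $N=N(m,k)$ so that $v$ can be carried to the basis vector $e_1$ by a product of at most $N$ elementary matrices. Granting it, write $g=\varepsilon g'$ with $\varepsilon$ such a product of $\le N$ elementary matrices and $g'$ having first column $e_1$; since $g'$ is then block-triangular with an $\mathrm{SL}_{m-1}(A_k)$-block, clearing its first row by $m-1$ more elementary matrices displays $g$ as a product of $\le N+m-1$ elementary matrices and one element of the corner subgroup $\mathrm{SL}_{m-1}(A_k)$. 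Iterating as long as the size stays $\ge3$ and merging all the elementary factors into one word in $\mathrm{E}_m(A_k)$ gives $\mathrm{SL}_m(A_k)=\mathrm{E}_m(A_k)^{\le N'}\cdot\mathrm{SL}_2(A_k)$ for some $N'=N'(m,k)$ (after conjugating the corner into the upper left by a fixed permutation, absorbed into the word), which is exactly the stated bounded generation.

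Everything is thus reduced to the quantitative transitivity fact, and this is where the arithmetic of $A_k=\mathbb{Z}[x_1,\dots,x_k]$ enters. Qualitatively, $\mathrm{E}_m(A_k)$ acts transitively on $\mathrm{Um}_m(A_k)$ for every $m\ge3$; this is of a piece with Suslin's stability theorem $\mathrm{SL}_m(A_k)=\mathrm{E}_m(A_k)$ recalled in Section~\ref{sec:symp} (\cite{Sus}), since every unimodular row of length $\ge3$ over $A_k$ is completable to an element of $\mathrm{SL}_m(A_k)$. What is needed here, however, is that the elementary word realizing this transitivity can be taken of length bounded purely in terms of $m$ and $k$. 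I would obtain this by making the proofs effective: the iterated stable-range reduction disposes of rows of length exceeding $\mathrm{sr}(A_k)\le k+2$ with an obviously bounded number of operations, while the Quillen--Suslin localization-and-patching induction on the number of variables, together with the Suslin monic-polynomial trick, reduces the general case to that regime with bounds depending only on $m$ and $k$ --- the base case $k=0$, $m\ge3$ being the Carter--Keller bounded elementary generation of $\mathrm{SL}_m(\mathbb{Z})$. Extracting such uniform bounds is the hard part, and it is the real content of \cite{Vas}; it is also precisely the terminal $2\times2$ factor, which has determinant $1$ but need not lie in $\mathrm{E}_2(A_k)$, that forces one to keep the whole subgroup $\mathrm{SL}_2(A_k)$ and not merely its elementary part.

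The symplectic statement runs along the identical lines, decreasing $m$ by one per step until one lands in the corner copy of $\mathrm{Sp}_2(A_k)$ from item $(iii)$ of Definition~\ref{def:idensym}. For $g\in\mathrm{Sp}_{2m}(A_k)$, $m\ge2$, the first column is again unimodular, being a member of a symplectic basis; the elementary symplectic generators $B_{i,j},C_{i,j},D_{i,j}$ of Definition~\ref{def:symp}, manipulated through the commutator identities of Lemma~\ref{lem:spcom}, carry it to $e_1$, after which symplectic orthogonality forces the remaining matrix into the corner $\mathrm{Sp}_{2(m-1)}(A_k)$ up to boundedly many more elementary symplectic factors. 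The ingredients are the symplectic analogues of the above: finiteness of a suitable stable-range invariant of $A_k$, the Grunewald--Mennicke--Vaserstein stability $\mathrm{Ep}_{2m}(A_k)=\mathrm{Sp}_{2m}(A_k)$ for $m\ge2$ recalled in Section~\ref{sec:symp} (\cite{GMV}), and an effective form of the transitivity of $\mathrm{Ep}_{2m}(A_k)$ on symplectic unimodular vectors. The bookkeeping is heavier here because of the asymmetry between the generators $B_{i,i},C_{i,i}$ and those of type $B_{i,j},C_{i,j}$ with $i\ne j$ (compare the discussion following Lemma~\ref{lem:spfg}), but no new idea is needed, and the principal obstacle is again the uniformity of the bounds on the number of elementary symplectic matrices, which is exactly where the hypothesis that $A_k$ is a polynomial ring over $\mathbb{Z}$, rather than an arbitrary finitely generated commutative ring, is used.
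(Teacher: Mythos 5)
The paper does not supply a proof of this theorem; it is cited wholesale from Vaserstein's preprint \cite{Vas} inside Remark~\ref{rem:Vas}, so there is no paper-internal argument to compare against.

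That said, your sketch does reproduce the correct architecture of such a proof: reduce the first column to $e_1$ by boundedly many elementary operations, clear the first row, pass to the $(m-1)\times(m-1)$ corner, iterate down to $\mathrm{SL}_2$, and keep the whole corner $\mathrm{SL}_2(A_k)$ precisely because $\mathrm{E}_2(A_k)\subsetneq\mathrm{SL}_2(A_k)$ once $k\ge 1$. The tools you name --- finite stable range of $A_k$, Suslin's monic polynomial trick, effective Quillen--Suslin localization-and-patching, Carter--Keller as the $k=0$ base --- are indeed the ingredients in Vaserstein's argument, and you correctly locate where the symplectic case gets harder (the asymmetry between the short-root generators $B_{i,j}, C_{i,j}, D_{i,j}$ and the long-root generators $B_{i,i}, C_{i,i}$, to be managed through Lemma~\ref{lem:spcom}).

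The gap is that you have not proved the theorem so much as relocated it. The ``quantitative transitivity fact'' --- a uniform bound $N(m,k)$ on the elementary word length carrying an arbitrary unimodular column over $A_k$ to $e_1$ --- is essentially equivalent to the bounded generation one is trying to establish, and after isolating it you simply assert that effective versions of localization-and-patching and the monic-polynomial trick deliver the bound. Nothing you write controls how many localizations are performed, how many stable-range reductions occur inside each one, or the lengths of the elementary words produced at each stage, uniformly over the infinite set of unimodular columns; those uniform estimates are the whole content of \cite{Vas}. You acknowledge this (``Extracting such uniform bounds is the hard part''), which is honest, but it means the proposal is a map of the strategy rather than a proof. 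For the symplectic assertion the sketch is thinner still: after fixing $e_1$ one must also normalize the column of the symplectic partner $e_{m+1}$ before the remainder sits in the corner copy of $\mathrm{Sp}_{2(m-1)}(A_k)$, and you merely gesture at ``symplectic orthogonality'' without exhibiting that second bounded reduction.
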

The following deep theorem, Shalom's machinery, \cite{Sha3} enables us to establish $(\mathrm{T})$ for universal lattices from some relative $(\mathrm{T})$ and Vaserstein's bounded generation above:
\begin{thm}$($Shalom's machinery, \cite{Sha3}$)$\label{thm:shalom}
Suppose a quadraple $(G,H,N_1,N_2)$, where $G$ is a finitely generated group with finite abelianization; and $H,N_1,N_2$ are subgroups in $G$, satisfies the following four conditions:
\begin{enumerate}[$(i)$]
   \item The group $G$ is generated by $N_1$ and $N_2$ together$;$
   \item The subgroup $H$ normalizes $N_1$ and $N_2;$
   \item The group $G$ is boundedly generated 
   by $H, N_1$, and $N_2.$
   \item For both $i\in \{ 1,2 \}$, $N_i \leqslant G$ has relative $(\mathrm{T});$
\end{enumerate}
Then $G$ has property $(\mathrm{T})$.
\end{thm}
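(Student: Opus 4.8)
The plan is to prove the equivalent assertion that $G$ has property $(\mathrm{FH})$; this is legitimate by the Delorme--Guichardet theorem because $G$ is finitely generated. Thus let $(\pi,\mathfrak{H})$ be a unitary representation of $G$ and $b\in Z^1(G;\pi)$ a cocycle, and let us show $b$ is bounded (equivalently, a coboundary). First I would strip off the invariant part: in $\mathfrak{H}=\mathfrak{H}^{\pi(G)}\oplus(\mathfrak{H}^{\pi(G)})^{\perp}$, the component of $b$ in $\mathfrak{H}^{\pi(G)}$ is a group homomorphism from $G$ into the torsion-free abelian group $(\mathfrak{H}^{\pi(G)},+)$, hence factors through $G/[G,G]$; since that quotient is finite the component vanishes, $b$ takes values in $(\mathfrak{H}^{\pi(G)})^{\perp}$, and we may assume $\pi$ has no nonzero invariant vectors. (This is the only place the finite-abelianization hypothesis is used.)

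Next I would invoke $(iv)$: relative $(\mathrm{T})$ of the pair $G\geqslant N_i$ implies relative $(\mathrm{FH})$, so $b|_{N_i}$ is bounded for $i=1,2$. Subtracting a coboundary of $\pi$, which affects neither the cohomology class nor boundedness, I normalise to $b|_{N_1}\equiv 0$; then $V:=\mathfrak{H}^{\pi(N_1)}$ is precisely the $N_1$-fixed set of the associated affine action. Hypothesis $(ii)$ then enters: for $h\in H$ and $n\in N_1$ one has $hnh^{-1}\in N_1$, and the cocycle identity together with $b|_{N_1}\equiv 0$ gives $(\mathrm{id}-\pi(m))b(h)=0$ for every $m\in N_1$, so $b(H)\subseteq V$; moreover $\pi(H)$ preserves $V$ because $H$ normalises $N_1$. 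The same computation applied to $N_2$ --- now $b|_{N_2}$ is only bounded, and $H$ normalises $N_2$ --- yields $\sup_{h\in H}\operatorname{dist}\!\big(b(h),\mathfrak{H}^{\pi(N_2)}\big)<\infty$. Granting that $b|_H$ is bounded, hypothesis $(iii)$ finishes the proof: writing an arbitrary $g\in G$ as $g=u_1\cdots u_L$ with each $u_j\in H\cup N_1\cup N_2$ and $L$ fixed, the cocycle identity gives $b(g)=\sum_{j=1}^{L}\pi(u_1\cdots u_{j-1})b(u_j)$, so $\|b(g)\|\le L\cdot\max\!\big(\sup_{N_1}\|b\|,\sup_{N_2}\|b\|,\sup_{H}\|b\|\big)<\infty$; hence $b$ is bounded, hence a coboundary, and $G$ has $(\mathrm{FH})$, therefore $(\mathrm{T})$.

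The step I expect to be the main obstacle is the one left open above: boundedness of $b|_H$. What we know is that $b(h)$ lies in the closed subspace $\mathfrak{H}^{\pi(N_1)}$ and stays uniformly close to $\mathfrak{H}^{\pi(N_2)}$, while $\mathfrak{H}^{\pi(N_1)}\cap\mathfrak{H}^{\pi(N_2)}=\mathfrak{H}^{\pi(\langle N_1,N_2\rangle)}=\mathfrak{H}^{\pi(G)}=0$ by $(i)$ and the previous reduction; but a vector lying in one subspace and close to another need not have bounded norm unless there is a uniform angle between the two subspaces, and producing such a spectral gap does not follow from relative $(\mathrm{T})$ of the pairs alone once $\pi$ admits almost invariant vectors. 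Following Shalom, I would handle this by passing to the reduced first cohomology $\overline{H}^1(G;\pi)$ --- which for finitely generated $G$ still detects property $(\mathrm{T})$ --- and representing a hypothetical nonzero reduced class by a harmonic cocycle relative to a fixed finite generating set; harmonic cocycles are far more rigid (they have sublinear growth along cyclic subgroups, and their restrictions to subgroups with relative $(\mathrm{T})$ are severely constrained), and together with the normalisations above this rigidity forces the reduced class to vanish. This reduced-cohomology refinement is, to my mind, the technical heart of the theorem; the remainder is the bookkeeping sketched above.
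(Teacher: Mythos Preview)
The paper does not prove this theorem; it is quoted from Shalom~\cite{Sha3} as a black box and then applied. So there is no ``paper's own proof'' to compare against, and your attempt should be judged on its own.

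Your reduction to $(\mathrm{FH})$, the use of finite abelianization to kill the invariant part, and the bookkeeping with relative $(\mathrm{T})\Rightarrow$ relative $(\mathrm{FH})$, normalisation $b|_{N_1}\equiv 0$, and bounded generation are all correct and standard. You also locate the genuine difficulty precisely: from $b(H)\subseteq\mathfrak{H}^{\pi(N_1)}$ and $\sup_{h\in H}\mathrm{dist}(b(h),\mathfrak{H}^{\pi(N_2)})<\infty$ together with $\mathfrak{H}^{\pi(N_1)}\cap\mathfrak{H}^{\pi(N_2)}=0$ one cannot conclude $b(H)$ bounded, because the two invariant subspaces may make an arbitrarily small angle; and since $N_1,N_2$ are not assumed normal in $G$, one cannot even split the $G$-representation along these subspaces to exploit the relative Kazhdan constants. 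Where your proposal falls short is in the resolution: passing to $\overline{H}^1$ and to a harmonic representative is indeed Shalom's route, but your justification---``harmonic cocycles have sublinear growth along cyclic subgroups''---is not correct in general (for fixed $g$ one still has $b(g^n)=\sum_{k<n}\pi(g)^kb(g)$, which is typically linear), and you do not explain what harmonicity actually buys here. The real content is a scaling/ultralimit argument: if $b$ is not an almost-coboundary one extracts, from a sequence $h_n\in H$ with $\|b(h_n)\|\to\infty$, almost $G$-invariant unit vectors (using that $N_1\cup N_2$ contains a generating set and that $b(h_n)$ is $N_1$-invariant and uniformly close to $\mathfrak{H}^{\pi(N_2)}$); one then has to feed this back---via an ultraproduct or via Shalom's finite-dimensional reduction---to contradict nontriviality of the reduced class. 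As written, your last paragraph gestures at this but does not carry it out, so the proof is incomplete at exactly the point you yourself flag.
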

Indeed, for universal lattice $G=\mathrm{SL}_m(A)$ ($A=\mathbb{Z}[x_1,\ldots ,x_k]$, $m\geq 3$, set 
$$
H= \left\{\left( \begin{array}{c|c}
* & 0 \\
\hline
0 & 1 
\end{array}\right) \right\} \cong \mathrm{SL}_{m-1}(A), \ 
N_1= \left\{\left( \begin{array}{c|c}
I_{m-1} & * \\
\hline
0 & 1 
\end{array}\right) \right\} \cong A^{m-1},\ 
N_2= {}^tN_1 =\cong A^{m-1}.
$$
Then conditions $(i)$ and $(ii)$ are trivial, and condition $(iv)$ follows from a well-known result \cite{Sha1}. Condition $(iii)$ is highly non-trivial, but Theorem~\ref{thm:vassp} assures it. There is a generalization of Shalom's machinery for general Banach space case (other than the  Hilbert space case), see Section~\ref{sec:tba} and \cite{Mim1} for details. 

Now we explain why Shalom's machinery does \textit{not} work for symplectic universal lattices. Consider for instance the case of $G=\mathrm{Sp}_{4}(A)$ ($A=\mathbb{Z}[x_1,\ldots x_k]$). Then in the view of Theorem~\ref{thm:neu}, the following triple $(H,N_1,N_2)$ is a standard candidate for pairs with the Shalom's machinery:
\begin{align*}
&H=\mathrm{SL}_{2} (A):=
 \left\{ \left( 
\begin{array}{c|c}
W &   0 \\
\hline
0 & {}^tW^{-1} \\
\end{array}
\right)
 : 
W \in \mathrm{SL}_{2} (A)
 \right\}, \\
 N_1&:=
 \left\{ 
 \left( 
\begin{array}{c|c}
I_2 & v \\
\hline 
 0 &  I_2 
\end{array}
\right)
 :  v \in S^{2*}(A^2)
 \right\}\cong S^{2*}(A^2), 
  N_2:={}^t N_1
\cong S^{2*}(A^2). 
\end{align*} 

The point here is the following: \textit{even though the group $\mathrm{Sp}_2(A)$ and $\mathrm{SL}_2(A)$ are isomorphic as abstract groups, their realization inside $\mathrm{Sp}_{2m}(A)$ are completely different. A group relating to Vaserstein's bounded generation is $\mathrm{Sp}_2(A)$; but a group relating to relative $(\mathrm{T})$ is $\mathrm{SL}_2(A)$}. 

Therefore to prove of property $(\mathrm{T})$ for symplectic universal lattices, one likely needs the different technologies in the work in \cite{EJK}.
\end{rem}

\section{Proof of $\boldsymbol{(\mathrm{TT})/\mathrm{T}}$ for smplectic universal lattices}\label{sec:keyobs}
\subsection{Key observation}\label{subsec:key}
The following is a key to proving Theorem~\ref{thm:TTmodT}. Although this looks similar to Shalom's machinery (Theorem~\ref{thm:shalom}), the proof of it is extremely less involved. 
\begin{thm}\label{thm:short}
Suppose a triple $(G,H,U)$, where $G$ is a countable group; $H\leqslant G$ is a subgroup; and $U\subseteq G$ is a $\mathrm{subset}$ satisfies the following five conditions:
   \begin{enumerate}[$(i)$]
     \item The set $U$ generates $G;$
     \item The set $U$ is invariant under the conjugation of elements in $H$. Namely, for any $h\in H$, $hUh^{-1}\subseteq U;$
     \item The group $G$ is boundedly generated by $P$ and $H$ $($recall the definition from Remark~$\ref{rem:Vas}$$);$
          \item The pair $G\supseteq U$ has relative $(\mathrm{TT});$
     \item The group $G$ has $(\mathrm{T})$.
    \end{enumerate}
Then $G$ has property $(\mathrm{TT})/\mathrm{T}$.
\end{thm}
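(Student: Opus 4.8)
The plan is to fix a unitary $G$-representation $\pi$ with $\pi\not\supseteq 1_G$ and an arbitrary quasi-$\pi$-cocycle $b\colon G\to\mathfrak{H}$, with defect $D:=\sup_{g,g'}\|b(gg')-b(g)-\pi(g)b(g')\|<\infty$, and to show that $b$ is bounded; this is precisely property $(\mathrm{TT})/\mathrm{T}$. (Note that condition $(v)$ together with the Delorme--Guichardet theorem already yields condition $(a)$ of Definition~\ref{defn:TTmT}, but bounding $b$ directly is cleaner.) The strategy parallels Shalom's machinery but is far lighter: bound $b$ on $U$ by condition $(iv)$, then bound $b$ on $H$ using conditions $(i)$, $(ii)$ and $(v)$, and finally bound $b$ on all of $G$ using the bounded generation in condition $(iii)$.

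First I would record two elementary consequences of the quasi-cocycle relation: $\|b(e)\|\le D$ (apply it to $e=e\cdot e$), and $\|b(h^{-1})+\pi(h^{-1})b(h)\|\le 2D$ for every $h\in G$ (apply it to $e=h\cdot h^{-1}$ and use unitarity of $\pi$). By condition $(iv)$ there is a constant $C$ with $\|b(u)\|\le C$ for all $u\in U$. The core step is then the following estimate: for $h\in H$ and $u\in U$, expanding $b(huh^{-1})$ through the quasi-cocycle relation---writing $huh^{-1}=(hu)\cdot h^{-1}$ and $hu=h\cdot u$---and substituting the above estimate for $b(h^{-1})$, one obtains
$$
\bigl\|(I-\pi(huh^{-1}))\,b(h)-\bigl(b(huh^{-1})-\pi(h)b(u)\bigr)\bigr\|\le 4D,
$$
whence $\|(I-\pi(huh^{-1}))b(h)\|\le 2C+4D=:C'$, since $huh^{-1}\in U$ by condition $(ii)$. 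Because $H$ is a subgroup, condition $(ii)$ in fact gives $hUh^{-1}=U$, so as $u$ ranges over $U$ the element $huh^{-1}$ ranges over all of $U$; hence the vector $\xi_h:=b(h)$ satisfies $\|\xi_h-\pi(v)\xi_h\|\le C'$ for every $v\in U$ and every $h\in H$.

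Next I would bring in property $(\mathrm{T})$. Since $G$ is finitely generated (a consequence of $(v)$) and $\langle U\rangle=G$ by $(i)$, there is a finite subset $Q\subseteq U$ with $\langle Q\cup Q^{-1}\rangle=G$; by property $(\mathrm{T})$ there is $\kappa>0$ making $(Q\cup Q^{-1},\kappa)$ a Kazhdan pair, and since $\|\xi-\pi(q^{-1})\xi\|=\|\xi-\pi(q)\xi\|$ and $\pi\not\supseteq 1_G$, this gives $\max_{q\in Q}\|\xi-\pi(q)\xi\|\ge\kappa\|\xi\|$ for all $\xi\in\mathfrak{H}$. Applying this to $\xi=\xi_h$ and using $Q\subseteq U$ gives $\|b(h)\|\le\kappa^{-1}C'$ for all $h\in H$. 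Finally, condition $(iii)$ furnishes $N\in\mathbb{N}$ with $G=(U\cup H)^N$; writing an arbitrary $g\in G$ as a product of at most $N$ elements of $U\cup H$ and iterating the quasi-cocycle relation (with unitarity of $\pi$) yields $\|b(g)\|\le N\max(C,\kappa^{-1}C')+(N-1)D$. Thus $b$ is bounded, and $G$ has $(\mathrm{TT})/\mathrm{T}$.

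The main obstacle is the middle paragraph: one must extract, from the sole fact that $b$ is bounded on $U$, a single vector---namely $b(h)$---that is uniformly almost invariant under the \emph{entire} generating set $U$, so that the Kazhdan inequality (with Kazhdan set inside $U$) can be applied. This is exactly where the $H$-conjugation invariance of $U$, condition $(ii)$, is indispensable: it is what upgrades ``$b(h)$ is almost fixed by $\pi(huh^{-1})$ for each $u\in U$'' to ``$b(h)$ is almost fixed by $\pi(U)$''. Everything else is bookkeeping of the defect constants.
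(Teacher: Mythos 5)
Your proposal is correct and follows essentially the same route as the paper's proof: bound $b$ on $U$ by condition $(iv)$, use the $H$-conjugation invariance of $U$ together with the quasi-cocycle identity to show that each $b(h)$ is a uniformly almost $\pi(U)$-invariant vector, invoke property $(\mathrm{T})$ via condition $(i)$ to bound $b$ on $H$, and finish with bounded generation $(iii)$. The only cosmetic differences are that you expand $b(huh^{-1})$ where the paper expands $b(uh)=b(h(h^{-1}uh))$, and that you make the Kazhdan constant explicit rather than arguing by contradiction with almost-invariant vectors; both are the same estimate in slightly different clothing.
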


\begin{proof}
Let $\pi$ be a unitary $G$-representation with $\pi \not\supseteq 1_G$, and $b$ be a quasi-$\pi$-cocycle. By the definition of quasi-cocycles and condition $(iv)$, one can set \textit{finite} numbers 
$$
C_1:=\sup_{g_1,g_2\in G}\| b(g_1g_2)-b(g_1)-\rho(g_1)b(g_2)\|
,\ 
C_2:=\sup_{u\in U}\|b(u)\|. 
$$
Set $C=\max\{C_1,C_2\}<\infty$. 

We will show $b(H)$ is bounded. Take any $h\in H$ and any $u\in U$. Then we have the following inequalities:
\begin{align*}
&\|\pi(u)b(h)-b(h)\| \leq \|b(uh)-b(u)-b(h)\|+C \\
  \leq & \| b(uh)-b(h)\| +2C  =  \|b(h(h^{-1}uh))-b(h)\|+2C \\
  \leq & \| b(h)+\pi(h)b(h^{-1}uh)-b(h)\|+3C \\
  \leq & \|b(h^{-1}uh)\|+3C  \leq 4C.
\end{align*}
Here in the last line we use condition $(ii)$ and condition $(iv)$. Note that the last dominating term $4C$ is \textit{independent} of the choices of $h\in H$ and $u\in U$.

Now suppose $b(H)$ is not bounded. Then by the inequalities above, this means for any $\epsilon>0$, $\pi$ admits a unit vector $\xi$ which satisfies 
$$
\sup_{u\in U}\| \pi (u)\xi -\xi \| < \epsilon.
$$ 
Since $U$ is a generating set for $G$ (condition $(i)$), this means that $\pi \succeq 1_G$. However this contradicts property $(\mathrm{T})$ for $G$ (condition $(v)$). Therefore $b(H)$ is bounded, as claimed.

Finally, we obtain that $b(G)$ is bounded by employing bounded generation (condition $(iii)$).
\end{proof}
The reason why the proof of this theorem is simple is that the assumption requires property $(\mathrm{T})$ for the whole group, which itself is a big issue to check. Nevertheless, Theorem~\ref{thm:short} can apply to remarkable cases, as we will see in the next subsection. Furthermore, Theorem~\ref{thm:short} becomes more interesting if we consider analogues of $(\mathrm{T})$ and $(\mathrm{FH})$ in Banach space setting (recall from Subsection~\ref{subsec:TT} that $(\mathrm{TT})$ is a strengthening of $(\mathrm{FH})$, and not of $(\mathrm{T})$). We refer to Section~\ref{sec:tba} for details.

\subsection{Proof of Theorem~$\ref{thm:TTmodT}$}\label{subsec:TTmodT}
We need examples of triples $(G,H,U)$ for which Theorem~\ref{thm:short} applies. The following theorem due to N. Ozawa \cite{Oza} provides us with various of examples of group pairs with relative property $(\mathrm{TT})$, which relates to condition $(iv)$:
\begin{thm}$($Ozawa \cite[Proposition 3]{Oza}$)$\label{thm:relTTT}
Let $G=G_0\ltimes A$ be the semidirect product of a $\mathrm{abelian}$ group by a continuous action of $G_0$ $($$G_0$ and $A$ are locally compact second countable groups$)$. Then the following are equivalent:
  \begin{enumerate}[$(i)$]
  \item  The pair $G\geqslant A$ has relative $(\mathrm{T})$.
  \item  The pair $G\geqslant  A$ has relative $(\mathrm{TT})$.
  \end{enumerate}
\end{thm}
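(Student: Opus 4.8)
The plan is to prove Theorem~\ref{thm:relTTT} by establishing the nontrivial implication $(i)\Rightarrow(ii)$, since $(ii)\Rightarrow(i)$ is immediate from the fact that a bounded quasi-cocycle restricted to $A$ is in particular a genuine cocycle up to bounded error, forcing $\pi\mid_A\supseteq 1_A$ whenever $\pi\succeq 1_G$ (more precisely, one runs the standard argument that a quasi-cocycle on an amenable group averages to an honest cocycle, then invokes relative $(\mathrm{T})$). So assume $G\geqslant A$ has relative $(\mathrm{T})$. Let $(\pi,\mathfrak{H})$ be a unitary $G$-representation and $b\colon G\to\mathfrak{H}$ a quasi-$\pi$-cocycle with defect $D:=\sup_{g_1,g_2}\|b(g_1g_2)-b(g_1)-\pi(g_1)b(g_2)\|<\infty$; I must show $\sup_{a\in A}\|b(a)\|<\infty$.

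First I would reduce to the case where $\mathfrak{H}$ has no nonzero $A$-invariant vectors: decompose $\mathfrak{H}=\mathfrak{H}^{\pi(A)}\oplus(\mathfrak{H}^{\pi(A)})^{\perp}$; since $A$ is normal in $G$ the subspace $\mathfrak{H}^{\pi(A)}$ is $G$-invariant, and on it $A$ acts trivially, so the component of $b$ there is (within bounded error) a quasi-cocycle for an $A$-trivial representation — which on the abelian group $A$ is automatically bounded (a quasi-homomorphism into a Hilbert space valued in a trivial module is bounded by the defect, after the averaging trick). Hence I may assume $\mathfrak{H}^{\pi(A)}=0$. Second, I would exploit the semidirect product structure: for $a\in A$ and $g_0\in G_0$ one has $g_0 a g_0^{-1}\in A$, so the conjugation orbit of $A$ stays inside $A$; this is exactly the hypothesis of the type used in Theorem~\ref{thm:short}, and the same four-line estimate there (with $U$ replaced by $A$, noting $b$ is \emph{not} assumed bounded on $A$ yet) will not directly apply. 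Instead, the key step is: since $\mathfrak{H}^{\pi(A)}=0$ and $A$ is abelian, the spectral measure of $\pi\mid_A$ on $\widehat{A}$ gives no atom at the trivial character, and relative property $(\mathrm{T})$ for $G\geqslant A$ says precisely that $1_A$ is isolated in the part of $\widehat{A}$ that appears (with $G$-translation taken into account) — equivalently, there is a Kazhdan pair $(Q,\kappa)$ with $Q\subseteq G$ finite so that any $(Q,\kappa)$-almost-invariant unit vector has nonzero $A$-fixed part. The plan is to run a Shalom-type cocycle reduction: using the decomposition over $\widehat{A}$, write $b\mid_A$ via an $L^2$ section and show that the absence of the trivial character combined with the spectral gap forces the $A$-cocycle part of $b$ to be a coboundary plus a bounded term; bounded quasi-cocycles on $A$ are bounded, done.

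More concretely, the cleanest route I expect to use is the following. Restrict attention to $b\mid_{G_0\ltimes A}=b$ itself and consider, for each $g_0\in G_0$, the "twisted" map; the point is that $b(g_0 a g_0^{-1})$ and $b(a)$ are related by $b(g_0ag_0^{-1})=b(g_0)+\pi(g_0)b(a g_0^{-1})+O(D)=b(g_0)+\pi(g_0)b(a)-\pi(g_0 a)\pi(g_0^{-1})b(g_0)+O(D)$. Thus the function $a\mapsto b(a)-\text{(bounded)}$ transforms under $G_0$-conjugation like a $1$-cocycle for $\pi\mid_A$ twisted by the $G_0$-action, i.e.\ it defines a genuine class in $H^1_{\mathrm{b/}}$-type space detecting $\widetilde{QH}(A;\pi\mid_A)$ compatibly with $G_0$. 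Now I invoke that on the abelian group $A$ one has $\widetilde{QH}(A;\sigma)=H^1(A;\sigma)$ for $\sigma$ any unitary $A$-representation with no invariant vectors up to bounded stuff — actually more simply, a quasi-cocycle on abelian $A$ with no $\sigma$-invariant vectors and a spectral gap away from $1_A$ is bounded, because one can average it over large Følner-type sets of $A$ (when $A$ is merely locally compact abelian, use that it is amenable) to produce a genuine cocycle $b_0$ with $b-b_0$ bounded, and then $b_0$ is a coboundary iff it is bounded, and the spectral gap supplied by relative $(\mathrm{T})$ of the pair $G\geqslant A$ gives that $b_0$, being $G_0$-equivariantly tame and landing in a gapped representation, must be bounded.

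The main obstacle I anticipate is the last point: passing from the spectral gap furnished by \emph{relative} property $(\mathrm{T})$ of the pair $(G,A)$ — which is a statement about $G$-representations, not purely about $\pi\mid_A$ — to an actual bound on the $A$-cocycle, one genuinely needs the $G_0$-action to "see" the gap, and this is where Ozawa's argument presumably uses that $1_A$ is isolated in the $G_0$-orbit closure within $\widehat{A}$ and a Mautner-type or ergodicity argument to upgrade a uniform bound on $\|\pi(a)\xi-\xi\|$ for $\xi$ in a suitable finite-dimensional or spectral slice to boundedness of $b(A)$. I would handle this by a compactness argument on the spectral decomposition of $\pi\mid_A$: cover $\widehat A\setminus\{1_A\}$ by finitely many $G_0$-translates of a small neighbourhood on each of which the restricted cocycle is controlled, then patch. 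If that patching is delicate, the fallback is to reduce directly to the statement $H^1(G;\pi)=0$ for $\pi$ with $\pi\mid_A$ gapped (which is exactly relative $(\mathrm{FH})$, the Delorme--Guichardet form of relative $(\mathrm{T})$), combined with a separate bounded-perturbation lemma asserting that any quasi-cocycle on $G$ whose associated class in $H^2_{\mathrm b}$ dies in $H^2$ and whose cohomology vanishes must be bounded on $A$ — i.e.\ reassemble Lemma~\ref{lem:comp} relativized to $A$.
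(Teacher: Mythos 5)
The paper does not give a proof of this statement at all: it is quoted as Ozawa's \cite[Proposition 3]{Oza}, with a remark that he in fact proves the stronger version with relative $(\mathrm{TTT})$ in place of relative $(\mathrm{TT})$. So there is nothing in the paper to compare against, and I assess your sketch on its own.

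Your first reduction step contains a concrete false claim. You split $\mathfrak{H}=\mathfrak{H}^{\pi(A)}\oplus\bigl(\mathfrak{H}^{\pi(A)}\bigr)^{\perp}$ and dispose of the $\mathfrak{H}^{\pi(A)}$-component by asserting that a quasi-homomorphism from the abelian group $A$ into a Hilbert space carrying the trivial $A$-action is ``automatically bounded \ldots after the averaging trick.'' This is not true: the averaging trick produces a genuine \emph{homomorphism} $\psi\colon A\to\mathfrak{H}^{\pi(A)}$ at bounded distance from the restriction, but $\psi$ has no reason to be zero (take $A=\mathbb{R}$, $\mathfrak{H}^{\pi(A)}=\mathbb{R}$, $\psi=\mathrm{id}$). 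Indeed, without relative $(\mathrm{T})$ the reduction simply fails: with $G_0$ trivial, $G=A=\mathbb{Z}$, $\pi$ trivial, $b(n)=n$ is an unbounded cocycle landing entirely in $\mathfrak{H}^{\pi(A)}$. The correct argument uses relative $(\mathrm{T})$ already at this stage: from the quasi-cocycle identity one gets $\psi(g_0 a g_0^{-1})=\pi(g_0)\psi(a)$ exactly (after homogenizing), so $\psi$ extends to an honest $\pi$-cocycle $G\to\mathfrak{H}^{\pi(A)}$ via $g_0 a\mapsto\pi(g_0)\psi(a)$, and relative Delorme--Guichardet turns relative $(\mathrm{T})$ for $G\geqslant A$ into boundedness of that cocycle on $A$, forcing $\psi=0$. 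The remaining, harder case $\mathfrak{H}^{\pi(A)}=0$ is only gestured at in your sketch; you correctly point toward the spectral decomposition of $\pi\mid_A$ over $\widehat{A}$ and covering $\widehat{A}\setminus\{1_A\}$ by $G_0$-translates of a small set, but you flag the patching yourself as unresolved, and the ``fallback'' (reducing to $H^1(G;\pi)=0$ plus an unspecified bounded-perturbation lemma) merely restates the goal. As a road map toward Ozawa's argument the outline is in the right spirit, but as a proof it opens with a wrong step and leaves the main step unexecuted.
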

In fact, he has shown that under the condition above, relative $(\mathrm{T})$ for $G\geqslant  A$ is equivalent to ``relative property $(\mathrm{TTT})$" for $G\geqslant A$, which looks much stronger than relative property $(\mathrm{TT})$. See \cite{Oza} for details.

Thanks to Ozawa's theorem above, we have the following remarkable examples of Theorem~\ref{thm:short}:
\begin{thm}\label{thm:ex}
Let $A=\mathbb{Z}[x_1,\ldots ,x_k]$. Then the following triples $(G,H,U)$ satisfy the conditions in Theorem~\ref{thm:short}:
 \begin{enumerate}[$(1)$]
   \item $G=\mathrm{SL}_m(A)$ with $m\geq 3;$ $H=G;$ and $U=\bigcup_{g\in G}g\mathcal{E}g^{-1}$.
   \item $G=\mathrm{Sp}_{2m}(A)$ with $m\geq 2;$ $H=G;$ and $U=\bigcup_{g\in G}g\mathcal{S}g^{-1}$.
 \end{enumerate}
 Here $\mathcal{E}\subseteq \mathrm{SL}_m(A)$ is the set of all elementary matrices, and $\mathcal{S}\subseteq \mathrm{Sp}_{2m}(A)$ is the set of all elementary symplectic matrices.
\end{thm}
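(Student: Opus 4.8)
The plan is to verify the five conditions of Theorem~\ref{thm:short} for each triple, invoking the theorems collected in Sections~\ref{sec:symp} and~\ref{sebsec:sympT}. Since $H=G$ in both cases, conditions $(i)$ and $(ii)$ become essentially trivial: $(i)$ holds because $\mathcal{E}$ (resp. $\mathcal{S}$) already generates $\mathrm{SL}_m(A)$ (resp. $\mathrm{Ep}_{2m}(A)=\mathrm{Sp}_{2m}(A)$ by the Suslin and Grunewald--Mennicke--Vaserstein stability theorems), and a fortiori its union of $G$-conjugates does; $(ii)$ holds because $U=\bigcup_{g\in G}g\mathcal{E}g^{-1}$ is by construction invariant under conjugation by every element of $G=H$. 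Condition $(v)$, property $(\mathrm{T})$ for $G$, is exactly the theorem of Shalom--Vaserstein for $\mathrm{SL}_m(A)$, $m\ge 3$, and the theorem of Ershov--Jaikin-Zapirain--Kassabov (Theorem~\ref{thm:sympT}) for $\mathrm{Sp}_{2m}(A)$, $m\ge 2$.

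Condition $(iii)$, bounded generation of $G$ by $U$ and $H=G$, is the one place where one must be a little careful about what is actually being asserted. Since $H=G$, it holds trivially (take $N=1$). However, the point of the argument in the proof of Theorem~\ref{thm:short} that actually uses bounded generation is to propagate boundedness of $b$ on $H$ to boundedness on all of $G$; when $H=G$ there is nothing to propagate, but one still needs to know $b(H)=b(G)$ is bounded, and the mechanism for that is conditions $(ii)$ and $(iv)$. So in practice the nontrivial content is split between $(iv)$ and $(v)$: $(v)$ gives that if $b$ is unbounded on $U$-conjugates-in-$G$ then $\pi\succeq 1_G$, contradicting $(\mathrm{T})$. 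Hence one should phrase $(iii)$ simply as: $G$ is boundedly generated by $U$ and $H=G$ with constant $N=1$.

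The real work is condition $(iv)$: the pair $G\supseteq U$ has relative $(\mathrm{TT})$, i.e. every quasi-cocycle into any unitary representation is bounded on $U=\bigcup_{g\in G}g\mathcal{E}g^{-1}$ (resp. $\bigcup_{g\in G}g\mathcal{S}g^{-1}$). Here is where Ozawa's Theorem~\ref{thm:relTTT} enters. First, since the elementary (symplectic) matrices can be assembled, via the Steinberg/symplectic commutator relations, into abelian subgroups $N$ on which a larger subgroup $G_0$ acts so that $G_0\ltimes N\le G$ and the pair $G_0\ltimes N\ge N$ has relative $(\mathrm{T})$ — for $\mathrm{SL}_m(A)$ one uses the pair $\mathrm{SL}_2(A)\ltimes A^2\ge A^2$ of Shalom, and for $\mathrm{Sp}_{2m}(A)$ the pair $\mathrm{E}_m(A)\ltimes S^{m*}(A^m)\ge S^{m*}(A^m)$ of Neuhauser (Theorem~\ref{thm:neu}) — Ozawa's theorem upgrades this to relative $(\mathrm{TT})$ for the pair $G\supseteq N$: any quasi-cocycle $b$ on $G$ is bounded on each such abelian block $N$. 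Then one observes that a quasi-cocycle is bounded on a finite union of subsets on which it is bounded, and is ``conjugation-stable up to the quasi-cocycle defect'': if $\sup_{n\in N}\|b(n)\|<\infty$ then for $g\in G$, $\|b(gng^{-1})\|\le \|b(n)\|+3C$ by the same three-term estimate used in the proof of Theorem~\ref{thm:short} (adding and subtracting $b(g),b(gn),\pi(g)b(ng^{-1})$, etc.), uniformly in $n$. Since every elementary (symplectic) generator lies in one of finitely many such abelian blocks $N$, we conclude $\sup_{u\in U}\|b(u)\|<\infty$, which is condition $(iv)$.

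The main obstacle I expect is purely bookkeeping in the symplectic case: one must check that the elementary symplectic matrices $B_{i,j}(a),C_{i,j}(a),D_{i,j}(a),B_{i,i}(a),C_{i,i}(a)$ do all sit inside $G$-conjugates of the distinguished abelian unipotent subgroup $S^{m*}(A^m)$ (or the transpose $N_2$) appearing in Neuhauser's relative $(\mathrm{T})$ statement — equivalently, that the finite generating set of Lemma~\ref{lem:spfg} is covered by finitely many conjugates of these two abelian blocks. For $m\ge 2$ this follows from the explicit matrix shapes in Definition~\ref{def:symp} together with item $(ii)$ of Definition~\ref{def:idensym}: the $B$'s and $C$'s are literally elements of $N_1$ and $N_2$ after an obvious permutation/embedding, and the $D_{i,j}(a)$'s, being elementary matrices of the $\mathrm{E}_m(A)$-factor, are handled by conjugating an off-diagonal unipotent by a Weyl-type element, all inside $\mathrm{Sp}_{2m}(A)$. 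Once this covering is in place, the argument is uniform with the $\mathrm{SL}_m$ case and the proof reduces to quoting Theorem~\ref{thm:short}.
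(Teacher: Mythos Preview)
Your treatment of conditions $(i)$, $(ii)$, $(iii)$, and $(v)$ is fine and matches the paper. The gap is in condition $(iv)$, and it is a real one.

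You claim that from $\sup_{n\in N}\|b(n)\|<\infty$ one gets $\|b(gng^{-1})\|\le \|b(n)\|+3C$ ``uniformly'', by the three-term estimate from the proof of Theorem~\ref{thm:short}. But that estimate runs in the \emph{other direction}: it shows $\|\pi(u)b(h)-b(h)\|\le \|b(h^{-1}uh)\|+3C$, not a bound on $\|b(gng^{-1})\|$ in terms of $\|b(n)\|$. If you try to bound $b(gng^{-1})$ directly from the quasi-cocycle relation you pick up $\|b(g)\|+\|b(g^{-1})\|$, which is \emph{not} uniform in $g\in G$. Since $U=\bigcup_{g\in G}g\mathcal{E}g^{-1}$ ranges over all $g$, this kills the argument; relative $(\mathrm{TT})$ for $G\supseteq \mathcal{E}$ does not formally propagate to $G\supseteq U$.

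The paper closes this gap with an extra, nontrivial ingredient you are missing: property $(*)$, a uniform bound $N$ such that every conjugate $gsg^{-1}$ of an elementary (symplectic) matrix is a product of at most $N$ elementary (symplectic) matrices, independently of $g$ and $s$. This is exactly Theorem~\ref{thm:bddgen}, due to Park--Woodburn for $\mathrm{SL}_m(A)$ and Kopeiko for $\mathrm{Sp}_{2m}(A)$. With $(*)$ in hand, boundedness of $b$ on $\mathcal{E}$ (resp.\ $\mathcal{S}$) transfers to $U$ via the quasi-cocycle relation applied to a word of fixed length $N$, giving a bound depending only on $N$ and the defect, not on $g$. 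Without invoking Park--Woodburn/Kopeiko (or some substitute for $(*)$), condition $(iv)$ is not established.
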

Theorem~\ref{thm:ex} together with Theorem~\ref{thm:short} immediately implies Theorem~\ref{thm:TTmodT}.

\begin{proof}(\textit{Theorem~\ref{thm:ex}})
First observe that we only need to confirm condition $(iv)$ (relative $(\mathrm{TT})$). Indeed, conditions $(i)$ and $(ii)$ are trivially satisfied; condition $(iii)$ also immediately follows (because $H$ itself is $G$!); and condition $(v)$ follows from a deep result of Shalom--Vaserstein (for universal lattices, see Remark~\ref{rem:Vas}), and of Ershov--Jaikin-Zapirain--Kassabov, Theorem~\ref{thm:sympT} (for symplectic universal lattices). Also note that the pair $G\supseteq \mathcal{S}$ for item $(2)$ has relative $(\mathrm{TT})$. This follows from an easy fact that $\mathrm{Sp}_{2m}(A)$ contains many copy of $\mathrm{E}_{m}(A) \ltimes S^{m*}(A^m)$$\trianglerighteq S^{m*}(A^m)$, together with Theorem~\ref{thm:neu} and Theorem~\ref{thm:relTTT}. For a similar reason, the pair $G\supseteq \mathcal{E}$ for item $(1)$ also has relative $(\mathrm{TT})$. 

Therefore to prove Theorem~\ref{thm:ex}, it suffices to check that the pair $(H,U_0)$ has the following property, here $U_0=\mathcal{E}$ for item $(1)$ and $U_0=\mathcal{S}$ for item $(2)$:
$$
(*):\left\{
\begin{array}{c}
\textrm{there exists a global bound }N\in \mathbb{N} \\
\textrm{ such that for any }g\in H\textrm{ and for any }s\in U_0\textrm{, }\\
gsg^{-1}\textrm{ can be writen as a product of at most }N \textrm{ elements of }U_0.
\end{array}\right.
$$

For both cases of $(1)$ and $(2)$, it is non-trivial to verify $(*)$. However, by clever matrix computation, it has been verified (implicitly) respectively by Park and Woodburn for $(1)$; and by V. I. Kopeiko for $(2)$:
\begin{thm}$($\cite{PaWo}$;$ \cite{Kop}$)$\label{thm:bddgen}
\begin{enumerate}[$(1)$]
 \item $($Park--Woodburn$)$ The pair $(H,U_0)=(\mathrm{SL}_{m}(A),\mathcal{E})$, $m\geq 3$, satisfies $(*)$.
 \item $($Kopeiko$)$ The pair $(H,U_0)=(\mathrm{Sp}_{2m}(A),\mathcal{S})$, $m\geq 2$, satisfies $(*)$.
 \end{enumerate}
\end{thm}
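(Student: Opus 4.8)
The plan is to reduce property $(*)$, in both cases, to the \emph{effective} form of the relevant stability theorem: Suslin's stability theorem \cite{Sus} for $\mathrm{SL}_m$ over a polynomial ring in case $(1)$, and Kopeiko's symplectic stability theorem \cite{Kop} in case $(2)$. The mechanism is that a conjugate of an elementary generator is a transvection whose direction vector is unimodular, and that the classical proofs that such a transvection is elementary are algorithmic, performing only a \emph{bounded} number of elementary operations.

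For case $(1)$: given $g\in\mathrm{SL}_m(A)$ and an elementary matrix $E_{i,j}(r)$ with $i\ne j$, I would let $v\in A^m$ denote the $i$-th column of $g$ and $u^{t}\in A^m$ the $j$-th row of $g^{-1}$, and record the identity
$$
g E_{i,j}(r) g^{-1}=I_m+r\,v u^{t}.
$$
Here $u^{t}v$ is the $(j,i)$-entry of $g^{-1}g=I_m$, so $u^{t}v=0$ because $i\ne j$; and $v$ is unimodular, being a column of the invertible matrix $g$. Thus $gE_{i,j}(r)g^{-1}$ is a transvection $I_m+v'u^{t}$ with $u^{t}v'=0$ and $v':=rv$. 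Since $m\ge 3$, Suslin's stability theorem \cite{Sus} already guarantees such a transvection lies in $\mathrm{E}_m(A)$; what $(*)$ needs in addition is a bound $N=N(m,k)$, independent of $g$ and $r$, on the number of elementary factors required. I would then invoke the fact that Park--Woodburn's matrix computations \cite{PaWo} supply precisely this quantitative refinement: they write an arbitrary transvection of the above form as a product of at most $N(m,k)$ elementary matrices. This finishes case $(1)$.

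Case $(2)$ I would handle in parallel. The prototype is the elementary symplectic transvections $B_{i,i}(a)$ and $C_{i,i}(a)$: a conjugate $g B_{i,i}(a) g^{-1}$ with $g\in\mathrm{Sp}_{2m}(A)$ is, using ${}^{t}gJ_mg=J_m$, again a symplectic transvection $\tau$ with unimodular direction vector, and Kopeiko's symplectic stability theorem \cite{Kop} writes such a $\tau$ as a bounded product of elementary symplectic matrices for $m\ge 2$. The remaining generators $D_{i,j}(a)$ are, via the commutator relations of Lemma~\ref{lem:spcom}, products of a bounded number of elements of the first two types, so their conjugates are likewise bounded products of conjugates of symplectic transvections; alternatively one quotes Kopeiko's treatment directly, since his reduction applies to conjugates of all elementary symplectic matrices at once. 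Either way one obtains a single $N=N(m,k)$ that works, giving $(*)$ for $(\mathrm{Sp}_{2m}(A),\mathcal{S})$.

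The hard part is the bounded factorization itself: a transvection (respectively a symplectic transvection) has to be written as a product of elementary matrices using a number of factors bounded independently of the entries of the matrix. Qualitatively this is the stability theorem of Suslin (respectively Kopeiko); the extra point --- the genuine obstacle in a self-contained treatment --- is that their classical algorithmic proofs use a number of elementary row-and-column operations bounded independently of the particular unimodular vector being completed, so that one uniform $N$ suffices for all $g$ and $s$. This is the ``clever matrix computation'' referred to in the statement, and since it is carried out in \cite{PaWo} for $\mathrm{SL}_m$ and in \cite{Kop} for $\mathrm{Sp}_{2m}$, I would simply cite those papers rather than reproduce the computation.
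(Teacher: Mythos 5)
Your proposal is correct and takes essentially the same route as the paper: both start from the observation that $gE_{i,j}(r)g^{-1}=I_m+v\,u^{t}$ is a transvection with unimodular $v$ and $u^{t}v=0$, and both defer the quantitative bounded factorization to Park--Woodburn for $\mathrm{SL}_m$ and to Kopeiko for $\mathrm{Sp}_{2m}$. The only difference is that the paper goes one step further before citing: it spells out the Suslin splitting $I_m+v\,w=\prod_{l<n}\bigl(I_m+v\,b_{l,n}(v_n\mathbf{e}_l-v_l\mathbf{e}_n)\bigr)$ into Mennicke-type factors and quotes Mennicke's bound of $2m+4$ elementary matrices per factor, yielding the explicit global bound $(m+2)m(m-1)$; you stop at citing the quantitative form of the stability theorem, which is fine since the statement is attributed to those papers anyway.
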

Here we just sketch the proof of item $(1)$ of Park--Woodburn. Take $s=E_{i,j}(a)\in \mathcal{S}$. Then for $g\in H$, 
$$
gsg^{-1}= I_m+(i\textrm{-th column vector of }g)\cdot a\cdot (j\textrm{-th column vector of }g^{-1}).
$$
Let $\mathbf{v}={}^t(v_1,\ldots ,v_m)$ be the $i$-th column vector of $g$, $\mathbf{w}=(w_1,\ldots ,w_m)$ be $a$ times the $j$-th row vector of $g^{-1}$, and $(g_1,\ldots ,g_m)$ be the $i$-th row vector of $g^{-1}$. Then since $i\ne j$, 
$
\sum_{l=1}^{m}g_lv_l=1$ and $\sum_{l=1}^{m}w_lv_l=0$. 
Therefore by letting $b_{l,n}=w_lg_n -w_n g_l$, one has 
$
\mathbf{w}=\sum_{l<n}b_{l,n}(v_n\mathbf{e}_l-v_l\mathbf{e}_n)
$, 
and thus (the following are due to Suslin),
\begin{align*}
 &gsg^{-1}=I_m + \mathbf{v}\cdot \mathbf{w} = I_m + \mathbf{v}\cdot \Big( \sum_{l<n}b_{ln}(v_n\mathbf{e}_l-v_l\mathbf{e}_n) \Big) \\
 =& I_m + \sum_{l<n} \mathbf{v}\cdot b_{l,n}(v_n\mathbf{e}_l-v_l\mathbf{e}_n) 
 = \prod_{l<n} (I_m + \mathbf{v}\cdot b_{l,n}(v_n\mathbf{e}_l-v_l\mathbf{e}_n)).
\end{align*}
Mennicke has shown that each factor in the very below side of the equalities above can be written as a product of bounded numbers of (only depend on $m$ and does not depend on $g$ and $s$), precisely at most $8+2(m-2)=2m+4$, elementary matrices. Therefore, for any $g\in H$ and $s\in U_0$ $gsg^{-1}$ is a product of at most $(m+2)m(m-1)$ elements in $U_0$, and this bound is independent of the choices of $g$ and $s$. For details, consult Lemma 2.3, Lemma 2.6, and Corollary 2.7 in \cite{PaWo}. 

The proof of item $(2)$ of Theorem~\ref{thm:bddgen} is much more complicated, but it is still done by direct matrix computation. See \S 1 of the paper \cite{Kop} of Kopeiko. 

Now Theorem~\ref{thm:bddgen} and thus condition $(iii)$ in Theorem~\ref{thm:short} has been verified in both cases. This ends our proof.
\end{proof}

\begin{rem}\label{rem:bdd}
If one looks at the proof above carefully, one may notice that property $(*)$ has potential for \textit{avioding} a deep result of bounded generation (condition $(iii)$ in Theorem~\ref{thm:short}). Instead, one needs to show another ``bounded generation" $(*)$ to check condition $(iv)$. However as we have seen in above, a proof of $(*)$ may be done by some elementary (but clever) computation, without deep tools. 

In view of this, it is natural to ask the following questions:
\begin{que}\label{que:chev}
 \begin{enumerate}[$(a)$]
    \item Is property $(*)$ is satisfied for the pair 
    $(H,U_0)=(\mathrm{E}_m(R),\mathcal{E})$, $m\geq 3$? Here 
    $R$ is the noncommutative polynomial ring $\mathbb{Z}\langle x_1,\ldots ,x_k\rangle$ and $\mathcal{E}$ is the set of all elementary matrices.
    \item For a reduced irreducible classical root system $\Phi$ and a commutative unital ring $A$, one can construct the \textit{elementary Chevalley group} $\mathrm{E}_{\Phi}(A)$ as the multiplicative group generated by the root subgroups with respect to the standard torus. Let $\mathcal{E}$ be the set of all elementary matrices with respect to $\Phi$. The question is the following: \textit{for $\Phi$ of rank $\geq 2$, does the pair $(H,U_0)=(\mathrm{E}_{\Phi}(A),\mathcal{E})$ satisfy $(*)$?}
    \end{enumerate}
\end{que}
Note that property $(\mathrm{T})$ for the groups above has been shown, respectively by Ershov--Jaikin-Zapirain \cite{ErJa} for $(a)$; and by  Ershov--Jaikin-Zapirain--Kassabov \cite{EJK} for $(b)$. Therefore positive answers will imply property $(\mathrm{TT})/\mathrm{T}$ and thus also homomorphism superridigity into $\mathrm{MCG}(\Sigma)$ and into $\mathrm{Out}(F_n)$. Moreover, at least in case $(a)$, positive answer to Question~\ref{que:chev} will imply property $(\mathrm{FF}_{\mathcal{L}_p})/\mathrm{T}$ (and in particular $(\mathrm{F}_{\mathcal{L}_p})$) for noncommutative universal lattices with $m\geq 4$. For details of this, see Section~\ref{sec:tba}. 

There is a work of G. Taddei which shows the normality of $\mathrm{E}_{\Phi}(A)$ in the Chevalley group. However this proof does not verify the existence of such a global bound  in $(*)$. For noncommutative ring, the argument in \cite{PaWo} does not work. 
Finally we note that bounded generation of the whole group, instead of checking just $(*)$, seems to be an extremely challenging problem.
\end{rem}

\begin{rem}
If one appeals to Vaserestin's bounded generation (Theorem~\ref{thm:vassp}), then one can get triples $(G,H,U)$ in Theorem~\ref{thm:short} with small $H$. For instance, for $A=\mathbb{Z}[x_1,\ldots ,x_k]$, the following triples work:
\begin{itemize} 
  \item $G=\mathrm{SL}_{m}(A)$ with $m\geq 3$; $H=\mathrm{SL}_2(A)$ in the left upper corner of $G$; and $U=\bigcup_{h\in H}h\mathcal{E}h^{-1}$.
  \item $G=\mathrm{Sp}_{2m}(A)$ with $m\geq 2$; $H=\mathrm{Sp}_2(A)$ realized in $G$ as in item $(iii)$ of Definition~$\ref{def:idensym}$; and $U=\bigcup_{h\in H}h\mathcal{S}h^{-1}$.
\end{itemize}
In these cases, it is easy to check $(*)$.
\end{rem}

\begin{rem}\label{rem:TTq}
As in Section~\ref{sec:prem}, it is not known whether universal lattices have $(\mathrm{TT})$. However by a matrix computation is \cite{PaWo} and a previous result of the author \cite{Mim2}, we have at least the following:

\begin{prop}\label{pro:univTT}
Let $A=\mathbb{Z}[x_1,\ldots ,x_k]$ with $k\in \mathbb{N}$. Suppose $m\geq 6$. Then $\mathrm{SL}_m(A) \geqslant \mathrm{U}_2(A)$ has relative property $(\mathrm{TT})$. Here $\mathrm{U}_2(A)$ denotes the multiplicative group of unipotent matrices in $\mathrm{M}_2(A)$, which sits in the left upper corner of $\mathrm{SL}_m(A)$.
\end{prop}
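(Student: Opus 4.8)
The plan is to reduce relative property $(\mathrm{TT})$ for $\mathrm{SL}_m(A)\geqslant\mathrm{U}_2(A)$ to a single uniform bound. Since in the stated embedding $\mathrm{U}_2(A)=\{E_{1,2}(a):a\in A\}$, it is enough to show that for every unitary representation $(\pi,\mathfrak H)$ of $G:=\mathrm{SL}_m(A)$ and every quasi-$\pi$-cocycle $b$ one has $\sup_{a\in A}\|b(E_{1,2}(a))\|<\infty$. So I would fix such a $\pi$ and $b$, normalize $b(e)=0$, set $D:=\sup_{g,h\in G}\|b(gh)-b(g)-\pi(g)b(h)\|<\infty$, and record the two standard estimates that carry the whole argument: $\|b(x_1\cdots x_N)\|\le\sum_{i=1}^N\|b(x_i)\|+(N-1)D$, and (using $\|b(g^{-1})\|\le\|b(g)\|+D$) $\|b([y,z])\|\le 2\|b(y)\|+2\|b(z)\|+6D$. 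It then suffices to produce a subset $\mathcal U\subseteq G$ and an integer $N=N(m)$, both independent of $a$, such that $b$ is bounded on $\mathcal U$ by a constant depending only on $b$ and $m$, and such that every $E_{1,2}(a)$ is a product of at most $N$ elements of $\mathcal U\cup\mathcal U^{-1}$.

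The bounded set $\mathcal U$ comes from the previous work \cite{Mim2}, which (for $m$ above a rank threshold) supplies a unipotent subgroup $Q\le G$ such that the pair $G\geqslant Q$ has relative property $(\mathrm{TT})$; hence $b$ is bounded on $Q$. To pass from $Q$ to all of $\mathrm{U}_2(A)$ one invokes the Suslin--Mennicke matrix identities used by Park--Woodburn \cite{PaWo}, which rewrite any $\mathrm{SL}_m(A)$-conjugate of an elementary matrix — equivalently any element of a $G$-conjugate of $Q$, and in particular any $E_{1,2}(a)$ — as a product of at most $(m+2)m(m-1)$ elementary matrices drawn from a fixed supply (e.g. starting from a Steinberg relation $E_{1,2}(a)=[E_{1,3}(a),E_{3,2}(1)]$ and expanding). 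Taking $\mathcal U$ to be the $G$-conjugates of elements of $Q$ together with the finitely many fixed elementary matrices appearing in these identities, one gets $\sup_{\mathcal U}\|b\|<\infty$ and the bounded factorization simultaneously; applying the two elementary estimates above then yields $\sup_{a}\|b(E_{1,2}(a))\|<\infty$, which is the proposition.

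The hypothesis $m\ge 6$ is precisely the combined cost of having both inputs available: the relative $(\mathrm{TT})$ of $G\geqslant Q$ is established in \cite{Mim2} only above a rank threshold — in the spirit of the $m\ge 4$ range that already appears in Theorem~\ref{thm:FFLp} — and the bounded rewriting of \cite{PaWo} consumes several further auxiliary indices, so the two requirements stack to $m\ge 6$. I expect the genuine obstacle to be exactly this last step, the uniform-in-$a$ factorization: a naive recursion through Steinberg relations, expressing $E_{1,2}(a)$ for $a=s_1\cdots s_r$ in terms of the ring generators $s_i$, produces a number of elementary factors that grows with $a$, and it is the clever but elementary matrix computations of Park--Woodburn that replace this by a bound depending only on $m$ — which is what lets the boundedness of $b$ propagate from $Q$ out to $\mathrm{U}_2(A)$. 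Everything else should be routine given the results already stated.
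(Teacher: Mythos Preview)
There is a genuine gap: you have misread $\mathrm{U}_2(A)$. It is not the one-parameter subgroup $\{E_{1,2}(a):a\in A\}$ but the set of \emph{all} unipotent matrices in $\mathrm{M}_2(A)$, i.e.\ all $s$ with $(s-I_2)^2=0$, sitting in the upper-left $2\times 2$ block of $\mathrm{SL}_m(A)$. Over the UFD $A$ a general such matrix has the shape
\[
s=\begin{pmatrix}1+auv & -au^2\\ av^2 & 1-auv\end{pmatrix},\qquad a,u,v\in A,
\]
and is not elementary. What your proposal actually argues---boundedness of every quasi-cocycle on $\{E_{1,2}(a):a\in A\}$---is just a special case of relative $(\mathrm{TT})$ for $G\supseteq\mathcal{E}$, which the paper has already obtained (via Ozawa's Theorem~\ref{thm:relTTT}) without any Park--Woodburn input and with no constraint $m\ge 6$. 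So the proposal both misses the target set and over-engineers a triviality; in particular your invocation of \cite{PaWo} to ``factor $E_{1,2}(a)$ as a bounded product of elementaries'' is vacuous, since $E_{1,2}(a)$ is itself elementary.

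The paper's argument has a different architecture. First, property $(\mathrm{TT})/\mathrm{T}$ for $G$ (Theorem~\ref{thm:TTmodT}) reduces the problem to quasi-cocycles into the \emph{trivial} representation, i.e.\ to quasi-homomorphisms; you omit this reduction entirely. For such $\phi$, every elementary matrix in $\mathrm{SL}_m(A)$ is conjugate to its own inverse, hence $\phi$ is bounded on elementaries. The Mennicke computation on p.~281 of \cite{PaWo} is then used---for a different purpose than the one you describe---to express the general unipotent $s$ above as a product of $8$ elementary matrices inside $\mathrm{E}_3(A)$. Finally, the hypothesis $m\ge 6$ enters through Theorem~\ref{thm:key} (from \cite{Mim2}), which is a statement about homogeneous quasi-homomorphisms on $\mathrm{E}_6(R)$: for $g\in\mathrm{E}_2(R)$ and $s\in\mathrm{E}_2(R)$ with $(s-I_2)^2=0$ and $g+g^{-1}$ central (automatic when $R$ is commutative), one has $\phi(gs)=\phi(g)+\phi(s)$. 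This is the mechanism that transfers the boundedness on elementaries to all of $\mathrm{U}_2(A)$; your account of $m\ge 6$ as ``stacked rank thresholds'' from an unspecified subgroup $Q$ with relative $(\mathrm{TT})$ does not correspond to anything in \cite{Mim2} or in the paper.
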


\begin{proof}
Thanks to $(\mathrm{TT})/\mathrm{T}$, we only need to verify boundedness of quasi-cocycles into trivial representation, which is called a \textit{quasi-homomorphisms} \cite{Cal}. It is easy to see that such a quasi-cocycle is bounded on the set of elementary matrices (because every elementary matrices in $\mathrm{SL}_m(A)$ is conjugate to its inverse). 
We use the fact that the ring $A$ is a unique factorization domain. Therefore any $2\times 2$ unipotent matrix $s$ is of the following form:
$$
s=\left(
\begin{array}{cc}
1+auv & -au^2 \\
av^2  & 1-auv
\end{array}
\right).
$$
Here $a,u,v\in A$. P. 281 of \cite{PaWo} (Mennicke) states this matrix is a product of $8$ elementary matrices in $\mathrm{E}_3(A)(=\mathrm{SL}_3(A))$. Finally, we appeal to the following result (recall that a quasi-homomorphism is said to be \textit{homogeneous} if it is a homomorphism on every cyclic subgroups):
\begin{thm}$($Compare with \cite[Claim, p. $3524$]{Mim2}$)$\label{thm:key}
Let $R$ be a $($possibly noncommutative$)$ associative unital ring. Suppose $g\in \mathrm{E}_2(R)$ and $s\in \mathrm{E}_2(R)$ satisfy the following conditions:
\begin{enumerate}[$(i)$]
  \item The matrix $g+g^{-1}\in \mathrm{M}_2(R)$ commutes with $s$.
  \item The equality $(s-I_2)^2=0$ holds.
\end{enumerate}
Then for any homogeneous quasi-homomorphism $\phi$ on $\mathrm{E}_6(R)$, we have
$$
\phi(gs)=\phi(sg)= \phi(g)+\phi(s).
$$
Here we put $\mathrm{E}_2(R)$ in the left upper corner of $\mathrm{E}_6(R)$, and view $g,s\in\mathrm{E}_2(R)$ as elements in $\mathrm{E}_6(R)$ with this identification.
\end{thm}
This theorem can be shown in the same argument as in \cite{Mim2}. Since for any $g\in \mathrm{E}_2(R)$, $g+g^{-1}$ is in the center of $\mathrm{M}_2(R)$ provided $R$ is commutative, Theorem~\ref{thm:key} shows that every quasi-homomorphism on $\mathrm{SL}_m(A)$ is bounded on $\mathrm{U}_2(A)$. This ends our proof (usually the terminology ``quasi-homomorphism" is used for the case of $(\pi,\mathfrak{H})=(1_G,\mathbb{R})$. However Theorem~\ref{thm:key} holds for any quasi-cocycle into trivial representation on any Banach space).
\end{proof}

We warn the following: in view of Theorem~\ref{thm:vassp}, it may seem that Proposition~\ref{pro:univTT} would imply $(\mathrm{TT})$ for universal lattices of degree $\geq 6$. However, there is a gap, which lies in the fact that $\mathrm{U}_2(\mathbb{Z}[x])$ is a quite \textit{small} subgroup in $\mathrm{SL}_2(\mathbb{Z}[x])$. See \cite{KrMc} for precise meaning of this.
\end{rem}

\section{Measure equivalence and induction of quasi-cocycles}\label{sec:ME}
\subsection{Definition and ME-cocycles}\label{subsec:ME}
We shortly recall definition of the measure equivalence and related concepts. For comprehensive treatments, we refer the readers to surveys of A. Furman \cite{Fur2} and of Shalom \cite{Sha4}.

\begin{defn}\label{def:me}
\begin{enumerate}[$(i)$]
 \item Two countable (infinite discrete) group $\Gamma,\Lambda$ are \textit{measure equivalent} (or shortly \textit{ME}) if there exists an infinite measure space $(\Omega,m)$ with a measurable, measure preserving action of $\Gamma\times \Lambda$ such that the action of each of the groups $\Gamma,\Lambda$ admit \textit{finite measure} fundamental domains:
 $$
 \Omega= \bigsqcup_{\gamma\in \Gamma}\gamma Y=\bigsqcup_{\lambda\in \Lambda}\lambda X.
 $$
 The space $(\Omega,m)$ is called a $(\Gamma,\Lambda)$\textit{-coupling} or an \textit{ME-coupling}.
 \item Suppose $(\Omega,m)$ be an ME-coupling of two groups $\Gamma$ and $\Lambda$ and let $Y,X$ be fundamental domains for $\Gamma$, $\Lambda$ actions respectively. The \textit{retraction} associated with $X$ is the measurable $\Lambda$-equivariant map $\kappa\colon \Omega \to \Lambda$ defined as follows: for $\omega\in \Omega$, $\kappa(\omega)^{-1}\omega \in X$. From this, we obtain a map 
 $\alpha\colon \Gamma \times X \to \Lambda$ defined by 
 $ \alpha(\gamma ,x)=\kappa (\gamma x)^{-1}$ for $\gamma\in \Gamma$, $x\in X$. Similarly, one can define the retraction associated with $Y$ and a map $\beta \colon \Lambda \times Y \to \Gamma$. These maps $\alpha,\beta$ are called \textit{ME-cocycles}. 
 \end{enumerate}
 \end{defn}
A motivating example is that $\Gamma$ and $\Lambda$ are lattices in the same locally compact group $G$. Then $G$ with a Haar measure gives an ME-coupling (recall that existence of a lattice implies $G$ is unimodular), and a corresponding $\Gamma\times \Lambda$ action is the left multiplication of $\Gamma$ and the right inverse multiplication of $\Lambda$ on $G$.

We have a natural $\Gamma$-action on $X$, and denote by $\gamma \cdot x$ ($\gamma\in \Gamma, x\in X$) to distinguish it from the $\Gamma$-action on $\Omega$. In this setting, the ME-cocycle $\alpha\colon \Gamma \times X \to \Lambda$ satisfies the following:
$$
\textrm{for any $\gamma\in \Gamma$ and $x\in X$, }\gamma \cdot x =\alpha(\gamma, x) \gamma x.
$$
Therefore this $\alpha$ is a \textit{Borel cocycle}, namely, a Borel measurable map satisfying the \textit{cocycle identity}:
$$
\textrm{for any $\gamma_1,\gamma_2\in \Gamma$ and $x\in X$, \ \ }
\alpha (\gamma_2 \gamma_1,x)=\alpha (\gamma_2,\gamma_1 \cdot x)\alpha(\gamma_1,x).
$$

Note that ME is indeed an equivalence relation, see \cite{Fur2} for the proof. Also, for any ME coupling, one can replace it with \textit{ergodic} one, see Lemma 2.2 of \cite{Fur1}.

\subsection{Induction process}\label{subsec:ind}
One good feature of ME is that one can induce unitary representation and bounded cohomology. Also, induction of  (quasi-)cocycles is available if some integrability condition (recall the statement in Theorem~\ref{mthm:ME}) is fulfilled. Here we only treat $L^2$-induction of a Hilbert (unitary) $\Gamma$-module, which fits our purpose. For our use in below, we will consider $\Gamma$-induction of $\Lambda$-module, for an ME-coupling $(\Omega,m)$ of $\Gamma$, $\Lambda$.

Let $(\sigma, \mathfrak{H})$ be a unitary $\Lambda$-representation. First, we define the induced $\Gamma$-module $(\Omega \mathbf{I}_{\Lambda}^{\Gamma}\sigma, \Omega_{\Lambda}^{\Gamma}\mathfrak{H})$ by
\begin{align*}
\Omega_{\Lambda}^{\Gamma}\mathfrak{H}&=L^{[2]}(\Omega ,\mathfrak{H})^{\Lambda} \\
&=\{ f\colon \Omega \to \mathfrak{H} \textrm{ with }f(\lambda \omega)=\sigma(\lambda) f(\omega)\textrm{ and }\|f\|^2_2= \int_{X} \|f(x)\|^2dm(x)<\infty \},
\end{align*}
where $X\cong \Omega/\Lambda$ is a $\Lambda$-fundamental domain and the $\Gamma$-representation is given by translation. Equivalently, $\Omega_{\Lambda}^{\Gamma}\mathfrak{H}\cong L^2(X,\mathfrak{H})$ with the twisted $\Gamma$-action defined a.e. by $$
((\Omega \mathbf{I}_{\Lambda}^{\Gamma}\sigma)(\gamma) f)(x)=
\sigma (\alpha(\gamma^{-1},x)^{-1})f(\gamma^{-1}\cdot x)
$$
for the associated ME-cocycle $\alpha\colon \Gamma \times X \to\Lambda$. Then $\Omega \mathbf{I}_{\Lambda}^{\Gamma}\sigma$ becomes a unitary $\Gamma$-representation. 

Secondly, we see induction of the bounded cohomology. Fix a $\Lambda$-fundamental domain $X$ and a retract $\kappa\colon \Omega \to \Lambda$. For $f\in \ell^2(\Lambda^{n+1};\mathfrak{H})^{\Lambda}$, define 
$$
\Omega \mathbf{i}_{\Lambda}^{\Gamma}f(\gamma_0,\ldots ,\gamma_n)(\omega):=
f(\kappa(\gamma_0^{-1}\omega),\ldots ,\kappa(\gamma_n^{-1}\omega)) \in \ell^{\infty}(\Gamma^{n+1};L^{[2]}(\Omega,\mathfrak{H})^{\Lambda})^{\Gamma}.
$$
The point here is that $\Omega \mathbf{i}_{\Lambda}^{\Gamma}f$ is $2$-integrable because we treat bounded cocycles (this is why the induction process for \textit{bounded} cohomology behaves goodly).

This map induces the well-defined morphism
$$
\Omega \mathbf{i}_{\Lambda}^{\Gamma}\colon H^n_{\mathrm{b}}(\Lambda;\sigma) \to H^n_{\mathrm{b}}(\Gamma;\Omega \mathbf{I}_{\Lambda}^{\Gamma}\sigma).
$$

Finally, we explain induction of (quasi-)cocycles. Let $b\colon \Lambda \to \mathfrak{H}$ be a quasi-$\sigma$-cocycle. Then by the identification $\Omega_{\Lambda}^{\Gamma}\mathfrak{H}\cong L^2(X,\mathfrak{H})$, we hope to define the induced $\Omega \mathbf{I}_{\Lambda}^{\Gamma}\sigma$-cocyle $\tilde{b}$ by the following formula:
$$
\textrm{for $\gamma\in \Gamma$ and $x\in X$, \ }\tilde{b}(\gamma)(x):=b(\alpha(\gamma^{-1},x)^{-1}).
$$
However, this is \textit{not} necessarily $2$-summable (, namely, possibly $\tilde{b}$ does \textit{not} range in $L^2(X,\mathfrak{H})$). One condition, called the $L^2$-condition, assures that the induction above is well-defined:
\begin{lem}\label{lem:2ind}
Stick to the setting in this subsection. Suppose $\Lambda$ is finitely generated and there exists a $\Lambda$-fundamental domain $X$ such that the associated ME cocycle $\alpha \colon \Gamma \times X \to \Lambda$ satisfies the $L^2$-$\mathrm{condition}$: 
$$
\mathrm{for\ any\ \gamma \in \Gamma ,\ }|\alpha(\gamma,\cdot )|_{\Lambda} \in L^2(X).
$$
Here $|\cdot|_{\Lambda}$ is a word metric on $\Lambda$ with respect to a finite generating set of $\Lambda$. Then for any unitary $\Lambda$-representation $(\sigma, \mathfrak{H})$ and any quasi-$\sigma$-cocycle $b$, the $(L^2$-$)$induction $\tilde{b}$ becomes a quasi-$\Omega\mathbf{I}_{\Lambda}^{\Gamma}\sigma$-cocycle $\Gamma \to L^2(X,\mathfrak{H})$.

In particular, the induction map of bounded cohomology
$$
\Omega \mathbf{i}_{\Lambda}^{\Gamma}\colon H^n_{\mathrm{b}}(\Lambda;\sigma) \to H^n_{\mathrm{b}}(\Gamma;\Omega \mathbf{I}_{\Lambda}^{\Gamma}\sigma),
$$
associated with this fundamental domain $X$ maps $\widetilde{QH}(\Lambda ;\sigma)$ inside $\widetilde{QH}(\Gamma ;\Omega\mathbf{I}_{\Lambda}^{\Gamma}\sigma)$.
\end{lem}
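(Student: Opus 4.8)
The plan is to verify directly that the formula $\tilde b(\gamma)(x):=b(\alpha(\gamma^{-1},x)^{-1})$ defines an element of $L^2(X,\mathfrak H)$ for each $\gamma$, and then that $\gamma\mapsto\tilde b(\gamma)$ satisfies the quasi-cocycle inequality with respect to $\Omega\mathbf I_\Lambda^\Gamma\sigma$. The only new input beyond the bookkeeping already set up in this subsection is the elementary comparison: if $b$ is a quasi-$\sigma$-cocycle on a finitely generated group $\Lambda$, then $b$ grows at most linearly in the word length, i.e. there is a constant $C=C(b,S)$ with $\|b(\lambda)\|\le C(|\lambda|_\Lambda+1)$ for all $\lambda\in\Lambda$. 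This is the standard fact that a quasi-cocycle is a ``quasi-Lipschitz'' map: writing $\lambda=s_1\cdots s_n$ with $n=|\lambda|_\Lambda$ and each $s_i\in S$ and iterating the defining inequality $\|b(gh)-b(g)-\sigma(g)b(h)\|\le D$ together with $\max_{s\in S}\|b(s)\|=:M$, one gets $\|b(\lambda)\|\le n(M+D)+M$, since $\sigma$ is isometric. First I would record this linear bound. Combined with the hypothesis $|\alpha(\gamma,\cdot)|_\Lambda\in L^2(X)$ (and $|\alpha(\gamma,\cdot)^{-1}|_\Lambda=|\alpha(\gamma,\cdot)|_\Lambda$ since $|\cdot|_\Lambda$ is symmetric), this gives
\[
\int_X\|\tilde b(\gamma)(x)\|^2\,dm(x)\le \int_X C^2\bigl(|\alpha(\gamma^{-1},x)|_\Lambda+1\bigr)^2\,dm(x)<\infty,
\]
using $m(X)<\infty$, so $\tilde b(\gamma)\in L^2(X,\mathfrak H)$ as required.

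Next I would check the quasi-cocycle inequality. Using the cocycle identity $\alpha(\gamma_1\gamma_2,x)=\alpha(\gamma_1,\gamma_2\cdot x)\alpha(\gamma_2,x)$ (rewritten for the inverse-argument convention used in the definition of $\Omega\mathbf I_\Lambda^\Gamma\sigma$), one computes pointwise, for a.e.\ $x$,
\[
\bigl(\tilde b(\gamma_1\gamma_2)-\tilde b(\gamma_1)-(\Omega\mathbf I_\Lambda^\Gamma\sigma)(\gamma_1)\tilde b(\gamma_2)\bigr)(x)
= b(\mu\nu)-b(\mu)-\sigma(\mu)b(\nu),
\]
where $\mu=\mu(\gamma_1,x)$ and $\nu=\nu(\gamma_1,\gamma_2,x)$ are the appropriate values of the cocycle $\alpha$; the right-hand side has norm $\le D$ for a.e.\ $x$ by the quasi-cocycle property of $b$. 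Hence the $L^2(X,\mathfrak H)$-norm of the left-hand side is $\le D\,m(X)^{1/2}$, a bound independent of $\gamma_1,\gamma_2$, so $\tilde b$ is a quasi-$\Omega\mathbf I_\Lambda^\Gamma\sigma$-cocycle. The main care here is purely notational: matching the twisted $\Gamma$-action $((\Omega\mathbf I_\Lambda^\Gamma\sigma)(\gamma)f)(x)=\sigma(\alpha(\gamma^{-1},x)^{-1})f(\gamma^{-1}\cdot x)$ against the cocycle identity so that the $\sigma(\mu)$ in front of $b(\nu)$ comes out correctly; I expect this algebra of cocycle substitutions, rather than any analytic difficulty, to be the only real obstacle.

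Finally, for the last assertion, I would observe that $\tilde b$ and the induced bounded cochain $\Omega\mathbf i_\Lambda^\Gamma$ are compatible with the coboundary: $\delta\tilde b$ is cohomologous (indeed equal, for a suitable choice of representatives) to $\Omega\mathbf i_\Lambda^\Gamma(\delta b)$ as a bounded $2$-cocycle with values in $\Omega\mathbf I_\Lambda^\Gamma\sigma$. Since the isomorphism of Lemma~\ref{lem:comp} identifies $\widetilde{QH}(\Lambda;\sigma)$ with $\operatorname{Ker}\Psi^2$ via $[b]\mapsto[\delta b]_{\mathrm b}$, and likewise for $\Gamma$, and since the induction map $\Omega\mathbf i_\Lambda^\Gamma$ of bounded cohomology commutes with the comparison maps $\Psi^2$ (functoriality of induction, already implicit in its construction), the class $[\delta\tilde b]_{\mathrm b}=\Omega\mathbf i_\Lambda^\Gamma[\delta b]_{\mathrm b}$ lies in $\operatorname{Ker}\Psi^2$ for $\Gamma$. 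Transporting back through Lemma~\ref{lem:comp} shows $\Omega\mathbf i_\Lambda^\Gamma$ sends $\widetilde{QH}(\Lambda;\sigma)$ into $\widetilde{QH}(\Gamma;\Omega\mathbf I_\Lambda^\Gamma\sigma)$, which is exactly the ``in particular'' clause. The only thing to be slightly careful about is well-definedness on $\widetilde{QH}$: a $\sigma$-cocycle induces (by the same formula, now genuinely a cocycle under the $L^2$-condition) an $\Omega\mathbf I_\Lambda^\Gamma\sigma$-cocycle, and a bounded map $\Lambda\to\mathfrak H$ clearly induces a bounded map $\Gamma\to L^2(X,\mathfrak H)$ because $m(X)<\infty$, so the subspace $\{\text{cocycles}\}+\{\text{bounded maps}\}$ is carried into its counterpart; this makes the induced map on the quotients $\widetilde{QH}$ well-defined.
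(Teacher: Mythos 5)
Your proposal is correct and follows exactly the route the paper indicates in its one-sentence proof: establish the linear growth bound for quasi-cocycles on the finitely generated group $\Lambda$, combine it with the $L^2$-condition and $m(X)<\infty$ to get $\tilde b(\gamma)\in L^2(X,\mathfrak H)$, check the uniform defect bound $D\,m(X)^{1/2}$ via the cocycle identity, and then invoke Lemma~\ref{lem:comp} for the $\widetilde{QH}$ statement. You have simply written out the details the paper leaves implicit; there is no divergence in method.
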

This lemma follows from the fact that the growth of a quasi-cocycle can be dominated linearly by a word length of a group (note that if $\Lambda$ is finitely generated, the choice of finite generating sets does not affect the growth of the associated word length), and from Lemma~\ref{lem:comp}.

\begin{rem}\label{rem:2-sum}
Some cases this $L^2$-condition is known to be held. A trivial case is that $\Gamma$ and $\Lambda$ are lattices in the same group, and that $\Gamma$ is cocompact. In this case, an ME-cocycle $\alpha$ (for an appropriate $\Lambda$-fundamental domain) satisfies $L^{\infty}$-condition, which is stronger than $L^2$-condition. 

A quite non-trivial case is that $\Lambda$ (or $\Gamma$) is a lattice in a simple higher rank algebraic group. In this case, an argument in Shalom's paper \cite{Sha2}, which is based on a study of word lengths on higher rank lattice by Lubotzky--Mozes--Raghunathan \cite{LMR}, together with Furman's superrigidity \cite{Fur1} insures that there exists a $\Lambda$-fundamental domain such that the associated ME-cocycle satisfies $L^p$-condition for all finite $p$.
\end{rem}

For the proof of Theorem~\ref{mthm:ME}, certain properties concerning induction are needed. First, we need the conception of \textit{weak mixing property} for a unitary representation, see \cite{Fur1} for details. We just recall the definition that a unitary representation is said to be \textit{weakly mixing} if it does not contain finite dimensional subrepresentations. An example of weakly mixing representation is the left regular representation of an infinite countable group (because matrix coefficients are $C_0$ in this case, in other words, it is \textit{strongly mixing}). We also note that weak mixing property is stable under pull-backs and inductions from finite index subgroups.

\begin{lem}$($Furman \cite[Lemma $8.2$]{Fur1}$)$\label{lem:fur}
Let $(\Omega,m)$ be an ergodic ME-coupling of $\Gamma$ and $\Lambda$ and let $\sigma$ be a weakly mixing unitary $\Lambda$-representation. Then the induced $\Gamma$-representation $\Omega\mathbf{I}_{\Lambda}^{\Gamma}\sigma$ does not contain $1_{\Gamma}$.
\end{lem}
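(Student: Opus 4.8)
The plan is to run the now-standard Furman argument: a nonzero $\Gamma$-invariant vector in the induced module would, after integration over a fundamental domain, produce a nonzero finite-dimensional $\sigma$-invariant subspace, contradicting weak mixing of $\sigma$. Work in the explicit model $\Omega_{\Lambda}^{\Gamma}\mathfrak{H}=L^{[2]}(\Omega,\mathfrak{H})^{\Lambda}$, on which $\Gamma$ acts by translation in the $\Omega$-variable. Suppose $0\ne f\in\Omega_{\Lambda}^{\Gamma}\mathfrak{H}$ is $\Gamma$-invariant; concretely $f\colon\Omega\to\mathfrak{H}$ is measurable with $f(\lambda\omega)=\sigma(\lambda)f(\omega)$ and $f(\gamma\omega)=f(\omega)$ for a.e.\ $\omega$ and all $\gamma\in\Gamma$, $\lambda\in\Lambda$. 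The function $\omega\mapsto\|f(\omega)\|^2$ is then $\Lambda$-invariant (unitarity of $\sigma$) and $\Gamma$-invariant, hence $(\Gamma\times\Lambda)$-invariant; by ergodicity of the coupling it is a.e.\ equal to a constant $c^2$. Since $\|f\|_2^2=\int_X\|f\|^2\,dm=c^2\,m(X)$ is finite and positive, we may rescale so that $\|f(\omega)\|=1$ for a.e.\ $\omega$.

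Next, since $f$ is $\Gamma$-invariant it descends (by the usual argument that an a.e.-invariant function under a countable group coincides a.e.\ with a genuinely invariant one) to a measurable map $\bar f\colon Y\to\mathfrak{H}$ on the $\Gamma$-fundamental domain $Y\cong\Gamma\backslash\Omega$. The residual $\Lambda$-action on $\Gamma\backslash\Omega$, which commutes with $\Gamma$, preserves the finite measure $m|_Y$; normalize it to a $\Lambda$-invariant probability measure $\mu$ on $Y$, so that $\bar f(\lambda\cdot y)=\sigma(\lambda)\bar f(y)$ and $\|\bar f(y)\|=1$ a.e. Now form the Bochner integral
\[
T:=\int_Y \bar f(y)\otimes\overline{\bar f(y)}\;d\mu(y)\ \in\ \mathfrak{H}\otimes\overline{\mathfrak{H}}=\mathrm{HS}(\mathfrak{H}),
\]
which converges because each integrand is a rank-one projection of Hilbert--Schmidt norm $1$ and $\mu$ is a probability measure. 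The operator $T$ is positive with $\mathrm{tr}\,T=\int_Y\|\bar f(y)\|^2\,d\mu(y)=1$, hence $T$ is a nonzero trace-class (in particular compact) operator, and $\Lambda$-invariance of $\mu$ together with the change of variables $y\mapsto\lambda\cdot y$ gives $\sigma(\lambda)T\sigma(\lambda)^{*}=T$ for all $\lambda$, i.e.\ $T$ commutes with $\sigma(\Lambda)$; equivalently, $T$ is a nonzero invariant vector of $\sigma\otimes\bar\sigma$.

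Finally, choose a nonzero eigenvalue $t_0$ of the self-adjoint compact operator $T$; the eigenspace $\ker(T-t_0)$ is finite-dimensional and, since $T$ commutes with $\sigma(\Lambda)$, it is $\sigma(\Lambda)$-invariant. This contradicts the hypothesis that $\sigma$ is weakly mixing, i.e.\ contains no nonzero finite-dimensional subrepresentation; hence $\Omega\mathbf{I}_{\Lambda}^{\Gamma}\sigma$ has no nonzero invariant vector. I do not expect a serious obstacle here: the conceptual core, which I would present as the main step, is the passage from a unit-norm $\sigma$-equivariant field on the probability space $(Y,\mu)$ to the invariant trace-class operator $T$ and thence to a finite-dimensional subrepresentation. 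The only real care needed is measure-theoretic bookkeeping --- that $f$ descends measurably to $Y$, that the residual $\Lambda$-action on $\Gamma\backslash\Omega$ is a well-defined measure-preserving action, and that the Bochner integral defining $T$ and its $\Lambda$-equivariance are legitimate --- and, crucially, the use of ergodicity to upgrade "$f\in L^2$" to "$\|f(\cdot)\|$ a.e.\ constant", which is exactly the point where the ergodicity hypothesis is indispensable.
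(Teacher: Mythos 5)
Your proof is correct and reproduces what is essentially the standard argument (the one underlying Furman's Lemma 8.2, which the paper cites without reproving): a $\Gamma$-invariant field in the induced module, normalized to unit length via ergodicity, restricts to a $\sigma$-equivariant unit-vector field on the finite-measure $\Gamma$-fundamental domain; integrating the rank-one projections $\bar f(y)\otimes\overline{\bar f(y)}$ over the invariant probability measure yields a nonzero positive trace-class intertwiner of $\sigma$, whose finite-dimensional eigenspaces violate weak mixing. All the steps you flag as needing care do in fact go through: $\|f(\cdot)\|$ is $(\Gamma\times\Lambda)$-invariant hence a.e.\ constant by ergodicity; the residual $\Lambda$-action on $\Gamma\backslash\Omega$ is measure-preserving because $\Gamma$ and $\Lambda$ commute and $m$ is $(\Gamma\times\Lambda)$-invariant; the Bochner integral converges because the integrand has constant Hilbert--Schmidt norm $1$ on a probability space and $\operatorname{tr}T=1$ by Tonelli; and your observation that ergodicity is what makes $\int_Y\|\bar f\|^2\,dm$ finite (and not merely $\int_X\|f\|^2\,dm$) is exactly right. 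One could equally finish by noting that $T$, viewed as a vector of $\mathfrak H\otimes\overline{\mathfrak H}$, is a nonzero $\sigma\otimes\bar\sigma$-invariant vector, using the characterization of weak mixing via $\sigma\otimes\bar\sigma\not\supseteq 1_\Lambda$; this is equivalent to your spectral argument. The only minor point worth making explicit is that one may assume $\mathfrak H$ separable (pass to the cyclic subspace generated by a putative finite-dimensional piece), so that the Bochner integral and measurability issues are unproblematic.
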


Finally, we state a notable result of Monod--Shalom which concerns induction of bounded cohomology (they in fact prove for any separable coefficient module):
\begin{thm}$($Monod--Shalom \cite[Theorem $4.4$]{MoSh}$)$\label{thm:MS}
Let $(\Omega, m)$ be an $ME$-coupling of $\Gamma,\Lambda$. Then the induction map in degree $2$
$$
\Omega \mathbf{i}_{\Lambda}^{\Gamma}\colon H^2_{\mathrm{b}}(\Lambda;\sigma) \to H^2_{\mathrm{b}}(\Gamma;\Omega \mathbf{I}_{\Lambda}^{\Gamma}\sigma),
$$
is injective for any Hilbert $($unitary$)$ $\Lambda$-module. Moreover, it does not depend on the choice of a retraction $\kappa$.
\end{thm}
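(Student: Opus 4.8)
The plan is to derive the theorem from the homological-algebra formalism for bounded cohomology of Burger--Monod, for which a measure equivalence coupling supplies exactly the structure needed. Recall two facts from that formalism. First, for a countable group $G$ and a Hilbert $G$-module $E$, the bounded cohomology $H^\bullet_{\mathrm{b}}(G;E)$ is computed by \emph{any} strong resolution of $E$ by relatively injective $G$-modules, and a morphism between two such resolutions lifting a fixed map on coefficients is unique up to $G$-homotopy (the fundamental lemma of homological algebra in this category). Second, if $S$ is an amenable standard Borel $G$-space then $L^{\infty}_{\mathrm{w}*}(S^{\bullet+1};E)^G$ is such a resolution, hence computes $H^\bullet_{\mathrm{b}}(G;E)$. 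The independence-of-$\kappa$ assertion is then immediate: for two $\Lambda$-fundamental domains $X,X'$ with retractions $\kappa,\kappa'$, the induced cochain maps are both morphisms out of the inhomogeneous bar resolution of $\sigma$ into a fixed strong resolution of $\Omega\mathbf{I}_{\Lambda}^{\Gamma}\sigma$ lifting the same map in degree $0$, so they agree on cohomology.

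For injectivity I would first put both sides into a common form. Fix an amenable standard Borel $\Gamma$-space $B$ and an amenable $\Lambda$-space $B'$ (for instance Poisson boundaries). Since amenability of an action passes to any product in which one factor is amenable, $B\times\Omega$ is an amenable $\Gamma$-space (with $\Gamma$ acting diagonally, using its coupling action on $\Omega$) and $B'\times\Omega$ is an amenable $\Lambda$-space. Using also $\Omega\mathbf{I}_{\Lambda}^{\Gamma}\sigma = L^2(\Omega;\mathfrak{H})^{\Lambda}$ and folding the $\Lambda$-invariance into the coefficients, this gives
$$
H^\bullet_{\mathrm{b}}(\Gamma;\Omega\mathbf{I}_{\Lambda}^{\Gamma}\sigma)=H^\bullet\big(L^{\infty}_{\mathrm{w}*}((B\times\Omega)^{\bullet+1};L^2(\Omega;\mathfrak{H}))^{\Gamma\times\Lambda}\big),\qquad H^\bullet_{\mathrm{b}}(\Lambda;\sigma)=H^\bullet\big(L^{\infty}_{\mathrm{w}*}((B'\times\Omega)^{\bullet+1};\mathfrak{H})^{\Lambda}\big),
$$
ergodicity of $\Omega$ being used only to identify the degree-$0$ invariants. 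The cochain-level map $\Omega\mathbf{i}_{\Lambda}^{\Gamma}$ through the retract $\kappa$ is visibly a morphism between these complexes, lifting the canonical inclusion on coefficients.

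The heart of the matter is a \emph{transfer} $\mathbf{t}\colon H^\bullet_{\mathrm{b}}(\Gamma;\Omega\mathbf{I}_{\Lambda}^{\Gamma}\sigma)\to H^\bullet_{\mathrm{b}}(\Lambda;\sigma)$ with $\mathbf{t}\circ\Omega\mathbf{i}_{\Lambda}^{\Gamma}=\mathrm{id}$. The coupling $\Omega$ is symmetric in $\Gamma$ and $\Lambda$, so there is a companion retract $\kappa'\colon\Omega\to\Gamma$ attached to a finite-measure $\Gamma$-fundamental domain $Y\subset\Omega$; using $\kappa'$ one averages a $\Gamma$-cochain over $Y$ to obtain a $\Lambda$-cochain. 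What legitimizes this is precisely that in bounded cohomology the cochains are $L^{\infty}$ while $m(Y)<\infty$, so the fibrewise integral $\int_Y(\cdot)\,dm$ is a genuine bounded operation --- the continuous analogue of the ``divide by the index'' normalization for a finite-index subgroup --- and this is also why no integrability hypothesis on the ME-cocycle is needed here, in contrast to the induction of ordinary (unbounded) quasi-cocycles in Lemma~\ref{lem:2ind}. Unwinding $\mathbf{t}\circ\Omega\mathbf{i}_{\Lambda}^{\Gamma}$ at the cochain level, using $\Omega=\bigsqcup_\gamma\gamma Y=\bigsqcup_\lambda\lambda X$ and the cocycle identities for $\kappa$ and $\kappa'$, reduces to a Fubini computation over $\Omega$ which, after normalizing $m(X)=m(Y)=1$, returns the identity on cochains modulo a $\Lambda$-coboundary; hence $\Omega\mathbf{i}_{\Lambda}^{\Gamma}$ is injective on $H^\bullet_{\mathrm{b}}$, in particular in degree $2$.

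I expect the main obstacle to be the bookkeeping in this last averaging step: one must verify that the two resolutions above are genuinely compatible as $\Gamma$- and as $\Lambda$-complexes (the diagonal actions on $B\times\Omega$ and $B'\times\Omega$ entangle the boundary factor with the coupling) and that the transfer is a chain map landing in the correct invariants. A tidy way to organize it is to run the entire argument inside the single $(\Gamma\times\Lambda)$-complex $L^{\infty}_{\mathrm{w}*}((B\times\Omega)^{\bullet+1};L^2(\Omega;\mathfrak{H}))$, realizing both $\Omega\mathbf{i}_{\Lambda}^{\Gamma}$ and $\mathbf{t}$ as explicit substitutions of variables --- pre-composition with $\kappa$, respectively $\kappa'$, in the appropriate slots --- and invoking the fundamental lemma to pass between the relevant cohomologies. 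The degree-$2$ statement, which is all that is used later in the paper (e.g.\ for the induction of $\widetilde{QH}$ in Lemma~\ref{lem:2ind}), then falls out as a special case.
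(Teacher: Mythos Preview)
The paper does not prove this theorem; it is quoted from Monod--Shalom \cite[Theorem~4.4]{MoSh} and used as a black box in the proof of Theorem~\ref{mthm:ME}. So there is no ``paper's own proof'' to compare against. Your outline is essentially the strategy Monod--Shalom themselves follow: realize both bounded cohomologies via amenable resolutions coming from the coupling, construct a transfer by integrating over a finite-measure fundamental domain, and invoke the fundamental lemma of the Burger--Monod homological algebra to get independence of the retraction and to identify the composition.

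Two points to tighten. First, you cannot normalize $m(X)=m(Y)=1$ simultaneously: the ratio $m(X)/m(Y)$ is the coupling index and is an invariant of the ME coupling. What the Fubini computation actually yields is that $\mathbf{t}\circ\Omega\mathbf{i}_{\Lambda}^{\Gamma}$ is multiplication by a nonzero scalar, which of course still gives injectivity. Second, your displayed identification of $H^\bullet_{\mathrm{b}}(\Gamma;\Omega\mathbf{I}_{\Lambda}^{\Gamma}\sigma)$ as the cohomology of $L^{\infty}_{\mathrm{w}*}((B\times\Omega)^{\bullet+1};L^2(\Omega;\mathfrak{H}))^{\Gamma\times\Lambda}$ is morally right but needs justification: you must specify the $\Gamma\times\Lambda$-action on $B\times\Omega$ and on the coefficients so that this genuinely is a strong relatively injective resolution for the correct group and module, and so that the transfer lands where you claim. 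You are right that this is where the substance is; in \cite{MoSh} it is handled by working with the $(\Gamma\times\Lambda)$-space $\Omega$ directly and appealing to the functoriality of bounded cohomology under the two projections, rather than by an ad hoc choice of boundaries $B,B'$.
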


\subsection{Proof of Theorem~\ref{mthm:ME}}
\begin{proof}
First, we observe that in the proof Theorem~\ref{thm:TTMCG}, the following two properties are sufficient to deduce homomorphism superrigidity from a countable group $\Lambda$ into $\mathrm{MCG}(\Sigma)$ and into $\mathrm{Out}(F_n)$:
\begin{enumerate}[$(a)$]
  \item  For any \textit{weakly mixing} unitary $\Lambda$ representation $\sigma$, $\widetilde{QH}(\Lambda,\sigma)=0$;
  \item  For any finite index subgroup $\Lambda_0\leq \Lambda$, $\Lambda_0$ has finite abelianization (for instance, $\Lambda$ has property $(\mathrm{T})$).
\end{enumerate}
For weakly mixing representation, see the previous subsection. 

In the setting of Theorem~\ref{mthm:ME}, we will check that $\Lambda$ satisfies these two properties. Item $(b)$ follows from the fact that $(\mathrm{T})$ is an ME-invariant \cite{Fur2}. For item $(a)$, take any weakly mixing $\Lambda$-representation $\sigma$. Consider the $L^2$-induction of this representation and corresponding bounded cohomology. Theorem~\ref{thm:MS}, Lemma~\ref{lem:2ind}, and existence of a $\Lambda$-fundamental domain with respect to which the ME-cocycle is $L^2$ imply that the induction map in degree $2$
$$
\Omega \mathbf{i}_{\Lambda}^{\Gamma}\colon H^2_{\mathrm{b}}(\Lambda;\sigma) \to H^2_{\mathrm{b}}(\Gamma;\Omega \mathbf{I}_{\Lambda}^{\Gamma}\sigma)
$$
is injective and that it maps $\widetilde{QH}(\Lambda;\sigma)$ into $\widetilde{QH}(\Gamma;\Omega \mathbf{I}_{\Lambda}^{\Gamma}\sigma)$ as an injection. By Lemma~\ref{lem:fur}, $\Omega \mathbf{I}_{\Lambda}^{\Gamma}\sigma \not\supseteq 1_{\Gamma}$. Hence thanks to property $(\mathrm{TT})/\mathrm{T}$ for $\Gamma$ (Theorem~\ref{thm:TTmodT}), $\widetilde{QH}(\Gamma;\Omega \mathbf{I}_{\Lambda}^{\Gamma}\sigma)=0$. We thus have
$$
\widetilde{QH}(\Lambda;\sigma)=0
$$
and this confirms $(a)$ in above.
\end{proof}

\begin{rem}\label{rem:sar}
Furman has pointed out to the author the following: his work in progress with U. Bader and R. Sauer \cite{BFS} implies that at least for a universal lattice or a symplectic universal lattice itself, if a locally compact second countable group $G$ contains  it as a lattice, then up to finite noise and compact kernel, $G$ must be totally disconnected. It might be interesting if there exists a nontrivial $\Lambda$ in Theorem~\ref{mthm:ME} (for $A$ which is not a ring of integers of a certain field). However at the moment, the author has no such examples.
\end{rem}

\begin{rem}
It does not seem to be known whether the $L^2$-condition in Theorem~\ref{mthm:ME} can be removed. One possible way to do this is to show that $H^2_{\mathrm{b}}(G;\pi)$ itself vanishes for $G$ a universal lattice or symplectic universal lattices; and $\pi \not\supseteq 1_G$ a unitary representation. Compare with Conjecture~1.8 in \cite{Mim1}.

In fact, by making full use of infinite dimensionality in Theorem~\ref{thm:ham} and Theorem~\ref{thm:BBF}, we only have to show finite dimensionality of $H^2_{\mathrm{b}}(G;\pi)$ above; or $H^2(G;\pi)$ in the setting above. The latter follows from the injectivity of the comparison maps, which is a corollary of $(\mathrm{TT})/\mathrm{T}$.

\end{rem}

\section{Property $(\mathrm{FF}_B)/\mathrm{T}$ for Banach spaces}\label{sec:tba}\subsection{Definitions and fundamental facts}\label{subsec:tba}
A Banach space $B$ is said to be \textit{ucus} if the norm is uniformly convex and uniformly smooth (for these definitions details, we refer to \cite{BFGM}), and is said to be \textit{superreflexive} if it admits a ucus norm which is compatible to the original norm. Superreflexive Baanach spaces are reflexive. A basic example of superreflexive space is $L^p$-space with $p\in (1,\infty)$ (it is ucus). 

In this section, we consider an isometric representation $\rho$ of a group $G$ in a Banach space $B$. Unlike unitary cases, it is not true in general that the space $B^{\rho(G)}$ of $\rho(G)$-invariant vectors is complemented in $B$. However, the following result in \cite{BFGM} states that it is complemented if $B$ is superreflexive:
\begin{prop}$($\cite[Proposition $2.6$, Proposition $2.10$]{BFGM}$)$\label{pro:decom}
Let $G$ be a group, $N\trianglelefteq G$ be a $\mathrm{normal}$ subgroup. Let $\rho$ be an isometric $G$-representation on a ucus Banach space $B$. Then there is a decomposition of $B$ as $\rho(G)$-spaces
$$
B=B^{\rho(N)} \oplus B'_{\rho(N)}.
$$
Here $B'_{\rho(N)}$ as the annihilator of $(B^{*})^{\rho^{\dagger}(N)}$, where $\rho^{\dagger}$ denotes the $\mathrm{contragredient}$ $\mathrm{representation}$  $G$ in $B^*$, defined as $\rho^{\dagger}(g)\phi \rangle = \langle \rho (g^{-1})\xi, \phi \rangle$ $(g\in G, \phi \in B^*, \xi \in B)$ ($\langle \cdot ,\cdot \rangle$ denotes the duality $B\times B^* \to \mathbb{C}$). 

Moreover,  for any $\xi=\xi_0 +\xi_1$ where $\xi\in B$, $\xi_0\in B^{\rho(N)} $ and $\xi_1\in B'_{\rho(N)}$, $\| \xi_0 \| \leq \| \xi \|$ and $ \| \xi_1 \| \leq 2\| \xi \|$ hold. 
\end{prop}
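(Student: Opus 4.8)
The plan is to obtain the splitting as a mean-ergodic decomposition for the isometry group $\rho(N)$, using only that $B$ is reflexive (which holds since $B$ is ucus, hence superreflexive), and then to make it $\rho(G)$-equivariant by exploiting the normality of $N$.

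\emph{Step 1: identify the complement intrinsically.} A Hahn--Banach computation identifies $B'_{\rho(N)}$, the annihilator in $B$ of $(B^*)^{\rho^{\dagger}(N)}$, with the closed linear span
\[
V:=\overline{\operatorname{span}}\,\{\,\rho(n)\xi-\xi:\ n\in N,\ \xi\in B\,\}.
\]
Indeed $\phi\in B^*$ kills every $\rho(n)\xi-\xi$ iff $\langle\rho(n)\xi,\phi\rangle=\langle\xi,\phi\rangle$ for all $\xi$, i.e. iff $\rho^{\dagger}(n)\phi=\phi$; so the annihilator of $V$ in $B^*$ is exactly $(B^*)^{\rho^{\dagger}(N)}$, and passing back to pre-annihilators (both sides are norm-closed subspaces) gives $V=B'_{\rho(N)}$. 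Note also that each $\rho(n)\xi-\xi\in V$ annihilates every $\rho^{\dagger}(N)$-fixed functional, which is what will make $V$ the kernel of the projection below.

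\emph{Step 2: the mean-ergodic projection.} Apply the Alaoglu--Birkhoff mean ergodic theorem to the group $\rho(N)$ of isometries of the reflexive space $B$: the weak-operator closure of the convex set $\operatorname{conv}(\rho(N))$ is compact (for reflexive $B$, operator-norm-bounded subsets of $\mathcal B(B)$ are relatively compact in the weak operator topology, by Tychonoff applied to products of weakly compact balls) and contains a projection $P$ with $\rho(n)P=P=P\rho(n)$ for all $n\in N$; hence $\operatorname{range}P=B^{\rho(N)}$, while $\ker P=V$, since $P(\xi-\rho(n)\xi)=0$ gives $V\subseteq\ker P$ and, writing $P\xi$ as a weak-operator limit of convex combinations of the $\rho(n)\xi$, one gets $\xi-P\xi\in V$, giving $\ker P\subseteq V$. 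Thus $B=B^{\rho(N)}\oplus B'_{\rho(N)}$ as a topological direct sum, and $P$ is the unique projection with this range and kernel. (Uniform convexity is not needed here, though it would let one describe $P\xi$ concretely as the unique $\rho(N)$-fixed point of $\overline{\operatorname{conv}}(\rho(N)\xi)$; it is reflexivity that is essential, the statement failing for isometric representations on non-reflexive spaces such as $c_0$.)

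\emph{Step 3: equivariance and norm bounds.} Since $N\trianglelefteq G$, each $\rho(g)$ conjugates $\rho(N)$ onto itself, hence conjugates $\operatorname{conv}(\rho(N))$ and its weak-operator closure onto themselves; so $\rho(g)P\rho(g)^{-1}$ lies in that closure and is a projection with range $\rho(g)B^{\rho(N)}=B^{\rho(N)}$ and kernel $\rho(g)B'_{\rho(N)}=B'_{\rho(N)}$, whence $\rho(g)P=P\rho(g)$ by the uniqueness in Step 2; in particular $B^{\rho(N)}$ and $B'_{\rho(N)}$ are $\rho(G)$-invariant. Finally, every element of $\operatorname{conv}(\rho(N))$ is a convex combination of isometries, so has operator norm $\le1$, and the operator norm is lower semicontinuous for the weak operator topology, so $\|P\|\le1$ and $\|I-P\|\le1+\|P\|\le2$. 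For $\xi=\xi_0+\xi_1$ with $\xi_0=P\xi\in B^{\rho(N)}$ and $\xi_1=(I-P)\xi\in B'_{\rho(N)}$ this is exactly $\|\xi_0\|\le\|\xi\|$ and $\|\xi_1\|\le2\|\xi\|$. The main obstacle is Step 2: because $N$ need not be amenable one cannot average over it, and the very existence of the invariant projection is a genuine use of reflexivity rather than a formal manipulation; the annihilator bookkeeping of Step 1 and the equivariance and estimates of Step 3 are soft.
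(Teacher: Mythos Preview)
Your argument is correct. The paper does not give its own proof of this proposition; it is simply quoted from \cite{BFGM}, so there is no in-paper argument to compare against. Your three-step outline (identify $B'_{\rho(N)}$ as the closed span of coboundaries, produce the ergodic projection, then read off equivariance and the norm bounds $\|P\|\le 1$, $\|I-P\|\le 2$) is exactly the argument in \cite{BFGM}.

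One small remark on Step~2: what you invoke as the ``Alaoglu--Birkhoff mean ergodic theorem'' for an \emph{arbitrary} group of isometries on a reflexive space is, in that generality, usually obtained via the Ryll--Nardzewski fixed point theorem rather than the classical Alaoglu--Birkhoff statement (which is typically formulated for a single contraction or an abelian semigroup). Concretely, for each $\xi$ one applies Ryll--Nardzewski to the isometric affine action of $\rho(N)$ on the weakly compact convex set $\overline{\mathrm{conv}}(\rho(N)\xi)$ to get a fixed point $P\xi$; uniqueness of $P\xi$ and linearity of $P$ then follow from your Step~1 identification together with $B^{\rho(N)}\cap B'_{\rho(N)}=0$. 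Since the statement is only asserted for ucus $B$, one could alternatively bypass Ryll--Nardzewski entirely and take $P\xi$ to be the unique point of minimal norm in $\overline{\mathrm{conv}}(\rho(N)\xi)$, which is $\rho(N)$-fixed by uniqueness and isometry; this is the ``concrete description'' you allude to parenthetically.
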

We note that in \cite{BFGM} (Proposition~2.3), they also show that for a superreflexive space $B$ and an isometric $G$-representation $\rho$ in $B$, one can take a $\rho(G)$-invariant \textit{ucus} norm which is compatible to the original norm. Also note that the normality of the subgroup $N$ is needed, otherwise $B^{\rho(N)}$ may not be $\rho(G)$-invariant.

For an isometric $G$-representation $\rho$ in $B$, the concepts of $\rho$ having \textit{almost invariant vectors} (we also write this as $\rho \succeq 1_G$); $\rho$-(1-)\textit{cocycles}, $\rho$-(1-)\textit{coboundaries}, and \textit{group cohomology} $H^{\bullet}(G;\rho)$; \textit{quasi-}$\rho$-(1-)\textit{cocycles}; and bounded cohomology $H_{\mathrm{b}}^{\bullet}(G;\rho)$ are defined in the same manner as in Definition~\ref{def:T}.

Recall that property $(\mathrm{T})$ for a group is defined by the condition ``for any unitary $G$-representation, $\pi \nsupseteq 1_G  \Rightarrow \pi \nsucceq 1_G$." However, in general setting, the straight generalization of above does not give information for the case of $\pi \supseteq 1_G$. More precisely, if we consider unitary representation, then we can restrict our representation on the orthogonal complement of the space of invariant vectors. However even in the case of considering isometric representation on $L^p$ spaces, the canonical complement, defined in Proposition~\ref{pro:decom} is a subspace of $L^p$ space, and usually \textit{not} realizable as an $L^p$ space on any measure space. Therefore, the following definition is appropriate:

\begin{defn}$($\cite{BFGM}$)$\label{def:propertyTB}
Let $B$ be a Banach space. 
\begin{enumerate}[$(1)$]
  \item A pair  $G \trianglerighteq N$ of a group and a \textit{normal} subgroup is 
  said to have \textit{relative property} $(\mathrm{T}_B)$  if for any 
  isometric representation $\rho$ of $G$ in $B$, the 
  isometric representation $\rho '$ on the quotient Banach space  $B / B^{\rho (N)}$, naturally induced by $\rho$, satisfies 
  $  \rho' \nsucceq 1_G$.   
  By Proposition~\ref{pro:decom}, if $B$ is superreflexive, then the definition above is equivalent to the following: for any 
  isometric representation $\rho$ of $G$ in $B$, 
  the restriction of $\rho$ in $B_{\rho(N)}'$ (see Proposition~\ref{pro:decom}) does not have almost invariant vectors.

  A group $G$ is said to have \textit{property} 
  $(\mathrm{T}_B)$ if $G \trianglerighteq G$ has 
  relative $(\mathrm{T}_B)$.
  \item  A group  $G$ is said to have \textit{property }$(\mathrm{F}_B)$ if for any isometric $G$-representation $\rho$ in $B$, every $\rho$-cocycle is a $\rho$-coboundary. Equivalently, if for any such $\rho$, $H^1(G;\rho,B)=0$ holds.
  \item (\cite{Mim1}) A pair  $G\supseteq U$ is said to have \textit{relative property }$(\mathrm{FF}_B)$ if for any isometric $G$-representation $\rho$ in $B$, every quasi-$\rho$-cocycle is bounded on $U$. A group  $G$ is said to have \textit{property }$(\mathrm{F}_B)$ if the pair $G\supseteq G$ has relative $(\mathrm{FF}_B)$.
  \item (\cite{Mim1}) A group $G$ is said to have \textit{property} 
   $(\mathrm{FF}_B)/\mathrm{T}$  if for any isometric $G$-representation $\rho$  in $B$  and    any quasi-$\rho$-cocycle $b$, $b'(G)$ is 
   bounded, where $b'\colon \Gamma \to B/B^{\rho(G)}$ is the natural quasi-cocycle constructed from the projection of $b$ associated the canonical quotient map $B\twoheadrightarrow B/B^{\rho(G)}$. If $B$ is superreflexive, then this definition is equivalent to the following condition: for any isometric representation $\rho$ of $\Gamma$ in $B$ and 
   any quasi-$\rho$-cocycle $b$, $b_1(G)$ is 
   bounded. Here we decompose $b$ as $b_0+b_1$ such that $b_0$ takes 
   values in $B_0=B^{\rho (\Gamma)}$ and $b_1$ takes values in 
   $B_1=B'_{\rho(\Gamma)}$. 
\end{enumerate}
If $B=\mathcal{B}$ is a class of Banach spaces, then we define these properties above as having corresponding property for all Banach spaces in the class $\mathcal{B}$.
\end{defn}
The well-known lemma of Chebyshev center states that for a bounded subset $X$ of a uniformly convex Banach space $B$, there exists a unique closed ball with minimimum radius which contains $X$ (the center of this ball is called the Chabyshev center). This implies the following: for a ucus Banach space $B$, every \textit{bounded} cocycle into an isometric representation in $B$ is a coboundary. Therefore for a superreflexive Banach space, $(\mathrm{FF}_B)$ implies $(\mathrm{F}_B)$, and $(\mathrm{FF}_B)$ for a group $G$ is equivalent to the following two conditions:
\begin{itemize}
  \item for any isometric $G$-representation $\rho$, $H^1(G;\rho)=0$;
  \item for any isometric $G$-representation $\rho$, $H^2_{\mathrm{b}}(G;\rho)$ naturally injects into $H^2(G;\rho)$.
\end{itemize}

We will employ the following two lemmas in the next subsection:
\begin{lem}\label{lem:cheb}
Suppose $B$ is uniformly convex. For an isometric representation $\rho$ of a group $G$ in $B$, if there exists $\xi\in B$ such that $\sup_{g\in G}\| \xi -\rho(g)\xi\| <\|\xi\|$, 
then $\rho \supseteq 1_G$.
\end{lem}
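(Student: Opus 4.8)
The plan is to use the circumcenter (Chebyshev center) of the $\rho(G)$-orbit of $\xi$, exactly as the paragraph preceding the statement suggests. Since $\rho$ is isometric, the orbit $O:=\{\rho(g)\xi:g\in G\}$ lies on the sphere of radius $\|\xi\|$ about $0$, so it is bounded; by uniform convexity of the (complete) space $B$, the Chebyshev-center lemma recalled above provides a unique closed ball $D(\eta,r)$ of minimal radius $r$ containing $O$. I would take $\eta$ as the candidate invariant vector.

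The first step is to show $\rho(g)\eta=\eta$ for every $g\in G$. Because $\rho(g)$ is a bijective linear isometry and $\rho(g)\rho(h)\xi=\rho(gh)\xi$, one has $\rho(g)O=O$; hence $\rho(g)D(\eta,r)=D(\rho(g)\eta,r)$ is again a closed ball of radius $r$ containing $O$, and the uniqueness clause of the Chebyshev-center lemma forces $\rho(g)\eta=\eta$. Thus $\eta\in B^{\rho(G)}$.

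The second step is to rule out $\eta=0$; this is where the hypothesis is used. Put $c:=\sup_{g\in G}\|\xi-\rho(g)\xi\|$, so that $c<\|\xi\|$ by assumption. The closed ball $D(\xi,c)$ contains $O$, so minimality of $r$ gives $r\le c<\|\xi\|$. If $\eta=0$, then $D(0,r)\supseteq O\ni\rho(e)\xi=\xi$, whence $\|\xi\|\le r<\|\xi\|$, a contradiction. Therefore $\eta$ is a nonzero vector fixed by $\rho(G)$, i.e.\ $\rho\supseteq 1_G$, as claimed.

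I do not anticipate a genuine obstacle here. The only substantive input is the existence and uniqueness of a minimal enclosing ball for a bounded set in a uniformly convex Banach space, which is precisely the Chebyshev-center lemma already cited in the text; should a self-contained argument be desired, it follows by taking a sequence of centers whose enclosing radii approach the infimal radius and invoking uniform convexity to show that the sequence is Cauchy, but I would treat that as standard and not reproduce it.
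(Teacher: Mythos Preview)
Your proof is correct and is exactly the argument the paper has in mind: the text simply says ``This follows from the lemma of a Chebyshev center,'' and you have spelled out precisely that argument (orbit is bounded, Chebyshev center is $\rho(G)$-fixed by uniqueness, and the hypothesis $c<\|\xi\|$ forces the center to be nonzero).
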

This follows from the lemma of a Chebyshev center.
\begin{lem}$($\cite[Lemma~$2.10$]{Mim1}$)$\label{lem:kazhconstTB}
Suppose $B$ is us, $G$ is a finitely generated group and 
$S$ is a finite generating set of $G$. Let $N\trianglelefteq G$. 
Let  $\rho$  be any isometric representation of $G$ in 
$B$,  $\xi$ be any vector in $B$ and set $\delta_{\xi} := 
\sup_{s \in S} \|  \xi -\rho (s)\xi \|$. 
If a pair $G \trianglerighteq N$ has relative 
$(\mathrm{T}_B)$, then 
$$
\textrm{for any $l\in N$,\ \ }
\| \xi - \rho(l)\xi  \| \leq 4 \mathcal{K}^{-1} \delta_{\xi}. 
$$
Here $\mathcal{K}$ stands for the $\mathrm{relative}$ $\mathrm{Kazhdan}$ $\mathrm{constant}$ $\mathcal{K}(G,N;S,\rho) $ for $(\mathrm{T}_B)$, defined as 
$$\mathcal{K}(G,N;S,\rho):= \inf_{\xi \in S(B_1) }
      \sup_{s \in S}   \|  \xi -\rho (s)\xi  \| . 
$$ 
Here $B_1 =B_{\rho (N)}'$ and $S(B_1)$ means the unit sphere of $B_1$. 
\end{lem}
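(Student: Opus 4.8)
The plan is to reduce the estimate to the canonical $\rho(G)$-equivariant splitting of $B$ supplied by Proposition~\ref{pro:decom}, and then to promote an almost-invariance bound from the finite generating set $S$ to the whole normal subgroup $N$ by a homogeneity argument against the relative Kazhdan constant. First I would note that, being uniformly smooth, $B$ is in particular superreflexive, so that the renorming and decomposition results of \cite{BFGM} quoted in Proposition~\ref{pro:decom} apply to the \emph{normal} subgroup $N\trianglelefteq G$: after passing if necessary to an equivalent $\rho(G)$-invariant ucus norm, one obtains a decomposition into $\rho(G)$-subspaces $B=B^{\rho(N)}\oplus B'_{\rho(N)}=:B_0\oplus B_1$, together with the norm control $\|v_1\|\le 2\|v\|$ for the $B_1$-component $v_1$ of any $v\in B$. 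I then decompose the given vector as $\xi=\xi_0+\xi_1$; if $\xi_1=0$ then $\xi\in B^{\rho(N)}$ and the left-hand side of the asserted inequality vanishes for every $l\in N$, so I may assume $\xi_1\neq 0$.

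The first step is to pass from $S$ to $B_1$. For each $s\in S$ the normality of $N$ makes both $B_0$ and $B_1$ invariant under $\rho(s)$, hence $\xi-\rho(s)\xi=(\xi_0-\rho(s)\xi_0)+(\xi_1-\rho(s)\xi_1)$ is precisely the $B_0\oplus B_1$-decomposition of $\xi-\rho(s)\xi$; applying the norm control to this vector gives $\|\xi_1-\rho(s)\xi_1\|\le 2\|\xi-\rho(s)\xi\|\le 2\delta_\xi$. The second step invokes the relative Kazhdan constant: by definition of $\mathcal{K}=\mathcal{K}(G,N;S,\rho)$ and homogeneity of $\eta\mapsto\sup_{s\in S}\|\eta-\rho(s)\eta\|$, applied to $\eta=\xi_1/\|\xi_1\|\in S(B_1)$, one gets $\mathcal{K}\,\|\xi_1\|\le\sup_{s\in S}\|\xi_1-\rho(s)\xi_1\|\le 2\delta_\xi$, whence $\|\xi_1\|\le 2\mathcal{K}^{-1}\delta_\xi$; here the only substantive input is that $\mathcal{K}>0$, which is exactly relative property $(\mathrm{T}_B)$ of the pair $G\trianglerighteq N$. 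Finally, for any $l\in N$ one has $\rho(l)\xi_0=\xi_0$, so $\xi-\rho(l)\xi=\xi_1-\rho(l)\xi_1$, and since $\rho(l)$ is an isometry,
\[
\|\xi-\rho(l)\xi\|=\|\xi_1-\rho(l)\xi_1\|\le 2\|\xi_1\|\le 4\mathcal{K}^{-1}\delta_\xi ,
\]
which is the claim.

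I do not expect any genuine obstacle here: the lemma is essentially a bookkeeping consequence of Proposition~\ref{pro:decom} together with the positivity of the relative Kazhdan constant. The two points deserving attention are: (i) the factor $2$ in the final constant $4\mathcal{K}^{-1}$ is inherited verbatim from the estimate $\|v_1\|\le 2\|v\|$ of Proposition~\ref{pro:decom}, so the reduction to a $\rho(G)$-invariant ucus norm should be arranged carefully enough not to degrade it; and (ii) it is precisely the normality of $N$ in $G$ that guarantees $B^{\rho(N)}$ and $B'_{\rho(N)}$ are $\rho(G)$-invariant, hence $\rho(s)$-invariant, which is what legitimizes the componentwise splitting of $\xi-\rho(s)\xi$ used in the first step.
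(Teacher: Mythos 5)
Your proof is correct and is essentially the same argument as the one the lemma is quoting from \cite[Lemma~2.10]{Mim1}: decompose $\xi=\xi_0+\xi_1$ via Proposition~\ref{pro:decom}, use the norm control $\|v_1\|\le 2\|v\|$ applied to $v=\xi-\rho(s)\xi$ (legitimate because normality of $N$ makes $B_0,B_1$ $\rho(G)$-invariant) together with the definition of $\mathcal{K}$ to obtain $\|\xi_1\|\le 2\mathcal{K}^{-1}\delta_\xi$, and then conclude from $\xi-\rho(l)\xi=\xi_1-\rho(l)\xi_1$ and the isometry of $\rho(l)$. One minor remark: since the norm is assumed ucus and $\rho$ acts by isometries, it is already $\rho(G)$-invariant, so the "passing to an equivalent $\rho(G)$-invariant ucus norm" you mention is unnecessary here — and indeed it would be dangerous, since renorming could alter both $\mathcal{K}$ and $\delta_\xi$.
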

Note that if $G$ is finitely generated, then relative $(\mathrm{T}_B)$ for $G\trianglerighteq N$ exactly says that $\mathcal{K}(G,N;S,\rho)>0$ for any finite generating set $S\subseteq G$ and any isometric $G$-representation $\rho$ in $B$.

Notable studies have been done for the case of $B=\mathcal{L}_p$, the class of $L^p$ spaces, with $p\in (1,\infty)$ by P. Pansu \cite{Pan}; Bourdon--Pajot; G. Yu \cite{Yu}; and Bader--Furman--Gelander--Monod \cite{BFGM}. We shortly states results here. Let for each $p$, $\mathcal{L}_p$ denotes the class of all $L^p$-spaces.
\begin{thm}\label{thm:BFGM1}
Let $G$ be a locally compact and second countable group. 
\begin{enumerate}[$\mathrm{(}$i$\mathrm{)}$]
   \item $($\cite{BFGM}$)$ For any  Banach space $B$, 
        property $(\mathrm{F}_B)$ implies property $(\mathrm{T}_B)$.
   \item $($\cite{BFGM}$)$ Property $(\mathrm{T})$ is equivalent to property 
   $(\mathrm{T_{\mathcal{L}_p}})$, where $p \in (1, \infty)$. 
   It is also equivalent to 
    property $(\mathrm{F_{\mathcal{L}_p}})$, 
    where $p \in (1, 2]$. 
    
    Moreover, relative property $(\mathrm{T})$ $($for a pair of a group and a normal subgroup$)$ is equivalent to relative property $(\mathrm{T}_{\mathcal{L}_p})$ for every $p\in (1,\infty)$.
    \item $($\cite{BFGM}$)$ Any totally higher rank lattices, in the sense in Subsection~$\ref{subsec:TT}$,  $\Gamma$ have property  $(\mathrm{F_{\mathcal{L}_p}})$. 
    \item $($\cite{Yu}$;$\cite{Pan}$)$ Any hyperbolic group $H$, including one with $(\mathrm{T})$, has $p_0\in (1,\infty)$ such that for any $(\infty >)p>p_0$, $H$ is $\mathcal{L}_p$-$\mathrm{Haagerup}$, namely, $H$ admits a metrically proper cocycle into an isomtric representation in an $L^p$-space. In particular, $\mathrm{Sp}_{n,1}$ $\mathrm{fails}$ to have $(\mathrm{F}_{\mathcal{L}_p})$ for $p>4n+2$.
\end{enumerate}
\end{thm}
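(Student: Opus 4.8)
The statement collects several known facts, and the plan is to prove each item by the argument of the cited source. \textbf{Item (i).} I would run the Banach-space analogue of the Delorme--Guichardet implication in the contrapositive. Assuming $G$ fails $(\mathrm{T}_B)$, there is an isometric $G$-representation $\rho$ on $B$ whose quotient representation $\rho'$ on $\bar B:=B/B^{\rho(G)}$ has almost invariant vectors while $\bar B^{\rho'(G)}=0$. Picking unit vectors $\bar\xi_n\in\bar B$ that are $(S_n,\varepsilon_n)$-invariant along an exhaustion $S_1\subseteq S_2\subseteq\cdots$ of $G$ by compact sets with $\varepsilon_n\to 0$, lifting them to $\xi_n\in B$, and telescoping the coboundaries $g\mapsto\xi_n-\rho(g)\xi_n$ into a single cocycle valued in a countable $\ell^2$- (or, more robustly, ultraproduct-) assembly of copies of $B$, one obtains, with the rescaling chosen carefully, a finite-valued but unbounded cocycle, unbounded precisely because $\bar B$ has no $\rho'(G)$-invariant vector to absorb it. When $B$ lies in a class stable under such assemblies --- in particular when $B$ is an $L^p$ space, the only case used below --- this contradicts $(\mathrm{F}_B)$; the class-free formulation is \cite[\S 2]{BFGM}.

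\textbf{Item (ii).} Since $(\mathrm{T}_{\mathcal{L}_2})$ is $(\mathrm{T})$, the plan for $(\mathrm{T})\Leftrightarrow(\mathrm{T}_{\mathcal{L}_p})$, $p\in(1,\infty)$ (and its relative version), is to transport between exponents $p$ and $2$ by the Mazur map $M_{p,2}\colon L^p(\mu)\to L^2(\mu)$, $f\mapsto|f|^{p/2}\,\mathrm{sgn}(f)$: by the Banach--Lamperti description every linear isometry of $L^p$ is a weighted composition operator, $M_{p,2}$ intertwines it with the corresponding isometry of $L^2$, and $M_{p,2}$ is a uniform homeomorphism between the unit spheres, so (almost) invariant vectors correspond under it; restricting to a normal subgroup gives the relative statement. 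For $(\mathrm{T})\Leftrightarrow(\mathrm{F}_{\mathcal{L}_p})$ with $p\in(1,2]$, the direction $(\mathrm{F}_{\mathcal{L}_p})\Rightarrow(\mathrm{T})$ is Item (i) combined with $(\mathrm{T}_{\mathcal{L}_p})=(\mathrm{T})$; conversely, if $b\colon G\to L^p(\mu)$ is a $\rho$-cocycle with $b(e)=0$ and $p\le 2$, then $g\mapsto\|b(g)\|_p^p$ is conditionally negative definite on $G$ --- the kernel $\|x-y\|_p^p$ being conditionally negative definite on $L^p$ for $p\le 2$ --- so $(\mathrm{T})$ forces it to be bounded, whence $b$ is bounded and hence a coboundary by the Chebyshev-center lemma (Lemma~\ref{lem:cheb}).

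\textbf{Item (iii).} For $p\in(1,2]$ this is Item (ii) applied to a totally higher rank lattice, which has $(\mathrm{T})$. For general $p\in(1,\infty)$ the plan is to prove $(\mathrm{F}_{\mathcal{L}_p})$ first for the ambient group $G=\prod_i\mathbf{G}_i(k_i)$ and then induce to lattices. For $G$: by the root-subgroup structure of a higher rank group, $G$ is boundedly generated (in the sense of Remark~\ref{rem:Vas}) by copies of $\mathrm{SL}_2(k)\ltimes k^2$, and the pair $\mathrm{SL}_2(k)\ltimes k^2\trianglerighteq k^2$ has relative $(\mathrm{T}_{\mathcal{L}_p})$ for \emph{every} $p\in(1,\infty)$ --- the genuinely Banach input of \cite{BFGM}, obtained from a direct spectral estimate that the linear part $\mathrm{SL}_2(k)\curvearrowright k^2$ has no almost invariant vectors on the relevant $L^p$ spaces; Lemma~\ref{lem:kazhconstTB} then bounds any $L^p$-cocycle on each such copy, the Banach version of Shalom's machinery (Theorem~\ref{thm:shalom}, see \cite{Mim1}) upgrades this to boundedness on all of $G$, and a Chebyshev-center argument makes the cocycle a coboundary. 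For a lattice $\Gamma\le G$ one uses $L^p$-induction: as $G/\Gamma$ carries a finite invariant measure, an isometric $\Gamma$-representation on an $L^p$ space and a cocycle for it induce an isometric $G$-representation on an $L^p$ space and a cocycle for it, which is a coboundary by $(\mathrm{F}_{\mathcal{L}_p})$ for $G$; restricting back shows the original cocycle bounded. I expect Item (iii) for $p>2$ to be the principal obstacle: it is the one place that cannot be black-boxed, since establishing relative $(\mathrm{T}_{\mathcal{L}_p})$ for $\mathrm{SL}_2(k)\ltimes k^2$ uniformly in $p$ and running the Banach Shalom machinery with the norm estimates of Lemma~\ref{lem:kazhconstTB} require honest (if by now routine) analysis.

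\textbf{Item (iv).} Here the plan is instead to construct proper cocycles. For a hyperbolic group $H$, Yu's construction \cite{Yu} --- via a geodesic combing, or, following Mineyev, a homological bicombing with exponential divergence --- yields, for every $p$ larger than some $p_0=p_0(H)$, a metrically proper cocycle for a suitable isometric $H$-representation on an $\ell^p$ space, properness coming from the estimate that the cocycle norm at $g$ is comparable to $|g|^{1/p}$. For $\mathrm{Sp}_{n,1}$ (and for its uniform lattices, which are hyperbolic and have $(\mathrm{T})$), Pansu's boundary-analytic argument \cite{Pan} applies: the visual boundary is $S^{4n-1}$ with its Carnot--Carath\'eodory metric of conformal (Hausdorff) dimension $4n+2$, and the degree-one $L^p$-cohomology of this boundary is non-zero precisely for $p>4n+2$, a non-zero class producing an unbounded --- in fact proper --- cocycle into an $L^p$ space, so $(\mathrm{F}_{\mathcal{L}_p})$ fails there. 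Combining the two, every such group is $\mathcal{L}_p$-Haagerup once $p>\max(p_0,4n+2)$.
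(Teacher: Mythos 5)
The paper does not prove this statement at all: it is a background theorem quoted from \cite{BFGM}, \cite{Yu} and \cite{Pan} (``We shortly states results here''), so there is no internal argument to compare yours against, and your proposal must be judged on its own terms. Items (i), (ii) and (iv) are essentially faithful reconstructions of the cited proofs: the Guichardet-type telescoping for (i), with your honest caveat that as written it needs a class of spaces stable under $\ell^2$-assemblies (note also that $\bar B^{\rho'(G)}=0$ is automatic only when one has a decomposition as in Proposition~\ref{pro:decom}, e.g.\ for superreflexive $B$, so the ``any Banach space'' case is genuinely deferred to \cite{BFGM}); the Mazur-map/Banach--Lamperti transfer plus conditional negative definiteness of $\|x-y\|_p^p$ for $p\le 2$ in (ii); and Yu's combing construction together with Pansu's computation of the conformal dimension $4n+2$ of the boundary of $\mathrm{Sp}_{n,1}$ in (iv).

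Item (iii) for $p>2$, which you yourself single out, is where your route has a genuine gap. The step ``the pair $\mathrm{SL}_2(k)\ltimes k^2\trianglerighteq k^2$ has relative $(\mathrm{T}_{\mathcal{L}_p})$, and Lemma~\ref{lem:kazhconstTB} then bounds any $L^p$-cocycle on each such copy'' is not a formal implication: Lemma~\ref{lem:kazhconstTB} controls $\|\xi-\rho(l)\xi\|$ only in terms of the defect $\delta_\xi$ of $\xi$ over a generating set of the pair, so to bound $b$ on $k^2$ you must first know that the vectors $b(l)$, $l\in k^2$, have uniformly bounded defect over that generating set. That extra input is exactly what Theorem~\ref{thm:shortFB} has to postulate as relative $(\mathrm{FF}_B)$ (condition $(iv')$), and what Proposition~\ref{prop:TtoFFsp} manufactures by a careful commutation argument inside $\mathrm{Sp}_6$; in the Hilbert case one can bypass it because relative $(\mathrm{T})$ implies relative $(\mathrm{FH})$ by a Delorme-type, genuinely Hilbertian, argument, but for $L^p$ with $p>2$ no such general principle is available. (Note also that, by the ``Moreover'' clause of item (ii), relative $(\mathrm{T}_{\mathcal{L}_p})$ for this pair is equivalent to classical relative $(\mathrm{T})$, so it cannot by itself be ``the genuinely Banach input''; in \cite{BFGM} the passage from such spectral information to cocycle boundedness uses the algebraic-group structure -- contracting elements, Mautner/Howe--Moore phenomena -- rather than bare relative $(\mathrm{T}_{\mathcal{L}_p})$ plus bounded generation.) Likewise your induction step is complete only for uniform lattices: finiteness of the invariant measure on $G/\Gamma$ does not by itself make the induced cocycle $L^p$-valued, and for non-uniform lattices one needs integrability estimates of Lubotzky--Mozes--Raghunathan/Shalom type, cf.\ Remark~\ref{rem:2-sum}. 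So (i), (ii), (iv) stand as sketches of the quoted proofs, while (iii) as written is missing precisely the relative fixed-point (not just spectral) input.
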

These result imply the following two points: first, $(\mathrm{F}_{\mathcal{L}_p})$ is strictly stronger than $(\mathrm{T})$$\Leftrightarrow(\mathrm{FH})=(\mathrm{F_{\mathcal{L}_2}})$ if $p\gg 2$; and secondly, thus $(\mathrm{F}_{B})$ is stronger than $(\mathrm{T}_{B})$ in general.

\subsection{Proof of Theorem~\ref{thm:FFLp}}\label{subsec:prooftba}
First, in exactly the same argument as one in Theorem~\ref{thm:short} we have the following result:

\begin{thm}\label{thm:shortFB}
Let $B$ be a Banach space or a class of them. Suppose a triple $(G,H,U)$, where $G$ is a countable group; $H\leqslant G$ is a subgroup; and $U\subseteq G$ is a closed subset, satisfies both conditions $(i)$, $(ii)$, $(iii)$ in Theorem~$\ref{thm:short}$ and the following two conditions:
   \begin{itemize}
     \item[$(iv')$] $G\supseteq U$ has relative $(\mathrm{FF}_B)$;
     \item[$(v')$] $G$ has $(\mathrm{T}_B)$.
    \end{itemize}
Then $G$ has property $(\mathrm{FF}_B)/\mathrm{T}$.
\end{thm}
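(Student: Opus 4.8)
The plan is to run the proof of Theorem~\ref{thm:short} almost verbatim, the only new point being to replace the orthogonal complement of the invariant vectors (which is not available in a general Banach space) by the quotient $B/B^{\rho(G)}$; this is precisely the setup for which the definitions of $(\mathrm{T}_B)$ and $(\mathrm{FF}_B)/\mathrm{T}$ were written. Fix an isometric $G$-representation $\rho$ in $B$ and a quasi-$\rho$-cocycle $b$. Since $B^{\rho(G)}$ is $\rho(G)$-invariant, $\rho$ descends to an isometric representation $\rho'$ of $G$ on $B/B^{\rho(G)}$, and the composition $b':=q\circ b$ with the (norm non-increasing, $G$-equivariant) quotient map $q\colon B\twoheadrightarrow B/B^{\rho(G)}$ is a quasi-$\rho'$-cocycle. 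By Definition~\ref{def:propertyTB}$(4)$ it then suffices to show that $b'(G)$ is bounded. When $B$ is superreflexive one may equivalently work with the $B'_{\rho(G)}$-component of $b$ via the $\rho(G)$-equivariant splitting $B=B^{\rho(G)}\oplus B'_{\rho(G)}$ of Proposition~\ref{pro:decom}.

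Next I would fix the two constants exactly as in the proof of Theorem~\ref{thm:short}. Relative $(\mathrm{FF}_B)$ of the pair $G\supseteq U$ (condition $(iv')$), applied to $b$, gives $C_2:=\sup_{u\in U}\|b(u)\|<\infty$, so $\sup_{u\in U}\|b'(u)\|\le C_2$; and the quasi-cocycle defect $C_1:=\sup_{g_1,g_2\in G}\|b'(g_1g_2)-b'(g_1)-\rho'(g_1)b'(g_2)\|$ is finite. Put $C:=\max\{C_1,C_2\}$. Then for every $h\in H$ and $u\in U$, using $hUh^{-1}\subseteq U$ (condition $(ii)$) and $\sup_{u\in U}\|b'(u)\|\le C$, one obtains
\begin{align*}
\|\rho'(u)b'(h)-b'(h)\| &\le \|b'(uh)-b'(u)-b'(h)\|+C \le \|b'(uh)-b'(h)\|+2C \\
&= \|b'(h(h^{-1}uh))-b'(h)\|+2C \le \|\rho'(h)b'(h^{-1}uh)\|+3C \\
&= \|b'(h^{-1}uh)\|+3C \le 4C,
\end{align*}
a bound independent of $h$ and $u$.

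Finally, if $b'(H)$ were unbounded, then for $h\in H$ with $\|b'(h)\|$ large the unit vectors $b'(h)/\|b'(h)\|$ would be displaced by every element of $U$ by at most $4C/\|b'(h)\|$, which tends to $0$; since $U$ generates $G$ (condition $(i)$), this forces $\rho'\succeq 1_G$, contradicting property $(\mathrm{T}_B)$ of $G$ (condition $(v')$), which says exactly that $\rho'$ on $B/B^{\rho(G)}$ has no almost invariant vectors. Hence $b'(H)$ is bounded, and then, using that $b'$ is bounded on $U$ and on $H$ together with the bounded generation of $G$ by $U$ and $H$ (condition $(iii)$), a routine quasi-cocycle estimate over products of uniformly bounded length shows that $b'(G)$ is bounded, as required.

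I do not expect a genuine obstacle here: the two heavy inputs, property $(\mathrm{T}_B)$ of the whole group and relative $(\mathrm{FF}_B)$ of $G\supseteq U$, are among the hypotheses, and the core computation is that of Theorem~\ref{thm:short}. The only place calling for a line of care is checking that the induced object $b'$ on the quotient is still a quasi-cocycle and still bounded on $U$ — immediate from $q$ being a $G$-equivariant contraction commuting with the $\rho(G)$-action — and, if one prefers the superreflexive formulation, that the splitting of Proposition~\ref{pro:decom} produces a bona fide quasi-cocycle $b_1$ valued in $B'_{\rho(G)}$ with the same boundedness properties (the projection constants $1$ and $2$ of Proposition~\ref{pro:decom} only affect the value of $C$).
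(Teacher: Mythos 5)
Your proof is correct and is essentially the paper's argument: the paper simply asserts that Theorem~\ref{thm:shortFB} follows by ``exactly the same argument'' as Theorem~\ref{thm:short}, and you have faithfully transplanted that argument to the Banach setting by passing to the quotient representation $\rho'$ on $B/B^{\rho(G)}$ (equivalently, via Proposition~\ref{pro:decom} when $B$ is superreflexive), exactly as intended by Definition~\ref{def:propertyTB}. The only checks needed beyond the original proof are that $b'=q\circ b$ remains a quasi-cocycle with $\sup_{u\in U}\|b'(u)\|<\infty$, which you verify from the $G$-equivariance and contractivity of $q$, and that $(\mathrm{T}_B)$ is precisely the statement $\rho'\nsucceq 1_G$ needed at the contradiction step.
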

Note that Theorem~\ref{thm:shortFB} deduces $(\mathrm{FF}_B)/\mathrm{T}$ from $(\mathrm{T}_B)$ under some condition. Since $(\mathrm{T}_B)$ is much weaker than $(\mathrm{F}_B)$, this implication might be powerful.

The following proposition is a key to establish $(\mathrm{FF}_{\mathrm{L}_p})$ for universal lattices of degree $\geq 4$ in the paper \cite{Mim1} of the author:
\begin{prop}$($\cite[Theorem~$6.4$]{Mim1}$)$\label{pro:TtoFF}
Let $A = \mathbb{Z} [x_1, \ldots , x_k]$. 
Suppose $B$ is any superreflexive Banach space. Then, if the pair 
$ \mathrm{E}_2 (A) \ltimes A^2 \trianglerighteq A^2$ has relative property 
$(\mathrm{T}_B)$, then the pair $ \mathrm{SL}_3 (A ) \ltimes A^3 \geqslant A^3$ 
has relative property $(\mathrm{FF}_B)$.
\end{prop}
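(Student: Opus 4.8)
The plan is to run a quasi-cocycle analogue of Shalom's machinery internally inside $G:=\mathrm{SL}_3(A)\ltimes A^3$, handling the normal abelian subgroup $N:=A^3$ through the decomposition of the coefficient space provided by Proposition~\ref{pro:decom}. Fix a superreflexive Banach space $B$; by \cite{BFGM} we may assume its norm is ucus and $\rho$-invariant for the given isometric $G$-representation $\rho$. Let $b\colon G\to B$ be a quasi-$\rho$-cocycle with defect $D:=\sup_{g,h\in G}\|b(gh)-b(g)-\rho(g)b(h)\|<\infty$; the goal is $\sup_{n\in N}\|b(n)\|<\infty$. Since $N\trianglelefteq G$, Proposition~\ref{pro:decom} gives the $\rho(G)$-invariant splitting $B=B^{\rho(N)}\oplus B'_{\rho(N)}$, and composing $b$ with the two bounded $\rho(G)$-equivariant projections yields quasi-$\rho$-cocycles $b_0,b_1$ with $b=b_0+b_1$; it suffices to bound $b_0$ and $b_1$ on $N$ separately.

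First the ``trivial part'' $b_0$, which takes values in $B^{\rho(N)}$, on which $N$ acts trivially. Then $b_0|_N\colon N\to B^{\rho(N)}$ is a quasi-homomorphism into a Banach space, so its homogenization $\bar b_0$ is a genuine homomorphism with $\sup_{n\in N}\|b_0(n)-\bar b_0(n)\|<\infty$, and it suffices to prove $\bar b_0=0$. Chasing the quasi-cocycle identity for $g\in\mathrm{SL}_3(A)\leqslant G$ and $n\in N$ (using that $gng^{-1}\in N$ acts trivially on $B^{\rho(N)}$) gives $b_0(gng^{-1})=\rho(g)b_0(n)$ up to an error bounded in terms of $D$, hence after homogenizing $\bar b_0$ is an $\mathrm{SL}_3(A)=\mathrm{E}_3(A)$-equivariant homomorphism from the standard column module $A^3$ to $B^{\rho(N)}$. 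Writing each ``scalar multiple'' $a\mathbf{e}_i$ ($a\in A$) as a group commutator $[E_{jk}(a),\mathbf{e}_\ell]$ with $E_{jk}(a)\in\mathrm{E}_3(A)$ and $\mathbf{e}_\ell\in A^3$, equivariance and the isometry of $\rho$ force $\bar b_0(a\mathbf{e}_i)=(\rho(E_{jk}(a))-I)\bar b_0(\mathbf{e}_\ell)$ to have norm at most $2\|\bar b_0(\mathbf{e}_\ell)\|$; since every element of $A^3$ is a bounded-length sum of such elements, $\bar b_0$ has bounded image, hence $\bar b_0=0$. (This step uses only the rank-$3$ elementary structure; degree $3$ rather than $2$ is already relevant here, as it is what gives room to express $A^3$ through such commutators.)

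The remaining, and main, task is to bound $b_1$ on $N$, where now $B'_{\rho(N)}$ has no nonzero $\rho(N)$-invariant vector. Here I would use the hypothesis: for each coordinate pair $\{i,j\}$ the plane $M_{ij}\subseteq A^3$ is normal in a copy of $\mathrm{E}_2(A)\ltimes A^2$ sitting inside $G$ (the $\mathrm{E}_2(A)$-factor being the elementary subgroup of $\mathrm{SL}_3(A)$ on the coordinates $i,j$), so by assumption each pair $\mathrm{E}_2(A)\ltimes M_{ij}\trianglerighteq M_{ij}$ has relative $(\mathrm{T}_B)$; since $A^3=M_{12}+M_{13}+M_{23}$ and $G$ is finitely generated (because $A^3$ is a cyclic $\mathrm{E}_3(A)$-module, so a finite generating set of $\mathrm{SL}_3(A)$ together with $\mathbf{e}_1$ generates $G$), this propagates to relative $(\mathrm{T}_B)$ for the pair $G\trianglerighteq N$. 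I would then argue in the spirit of the proof of Theorem~\ref{thm:short}: Lemma~\ref{lem:kazhconstTB} turns relative $(\mathrm{T}_B)$ for $G\trianglerighteq N$ into a uniform estimate $\|\xi-\rho(l)\xi\|\le 4\mathcal{K}^{-1}\sup_{s\in S}\|\xi-\rho(s)\xi\|$ on $B'_{\rho(N)}$ (with $S$ a finite generating set of $G$ and $l\in N$), and if $b_1$ were unbounded on $N$ one would, after suitably normalising values of $b_1$ and feeding the quasi-cocycle identity into this estimate, manufacture a unit vector of $B'_{\rho(N)}$ moved arbitrarily little by all of $S$, hence (Lemma~\ref{lem:cheb}) a nonzero $\rho(N)$-invariant vector in $B'_{\rho(N)}$ --- a contradiction; the defect $D$ should enter these estimates only additively. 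In the Hilbert case this step is subsumed by Ozawa's Theorem~\ref{thm:relTTT} (relative $(\mathrm{T})$ $\Leftrightarrow$ relative $(\mathrm{TT})$ for $G_0\ltimes$ abelian), but for a general superreflexive $B$ no such clean equivalence is available, which is precisely why the estimate has to be made explicit.

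Combining the two parts, $b=b_0+b_1$ is bounded on $N=A^3$, which is exactly relative property $(\mathrm{FF}_B)$ for $\mathrm{SL}_3(A)\ltimes A^3\geqslant A^3$. I expect the $b_1$-step to be the real obstacle: the relative property is hypothesized for the rank-$2$ group $\mathrm{E}_2(A)\ltimes A^2$, which is \emph{not} finitely generated over $A=\mathbb{Z}[x_1,\ldots,x_k]$, so the quantitative tools (Lemma~\ref{lem:kazhconstTB}, and the contradiction through almost invariant vectors) cannot be run there directly; one is forced to pass to the finitely generated overgroup $\mathrm{SL}_3(A)\ltimes A^3$ and verify both that relative $(\mathrm{T}_B)$ propagates with a uniform Kazhdan constant and that the quasi-cocycle defect does not accumulate along this passage. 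Managing that defect, together with the bounded generation of $A^3$ by the planes $M_{ij}$ arranged with a conjugation-invariance compatible with the Theorem~\ref{thm:short}-style computation, is the delicate point.
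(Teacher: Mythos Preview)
Your overall architecture matches the paper's: split $B=B^{\rho(N)}\oplus B'_{\rho(N)}$, decompose $b=b_0+b_1$, and treat the two pieces separately. Your treatment of $b_0$ is essentially the paper's (the equivariance-plus-commutator argument to kill the homogenized quasi-homomorphism is exactly what is used).

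The gap is in your $b_1$-step. You propose to argue by contradiction: if $b_1$ were unbounded on $N$, normalise $b_1(n)/\|b_1(n)\|$ and use the quasi-cocycle identity to produce almost invariant vectors in $B'_{\rho(N)}$. But the quasi-cocycle identity alone does \emph{not} give you control of $\|\rho(s)b_1(n)-b_1(n)\|$ for $s$ in a generating set of $G$. What you get is
\[
\|\rho(s)b_1(n)-b_1(sns^{-1})\|\le 2\|b_1(s)\|+2D,
\]
and since $sns^{-1}\ne n$ in general this tells you nothing about the $s$-displacement of $b_1(n)$ individually. Propagating relative $(\mathrm{T}_B)$ from the rank-$2$ pairs up to $G\trianglerighteq N$ does not help: Lemma~\ref{lem:kazhconstTB} needs as input a bound on $\sup_{s\in S}\|\xi-\rho(s)\xi\|$, and you have no mechanism to produce one.

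The paper's mechanism (visible in the proof of the symplectic analogue, Proposition~\ref{prop:TtoFFsp}) is a commutation trick, not a normalisation-and-contradiction. One singles out specific one-parameter subgroups $N_1,N_2\le N$ and, for each, a carefully chosen \emph{finite} set $S_i\subset G$ which \emph{commutes} (or commutes modulo $N_1$) with $N_i$ and which is large enough to generate copies of $\mathrm{E}_2(A)\ltimes A^2$ containing the relevant relative-$(\mathrm{T}_B)$ pairs. Commutation gives the clean estimate $\|\rho(s)b_1(l)-b_1(l)\|\le 4C$ for all $l\in N_i$, $s\in S_i$; Lemma~\ref{lem:kazhconstTB} then upgrades this to a uniform bound on the displacement of $b_1(l)$ under the normal abelian parts of those rank-$2$ pairs; commutator identities among elementary matrices push this to a uniform bound on $\|\xi-\rho(n)\xi\|$ for all $n\in N$ and $\xi\in b_1(N_i)$; and finally Lemma~\ref{lem:cheb} (not a contradiction argument) converts this directly into $\sup_{\xi\in b_1(N_i)}\|\xi\|<\infty$. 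Bounded generation of $N$ by $N_1,N_2$ under the $\mathrm{SL}_3$-action finishes. You correctly flagged this step as ``the real obstacle'', but the missing idea is precisely this choice of commuting finite sets $S_i$; without it your normalisation argument does not close.
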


To prove Theorem~\ref{thm:FFLp}, we will show the following symplectic version of the proposition above: 
\begin{prop}\label{prop:TtoFFsp}
Let $A = \mathbb{Z} [x_1, \ldots , x_k]$. 
Suppose $B$ is any superreflexive Banach space. If the pairs 
$ \mathrm{E}_2 (A) \ltimes A^2 \trianglerighteq A^2$; and 
$ \mathrm{E}_2 (A) \ltimes S^{2*}(A^2) \trianglerighteq S^{2*}(A^2)$ have relative property 
$(\mathrm{T}_B)$, then the pair $ \mathrm{SL}_3 (A ) \ltimes S^{3*}(A^3) \geqslant S^{3*}(A^3)$ 
has relative property $(\mathrm{FF}_B)$.
\end{prop}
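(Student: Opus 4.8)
The plan is to run the proof of Proposition~\ref{pro:TtoFF} (= \cite[Theorem~6.4]{Mim1}) with the symplectic module $S^{3*}(A^3)$ in place of the vector module $A^3$, and to feed in the \emph{two} relative $(\mathrm{T}_B)$ hypotheses where the single vector-module hypothesis was used before. Write $G=\mathrm{SL}_3(A)\ltimes S^{3*}(A^3)$ and $N=S^{3*}(A^3)$, realized inside $\mathrm{Sp}_6(A)$ as in Definition~\ref{def:idensym}(ii) and Definition~\ref{def:symp}. Let $\rho$ be an isometric $G$-representation on the superreflexive space $B$, and let $\mathbf{c}$ be a quasi-$\rho$-cocycle, say with defect $D=\sup_{g,h}\|\mathbf{c}(gh)-\mathbf{c}(g)-\rho(g)\mathbf{c}(h)\|<\infty$. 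By the very definition of relative $(\mathrm{FF}_B)$ (Definition~\ref{def:propertyTB}(3)) it suffices to show that $\mathbf{c}$ is bounded on $N$.

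First I would record the structural decomposition of $N$. Inside $N$ sit the elementary symplectic matrices $B_{i,j}(a)$ for $1\le i\le j\le 3$ and $a\in A$; they pairwise commute and every element of $N$ is the product $\prod_{i\le j}B_{i,j}(v_{ij})$ of these six one-parameter subgroups. For an unordered pair $\{i,j\}\subseteq\{1,2,3\}$ let $\mathrm{E}_2^{(ij)}(A)\le\mathrm{SL}_3(A)$ denote the copy of $\mathrm{E}_2(A)$ on the coordinates $i,j$. A short computation with the conjugation action $W\cdot v=WvW^{t}$, together with the commutator formulae of Lemma~\ref{lem:spcom}, shows that $\mathrm{E}_2^{(ij)}(A)$ normalizes: the subgroup $V_{ij}=\langle B_{i,i}(a),B_{i,j}(a),B_{j,j}(a):a\in A\rangle$, on which it acts exactly as $\mathrm{E}_2(A)$ acts on $S^{2*}(A^2)$, so that $\mathrm{E}_2^{(ij)}(A)\ltimes V_{ij}\cong\mathrm{E}_2(A)\ltimes S^{2*}(A^2)$; the ``off-block'' subgroup $W_{k}\cong A^2$ ($k$ the remaining index), on which it acts by the standard representation, so that $\mathrm{E}_2^{(ij)}(A)\ltimes W_k\cong\mathrm{E}_2(A)\ltimes A^2$; and, inside $\mathrm{SL}_3(A)$, the abelian subgroup spanned by the elementary matrices $D_{i,k}(a),D_{j,k}(a)$, again by the standard representation. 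The decisive point is that already the three $V_{ij}$ exhaust $N$ coordinatewise: $V_{12}+V_{13}+V_{23}=N$, so every $n\in N$ can be written $n=v'v''v'''$ with $v'\in V_{12}$, $v''\in V_{13}$, $v'''\in V_{23}$.

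Next I would bound $\mathbf{c}$ on these pieces by the mechanism of \cite{Mim1} that converts relative $(\mathrm{T}_B)$ of a pair ``group $\trianglerighteq$ abelian normal subgroup with a rigid linear action'' into a uniform bound for quasi-cocycles on the subgroup (Lemma~\ref{lem:kazhconstTB}, Proposition~\ref{pro:decom}, and the rigidity argument therein). Applying it to the copies $\mathrm{E}_2^{(ij)}(A)\ltimes V_{ij}\le G$ via the hypothesis for $\mathrm{E}_2(A)\ltimes S^{2*}(A^2)\trianglerighteq S^{2*}(A^2)$, and — where the bootstrap requires it — to the copies $\mathrm{E}_2^{(ij)}(A)\ltimes W_k$ and to the $D$-pairs via the hypothesis for $\mathrm{E}_2(A)\ltimes A^2\trianglerighteq A^2$, one obtains a constant $M$ with $\|\mathbf{c}(v)\|\le M$ for all $v\in V_{12}\cup V_{13}\cup V_{23}$, independently of all choices. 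Then for $n=v'v''v'''$ the quasi-cocycle identity and the isometry of $\rho$ give $\|\mathbf{c}(n)\|\le\|\mathbf{c}(v')\|+\|\mathbf{c}(v'')\|+\|\mathbf{c}(v''')\|+2D\le 3M+2D$, a bound independent of $n$; hence $\mathbf{c}$ is bounded on $N$, which is the assertion. The roles of the two hypotheses become transparent on noting the identities $[D_{i,k}(a),B_{k,j}(1)]=B_{i,j}(a)$ (distinct $i,j,k$) and $[D_{i,k}(a),B_{k,i}(b)]=B_{i,i}(2ab)$: the off-diagonal $B_{i,j}$ and the ``even'' diagonal directions are bounded-length commutators of the $D$-pieces (controlled by the $A^2$-hypothesis) with fixed elements, while the remaining diagonal directions, e.g. $B_{i,i}(x_1x_2)$, cannot be reached this way and are covered exactly by the $V_{ij}$ (controlled by the $S^{2*}(A^2)$-hypothesis).

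The main obstacle is precisely the invocation in the previous paragraph. The model pairs $\mathrm{E}_2(A)\ltimes A^2$ and $\mathrm{E}_2(A)\ltimes S^{2*}(A^2)$ have \emph{non-finitely-generated} acting group $\mathrm{E}_2(A)$, so Lemma~\ref{lem:kazhconstTB} does not apply to them verbatim; one must either (i) carry out the argument of \cite[Theorem~6.4]{Mim1} directly — decompose $B=B^{\rho(V_{ij})}\oplus B'_{\rho(V_{ij})}$ by Proposition~\ref{pro:decom}, kill the $B'_{\rho(V_{ij})}$-component of $\mathbf{c}|_{V_{ij}}$ using the relative Kazhdan constant of the pair (taken with respect to a finite subset, not a generating set), and kill the $B^{\rho(V_{ij})}$-component using the rigidity of the $\mathrm{E}_2(A)$-action, namely that it admits no nonzero equivariant homomorphism into an isometric module — now verifying that nothing breaks when $A^2$ is replaced by $S^{2*}(A^2)$; or (ii) first use both hypotheses to bootstrap relative $(\mathrm{T}_B)$ up to the finitely generated pair $G\trianglerighteq S^{3*}(A^3)$, and then apply Lemma~\ref{lem:kazhconstTB} inside $G$. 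Either route is where the symplectic version genuinely differs from the vector-module one; the matrix bookkeeping of the structural step and the final assembly are routine, with the commutator relations of Lemma~\ref{lem:spcom} the only slightly delicate computational input.
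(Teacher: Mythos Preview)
Your reduction of $N$ to the three blocks $V_{ij}$ and the final assembly are fine, but the crucial middle step --- ``bound $\mathbf{c}$ on each $V_{ij}$ by the mechanism of \cite{Mim1}'' --- invokes a black box that does not exist. Proposition~\ref{pro:TtoFF} is not a general device that upgrades relative $(\mathrm{T}_B)$ of a pair ``group $\trianglerighteq$ abelian normal subgroup'' to relative $(\mathrm{FF}_B)$; its proof uses the ambient $\mathrm{SL}_3$-structure in an essential way, and that is exactly what has to be redone here. Concretely, your route~(i) asks to bound the $B'_{\rho(V_{ij})}$-component of $\mathbf{c}|_{V_{ij}}$ via Lemma~\ref{lem:kazhconstTB}; but that lemma only helps once each vector $\mathbf{c}(v)$ is already $(S,\epsilon)$-almost invariant for a finite $S$ generating the copy of $\mathrm{E}_2(A)\ltimes S^{2*}(A^2)$, and you give no source for this almost-invariance. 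In the paper it comes from \emph{commutation}, and one cannot get it for the whole block $V_{12}$: for instance $D_{1,2}(a)$, needed to generate $\mathrm{E}_2^{(12)}(A)$, does not commute with $B_{2,2}(r)\in V_{12}$. Your route~(ii) does not close the gap either: even granting relative $(\mathrm{T}_B)$ for $G\trianglerighteq N$, Lemma~\ref{lem:kazhconstTB} does not by itself bound a quasi-cocycle on $N$ --- one still needs almost-invariance of the values, and $N$ does not commute with a generating set of $G$.

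The paper proceeds differently. It decomposes $B$ relative to the \emph{full} $N=S^{3*}(A^3)$, so that the component $b_1$ lands in $B'_{\rho(N)}$, which has no nonzero $\rho(N)$-invariant vector. It then reduces (via Weyl conjugation, as in \cite{Mim1}) to bounding $b_1$ on two single root subgroups $N_1=\{B_{1,1}(r)\}$ and $N_2=\{B_{1,2}(r)\}$. The point is that $N_1$ commutes with a large finite set $S_1\subset G$ that contains generating sets for a copy of $\mathrm{E}_2(A)\ltimes S^{2*}(A^2)$ \emph{and} a copy of $\mathrm{E}_2(A)\ltimes A^2$ (on the complementary index~$3$); the quasi-cocycle identity then makes each $b_1(l)$ uniformly $S_1$-almost invariant, the two relative-$(\mathrm{T}_B)$ hypotheses (via Lemma~\ref{lem:kazhconstTB}) upgrade this to almost-invariance under $N'\cup H'$, the commutator formulae of Lemma~\ref{lem:spcom} extend it to all of $N$, and Lemma~\ref{lem:cheb} forces $b_1(N_1)$ bounded since $(B'_{\rho(N)})^{\rho(N)}=0$. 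For $N_2$ the set $S_2$ only \emph{almost} commutes with $N_2$ (one has $s\gamma=\gamma s\,l$ with $l\in N_1$), and the already-established bound on $b_1(N_1)$ absorbs the defect. This two-step bootstrap through single root subgroups, with the decomposition taken relative to the full $N$, is the missing idea in your proposal.
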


\begin{proof}
Set $G=\mathrm{SL}_3 (A ) \ltimes S^{3*}(A^3)$ and $N=S^{3*}(A^3) \trianglelefteq G$, and consider $G$ (and $N$) as subgroup(s) in $\mathrm{Sp}_6(A)$, as in item $(ii)$ of Definition~\ref{def:idensym}. Let $\rho$ be an isometric $G$-representation in $B$ and $b\colon G\to B$ be an (arbitrary) quasi-$\rho$-cocycle. Take a ucus $\rho$-invariant norm on $B$ and fix it (see the paragraph below Proposition~\ref{pro:decom}). Take a decomposition $B=B_0\oplus B_1:=B^{\rho(N)}\oplus B_{\rho(N)}'$, and decompose $b$ as $
b=b_0+b_1$, 
where $b_0\colon G\to B_0$ and $b_1\colon G\to B_1$. Then for $i=1,2$, by $\rho(G)$-invariance of $B_i$, $b_i$ becomes a quasi-$\rho$-cocycle. Set the following two subgroups $N_1$, $N_2$ of $N$:
\begin{align*}
N_1&:=\left\{ B_{1,1}(r)=\left(
   \begin{array}{c|ccc}
     {}  & r & 0 & 0 \\
      I_3 & 0 & 0 & 0 \\
      {} & 0 & 0& 0 \\
     \hline  
     0 & {}& I_3 & {}
   \end{array}\right)
   : r\in A\right\}, \\
N_2&:=\left\{ B_{1,2}(r)=\left(
   \begin{array}{c|ccc}
     {}  & 0 & r & 0 \\
      I_3 & r & 0 & 0 \\
      {} & 0 & 0& 0 \\
     \hline  
     0 & {}& I_3 & {}
   \end{array}\right)
   : r\in A\right\}. 
\end{align*}

Then in a similar way to one in the proof of \cite{Mim1}[Theorem~1.3], the following two can be verified:
\begin{enumerate}[$(1)$]
  \item The quasi-cocycle $b_0$ is bounded on $N$.
  \item If $b_1$ is bounded both on $N_1$ and on $N_2$, then it is bounded on $N$.
\end{enumerate}
Therefore for the proof of the theorem, it suffices to show the following two assertions:
\begin{itemize}
  \item[(A$1)$] \textit{The set $b_1(N_1)$ is bounded.}
  \item[(A$2)$] \textit{The set $b_1(N_2)$ is bounded.}
\end{itemize}  

In below, we shall prove assertions $(\mathrm{A}1)$ and $(\mathrm{A}2)$. Recall from Definition~\ref{def:symp} the definitions of $B_{i,j}(r)$ and $D_{i,j}(r)$. Also recall from Lemma~\ref{lem:spfg} a finite generating set of $\mathrm{Sp}_6(A)$. Set
\begin{align*}
S&=\{ B_{i,j}(\pm x_l), D_{i,j}(\pm x_l): 1\leq i,j\leq 3, i\ne j, 0\leq l\leq k\} \\
&\cup \{B_{i,i}(\pm x_1^{\epsilon_1}\cdots x_k^{\epsilon_k}):1\leq i\leq 3, \epsilon_1 ,\ldots ,\epsilon_k\in \{0,1\} \},
\end{align*}
where $x_0=1$ and $x_l^0=1$. Then $S$ is a finite generating set of $G(\leqslant \mathrm{Sp}_6(A))$. 

Firstly, we verify assertion $(\mathrm{A}1)$. Define a \textit{finite} subset $S_0$ of $G$ as follows:
\begin{align*}
S_1&:=\{ B_{1,2}(\pm x_l), B_{1,3}(\pm x_l), B_{2,3}(\pm x_l), D_{1,2}(\pm x_l), D_{1,3}(\pm x_l), D_{2,3}(\pm x_l), D_{3,2}(\pm x_l):  0\leq l\leq k\} \\
&\ \ \cup \{B_{i,i}(\pm x_1^{\epsilon_1}\cdots x_k^{\epsilon_k}):1\leq i\leq 3, \epsilon_1 ,\ldots ,\epsilon_k\in \{0,1\} \} \\
&= \left\{ \left(
\begin{array}{ccc|ccc} 
 1 & * & * & * & * & * \\
 0 & 1 & * & * & * & * \\
 0 & * & 1 & * & * & * \\
 \hline 
 {} & {} & {} & 1 & 0 & 0 \\
 {} & 0 &  {} & * & 1 & * \\
 {} & {} & {} & * & * & 1 
 \end{array}\right) \right\} \cap S.
\end{align*}
Set 
$$
C:=\max \{ \sup_{s\in S_1}\| b_1(s)\|, \sup_{g,h\in G}\| b_1(gh)-b_1(g)-\rho(g)b_1(h)\| \} <\infty.
$$
It is a key observation to this proof that $N_1$ commutes with $S_1$. Therefore for any $l\in N_1$ and $s\in S_1$, the following inequalities hold:
\begin{align*}
  &\| b_1(l)-\rho(s)b_1(l)\| \leq \| b_1(l) - b_1(sl)\| +\|b_1(s)\| +C \\
                            \leq & \| b_1(l) - b_1(sl)\| +2C = \| b_1(l) - b_1(ls)\| +2C \\
                            \leq & \| b_1(l)-b_1(l) -\rho(l)b_1(s)\| +3C \leq 4C.
\end{align*}
Note that the term in the very below is independent of the choice of $l\in N_1$. Now we define the following pairs of subgroups $(G',N')$; $(H,H')$ of $G$:
\begin{align*}
 G':&=\left\{ \left(
 \begin{array}{cc|cc}
  1 & 0 & 0 & 0 \\
  0 & W' & 0 & v' \\
  \hline 
  0 & 0 & 1 & 0 \\
  0 & 0 & 0 & {}^t(W')^{-1}
  \end{array}\right) : W'\in \mathrm{E}_2(A), v' \in S^{2*}(A^2)\right\} \\
  &\trianglerighteq \{ g\in G': W'=I_2 \}:=N'; \\
   H&:=\left\{ \left(
 \begin{array}{cc|cc}
  1 & {}^tu & 0 & 0 \\
  0 & W' & 0 & 0 \\
  \hline 
  0 & 0 & 1 & 0 \\
  0 & 0 & {}^t(W')^{-1}u & {}^t(W')^{-1}
  \end{array}\right) : W'\in \mathrm{E}_2(A), u \in A^2\right\} \\
  &\trianglerighteq \{ h\in H: W'=I_2 \}:=H'. 
\end{align*}
Set a real number $\mathcal{K}$ by the following formula:
$$
\mathcal{K}:=\min \{ \mathcal{K}(G',N';S\cap G',\rho\mid_{G'}), \mathcal{K}(H,H';S\cap H,\rho\mid_{H})\}
$$
(recall the definition of the relative Kazhdan constant for property $(\mathrm{T}_B)$ from Lemma~\ref{lem:kazhconstTB}). Then by the assumptions of relative property $(\mathrm{T}_B)$ for $\mathrm{E}_2 (A) \ltimes S^{2*}(A^2) \trianglerighteq S^{2*}(A^2)$; and $\mathrm{E}_2 (A) \ltimes A^2 \trianglerighteq A^2$, $\mathcal{K}$ is strictly positive. Therefore, we have the following inequalities by  the inequalities above and Lemma~\ref{lem:kazhconstTB}:
\begin{align*}
\textrm{for any $\xi\in b_1(N_1)$, } \ \ \|\xi -\rho(n')\| \leq 16\mathcal{K}^{-1}C, \textrm{and}\ \ &\|\xi -\rho(h')\| \leq 16\mathcal{K}^{-1}C.
\end{align*}
Here $n' \in N'$ and $h'\in H'$ are arbitrary elements. Note the following:
\begin{align*}
 N'=\{ B_{2,3}(r),B_{2,2}(r),B_{3,3}(r):r\in A\}, \ 
 H'=\{ D_{1,2}(r), D_{1,3}(r): r\in A\}.
\end{align*}

Now we observe the following correspondences among elementary symplectic matrices according to choices of alternating matrices ($J_3 \leftrightarrow L_3$):
$$
\begin{array}{cccc}
\textrm{alternating matrix$\colon$} &J_3& \longleftrightarrow &L_3 \\
\hline \hline
{} & B_{2,3}(r)& \longleftrightarrow &SE_{3,6}(r), \\
{} & D_{1,2}(r)& \longleftrightarrow &SE_{1,3}(r), \\
{} & D_{1,3}(r)& \longleftrightarrow &SE_{1,5}(r), \\
{}  & B_{2,2}(r)& \longleftrightarrow &SE_{3,4}(r), \\
{} & B_{3,3}(r)& \longleftrightarrow &SE_{5,6}(r), \\
\hline
{} & B_{1,2}(r)& \longleftrightarrow &SE_{1,4}(r), \\
{} & B_{1,3}(r)& \longleftrightarrow &SE_{1,6}(r), \\
{} & B_{1,1}(r)& \longleftrightarrow &SE_{1,2}(r). \\
\end{array}
$$
By Lemma~\ref{lem:spcom} (item $(ii)$ (1) and item $(i)$ (3)), we have the following equalities: for any $r\in A$,
$$
B_{1,3}(r)=[D_{1,2}(1),B_{2,3}(r)],\ \ 
B_{1,1}(r)=B_{1,3}(-r)[B_{3,3}(-r),D_{1,3}(1)].
$$
These equalities together with the inequalities in the paragraph above imply that for any $\xi \in b_1(N_1)$ and any $r\in A$,
$$
\|\xi -\rho(B_{1,3}(r))\xi\| \leq 64\mathcal{K}^{-1}C, \textrm{ and }\|\xi -\rho(B_{1,1}(r))\xi\| \leq 128\mathcal{K}^{-1}C.
$$
In a similar way, we also have $\|\xi -\rho(B_{1,2}(r))\xi\| \leq 64\mathcal{K}^{-1}C$ in the setting above. Therefore we conclude the following:
$$
\textrm{for any $\xi\in b_1(N_1)$ and any $n \in N$, }
\|\xi -\rho(n)\xi\| \leq 304\mathcal{K}^{-1}C.
$$
Note that the right hand side is independent of the choices of $\xi \in b_1(N_1)$ and $n \in N$. By Lemma~\ref{lem:cheb} 
and the trivial fact that $(B_{\rho(N)}')^{\rho(N)}=0$, this inequality forces that $
\sup_{\xi \in b_1(N_1)}\|\xi\| \leq 304 \mathcal{K}^{-1}C$. 
This means that $b_1(N_1)$ is bounded. Thus we have shown assertion $(\mathrm{A}1)$. 

Finally, we will confirm assertion $(\mathrm{A}2)$. Define a \textit{finite} subset $S_2$ of $G$ as follows:
\begin{align*}
S_2&:=\{ B_{1,2}(\pm x_l), B_{1,3}(\pm x_l), B_{2,3}(\pm x_l), D_{1,2}(\pm x_l), D_{2,1}(\pm x_l), D_{1,3}(\pm x_l), D_{2,3}(\pm x_l):  0\leq l\leq k\} \\
&\ \ \cup \{B_{i,i}(\pm x_1^{\epsilon_1}\cdots x_k^{\epsilon_k}):1\leq i\leq 3, \epsilon_1 ,\ldots ,\epsilon_k\in \{0,1\} \} \\
&= \left\{ \left(
\begin{array}{ccc|ccc} 
 1 & * & * & * & * & * \\
 * & 1 & * & * & * & * \\
 0 & 0 & 1 & * & * & * \\
 \hline 
 {} & {} & {} & 1 & * & 0 \\
 {} & 0 &  {} & * & 1 & 0 \\
 {} & {} & {} & * & * & 1 
 \end{array}\right) \right\} \cap S.
\end{align*}
Then the following holds true:
\begin{align*}
\textrm{for any $s\in S_2$ and $\gamma\in N_2$,}\textrm{ there exists $l\in N_1$ such that\ \ }  s\gamma=\gamma s l.
\end{align*}
More precisely, $l=e$ unless $s$ is of the form of $D_{1,2}$ or $D_{2,1}$. Since we have already verified assertion $(\mathrm{A}1)$, we know that $b_1(N_1)$ is bounded. Therefore 
$$
C':=\max \{ \sup_{s\in S_2}\| b_1(s)\|, \sup_{g,h\in G}\| b_1(gh)-b_1(g)-\rho(g)b_1(h)\|, \sup_{l\in N_1}\| b_1(l)\| \}
$$
is a finite real number. 
We have the following inequalities for any $\gamma\in N_2$ and $s\in S_2$:
\begin{align*}
  &\| b_1(\gamma)-\rho(s)b_1(\gamma)\| \leq \| b_1(\gamma) - b_1(s\gamma)\| +\|b_1(s)\| +C' \\
                            \leq &\| b_1(\gamma) - b_1(s\gamma)\| +2C' 
                            = \| b_1(\gamma) - b_1(\gamma sl)\| +2C' \\
                            \leq &\| b_1(\gamma)-b_1(\gamma) -\rho(\gamma)b_1(s)- \rho(\gamma s)b_1(l)\| +4C' \leq 6C'.
\end{align*}
Here $l\in N_1$ is chosen such that $s\gamma=\gamma sl$ as in above. Note that the very below term in these inequalities is independent of the choice of $\gamma \in N_2$. Therefore we can verify assertion $(\mathrm{A}2)$ in a similar way to one in the proof of assertion $(\mathrm{A}1)$.

We have verified assertions $(\mathrm{A}1)$ and $(\mathrm{A}2)$, and thus have completed the proof of the theorem.

\end{proof}

\begin{proof}(Theorem~\ref{thm:FFLp})
The conclusion follows from Proposition~\ref{prop:TtoFFsp} and Theorem~\ref{thm:short}, in view of Theorem~\ref{thm:ex} (recall from $(ii)$ of Theorem~\ref{thm:BFGM1} that relative property $(\mathrm{T}_{\mathcal{L}_p})$ is equivalent to relative property $(\mathrm{T})$!). Here again we utilize property $(*)$ in Subsection~\ref{subsec:TTmodT}, see Theorem~\ref{thm:bddgen}.
\end{proof}

\section{Shortcut of Theorem~\ref{mthm:MCGOut1} for universal lattices}\label{sec:short}
Here we see short and much less involved proof of Theorem~\ref{mthm:MCGOut2}. The key is study on distorted elements in a group. We prove Theorem~\ref{mthm:MCGOut2}. 
\begin{defn}\label{def:dist}
Let $G$ be a finitely generated group. An element $g\in G$ is called a \textit{distorted element} if $\lim_{n\to \infty }|g^n|_S/n=0$.
Here $S$ is a finite generating set (the choice of such $S$ does not affect the definition above), and $|\cdot|_S$ denotes the word length with respect to $S$. The element $g$ is said to be \textit{undistorted} otherwise.
\end{defn}

Note that in the literatures, it is more common to ask a distorted element \textit{not} to be a torsion. Here we allow torsions as well.

The following theorems respectively Farb--Lubotzky--Minsky \cite{FLM} (and L. Mosher); and E. Alibegovi\'{c} \cite{Ali} state torsions are only distorted elements in $\mathrm{MCG}(\Sigma)$; $\mathrm{Out}(F_n)$.

\begin{thm}$($\cite{FLM},\cite{Mos};\cite{Ali}$)$\label{thm:undist}
Let $\Sigma=\Sigma_{g.l}$ be a surface and $n\geq 2$. Then any element respectively in $\mathrm{MCG}(\Sigma)$; and in $\mathrm{Out}(F_n)$ which is not a torsion is undistorted.
\end{thm}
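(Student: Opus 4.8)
The plan is to treat each target by realizing the ambient group acting by isometries on a suitable metric space $(Y,d)$ with a fixed basepoint $y_0$. The orbit map $g\mapsto g\cdot y_0$ is automatically coarsely Lipschitz from any word metric $|\cdot|_S$ to $d$, since the finitely many generators displace $y_0$ by a bounded amount, so that $d(y_0,g\cdot y_0)\le C\,|g|_S$ for a constant $C$. Hence, if a given infinite-order element $f$ has positive stable translation length $\ell(f):=\lim_n d(y_0,f^n\cdot y_0)/n>0$, one gets $|f^n|_S\ge \ell(f)\,n/C-O(1)$, i.e.\ $f$ is undistorted. So the content, for each of $\mathrm{MCG}(\Sigma)$ and $\mathrm{Out}(F_n)$, is to produce for every element of infinite order a geometric action detecting it with positive translation length. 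Since the statement is quoted verbatim from \cite{FLM}, \cite{Mos}, \cite{Ali}, I would in the end only record this reduction scheme and cite those papers for the technical estimates; I sketch the scheme below.

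\textbf{The case of $\mathrm{MCG}(\Sigma)$.} First I would split along the Nielsen--Thurston classification recalled in Subsection~\ref{subsec:MCG}. If $f$ is pseudo-Anosov, take $Y=\mathcal{T}(\Sigma)$ with the Teichm\"uller metric: $f$ translates along a Teichm\"uller geodesic axis with $\ell(f)=\log\lambda(f)>0$, where $\lambda(f)$ is the dilatation of $f$, and the orbit map is coarsely Lipschitz as above. If $f$ is reducible of infinite order, pass to a pure power $f^k$ fixing its canonical reduction system $\sigma$; cutting $\Sigma$ along $\sigma$ and capping boundary circles with punctures, $f^k$ restricts on each complementary subsurface to either a pseudo-Anosov map or the identity. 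If some piece $W$ carries a pseudo-Anosov restriction, transport the Teichm\"uller metric of $W$ to $\mathrm{MCG}(\Sigma)$ via the Masur--Minsky subsurface projection $\pi_W$, which is coarsely Lipschitz for the word metric \cite{MaMi}; this yields $|f^{kn}|_S\gtrsim n$. Otherwise $f^k$ is a pure multitwist $\prod_i T_{c_i}^{m_i}$, and since $f$ has infinite order some $m_i\ne 0$; the annular subsurface projection to $c_i$ then satisfies $d_{c_i}(f^{kn}\mu,\mu)\asymp |n|$, so again $|f^{kn}|_S\gtrsim n$ and hence $|f^n|_S\gtrsim n$. This is precisely the argument of \cite{FLM} (and \cite{Mos}), which I would cite for the projection estimates.

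\textbf{The case of $\mathrm{Out}(F_n)$.} The model is the same with $Y=CV_n$ the Culler--Vogtmann Outer space \cite{CuVo}, equipped with the asymmetric Lipschitz metric and then symmetrized to an honest metric on which $\mathrm{Out}(F_n)$ acts with bounded generator-displacement. If $f$ is fully irreducible, Alibegovi\'c \cite{Ali}, building on Bestvina--Handel train-track theory, shows $\ell(f)$ is the logarithm of the expansion factor of $f$ and is positive. For a general infinite-order $f$ I would use relative train tracks: either (exponential case) a power of $f$ admits a filtration with an exponentially-growing stratum, and one extracts positive translation length on $CV_n$ — in the cleanest form, by restricting to an $f$-invariant free factor on which $f$ acts with a fully irreducible restriction and applying \cite{Ali} there — or (polynomial case) $f$ is polynomially growing, and then one argues exactly as in the multitwist case above, using the linear growth under iteration of the unipotent (twist) exponents occurring in the relative train-track filtration.

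\textbf{The expected obstacle.} The hard part, in both targets, is exactly this second, parabolic case — multitwists for $\mathrm{MCG}(\Sigma)$, polynomially-growing automorphisms for $\mathrm{Out}(F_n)$ — whose dynamics are invisible to the primary $\delta$-hyperbolic object (the curve complex $\mathcal{C}(\Sigma)$, respectively the free factor complex), on which such elements act with zero translation length. Positivity of translation length must instead be harvested from finer, strictly annular invariants (subsurface twisting numbers; unipotent blocks in a relative train-track filtration), and one must verify both that these invariants grow linearly under iteration and that they are coarsely Lipschitz-dominated by the word metric. This bookkeeping is the technical heart of \cite{FLM} and \cite{Ali}, and in the write-up I would not reproduce it, only cite it.
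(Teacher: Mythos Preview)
The paper does not prove this theorem at all: it is stated as a result quoted from the literature \cite{FLM}, \cite{Mos}, \cite{Ali} and used as a black box in the proof of Theorem~\ref{mthm:MCGOut2}. Your proposal therefore goes well beyond what the paper does --- you sketch the actual geometric mechanism (orbit maps to Teichm\"uller space or Outer space, positive translation length via Nielsen--Thurston and train-track theory, and the subtler annular/polynomial-growth cases), whereas the paper simply invokes the cited references. Your outline is a reasonable summary of how those references proceed, and your own instinct to ultimately cite \cite{FLM}, \cite{Mos}, \cite{Ali} for the technical estimates matches exactly what the paper does in full.
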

This result together with the following also well-known fact is the key in this section.
\begin{prop}\label{prop:torsi}
Groups $\mathrm{MCG}(\Sigma)$ for a non-exceptional surface and $\mathrm{Out}(F_n)$ are virtually torsion-free. Namely, there exists finite index torsion-free subgroups.
\end{prop}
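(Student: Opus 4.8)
The plan is to combine the two surjections recalled in Subsection~\ref{subsec:MCG},
$$
\mathrm{MCG}(\Sigma_g)\twoheadrightarrow \mathrm{Sp}_{2g}(\mathbb{Z})\quad (g\geq 1),\qquad \mathrm{Out}(F_n)\twoheadrightarrow \mathrm{GL}_n(\mathbb{Z}),
$$
with Minkowski's lemma that for $m\geq 3$ the level $m$ principal congruence subgroup $\ker\big(\mathrm{GL}_N(\mathbb{Z})\to \mathrm{GL}_N(\mathbb{Z}/m\mathbb{Z})\big)$ is torsion-free and of finite index. First I would record the routine fact that an extension $1\to A\to G\to B\to 1$ with both $A$ and $B$ torsion-free is itself torsion-free: a finite order element of $G$ maps to a finite order, hence trivial, element of $B$, so it lies in $A$ and is therefore trivial. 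Granting this, once we know that the kernel of each displayed surjection is torsion-free, the preimage in $\mathrm{MCG}(\Sigma_g)$ (resp.\ in $\mathrm{Out}(F_n)$) of the level $m$ subgroup of $\mathrm{Sp}_{2g}(\mathbb{Z})$ (resp.\ of $\mathrm{GL}_n(\mathbb{Z})$), with $m\geq 3$, is a finite index torsion-free subgroup, which is exactly what is claimed.

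Hence the one thing to check is that the Torelli type kernels $\mathcal{T}_g\trianglelefteq\mathrm{MCG}(\Sigma_g)$ and $\overline{\mathrm{IA}}_n\trianglelefteq\mathrm{Out}(F_n)$ are torsion-free. For $\overline{\mathrm{IA}}_n$ this is immediate from Theorem~\ref{thm:BaLu} of Bass--Lubotzky: a nontrivial finite subgroup of $\overline{\mathrm{IA}}_n$ would surject onto $\mathbb{Z}$, which is absurd. This settles $\mathrm{Out}(F_n)$, and with it the case of $\mathrm{MCG}(\Sigma_{g,l})$ with $l\geq 1$: as noted in the introduction such a mapping class group embeds in $\mathrm{Out}(F_{2g+l-1})$, and its intersection with a finite index torsion-free subgroup of the latter is again finite index and torsion-free.

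It remains to handle the closed non-exceptional case, i.e.\ to see that $\mathcal{T}_g$ is torsion-free for $g\geq 2$. Here I would use that a torsion element of $\mathrm{MCG}(\Sigma_g)$ is represented, by the cyclic case of the Nielsen realization theorem (due to Nielsen; in full generality to Kerckhoff), by a finite order orientation-preserving homeomorphism $\varphi$ of $\Sigma_g$. Near each of its fixed points $\varphi$ is conjugate to a nontrivial rotation, so each fixed point contributes Lefschetz index $+1$ and hence the Lefschetz number satisfies $L(\varphi)=\#\mathrm{Fix}(\varphi)\geq 0$. On the other hand, if $[\varphi]\in\mathcal{T}_g$ then $\varphi$ acts trivially on $H_\bullet(\Sigma_g;\mathbb{Q})$, so $L(\varphi)=2-2g$, which is negative for $g\geq 2$; the contradiction forces $\varphi$, and therefore $[\varphi]$, to be trivial. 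The main obstacle is precisely this last step, resting on the realization of periodic mapping classes by genuine periodic maps and the standard local normal form for finite order surface homeomorphisms; alternatively one may simply invoke Serre's theorem that for $m\geq 3$ the level $m$ congruence subgroup of $\mathrm{MCG}(\Sigma_g)$ is torsion-free (see Chapter~6 of \cite{FaMar} and \cite{Iva}), which bypasses the Lefschetz computation.
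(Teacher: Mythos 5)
The paper states this proposition as a ``well-known fact'' and supplies no proof at all, so there is no internal argument to compare against; what you have written is a supplementary proof. Your argument is correct. The overall plan --- reduce modulo the surjections $\mathrm{MCG}(\Sigma_g)\twoheadrightarrow\mathrm{Sp}_{2g}(\mathbb{Z})$ and $\mathrm{Out}(F_n)\twoheadrightarrow\mathrm{GL}_n(\mathbb{Z})$, invoke Minkowski's lemma for level $\geq 3$ congruence subgroups, and verify that the kernels are torsion-free so the extension lemma applies --- is the standard strategy. Using Theorem~\ref{thm:BaLu} of Bass--Lubotzky to dispose of torsion in $\overline{\mathrm{IA}}_n$ is a nice internal economy, as that theorem is already quoted in the paper; it handles $\mathrm{Out}(F_n)$ and, via the embedding $\mathrm{MCG}(\Sigma_{g,l})\hookrightarrow\mathrm{Out}(F_{2g+l-1})$, all punctured non-exceptional surfaces (non-exceptionality guarantees $2g+l-1\geq 2$, as needed). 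The Lefschetz argument for the closed case $g\geq 2$ is classical and correctly executed: a nontrivial finite-order orientation-preserving homeomorphism, realized as such by Nielsen's cyclic realization theorem and made an isometry by averaging a metric, has isolated fixed points each of index $+1$, so its Lefschetz number is $\geq 0$, contradicting $L(\varphi)=\chi(\Sigma_g)=2-2g<0$ when $\varphi$ acts trivially on rational homology. As you note at the end, one could equally well just cite Serre's theorem that level~$\geq 3$ congruence subgroups of $\mathrm{MCG}(\Sigma_g)$ are torsion-free, of which this Lefschetz computation is essentially the proof.
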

Before proceeding to the proof of Theorem~\ref{mthm:MCGOut2}, we recall the definition of the \textit{Steinberg group}. The Steinberg group over a unital ring $R$ of degree $m$ is defined as follows: it is the group generated by $\{x_{i,j}(r) :1\leq i,j\leq m, i\ne j\}$ (the set of formal generators) which subjects to the following commutator relations:
\begin{align*}
&x_{i,j}(r)x_{i,j}(s)=x_{i,j}(r+s), & \\
&[x_{i,j}(r),x_{k,l}(s)]=e   &\textrm{if }i\ne l,j\ne k, \\
&[x_{i,j}(r),x_{j,k}(s)]=x_{i,k}(rs)   &\textrm{if }i\ne k. 
\end{align*}
The Steinberg group is written as $\mathrm{St}_m(R)$, and there is a natural surjection $\mathrm{St}_m(R)  \twoheadrightarrow \mathrm{E}_m(R)$, which sends each $x_{i,j}(r)$ to $E_{i,j}(r)$.

\begin{proof}(Theorem~\ref{mthm:MCGOut2})
Here we only argue the case of $H=\mathrm{MCG}(\Sigma)$ targets. 
Set $R=\mathbb{Z}\langle x_1,\ldots ,x_k\rangle$. By Proposition~\ref{prop:torsi}, there exists a finite index subgroup of $H$ which is torsion-free. We choose one and name it $H_0$.

 Let $\Psi \colon \Gamma \to H$ be a homomorphism. Then $\Phi(\Gamma)\cap H_0$ is a finite index subgroup in $\Phi (\Gamma)$ and hence $\Gamma_0:= \Phi^{-1}(\Psi(\Gamma)\cap H_0)$ is a finite index subgroup of $\Gamma$. For distinct pair $(i,j)$ with $1\leq i,j \leq m$, define a subset of $R$ as $R^0_{i,j}:=$$\{r\in R : E_{i,j}(r)\in \Gamma_0 \}$. Since for fixed $i,j$ $\{E_{i,j}(r):r\in R\} $$\cong R$ as additive groups, $R^0_{i,j}$ is a finite index subgroup in the additive group $R$. 

Note that if $E_{i,j}(r)\in \Gamma$, then it is a distorted element in $\Gamma$. This follows form the commutator relation:
 $$
 [E_{i,j}(r),E_{j,l}(s)]=E_{i,l}(rs) \ \ \ \ (i\ne j ,j\ne l, l\ne i;r,s\in R).
 $$

 By the construction of $H_0$, we conclude that for any $(i,j)$ and any $r\in R^0_{i,j}$, $\Phi(E_{i,j}(r))=e_H$. Indeed, the relation above implies each $\Phi(E_{i,j}(r))$ ($r\in R^0_{i,j}$) is distorted, and Theorem~\ref{thm:undist} shows that only distorted element in $H_0$ is $e_H$. We next claim that $R^0:=\bigcap_{i,j}R^0_{i,j}$ is a \textit{subring} of $R$. Indeed, the commutator relation above implies is $R$ is closed under multiplication. 
Therefore $R^0$ is a subring of $R$ of finite index. We use the following theorem of J. Lewin (Lemma $1$ in \cite{Lew}): \textit{for any finitely generated $($possibly noncommutative$)$ ring $Q$, any finite index subring $Q_0$ of $Q$ contains a $($two sided$)$ ideal $J$ of $Q$ which is a finite index subring of $Q_0$}. Thus we have a ideal $\mathcal{I}$ of $R$ of finite index, which is included in in $R_0$. 

Finally, we employ the following folklore result (the proof can be found in Lemma 17 of \cite{KaSa}): \textit{for a finite ring} $S$ \textit{the Steinberg group} $\mathrm{St}_{m\geq 3}(S)$ \textit{is finite.} Now we are done because $\Phi$ factors through  a subgroup of $\mathrm{St}_m(R/\mathcal{I})$, and this group itself is finite. 
\end{proof}

\begin{rem}\label{rem:spdame}
Here is a remark on why this argument does not work for symplectic universal lattice cases. The finiteness of the Steinberg group over a finite ring is extended to general cases, more precisely, twisted Steinberg groups (see Proposition 4.5 of a paper \cite{Rap} of I. A. Rapinchuk). However, as we have seen in Lemma~\ref{lem:spcom}, the commutator relation among elementary symplectic groups are much complicated. The gap lies in the point that we have deduced $R_0$ in the proof above is multiplication closed.
\end{rem}

\end{document}